\newenvironment{proof*}{\noindent\emph{Proof}}{$\square$\smallskip}
\newtheorem{theorem}{Theorem}[section]
\newtheorem{Definition}[theorem]{Definition}
\newtheorem{lemma}[theorem]{Lemma}
\newtheorem{Example}[theorem]{Example}
\newtheorem{corollary}[theorem]{Corollary}
\newtheorem{Remark}[theorem]{Remark}
\newtheorem{proposition}[theorem]{Proposition}
\newtheorem{claim}[theorem]{Claim}
\newtheorem{Exercise}[theorem]{Exercise}
\newtheorem{Exercises}[theorem]{Exercises}
\newtheorem{Notation}[theorem]{Notation}
\newtheorem{Convention}[theorem]{Convention}
\newtheorem{standing assumption}[theorem]{Standing Assumption}
\newenvironment{definition}{\begin{Definition}\normalfont}{\end{Definition}}
\newenvironment{example}{\begin{Example}\normalfont}{\end{Example}}
\newenvironment{remark}{\begin{Remark}\normalfont}{\end{Remark}}
\newenvironment{convention}{\begin{Convention}\normalfont}{\end{Convention}}
\newcommand{\id}{\ensuremath{\mathrm{id}}} 
\title[Finiteness properties]{Finiteness properties of locally defined groups}  
\author[D.~S.~Farley]{Daniel S. Farley}
\address{Department of Mathematics\\ Miami University\\ Oxford, OH 45056 U.S.A.}
\email{farleyds@muohio.edu}
\author[B.~Hughes]{Bruce Hughes}
\address{Department of Mathematics\\ Vanderbilt University\\ Nashville, TN 37240 U.S.A.}
\email{bruce.hughes@vanderbilt.edu}
\date{\today}
\begin{document}

\begin{abstract} 
Let $X$ be a set and let $S$ be an inverse semigroup of partial bijections of $X$. Thus, an element of $S$ is a bijection between two subsets of $X$, and the set $S$ is required to be closed under the operations of taking inverses and compositions of functions. We define  $\Gamma_{S}$ to be the set of self-bijections of $X$ in which each $\gamma \in \Gamma_{S}$ is expressible as a union of finitely many members of $S$. This set is a group with respect to composition.

The groups $\Gamma_{S}$ form a class containing numerous widely studied groups, such as Thompson's group $V$, the Nekrashevych-R\"{o}ver groups, Houghton's groups, and the Brin-Thompson groups $nV$, among many others.

We offer a unified construction of geometric models for $\Gamma_{S}$ and a general framework for studying the finiteness properties of these groups.
\end{abstract}

\subjclass[2010]{Primary 20F65, 20J05; Secondary 20M18}

\keywords{generalized Thompson groups,  inverse semigroups, finiteness properties} 

\maketitle

\setcounter{tocdepth}{2} \tableofcontents

\section{Introduction}

This paper is about the topological finiteness properties $F_{n}$. A group $G$ has \emph{type $F_{n}$} if it is the fundamental group of an aspherical CW-complex with finite $n$-skeleton. Thus, all groups are of type $F_{0}$, finitely generated groups are precisely the groups of type $F_{1}$, and finitely presented groups are precisely the groups of type $F_{2}$. Type $F_{n}$, for $n \geq 3$, is a topologically-defined refinement of the latter properties. A group has \emph{type $F_{\infty}$} if it has type $F_{n}$ for all $n \geq 0$. The book \cite{BookbyRoss} by Geoghegan discusses these properties in greater depth, and also provides a great deal of relevant background.    

The ``locally defined'' groups from the title are groups of bijections of a given set $X$, where the bijections in question have a suitable ``piecewise'' definition. Thompson's group $V$ is an illustrative example. There are many others, as we explain below.

Our basic approach to topological finiteness properties can be traced back to work of Brown and Geoghegan from the 1980's \cite{BrownGeoghegan}, who proved that Thompson's group $F$ has type $F_{\infty}$. In subsequent work \cite{Brown}, Brown showed that Thompson's groups $F$, $T$, and $V$ all have type $F_{\infty}$. The arguments were essentially Morse-theoretic in nature. Brown also established the $F_{\infty}$ property for several classes of generalized Thompson groups $F_{n,r}$, $T_{n,r}$, and $V_{n,r}$. Similar arguments from \cite{Brown} showed that Houghton's groups $H_{n}$ are of type $F_{n-1}$ but not $F_{n}$. We refer the reader to Cannon, Floyd, and Parry \cite{CFP} for an introduction to Thompson's groups $F$, $T$, and $V$, and to some of their generalizations. See also Examples \ref{example:V} and \ref{example:Houghton}
for (somewhat non-standard) definitions of Thompson's group $V$ and the Houghton groups $H_{n}$.

Later, Melanie Stein \cite{Stein} associated much more economical simplicial complexes to the class of Stein-Thompson groups and used the results to determine their finiteness properties and to compute homology groups.   

In recent years, there have been numerous studies of the finiteness properties of what might be called ``generalized Thompson groups'' -- see \cite{QV}, \cite{BelkMatucci},\cite{Bieri}, \cite{BraidedV}, \cite{FarleyHughes}, 
\cite{nV}, \cite{Lodha}, \cite{Skipper}, \cite{Thumann}, \cite{Witzel}, and \cite{WZcloning}, for instance. As a rule, these  follow the general strategy pioneered by Brown, often also using variants of the construction due to Stein.

Our purpose here is to offer a general setting for proving results of the above kind. We consider groups $G$ of bijections of a set $X$ with the property that the bijections have a  ``piecewise'' definition; i.e., for each $g \in G$, there are finite partitions $\{ U_{1}, \ldots, U_{n} \}$ and $\{ V_{1}, \ldots, V_{n} \}$
of $X$ such that the restrictions $g_{\mid U_{i}}: U_{i} \rightarrow V_{i}$ are each taken from a fixed set of partial transformations. We assume that the latter set is closed under compositions and inverses of functions. These closure properties make the set of partial transformations into an inverse semigroup. (We will make little use of the formal theory of inverse semigroups, for which \cite{SemigroupBook} is a reference.)    

Thus, we are led to start with a \emph{semigroup action} on a set $X$, which is simply a collection $S$ of partial bijections of $X$ that is closed under the operations of taking compositions and inverses.  We let  $\Gamma_{S}$ be the set of all bijections of $X$ that are piecewise determined by $S$ (in the sense of the previous paragraph). Straightforward checking shows that $\Gamma_{S}$ is a group.

To associate a natural geometry to the above set-up, we can follow the basic strategy of \cite{FarleyHughes} and \cite{Hughes}. Hughes \cite{Hughes} defined a certain class of groups that act by homeomorphisms on compact ultrametric spaces, which are called \emph{finite similarity structure (FSS) groups} in \cite{FarleyHughes}. Given a compact ultrametric space $X$,  a \emph{finite similarity structure}
is a function $\mathrm{Sim}$ that assigns to each pair $(B_{1},B_{2})$ of balls in $X$ a finite \emph{similarity set} $\mathrm{Sim}(B_{1},B_{2})$ of surjective similarities $h: B_{1} \rightarrow B_{2}$. The sets $\mathrm{Sim}(B_{1},B_{2})$ are required to satisfy various ``groupoid-like'' properties. A finite similarity structure
$\mathrm{Sim}$ determines a group $\Gamma_{\mathrm{Sim}}$. Elements of $\Gamma_{\mathrm{Sim}}$ are bijections $\gamma: X \rightarrow X$ that are \emph{locally determined} by $\mathrm{Sim}$; i.e., there are partitions $\{ B_{1}, \ldots, B_{n} \}$ and $\{ \widehat{B}_{1}, \ldots, \widehat{B}_{n} \}$ of $X$ such that the restrictions 
$\gamma_{\mid B_{i}}$ are members of the similarity sets
$\mathrm{Sim}(B_{i}, \widehat{B}_{i})$. 

Using the similarity sets, Hughes \cite{Hughes} defined an equivalence relation on pairs $(f,B)$, where $B$ is a compact ultrametric ball and $f:B \rightarrow X$ is a \emph{local similarity embedding}; i.e., there is a partition $\{ \widehat{B}_{1}, \ldots, \widehat{B}_{m} \}$ of $B$ into ultrametric balls such that the restrictions $f_{\mid \widehat{B}_{i}}$ are similarities from the similarity sets
$\mathrm{Sim}(\widehat{B}_{i}, f(\widehat{B}_{i}))$. The definition of the equivalence relation is in terms of a certain commutative diagram that involves elements from the similarity sets. (A generalized version of the definition occurs below, as Definition \ref{definition:pairs}.) We let $[f,B]$ denote the equivalence class of $(f,B)$. The main argument of \cite{Hughes} used the set of all such equivalence classes to prove that FSS groups have the Haagerup property.

In \cite{FarleyHughes}, the authors produced a proper action of $\Gamma_{\mathrm{Sim}}$ on a contractible simplicial complex. A vertex in the complex is a collection of equivalence classes
\[ \{ [f_{1},B_{1}], \ldots, [f_{n},B_{n}] \} \]
where the set $\{ f_{i}(B_{i}) : i = 1, \ldots, n \}$ is a partition of $X$. The vertices become a directed set under a natural expansion relation. (A general definition of the expansion relation appears here as Definition \ref{definition:expansion}.) The entire construction can be seen as a generalization of the ones from Brown \cite{Brown}. The authors were able to prove that certain classes of FSS groups have type $F_{\infty}$ by following Brown's method.

The theory of FSS groups, as sketched above, extends naturally to the groups $\Gamma_{S}$, where $S$ is an inverse semigroup acting on a set $X$. The inverse semigroup $S$ specifies certain subsets $D \subseteq X$ as \emph{domains}, so-called because each $D$ is the domain of some $s \in S$. We let $\mathcal{D}_{S}$ denote the set of domains. The domains play the role in the general theory that metric balls play in the more specific setting of FSS groups. Thus, we define an equivalence relation on the set of 
pairs $(f,D)$, where $D$ is a domain and $f: D \rightarrow X$ is an embedding that is locally determined by $S$ (in a suitable sense). Our definition of the equivalence relation (Definition \ref{definition:sstructure}) uses a \emph{structure function} $\mathbb{S}$, which assigns to each pair $(D_{1},D_{2}) \in \mathcal{D}_{S} \times \mathcal{D}_{S}$ a certain \emph{structure set}
$\mathbb{S}(D_{1},D_{2}) \subseteq S$. The  structure sets $\mathbb{S}(D_{1},D_{2})$ are required to have ``groupoid-like'' properties that resemble those of the similarity sets from the theory of FSS groups. We can then use the structure sets $\mathbb{S}(D_{1},D_{2})$ to define an equivalence relation on pairs $(f,D)$, where $D$ is a domain and $f:D \rightarrow X$ is an embedding that is locally determined by the inverse semigroup $S$. Denoting the equivalence class of $(f,D)$ by $[f,D]$, we can follow the general procedure of \cite{FarleyHughes} to produce a contractible complex upon which $\Gamma_{S}$ acts. Here we must also specify a certain \emph{pattern function} 
$\mathbb{P}$ to help determine the expansion relation (which is otherwise defined very much as in the basic theory of FSS groups). The entire construction generalizes the ones from \cite{FarleyHughes} and (therefore) the ones from \cite{Brown}.

Our finiteness results ultimately use simplified versions of the above complexes, which can be seen as generalizations of the ones constructed by Stein \cite{Stein}. Here we introduce \emph{expansion schemes}, denoted $\mathcal{E}$, which are a device for simplifying our first (directed set) construction in a controlled way. More specifically, an expansion scheme $\mathcal{E}$ determines the collection of \emph{$\mathcal{E}$-chains}, which are the simplices in our complexes. The $\mathcal{E}$-chains are direct descendents of Stein's ``elementary intervals''. The basic (and probably most important) examples of expansion schemes are very similar to ones described (not under the same name) in \cite{Stein}, but we also offer a more general theory, which holds the potential for greater flexibility in applications. 

Thus, to summarize, we propose the following sequence of choices in studying the finiteness properties of generalized Thompson groups:
\begin{enumerate}
\item Choose a generalized Thompson group $\Gamma_{S}$ with the associated set $S$ of partial transformations. The set $S$ specifies a set $\mathcal{D}_{S}$ of domains.
\item Pick a structure function $\mathbb{S}$ and a pattern function $\mathbb{P}$. These choices depend upon (but are not completely determined by) the inverse semigroup $S$. The pair $(\mathbb{S},\mathbb{P})$ is called an \emph{$S$-structure} (Definition \ref{definition:sstructure}). The choice of $S$-structure completely determines a directed set upon which $\Gamma_{S}$ acts by order-preserving bijections. The simplicial realization $\Delta$ of this directed set is analogous to the constructions due to Brown \cite{Brown}.
\item Pick an expansion scheme $\mathcal{E}$, which will depend upon the choice of $S$-structure. The expansion scheme will (in general) determine a more economical simplicial complex $\Delta^{\mathcal{E}}$ upon which $\Gamma_{S}$ acts. The complex $\Delta^{\mathcal{E}}$ is analogous to the ones introduced by Stein \cite{Stein}.
\end{enumerate}
 
These choices lead us up to the point where we can determine finiteness properties. The actual analysis of finiteness properties proceeds along well-established lines: we use Brown's finiteness criterion along with an analysis of the descending links in $\Delta^{\mathcal{E}}$. 

The details of the analysis are relegated to the main body of the paper; however, we mention two major ingredients. The first is the idea of a pseudovertex. A \emph{pseudovertex} is a collection of pairs
\[ \{ [f_{1}, D_{1}], \ldots, [f_{m}, D_{m}] \}, \] 
where the images $f_{i}(D_{i})$ are pairwise disjoint, but are not required to form a partition of $X$; i.e., it may be that their union is not all of $X$.  All vertices are pseudovertices, but not conversely. The second ingredient is the ubiquity of product decompositions among subcomplexes of $\Delta$ and $\Delta^{\mathcal{E}}$. Indeed, the product decompositions are most easily described using the vocabulary of pseudovertices - see, for instance, the crucial Proposition \ref{proposition:partitionedstar}. We therefore formulate most of our basic results about the expansion partial order in the general setting of pseudovertices. This is the case especially in Section \ref{section:Sstructures}, which contains the fundamentals about the partial order induced by expansion.

The main applications to finiteness properties of groups appear in the final section. We recover proofs that $V_{n,r}$ \cite{Brown}, $nV$ \cite{BrinHighD, nV} , $QV$ \cite{QV, Nucinkis}, R\"{o}ver's group \cite{BelkMatucci, Nek, Rover}, and FSS groups \cite{FarleyHughes} are of type $F_{\infty}$. We also consider a class of groups based on products and show that they are of type $F_{\infty}$. (These results are intended, in effect, as a ``proof of concept''. We make no attempt at being complete. Indeed, a complete exposition of possible applications to finiteness properties would not fit in any paper of a reasonable length.)

On the other hand, it may be useful to the reader to mention a few groups whose finiteness properties are not handled in this paper. Some of these groups can almost certainly be handled using similar techniques to the ones described here; others definitely require completely different methods. Among the former, we have Thompson's groups $F$ and $T$, which are not considered at all here. The Stein-Thompson groups \cite{Stein} are not considered, either, although it also seems likely that these groups are amenable to our methods. The Houghton groups $H_{n}$ are used as a running example, but we do not compute their finiteness properties here (as was done in \cite{Brown}). Moving to a more speculative case, one has the groups defined by Bieri and Sach \cite{Bieri}. We have defined some of these groups here (see Example \ref{example:bieri}), but an analysis of their finiteness properties is outside the scope of this paper. Another group to mention in the current context is the Lodha-Moore group \cite{LM}, which is known to have type $F_{\infty}$ \cite{Lodha}.

The braided Thompson group $BV$ is known to have type $F_{\infty}$ \cite{BraidedV}. This group, in contrast to the others mentioned above, is probably impossible to handle by anything resembling the methods of this paper. To explain why, it may be helpful to divide the generalized Thompson groups into multiple classes, depending upon the property of Thompson groups that they generalize. One class generalizes the ``piece-wise'' nature of Thompson's groups. This paper sketches a general theory of the finiteness properties of groups in this class. Another class generalizes the tree-pair definition of Thompson's groups. The group $BV$ clearly lies in the latter class; \cite{WZcloning} might be regarded as a general framework for studying this second ``tree-like'' class of generalized Thompson group. The work of Thumann \cite{Thumann} emphasizes what might be considered a ``strand diagram"-centered view of generalized Thompson groups.   

Let us describe the structure of the paper. In Section \ref{section:2}, we collect various results about simplicial complexes, especially the simplicial realizations of partially ordered sets. Most of the section consists of rather standard definitions, but we call attention to Lemma \ref{lemma:nconnectedness}, which ultimately allows us to compute the  connectivity of $\Delta^{\mathcal{E}}$ (Theorem \ref{theorem:bigone}). The section concludes with a brief description of simplicial products and the Nerve Theorem. In Section \ref{section:3}, we introduce the basic objects of study, namely the groups $\Gamma_{S}$, and we establish a few basic properties of the set of domains $\mathcal{D}_{S}$. This section also introduces the compact ultrametric property and inverse semigroup actions on product spaces. The section concludes with a number of examples. Section \ref{section:Sstructures} contains numerous fundamental ideas. We introduce the all-important idea of an $S$-structure and define the equivalence relation on pairs $(f,D)$. We also describe the partially ordered sets of pseudovertices, including the fact that suitable collections of pseudovertices are directed sets. The section ends with a number of examples of $S$-structures and an explicit characterization of the expansion relation in certain cases. In Section \ref{section:5}, we assemble the results of Section \ref{section:Sstructures} into a proof that the groups $\Gamma_{S}$ act on two explicitly-described contractible simplicial complexes $\Delta$ and $\Delta(\mathcal{V}^{ord})$, both of which depend on the $S$-structure. The latter complex is designed to have smaller vertex stabilizers than the former. We also describe these vertex stabilizers (in both cases), and offer information about the orbits under the $\Gamma_{S}$-action. Section \ref{section:expansionscheme} describes expansion schemes and the properties of complexes determined by them. The section concludes with descriptions of several expansion schemes. Section \ref{section:7} states Brown's finiteness criterion, and contains some generalities about the descending link. The section concludes with a sufficient condition for the descending link to be $n$-connected. The method uses the machinery of nerves of covers. Section \ref{section:8} establishes the finiteness properties  
of a wide range of generalized Thompson groups.

The main body of the argument begins in Section \ref{section:Sstructures}. Section \ref{section:2} contains a large amount of standard material, most of which is used much later in the paper. The reader may therefore want to skim Section \ref{section:2} on a first reading. Section \ref{section:3} introduces the basic objects of study (namely the groups $\Gamma_{S}$), but it should be possible to continue into the main body of the paper with just a few basic definitions. The reader can probably skip the long subsection on the compact ultrametric property (Subsection \ref{subsection:CUP}) on a first reading, referring back to it as necessary.

\section{Combinatorial preliminaries} \label{section:2}

This section collects a number of combinatorial preliminaries. In Subsection \ref{subsection:ordercomplex}, we review the order complex construction, which associates a simplicial complex to a partially ordered set. Subsection \ref{subsection:ordercomplex} also contains standard definitions of the descending and ascending links and stars.

Subsection \ref{subsection:Elemma} establishes a principle (Lemma \ref{lemma:nconnectedness}) that will eventually be used to prove that the complexes $\Delta^{\mathcal{E}}$ determined by expansion schemes are highly connected -- see Section \ref{section:expansionscheme} and Theorem \ref{theorem:bigone}.

Subsection \ref{subsection:product} contains a definition of simplicial products, which will be vital in analyzing the topology of the complexes to be constructed in the rest of the paper.

Finally, Subsection \ref{subsection:nerves} contains a definition of the nerve of a cover, and a suitably general form of the Nerve Theorem.

\subsection{Partially ordered sets and simplicial complexes} \label{subsection:ordercomplex}

\begin{definition} (order complex of a partially ordered set $P$; simplicial complex on $P$) If $P$ is a partially ordered set, then the \emph{order complex of $P$}, denoted $\Delta(P)$, is the abstract simplicial complex $( \mathcal{V}_{P}, \mathcal{S}_{P})$ such that 
\begin{align*}
\mathcal{V}_{P} &= P;  \\
\mathcal{S}_{P} &= \{ S \mid S \text{ is a non-empty finite chain in } P \}. 
\end{align*}

A \emph{simplicial complex on $P$} is a subcomplex of $\Delta(P)$ having $P$ as its vertex set.
\end{definition}

\begin{definition} (interval subcomplexes in the simplicial complex on $\mathcal{V}$)
Let $K = (\mathcal{V}, \mathcal{S})$ be a simplicial complex on the partially ordered set $\mathcal{V}$. If $x \in \mathcal{V}$, then we let
$K_{[x, \infty)}$ be the simplicial complex $(\mathcal{V}_{\geq x}, \mathcal{S}_{\geq x})$, 
where 
\begin{align*}
\mathcal{V}_{\geq x} &= \{ v \in \mathcal{V} \mid v \geq x \} \\
\mathcal{S}_{\geq x} &= \{ S \in \mathcal{S} \mid S \subseteq \mathcal{V}_{\geq x} \}.
\end{align*}
We can similarly define $K_{(-\infty,x]}$, by simply replacing each ``$\geq$" with ``$\leq$'' in the above definition. 

Finally, for vertices $x,y \in \mathcal{V}$, we define $K_{[x,y]} = (\mathcal{V}_{\geq x} \cap \mathcal{V}_{\leq y}, \mathcal{S}_{\geq x} \cap \mathcal{S}_{\leq y})$. 
\end{definition}

\begin{remark}
In practice, we will make little distinction between abstract simplicial complexes on the one hand and (geometric) simplicial complexes on the other. We will refer to both as simplicial complexes, trusting that the specific meaning will be clear from the context.

When specifying a subcomplex $K'$ of an (abstract) simplicial complex $K = (\mathcal{V}, \mathcal{S})$, it suffices to specify a collection of simplices $\mathcal{S}' \subseteq \mathcal{S}$ that is closed under taking non-empty subsets, since the vertex set for $K'$ is then determined by the equality
\[ \mathcal{V}' = \bigcup_{S' \in \mathcal{S}'} S'. \]
It will therefore be convenient to write $K' = \mathcal{S}'$ instead of $K' = (\mathcal{V}', \mathcal{S}')$ in what follows.  
\end{remark}

\begin{definition} \label{definition:linkandstar} (links and stars)
Let $K= (\mathcal{V}, \mathcal{S})$ be a simplicial complex, and let $v \in \mathcal{V}$. We recall that the \emph{star} of the vertex $v$, denoted $st(v,K)$, 
is 
\[  st(v,K) = \{ S \in \mathcal{S} \mid S \subseteq S', \text{ for some } S' \in \mathcal{S} \text{ such that }v \in S' \}. \] 

The \emph{link} of $v$ is defined as follows:
\[ lk(v,K) = \{ S - \{ v \} \mid S \in st(v,K) \}. \]
If the vertex set $\mathcal{V}$ is also partially ordered, then we can further define
\begin{align*}
st_{\downarrow}(v,K) &= st(v,K_{(-\infty,v]}); \\
st_{\uparrow}(v,K) &= st(v,K_{[v,\infty)}); \\
lk_{\downarrow}(v,K) &= lk(v,K_{(-\infty,v]}); \\
lk_{\uparrow}(v,K) &= lk(v,K_{[v,\infty)});
\end{align*}
These are the \emph{descending} and \emph{ascending stars} (respectively) and the \emph{descending} and \emph{ascending links} (respectively) of $v$ in $K$.
All of these are subcomplexes of $K$.
\end{definition}


\subsection{Ranked directed sets and $n$-connected simplicial complexes}
\label{subsection:Elemma}

\begin{definition} (ranked directed set) \label{definition:rankdir}
Let $(P,\leq)$ be a partially ordered set. We say that $(P,\leq)$ is a \emph{directed set} if, whenever $a, b \in P$, there is some $c \in P$ such that $a \leq c$ and $b \leq c$.

We say that  the directed set $(P, \leq)$ is a \emph{ranked directed set}
if there is also a \emph{ranking function} $r: P \rightarrow \mathbb{N}$ such that if $s_{1} < s_{2}$, then
$r(s_{1}) < r(s_{2})$. 
\end{definition}

\begin{lemma} \label{lemma:nconnectedness} (A sufficient condition for $n$-connectedness)
Let $(P, \leq)$ be a ranked directed set, and let $K$ be a simplicial complex on $P$. If, for every $x<y$ in $P$, $lk(x, K_{[x,y]})$ is
$(n-1)$-connected, then, for every pair $a<b$ in $P$, the complexes $K_{(-\infty, b]}$, $K_{[b,\infty)}$, and $K_{[a,b]}$ are $n$-connected. In particular, $K$ is $n$-connected.
\end{lemma}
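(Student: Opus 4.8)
The plan is to prove all three claims simultaneously by induction on $r(b)$ (or, for the claim about $K_{[a,b]}$, on $r(b) - r(a)$), exploiting the directedness of $P$ together with the hypothesis on links inside intervals. Throughout, the key technical tool is the standard fact that if a simplicial complex $L$ can be written as a union of subcomplexes $L = \bigcup_{i} L_i$, and suitable intersections are highly connected, then the connectivity of $L$ can be deduced (a nerve-type or Mayer--Vietoris-type argument); the cleanest route here is to build up $K_{(-\infty,b]}$ as a union of the intervals $K_{[x,b]}$ as $x$ ranges over elements below $b$, or alternatively to add vertices of $K_{(-\infty,b]}$ one at a time in order of increasing rank and control how each new vertex's star and link sit inside what has been built so far.

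First I would handle $K_{[b,\infty)}$: here directedness is used most directly. Given any finite subcomplex (in particular, a map of a sphere $S^k$, $k \le n$, which has compact image lying in finitely many simplices), all the vertices involved lie above $b$, and by directedness there is a common upper bound $c$; so the map factors through $K_{[b,c]}$. Thus it suffices to show each $K_{[b,c]}$ is $n$-connected, and then take a direct limit. For $K_{[b,c]}$ one argues by (downward or upward) induction using the link hypothesis: the complex $K_{[b,c]}$ is the union of the stars $\mathrm{st}(x, K_{[b,c]})$, each of which is a cone (hence contractible), and the intersection patterns are governed by $lk(x, K_{[b,c]})$ and, inductively, by lower-complexity intervals; here one invokes the $(n-1)$-connectivity of $lk(x, K_{[x,c]})$ (a consequence of the hypothesis) to glue. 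One does essentially the same thing, with the roles of top and bottom swapped, for $K_{(-\infty,b]}$ and $K_{[a,b]}$, noting that $K_{[a,b]}$ is an honest interval complex so the ranking gives a strictly decreasing complexity parameter $r(b)-r(a)$ to induct on, and $K_{(-\infty,b]} = \bigcup_{a \le b} K_{[a,b]}$ with the union again taken in a directed fashion (any finite piece sits inside some $K_{[a,b]}$ by directedness of $P_{\le b}$, which is directed since it has a top element). Finally, $K = \bigcup_b K_{(-\infty,b]}$, and since $P$ is directed this union is directed, so any map of $S^k$ with $k \le n$ factors through some $K_{(-\infty,b]}$ and hence bounds; thus $K$ is $n$-connected.

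The main obstacle I expect is the gluing step: turning the "union of contractible stars with $(n-1)$-connected pairwise-type intersections" picture into an actual $n$-connectedness conclusion, uniformly across the induction, without circularity between the three statements. The safe way is to set up one combined induction on a single complexity parameter (rank of the top vertex, with $-\infty$ handled as a limit) and, at each stage, to add the vertex $x$ of maximal rank to the already-built complex $L'$, observing $L = L' \cup \mathrm{st}(x, L)$, $L' \cap \mathrm{st}(x,L) = lk(x, L)$, where $lk(x,L)$ is contained in the link inside an interval with strictly smaller complexity and is therefore $(n-1)$-connected by the inductive hypothesis (or by the main hypothesis at the base); then $\mathrm{st}(x,L)$ is contractible, and a standard application of the (simplicial) excision/van Kampen plus Mayer--Vietoris argument — or an invocation of the Nerve Theorem from Subsection \ref{subsection:nerves} — upgrades this to $n$-connectedness of $L$. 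Care is needed to ensure that at each stage the relevant link really does lie in an interval $K_{[x',y']}$ to which the hypothesis applies; this is where directedness (to supply upper bounds) and the ranking function (to guarantee the induction terminates) are both essential.
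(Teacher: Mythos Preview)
There are two genuine gaps in your plan.

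First, the direction of your vertex-addition argument is backwards relative to the hypothesis. The hypothesis provides the $(n-1)$-connectivity of $lk(x, K_{[x,y]})$, which is the \emph{ascending} link of the \emph{bottom} element of an interval. If you add vertices in order of increasing rank and attach a vertex $x$ of maximal rank to $L'$, then $L' \cap \mathrm{st}(x,L)$ lies in the \emph{descending} link of $x$, about which the hypothesis says nothing. The paper proceeds in the opposite direction: given a map $f\colon S^{k} \to K_{(-\infty,b]}$ with $k\le n$, it takes the finite carrier, enlarges it to $L' = \bigcup_{x'} K_{[x',b]}$ over the vertices $x'$ of the carrier, and then pushes $f$ off a vertex $x$ of \emph{minimal} rank in $L'$. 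For such $x$, any neighbor of $x$ in $L'$ forms a chain with $x$ and has strictly larger rank, hence lies in $(x,b]$; thus $lk(x,L') = lk(x, K_{[x,b]})$, and the hypothesis applies directly. A van Kampen (for $k=1$) or Mayer--Vietoris plus relative Hurewicz (for $k\ge 2$) step then homotopes $f$ off $x$ without disturbing it outside $\mathrm{st}(x,L')$; iterating strictly decreases $\max\{r(b)-r(x)\}$ over the carrier, and induction finishes.

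Second, your reduction $K_{(-\infty,b]} = \bigcup_{a \le b} K_{[a,b]}$ is not a directed union. You assert that ``any finite piece sits inside some $K_{[a,b]}$ by directedness of $P_{\le b}$,'' but directedness in the sense of Definition~\ref{definition:rankdir} supplies only common \emph{upper} bounds; there is no reason a finite subset of $P_{\le b}$ should admit a common lower bound $a$. The paper never attempts this reduction: it argues on $K_{(-\infty,b]}$ directly via the push-off procedure above, and handles $K_{[a,b]}$ by the \emph{same} argument in parallel rather than as an intermediate step. The remaining cases $K$ and $K_{[b,\infty)}$ are then deduced by finding a common upper bound for the vertices of a carrier (the one place where upward directedness is legitimately invoked), exactly as you propose.
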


\begin{proof}
We first assume that $lk(x, K_{[x,y]})$ is non-empty whenever $x<y$. We will show that $K_{(-\infty,b]}$ is connected. For this, it will suffice to show that every vertex in $K_{(-\infty,b]}$ can be connected to $b$ by a path. We let $x$ be a vertex of $K_{(-\infty,b]}$ and induct on the difference $r(b) - r(x)$. If $r(b) - r(x) =0$, then we must have $b=x$ (since $x \leq b$), so there is nothing to prove. If $r(b)-r(x) > 0$, then $lk(x,K_{[x,b]})$ is non-empty by our assumption. If $x'$ is a vertex in this link, then we must have
$r(b)-r(x') < r(b)-r(x)$. By induction, $x'$ is connected to $b$ by a path. Since $x$ and $x'$ are connected by an edge in $K_{[x,b]}$ by the definition of the link, $x$ can be connected to $b$ by a path. This shows that $K_{(-\infty,b]}$ is path connected, as required. Exactly the same argument shows that the complexes $K_{[a,b]}$ are connected under the same hypotheses.

Now suppose that $lk(x,K_{[x,y]})$ is path connected whenever $x<y$. We will argue that $K_{(-\infty,b]}$ is simply connected. Indeed, by the previous case, we know that $K_{(-\infty,b]}$ is connected. Let $c$ be a loop based at $b$. We can assume, by cellular approximation, that the image of $c$ lies entirely inside the $1$-skeleton of $K_{(-\infty,b]}$. We can further assume that $c$ is a combinatorial edge-path. We define 
\[ rk(c) = max \{ r(b)-r(x) \mid x \text{ is a vertex in } \mathrm{Im}\, c \} \]
and induct on $rk(c)$. If $rk(c)=0$, then $c$ is the constant path at $b$, and there is nothing to prove. Assume $rk(c)>0$. We let
\[ L = \bigcup_{x_{i}} K_{[x_{i},b]}, \]
where the union ranges over all vertices $x_{i}$ lying in $\mathrm{Im}\,c$. The complex $L$ may, a priori, be infinite, but there are at most finitely many vertices $x$ such that $r(b)-r(x) = rk(c)$, since all such vertices must lie on the original loop $c$. 

For a given such $x$ we can write
\[ L = st(x,L) \cup (L-\{x\}). \]
We note that $st(x,L)$ is contractible (since the star of any vertex in a simplicial complex is contractible) and that the intersection 
$st(x,L) \cap (L-\{x\})$ is homotopy equivalent to $lk(x,L) = lk(x,K_{[x,b]})$, and thus connected. It follows from van Kampen that $\pi_{1}(L - \{ x \}) \rightarrow \pi_{1}(L)$ is surjective, so $\pi_{1}(L, L-\{x\}) = 0$. This means that the loop $c$ can be altered in order to miss $x$, while remaining unchanged outside the star of $x$. We can argue similarly at each vertex $x$ such that $r(b)-r(x)=rk(c)$, eventually finding a new path $c'$ path homotopic to $c$ and satisfying
$rk(c') < rk(c)$. It follows by induction on $rk$ that $c$ is homotopic to the constant path, so $K_{(-\infty,b]}$ is simply connected. The same argument shows that $K_{[a,b]}$ is simply connected under the same hypotheses. 
   
Now assume that $lk(x,K_{[x,y]})$ is $(n-1)$-connected whenever $x<y$, where $n \geq 2$. We want to show that $K_{(-\infty,b]}$ is $n$-connected; by the induction hypothesis we know that $K_{(-\infty,b]}$ is $(n-1)$-connected. 
Let $f: S^{n} \rightarrow K$ be a continuous map. Let $L$ denote the smallest subcomplex of $K$ that contains $f(S^{n})$ (i.e., the \emph{carrier} of $f(S^{n})$). Then $L$ is a finite simplicial complex, since $f(S^{n})$ is compact.  We set
\[ L' = \bigcup_{x' \in L^{(0)}} K_{[x',b]}. \]
Note that $L'$ has at most finitely many vertices of minimal rank $m$, since all such vertices must be in $L$. We define
\[ rk(f) = max \{ r(b) - r(x) \mid x \in (L')^{(0)} \}. \]
Assume that $x$ is such that $r(b)-r(x) = rk(f)$. We can express $L'$ as the union 
$(L'-\{ x \}) \cup st(x,L')$, where the intersection is homotopy equivalent to $lk(x,L')= lk(x,K_{[x,b]})$, which is $(n-1)$-connected by hypothesis. The Mayer-Vietoris sequence combined with the relative Hurewicz theorem now implies that
\[ \pi_{n}(L', L' - \{ x \}) = 0. \]
Thus, we may homotope the map $f: S^{n} \rightarrow L'$ so that its image lies in the subcomplex of $K$ spanned by 
$(L')^{(0)} - \{ x \}$. We can do this while keeping $f$ unchanged outside the star of $x$. We repeat this procedure until the carrier of the new map $f_{1}: S^{n} \rightarrow K$ contains only vertices of rank strictly greater than $m$. If follows that $rk(f_{1}) < rk(f)$, which shows, by induction, that $f$ is null-homotopic in $K_{(-\infty,b]}$. It follows that $K_{(-\infty,b]}$ is $n$-connected. An exactly similar argument shows that 
$K_{[a,b]}$ is $n$-connected under the same hypotheses.

This proves the lemma for the complexes $K_{(-\infty,b]}$ and $K_{[a,b]}$. 
The remaining cases follow from these cases and from the directed set condition. Indeed, assume that $K_{(-\infty,b]}$ is $n$-connected for every $b \in K^{(0)}$. Consider any map $f: S^{n} \rightarrow K$. The carrier $L$ of $f$ is a finite subcomplex of $K$, and therefore there is a vertex $b \in K^{(0)}$ that is a common upper bound of all vertices in $L$. Thus, $f$ is null-homotopic in $K_{(-\infty,b]}$ and, thus, in $K$. This proves the lemma for $K$; the proof for the complexes $K_{[b,\infty)}$ is similar. 
\end{proof}

\subsection{Products of simplicial complexes} \label{subsection:product}

It is well-known that the product of simplicial complexes does not, in general, have a natural simplicial complex structure. However, given a family $P_{1}, \ldots, P_{n}$ of partially ordered sets, one can compare the order complex $\Delta(P_{1} \times \ldots \times P_{n})$ of the product 
with the product $\Delta(P_{1}) \times \ldots \times \Delta(P_{n})$ of the individual order complexes. A result that can be found in Walker \cite{Walker} shows that these spaces are homeomorphic (with respect to the compactly generated topology), which allows us to put a simplicial complex structure on the latter space. Indeed, more importantly, this enables us to put a simplicial complex structure on products $K_{1} \times \ldots \times K_{n}$, where $K_{i}$ is a subcomplex of $\Delta(P_{i})$. We summarize this result and a few related consequences in this subsection.

\begin{definition} \label{definition:product} (simplicial product)
Let $P_{1}, P_{2}, \ldots, P_{k}$ be partially ordered sets and, for $i=1, \ldots, k$, let $K_{i}$ be a simplicial complex on 
$P_{i}$. We endow $\prod_{i=1}^{k} P_{i}$
with the natural coordinate-wise partial order; thus, if $v = (v_{1}, \ldots, v_{k}) \in \prod_{i=1}^{k} P_{i}$ and $v' = (v'_{1}, \ldots, v'_{k}) \in \prod_{i=1}^{k} P_{i}$, then $v \leq v'$ if and only if $v_{i} \leq v'_{i}$ for all $i \in \{ 1, \ldots, k \}$. For $j=1, \ldots, k$, we have natural projection maps
\[ \pi_{j}: \prod_{i=1}^{k} P_{i} \rightarrow P_{j} \]
satisfying $\pi_{j}(v_{1}, \ldots, v_{j}, \ldots, v_{k}) = v_{j}$. 

The \emph{simplicial product} of $K_{1}, \ldots, K_{k}$ is the simplicial complex on $\prod_{i=1}^{k} P_{i}$ with the property that a chain $C \subseteq \prod_{i=1}^{k} P_{i}$ is a simplex if and only if $\pi_{j}(C)$ is a simplex in $K_{j}$, for $j=1, \ldots, k$. 
\end{definition}

\begin{theorem} \label{theorem:product} (Product Theorem) Let $K$ denote the simplicial product of
$K_{1}, \ldots,$ $K_{k}$.
\begin{enumerate}
\item the geometric realization of $K$ is homeomorphic to the product $K_{1} \times \ldots \times K_{k}$ of the realizations of the factors;
\item the geometric realization of the link $lk(v,K)$ of a vertex $v = (v_{1}, \ldots, v_{k})$ is homeomorphic to the join of the realizations of the factors; i.e., 
\[ lk(v,K) \cong \Asterisk_{i=1}^{k} lk(v_{i},K_{i}). \]
\end{enumerate}
\end{theorem}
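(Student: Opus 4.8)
The plan is to reduce the theorem to Walker's result on order complexes of products of posets, cited in the text, and then deduce the link statement from the product statement together with standard properties of joins and links. For part (1), the natural approach is to observe that the simplicial product $K$ of $K_1, \ldots, K_k$ is, by definition, a subcomplex of the order complex $\Delta(\prod_i P_i)$: a chain $C$ in $\prod_i P_i$ is a simplex of $K$ iff each projection $\pi_j(C)$ is a simplex of $K_j$. I would first treat the case $K_i = \Delta(P_i)$ for all $i$, where the simplicial product is exactly $\Delta(\prod_i P_i)$ (every projection of a chain is automatically a chain, hence a simplex of $\Delta(P_j)$), and invoke Walker's homeomorphism $|\Delta(\prod_i P_i)| \cong \prod_i |\Delta(P_i)|$ directly. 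Then for general subcomplexes $K_i \subseteq \Delta(P_i)$, I would check that Walker's homeomorphism carries $|K|$ onto $\prod_i |K_i|$: a point of $|\Delta(\prod_i P_i)|$ lies in $|K|$ iff its carrier (a chain $C$) has all projections $\pi_j(C)$ simplices of $K_j$, and under Walker's map this corresponds exactly to the point of $\prod_i |\Delta(P_i)|$ whose $j$-th coordinate has carrier $\pi_j(C) \in K_j$, i.e. lies in $|K_j|$. This is essentially a bookkeeping argument tracking carriers through the homeomorphism; the main subtlety is to state precisely how Walker's map behaves on carriers, which I would extract from the structure of his proof (the homeomorphism is piecewise-linear on the subdivision and sends the open simplex spanned by a chain $C$ into the product of the open simplices spanned by the $\pi_j(C)$).

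For part (2), I would argue as follows. First, for a single simplicial complex $L = (\mathcal{V}, \mathcal{S})$ on a poset and a vertex $v$, recall the standard fact that $|\sta(v,L)| \cong v * |\lk(v,L)|$ (the star is the cone on the link), and more precisely that a neighborhood of $v$ in $|L|$ is a cone on $|\lk(v,L)|$. Now take $v = (v_1, \ldots, v_k)$ in $K$. Using part (1), a neighborhood of $v$ in $|K| \cong \prod_i |K_i|$ is the product of neighborhoods of $v_i$ in $|K_i|$, hence homeomorphic to $\prod_i \bigl(v_i * |\lk(v_i, K_i)|\bigr)$. The identity of pointed spaces $(v_1 * A_1) \times \cdots \times (v_k * A_k) \cong (v_1, \ldots, v_k) * (A_1 * \cdots * A_k)$ — the product of cones is the cone on the join — then gives that a neighborhood of $v$ in $|K|$ is a cone on $\Asterisk_{i=1}^k |\lk(v_i, K_i)|$. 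Comparing with the fact that a neighborhood of $v$ in $|K|$ is a cone on $|\lk(v,K)|$, and checking this identification is the canonical one (both links sit inside the boundary of the same star), yields $|\lk(v,K)| \cong \Asterisk_{i=1}^k |\lk(v_i, K_i)|$.

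Alternatively, and perhaps more cleanly, I would prove part (2) combinatorially by identifying $\lk(v,K)$ with the simplicial join of the $\lk(v_i, K_i)$ directly at the level of posets, then apply a join analogue of part (1). A chain $C$ in $\prod_i P_i$ with $C \cup \{v\}$ still a chain and each $\pi_j(C \cup \{v\})$ a simplex of $K_j$ corresponds to a tuple of chains $C_j := \pi_j(C) \setminus \{v_j\}$ with $C_j \cup \{v_j\}$ a simplex of $K_j$; but here one must be careful, since distinct elements of $C$ may share a projection, so $C$ need not be recoverable from $(\pi_1(C), \ldots, \pi_k(C))$. This is exactly where the join structure enters: the partition of $C$ according to which coordinate "strictly increases" realizes $C$ as a join of the pieces, matching the definition of the simplicial join. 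I expect this combinatorial route to require the most care, because the simplicial product is genuinely finer than the product of vertex sets and one must argue that on the link of a vertex the two notions coincide up to the homeomorphism of part (1); I would likely present part (2) as a corollary of part (1) via the cone/join neighborhood argument above, relegating the combinatorial identification to a remark.

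The step I expect to be the main obstacle is making the passage from Walker's abstract homeomorphism to the statement about subcomplexes and about carriers fully rigorous: Walker's theorem as usually stated is about the full order complexes, and one must verify that the homeomorphism restricts correctly to the sub-objects cut out by the "each projection is a simplex" condition. Since the paper explicitly says this result "enables us to put a simplicial complex structure on products $K_1 \times \cdots \times K_k$", I would feel justified in citing \cite{Walker} for the precise carrier-compatible form and keeping the verification brief.
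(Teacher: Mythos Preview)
Your proposal is correct and matches the paper's approach: the paper's proof of part (1) is literally a citation of Walker's Theorem 3.2 together with the one-sentence remark that ``the formula for the homeomorphism in Theorem 3.2 restricts to the desired homeomorphism in our case,'' which is precisely the carrier-tracking argument you spell out; for part (2) the paper simply cites Exercise 2.24(3) of Rourke--Sanderson \cite{RS}, whose content is the product-of-cones-equals-cone-on-join identity you propose. Your write-up is more detailed than the paper's, but the underlying arguments are the same.
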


\begin{proof}
For the first statement, see Theorem 3.2 from \cite{Walker}. Note, in particular, that the formula for the homeomorphism in Theorem 3.2 restricts to the desired homeomorphism in our case.  The second statement is Exercise
2.24(3) on page 24 of \cite{RS}.
\end{proof}

\subsection{Nerves of covers and the Nerve Theorem} \label{subsection:nerves}

Our applications to finiteness properties of groups will involve the Nerve Theorem. We recall a standard definition of the nerve; the form of the Nerve Theorem that we will use can be found in \cite{AB}.

\begin{definition} \label{definition:nerve} (the nerve of a cover) Let $S$ be any set, and let $\mathcal{C}$ be a cover of $S$. We let $\mathcal{N}(\mathcal{C})$ denote the \emph{nerve of the cover $\mathcal{C}$}. The vertices of $\mathcal{N}(\mathcal{C})$ are the non-empty elements $C \in \mathcal{C}$. A subset $\{ C_{0}, \ldots, C_{k} \} \subseteq \mathcal{C}$ is a simplex if and only if $C_{0} \cap \ldots \cap C_{k} \neq \emptyset$.
\end{definition}

\begin{theorem} \label{theorem:Nerve} \cite{AB} (Nerve Theorem) Let $K$ be a simplicial complex and let 
$\{ K_{i} \}_{i \in I}$ be a family of subcomplexes of $K$ such that $K = \bigcup_{i \in I} K_{i}$. If every non-empty finite intersection $K_{i_{1}} \cap \ldots \cap K_{i_{t}}$ is  $(k-t+1)$-connected, then $K$ is $k$-connected if and only if $\mathcal{N}(\{K_{i}\}_{i\in I})$ is $k$-connected. \qed
\end{theorem}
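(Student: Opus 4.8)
The plan is to reduce the Nerve Theorem (Theorem \ref{theorem:Nerve}) to the sufficient condition for $n$-connectedness established in Lemma \ref{lemma:nconnectedness}, by passing to an auxiliary poset built from the cover and the complex. First I would form the poset $P$ whose elements are pairs $(\sigma, T)$ consisting of a simplex $\sigma$ of $K$ together with a non-empty subset $T \subseteq I$ such that $\sigma \in K_i$ for every $i \in T$, ordered by $(\sigma, T) \leq (\sigma', T')$ iff $\sigma \subseteq \sigma'$ and $T \supseteq T'$ (or, slightly more standard, work with the poset of pairs $(x, i)$ with $x$ a vertex and $x \in K_i$, together with its order complex). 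The point of such a construction is that there are two natural projections: one to the face poset of $K$ (forgetting $T$) and one to the nerve $\mathcal{N}(\{K_i\})$ (forgetting $\sigma$, recording which intersection $\bigcap_{i \in T} K_i$ the simplex lies in). The connectivity hypotheses on the finite intersections $K_{i_1} \cap \dots \cap K_{i_t}$ are precisely what make the fibers of these projection maps highly connected, so the Quillen-type fiber lemmas (or a direct spectral-sequence / homotopy-colimit argument) will show that the order complex $\Delta(P)$ is homotopy equivalent both to $K$ and to $\mathcal{N}(\{K_i\})$ in the relevant range of dimensions.

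The key steps, in order, would be: (1) set up the auxiliary poset $P$ and its two projections $p \colon \Delta(P) \to K$ and $q \colon \Delta(P) \to \mathcal{N}(\{K_i\})$; (2) identify the relevant fibers — for $p$, the fiber over a simplex $\sigma$ is the nerve of the subcover $\{K_i : \sigma \in K_i\}$, which is a simplex (hence contractible) because any finite collection of the $K_i$ containing a common simplex has non-empty intersection; for $q$, the fiber over a simplex $\{K_{i_0}, \dots, K_{i_t}\}$ of the nerve is the face poset of $K_{i_0} \cap \dots \cap K_{i_t}$, which is $(k - t + 1)$-connected by hypothesis; (3) invoke the appropriate homotopy-colimit / Nerve-type lemma to conclude that $p$ induces isomorphisms on $\pi_j$ for $j \leq k$ and $q$ induces isomorphisms on $\pi_j$ for $j \leq k$ — here the dimension-shift $(k-t+1)$ in the hypothesis is exactly what is needed so that the contributions from $t$-fold intersections do not disturb connectivity below degree $k$; (4) combine: $K$ is $k$-connected $\iff$ $\Delta(P)$ is $k$-connected $\iff$ $\mathcal{N}(\{K_i\})$ is $k$-connected.

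Alternatively — and this may be the cleaner route to write up given what the paper has already proved — one can avoid general homotopy-colimit machinery and instead run the Mayer--Vietoris / van Kampen induction directly, exactly in the style of the proof of Lemma \ref{lemma:nconnectedness}. One orders the index set $I$ (well-orders it if infinite) and builds $K$ as an increasing union $K = \bigcup_{i} K^{(i)}$ where $K^{(i)} = \bigcup_{j \leq i} K_j$; at each stage $K^{(i)} = K^{(i-1)} \cup K_i$ with intersection $K^{(i-1)} \cap K_i = \bigcup_{j < i}(K_j \cap K_i)$, and one argues inductively on the same structure, using that the iterated intersections are $(k-t+1)$-connected to feed a Mayer--Vietoris/Hurewicz argument. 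The bookkeeping showing that the connectivity of $\mathcal{N}(\{K_i\})$ controls exactly the connectivity of $K$ under the stated dimension shift is the crux.

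The hard part will be step (3): making the passage from ``fibers are $c$-connected'' to ``the total space has the connectivity of the base, up to degree $k$'' rigorous with the correct indexing. This is where the precise form of the dimension shift $k - t + 1$ matters, and where one must be careful about the infinite index set $I$ (compactness of spheres and of carriers, as used repeatedly in the proof of Lemma \ref{lemma:nconnectedness}, reduces everything to finite subcovers, but this reduction should be stated explicitly). Everything else — the description of the fibers, the verification that a finite family of $K_i$'s sharing a simplex has non-empty common intersection, the reduction to the finite case — is routine.
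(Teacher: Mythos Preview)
The paper does not give its own proof of this statement: Theorem~\ref{theorem:Nerve} is stated with a citation \cite{AB} and a terminal \qed, meaning the authors import the result from the literature rather than prove it. So there is nothing in the paper to compare your proposal against.

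That said, your sketch is broadly along the standard lines for proving nerve theorems of this type. The auxiliary-poset approach (your steps (1)--(4)) is essentially the Bj\"orner--Quillen strategy: one builds a poset projecting both to the face poset of $K$ and to the face poset of the nerve, and then uses fiber-connectivity results (Quillen's Theorem~A or its quantitative refinements) to transfer connectivity back and forth. One caution: the precise poset you wrote down and the fiber description need more care --- in Quillen-type arguments the relevant ``fiber'' is an up-set or down-set $p^{-1}(P_{\leq x})$, not the literal preimage of a point, and the bookkeeping for matching the $(k-t+1)$-connectivity hypothesis to the correct fiber takes some work. Your alternative Mayer--Vietoris induction is also viable but, as you note, the indexing is delicate; in particular the intersection $K^{(i-1)} \cap K_i$ is itself a union whose nerve must be analyzed recursively, which is where the argument typically becomes intricate. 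Neither route uses Lemma~\ref{lemma:nconnectedness} in any essential way, despite your opening sentence --- that lemma concerns ranked directed sets and is tailored to a different situation.
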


\section{The group determined by an inverse semigroup} \label{section:3}

Our viewpoint throughout the rest of the paper will be that a generalized Thompson group is determined locally by a fixed set of partial bijections, which naturally has the structure of an inverse semigroup $S$. We recall the definition of inverse semigroups in Subsection \ref{subsection:inverse}. We will use very little of the theory of inverse semigroups, all of which can be found in \cite{SemigroupBook}. In Subsection \ref{subsection:domains}, we collect a few basic properties that the set of domains $\mathcal{D}_{S}$ must have, where a ``domain'' is simply the domain of some element of $S$. Subsection \ref{subsection:thegroups} defines the groups $\Gamma_{S}$, which play the role of generalized Thompson groups. 

Subsection \ref{subsection:CUP} describes the compact ultrametric property, which is of great importance in the examples and applications of the theory to be developed. Subsection \ref{subsection:productaction} describes the actions of inverse semigroups on products.

The section concludes with Subsection \ref{subsection:examples}, which contains several examples.

\subsection{Inverse semigroups and monoids} \label{subsection:inverse}

\begin{definition} (inverse semigroup; inverse monoid) 
Let $S$ be a set with an associative binary operation (i.e., a \emph{semigroup}). An element $e \in S$ is an \emph{idempotent} if $e^{2} = e$.

We say that $S$ is \emph{regular} if for every $x \in S$, there is $y \in S$ such that
$xyx=x$. We say that a regular semigroup $S$ is an \emph{inverse semigroup} if any two idempotents of $S$ commute. If an inverse semigroup $S$ has a two-sided identity element, then $S$ is an \emph{inverse monoid}. 

If $S$ is an inverse semigroup, then for every $x \in S$, there is a unique $y \in S$ such that $xyx=x$ and $yxy=y$ \cite{SemigroupBook}. This $y$ is called the \emph{inverse} of $x$. We will often denote this inverse by $x^{-1}$.
\end{definition}

\begin{definition} (partial bijections) Let $X$ be a set. A \emph{partial bijection} of $X$ is a bijection $f: A \rightarrow B$ between subsets $A$ and $B$ of $X$. 

If $f: A \rightarrow B$ and $g: C \rightarrow D$ are partial bijections of $X$, then the \emph{composition} is the partial bijection $g \circ f: f^{-1}(C) \rightarrow g(B \cap C)$ defined by $(g \circ f)(x) = g(f(x))$, for each $x \in f^{-1}(C)$. 
\end{definition}

\begin{proposition} (inverse semigroups as sets of partial bijections)
Let $X$ be an arbitrary set, and let $S$ be a set of partial bijections of $X$ that is closed under compositions and inverses. The set $S$ is an inverse semigroup under the operation of composition. The inverse of $s \in S$ is its usual inverse (as a function). 
Every inverse semigroup arises in this way.
\end{proposition}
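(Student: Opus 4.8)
The statement splits into two halves, and the plan is to treat them separately. For the forward implication I would argue directly from the definitions. First, the composite of two partial bijections is again a partial bijection, and composition of partial functions is associative, so $S$ is a semigroup. To see regularity, note that for $x \co A \to B$ in $S$ the functional inverse $x^{-1} \co B \to A$ also lies in $S$ by hypothesis, and the composition formula gives $x^{-1} \circ x = \id_{A}$, hence $x \circ x^{-1} \circ x = x$. Next I would identify the idempotents: if $e \circ e = e$ as partial bijections (so in particular the domains agree), comparing domains forces $\image(e) \subseteq \mathrm{dom}(e)$, and injectivity of $e$ then forces $e = \id_{A}$ on its domain $A$; conversely every $\id_{A} \in S$ is idempotent. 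Since $\id_{A} \circ \id_{B} = \id_{A\cap B} = \id_{B} \circ \id_{A}$, any two idempotents of $S$ commute, so $S$ is an inverse semigroup. Applying the regularity computation to both $x$ and $x^{-1}$ gives $x x^{-1} x = x$ and $x^{-1} x x^{-1} = x^{-1}$, so by uniqueness of inverses in an inverse semigroup \cite{SemigroupBook} the functional inverse is precisely the semigroup inverse.

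For the converse ("every inverse semigroup arises this way") I would prove the Wagner--Preston representation theorem. Given an abstract inverse semigroup $S$, I would represent it on the underlying set $X = S$: for $a \in S$ set $\varphi_{a} \co \{ x \in S : a^{-1}ax = x \} \to \{ z \in S : aa^{-1}z = z \}$ by $\varphi_{a}(x) = ax$. One checks $\varphi_{a}$ is well defined, injective (multiply by $a^{-1}$ on the left), and surjective onto its claimed image (given $z$ with $aa^{-1}z = z$, the element $a^{-1}z$ is a preimage), so each $\varphi_{a}$ is a partial bijection of $X$. Then I would verify that $a \mapsto \varphi_{a}$ is an injective homomorphism: injectivity follows because $\varphi_{a} = \varphi_{b}$ forces $a^{-1}a = b^{-1}b$ (equal domains), and evaluating at that common idempotent gives $a = b$; and one checks directly that $\varphi_{a}^{-1} = \varphi_{a^{-1}}$. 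Consequently $\{ \varphi_{a} : a \in S \}$ is a set of partial bijections of $X$ closed under composition and inverses and isomorphic to $S$, which by the first half is an inverse semigroup; thus $S$ has been realized in the stated form.

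Most of the work above is routine bookkeeping with the composition formula. The one genuinely delicate point, and where I expect the main obstacle to lie, is the homomorphism identity $\varphi_{a} \circ \varphi_{b} = \varphi_{ab}$: one must show both that the domains coincide, i.e.\ $\{ x : b^{-1}bx = x,\ a^{-1}abx = bx \} = \{ x : (ab)^{-1}(ab)x = x \}$, and that the two maps agree on that common domain. Both use essentially the defining property of an inverse semigroup that idempotents commute, together with the standard identities $b^{-1}bb^{-1} = b^{-1}$ and $bb^{-1}b = b$; this is exactly the point at which the inverse-semigroup hypothesis on $S$ is needed, so I would write out that computation carefully.
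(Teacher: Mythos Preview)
Your forward argument is correct and essentially identical to the paper's: both establish regularity via $xx^{-1}x = x$, then show that every idempotent is a partial identity $\id_A$ (using injectivity to deduce $e(x)=x$ from $e(e(x))=e(x)$), and conclude that idempotents commute because $\id_A \circ \id_B = \id_{A\cap B} = \id_B \circ \id_A$. Your extra sentence checking that the functional inverse is the semigroup inverse is a small addition the paper leaves implicit.

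The only real difference is in the converse: the paper does not prove it, but simply invokes the Wagner--Preston Theorem by citation to \cite{SemigroupBook}. You instead outline the standard Wagner--Preston representation $a \mapsto \varphi_a$, $\varphi_a(x)=ax$ on the domain $a^{-1}aS$, and correctly flag the verification $\varphi_a \circ \varphi_b = \varphi_{ab}$ as the one place that genuinely uses commutativity of idempotents. Your sketch is accurate (the domain equality does reduce to manipulations with the idempotents $b^{-1}b$, $a^{-1}a$, $bb^{-1}$ and $b^{-1}a^{-1}ab$), so your proposal is strictly more self-contained than the paper at this point; the paper gains brevity by offloading this to the reference.
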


\begin{proof}
Assume that $S$ is a collection of partial bijections with the given properties. Let $s \in S$ and assume $s: A \rightarrow B$, where $A$, $B \subseteq X$. We note that $s^{-1}s = id_{A}$, and $id_{A} \in S$ because $S$ is closed under inverses and compositions. We now have $ss^{-1}s = s \circ id_{A} = s$, so $S$ is regular.

We claim that idempotents in $S$ all have the form $id_{C}: C \rightarrow C$, for some $C \subseteq X$. Indeed, let $e: A \rightarrow B$ be an idempotent. Since $e(e(x)) = e(x)$ for all $x$ in $A$, $e(x) = x$ for all $x \in A$ (because $e$ is injective). Thus, 
$e = id_{A}$ and $A = B$.

It is now clear that any two idempotents in $S$ commute, so $S$ is an inverse semigroup. 

The converse, that every inverse semigroup is realizable as a set of partial bijections, is the content of the Wagner-Preston Theorem \cite{SemigroupBook}, which is the counterpart for inverse semigroups of Cayley's Theorem for groups.
\end{proof}

Thus, inverse semigroups are the algebraic structure corresponding to partial bijections in the same way that groups are the algebraic structure corresponding (via Cayley's Theorem) to permutations.

\begin{convention} \label{convention:pb} For the rest of the paper, we fix a set $X$. 
We let $PB(X)$ denote the set of partial bijections of $X$. Note that $PB(X)$ is (of course) an inverse semigroup under composition.
We fix, for the remainder of the argument, an inverse semigroup $S \subseteq PB(X)$ such that $S$ contains the empty function, which we denote by $0$ when necessary.

It will occasionally be useful to refer to the above set-up as an \emph{action of $S$ on $X$}. We emphasize, however, that the inverse semigroups under consideration  are always defined as sets of partial bijections.

Note that the empty function is a zero in $S$; i.e.,  if $s \in S$, then
$0 \circ s = 0 = s \circ 0$.
\end{convention}

\subsection{The set of domains} \label{subsection:domains}

\begin{definition} A \emph{domain} $D$ is the domain of some $s \in S$. We let $\mathcal{D}_{S}$ denote the set of all domains
as $s$ ranges over all $s \in S$. We typically write $\mathcal{D}$ instead if $S$ is understood.

We let $\mathcal{D}^{+}_{S}$ denote the subcollection of non-empty domains. We similarly write $\mathcal{D}^{+}$ if $S$ is understood.
\end{definition}

\begin{remark} \label{remark:ranges} We note that the range (or image) of any $s \in S$ is also a domain in the above sense, due to the fact that $S$ is closed under inverses.
\end{remark}

\begin{lemma} \label{lemma:domainproperties} (closure properties of $\mathcal{D}$) The set $\mathcal{D}$ is closed under intersections and translation by elements of $S$; that is,
\begin{enumerate}
\item if $D_{1}$ and $D_{2}$ are in $\mathcal{D}$, then $D_{1} \cap D_{2} \in \mathcal{D}$.
\item if $D \in \mathcal{D}$ and $s \in S$, then $s(D) \in \mathcal{D}$.
\end{enumerate}
\end{lemma}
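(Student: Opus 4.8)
The plan is to prove each closure property directly from the definition of a domain together with the fact that $S$ is an inverse semigroup closed under composition and inverses. Recall the key structural fact established just above: every idempotent of $S$ has the form $\id_C \colon C \to C$ for some $C \subseteq X$, and for any $s \in S$ with domain $A$ we have $s^{-1}s = \id_A$. Thus a subset $D \subseteq X$ is a domain if and only if $\id_D \in S$. This reformulation will do most of the work.

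For part (1), suppose $D_1, D_2 \in \mathcal{D}$, so $\id_{D_1}, \id_{D_2} \in S$. Since $S$ is closed under composition, $\id_{D_1} \circ \id_{D_2} \in S$. A direct computation with the composition rule for partial bijections shows $\id_{D_1} \circ \id_{D_2}$ is the identity on $\id_{D_2}^{-1}(D_1) = D_1 \cap D_2$, i.e. it equals $\id_{D_1 \cap D_2}$. Hence $\id_{D_1 \cap D_2} \in S$, so $D_1 \cap D_2 \in \mathcal{D}$. (One could equally phrase this as: the product of the two idempotents $\id_{D_1}, \id_{D_2}$ is again an idempotent, necessarily of the form $\id_C$, and unwinding the composition identifies $C = D_1 \cap D_2$.)

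For part (2), let $D \in \mathcal{D}$ and $s \in S$, say $s \colon A \to B$. Then $\id_D \in S$, and by closure under composition $s \circ \id_D \in S$. Its domain is $\id_D^{-1}(A) = D \cap A$, and $s(D \cap A)$ is the range of $s \circ \id_D$; by Remark \ref{remark:ranges} this range is a domain. But $s(D) = s(D \cap A)$ since $s$ is only defined on $A$, so $s(D) \in \mathcal{D}$. If one prefers to interpret $s(D)$ literally as the image of $D$ under the partial function $s$ (which is exactly $s(D \cap A)$), this is precisely what is shown; it is worth a sentence pointing out this convention so the statement is unambiguous.

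I do not anticipate any serious obstacle here — both parts are routine once the reformulation ``$D \in \mathcal{D} \iff \id_D \in S$'' is in hand. The only point requiring a little care is bookkeeping with the composition formula for partial bijections (the domain of $g \circ f$ is $f^{-1}(\text{dom}\, g)$, not $\text{dom}\, f$), so that the intersections $D_1 \cap D_2$ and $D \cap A$ appear correctly; this is the step most likely to invite a sloppy error, so I would write the composition out explicitly in each case.
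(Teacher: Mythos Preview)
Your proof is correct and follows essentially the same approach as the paper's. The paper writes $s_1^{-1}s_1$ and $s_2^{-1}s_2$ in place of your $\id_{D_1}$ and $\id_{D_2}$, and for part~(2) argues via the inverse of $s\circ \id_D$ rather than invoking Remark~\ref{remark:ranges} directly, but these are cosmetic differences; the underlying idea --- reducing to the observation that $D \in \mathcal{D}$ iff $\id_D \in S$ and then composing --- is identical.
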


\begin{proof}
We prove (1). Let $D_{1}, D_{2} \in \mathcal{D}$ and assume that the domains of $s_{1}$ and $s_{2} \in S$ are
$D_{1}$ and $D_{2}$, respectively. It follows that $s^{-1}_{1} s_{1} = id_{D_{1}}$
and $s^{-1}_{2}s_{2} = id_{D_{2}}$. Thus, 
\[ s^{-1}_{1}s_{1}s^{-1}_{2}s_{2}  = id_{D_{1}} \circ \id_{D_{2}} = id_{D_{1} \cap D_{2}}.\]
Since $s^{-1}_{1}s_{1}s^{-1}_{2}s_{2} \in S$, $D_{1} \cap D_{2} \in \mathcal{D}$.

Now we prove (2). Let $D \in \mathcal{D}$ and let $s \in S$. There is some $\hat{s} \in S$ such that
$D$ is the domain of $\hat{s}$. It follows that $\hat{s}^{-1} \hat{s} = id_{D}$,
so $s \hat{s}^{-1} \hat{s} = s \circ id_{D}$. The image of the latter function is
$s(D)$. It follows that the domain of $id_{D} \circ s^{-1}$ is $s(D)$, so $s(D) \in \mathcal{D}$.
\end{proof}

\begin{corollary} \label{corollary:closureunderrestriction}(Closure under restriction to subdomains)
The inverse semigroup $S$ is closed under restriction to subdomains. That is, if
$s \in S$ and $D$ is contained in the domain of $s$, then $s_{\mid D} \in S$.
\end{corollary}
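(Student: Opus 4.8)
The plan is to exhibit $s_{\mid D}$ as a composition of two elements of $S$ and then invoke closure under composition. Here $D$ is understood to be a domain, i.e.\ $D \in \mathcal{D}$ with $D \subseteq \mathrm{dom}(s)$ (otherwise $s_{\mid D}$ could not even be a candidate element of $S$). Since $D \in \mathcal{D}$, there is $\hat{s} \in S$ with $\mathrm{dom}(\hat{s}) = D$, and then $\hat{s}^{-1}\hat{s} = \id_{D}$ lies in $S$, because $S$ is closed under inverses and compositions; this is precisely the identity already used in the proof of Lemma \ref{lemma:domainproperties}.

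Next I would form $s \circ \id_{D}$, which belongs to $S$ since $S$ is closed under composition. By the composition convention for partial bijections, the domain of $s \circ \id_{D}$ is $\id_{D}^{-1}(\mathrm{dom}(s)) = D \cap \mathrm{dom}(s)$, and this equals $D$ precisely because $D \subseteq \mathrm{dom}(s)$ by hypothesis; moreover $(s \circ \id_{D})(x) = s(x)$ for every $x \in D$. Hence $s \circ \id_{D} = s_{\mid D}$, and therefore $s_{\mid D} \in S$, as claimed.

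The argument is essentially immediate, so there is no real obstacle here; the one point that needs a moment's care is the bookkeeping for the domain of the composition $s \circ \id_{D}$, where one must actually use the hypothesis $D \subseteq \mathrm{dom}(s)$ in order to conclude that the domain is all of $D$ rather than merely the subdomain $D \cap \mathrm{dom}(s)$. (Without that hypothesis, the same computation would only show that the restriction of $s$ to $D \cap \mathrm{dom}(s)$ lies in $S$, which is the natural more general statement.)
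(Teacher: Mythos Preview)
Your proof is correct and is essentially the same as the paper's: both pick $t \in S$ with domain $D$, note that $t^{-1}t = \id_{D} \in S$, and write $s_{\mid D} = s\,t^{-1}t \in S$. Your version just spells out the domain bookkeeping for the composition a bit more carefully.
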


\begin{proof}
Let $s \in S$ and let $D \in \mathcal{D}_{S}$. Since $D$ is a domain, it follows that there is some $t \in S$ having the domain $D$. It now follows directly that $s_{\mid D} = st^{-1}t \in 
S$.
\end{proof}

\begin{convention} \label{convention:Xadomain} We will assume in all that follows that $X$ can be expressed as a finite disjoint union of domains.
This assumption is satisfied in all interesting cases that come to mind (and is automatic under certain general hypotheses; see [compact u-metric case], for instance).

If $X$ cannot be so expressed, one option is simply to add $id_{X}$ to the set $S$. This has  the effect of adjoining an identity to $S$ and also forces $X$ to be a member of $\mathcal{D}$. It is occasionally inconvenient to include $X$ in the set of domains, however, so we will not do this in general. 
\end{convention}

\subsection{The group determined by $S$} \label{subsection:thegroups} 

\begin{definition} \label{definition:hatS}(locally determined by $S$) Let $A, B \subseteq X$. A bijection $\hat{s}: A \rightarrow B$ is \emph{locally determined by $S$} if, for some $m \geq 0$,
\[ A = \coprod_{i=1}^{m} D_{i}; \quad B= \coprod_{i=1}^{m} E_{i}, \]
$\hat{s}_{\mid D_{i}}$ is a bijection from $D_{i}$ to $E_{i}$ for each $i$, and $\hat{s}_{\mid D_{i}} \in S$ for each $i$. Note that the sets $D_{i}$ are assumed to be pairwise disjoint and the $E_{i}$ are likewise assumed to be pairwise disjoint.

We let $\widehat{S}$ denote the set of partial bijections of $X$ that are locally determined by $S$.
We let $\Gamma_{S}$ denote the subset of $\widehat{S}$ consisting of bijections of $X$. We say that $\Gamma_{S}$ is the \emph{group locally determined by $S$}. 
\end{definition}

\begin{proposition} The set $\widehat{S}$ is an inverse semigroup and $\Gamma_{S}$ is a group with respect to the natural operations.
\end{proposition}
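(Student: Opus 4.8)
The statement asserts that $\widehat{S}$ is an inverse semigroup and that $\Gamma_S$ is a group. I would prove this by direct verification, leaning on the fact (already established in the excerpt) that $PB(X)$ is an inverse semigroup under composition, so it suffices to show $\widehat{S}$ is a subsemigroup of $PB(X)$ closed under the inverse operation of $PB(X)$, and then that this forces $\widehat{S}$ to be an inverse semigroup and $\Gamma_S$ to be a group.

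\medskip

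First I would show that $\widehat{S}$ is closed under composition. Take $\hat{s}\colon A\to B$ and $\hat{t}\colon C\to D$ in $\widehat{S}$, with witnessing partitions $A=\coprod_{i=1}^m D_i$, $B=\coprod_{i=1}^m E_i$ and $C=\coprod_{j=1}^n F_j$, $D=\coprod_{j=1}^n G_j$, where $\hat{s}_{\mid D_i}\in S$ and $\hat{t}_{\mid F_j}\in S$. The composite $\hat{t}\circ\hat{s}$ has domain $\hat{s}^{-1}(B\cap C)$. I would partition this domain by the sets $D_i\cap \hat{s}^{-1}(F_j)$ over all valid $(i,j)$: the key point is that $\hat{s}^{-1}(F_j) = (\hat{s}_{\mid D_i})^{-1}(F_j\cap E_i)$ on $D_i$, and since $\hat{s}_{\mid D_i}\in S$ and $S$ is an inverse semigroup containing $0$, $F_j\cap E_i$ is a domain by Lemma~\ref{lemma:domainproperties}(1) (ranges of elements of $S$ are domains by Remark~\ref{remark:ranges}), hence so is its preimage under $\hat{s}_{\mid D_i}$ by Lemma~\ref{lemma:domainproperties}(2) applied to the inverse. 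By Corollary~\ref{corollary:closureunderrestriction}, $\hat{s}$ restricted to $D_i\cap\hat{s}^{-1}(F_j)$ lies in $S$, and likewise $\hat{t}$ restricted to the image $\hat{s}(D_i\cap\hat{s}^{-1}(F_j))\subseteq F_j$ lies in $S$; so $(\hat{t}\circ\hat{s})$ restricted to each piece $D_i\cap\hat{s}^{-1}(F_j)$ is a composite of two elements of $S$, hence in $S$. The pieces partition the domain and their images partition the range, so $\hat{t}\circ\hat{s}\in\widehat{S}$.

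\medskip

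Closure under inverses is immediate: if $\hat{s}\colon A\to B$ is locally determined by $S$ via the partitions above, then $\hat{s}^{-1}\colon B\to A$ is locally determined via $B=\coprod E_i$, $A=\coprod D_i$, with $(\hat{s}^{-1})_{\mid E_i}=(\hat{s}_{\mid D_i})^{-1}\in S$ since $S$ is closed under inverses. Thus $\widehat{S}$ is a subset of $PB(X)$ closed under composition and inverses, so by the proposition on inverse semigroups as sets of partial bijections, $\widehat{S}$ is an inverse semigroup. Finally, $\Gamma_S$ is by definition the set of elements of $\widehat{S}$ that are bijections of $X$ onto itself; the identity $\id_X$ lies in $\Gamma_S$ by Convention~\ref{convention:Xadomain} (writing $X$ as a finite disjoint union of domains $D_i$ and noting each $\id_{D_i}\in S$); $\Gamma_S$ is closed under composition (a composite of self-bijections in $\widehat{S}$ is again a self-bijection in $\widehat{S}$) and under inverses (the inverse of a self-bijection in $\widehat{S}$ is a self-bijection in $\widehat{S}$); hence $\Gamma_S$ is a group.

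\medskip

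\emph{Main obstacle.} The one step requiring genuine care is the composition argument: one must check that the collection $\{D_i\cap\hat{s}^{-1}(F_j)\}$ really is a partition of $\hat{s}^{-1}(B\cap C)$ (disjointness is clear; covering requires that every point of the domain maps into some $F_j$, which holds because the $F_j$ cover $C$ and the domain is $\hat{s}^{-1}(B\cap C)\subseteq\hat{s}^{-1}(C)$), and that each restricted piece genuinely lands in $S$ — for which the only subtlety is verifying that the relevant preimages and intersections are domains, handled by Lemma~\ref{lemma:domainproperties} and Corollary~\ref{corollary:closureunderrestriction}. Everything else is routine bookkeeping.
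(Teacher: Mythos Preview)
Your proof is correct and follows essentially the same route as the paper: verify that $\widehat{S}$ is closed under inverses and under composition inside $PB(X)$, then deduce that $\Gamma_S$ is a group from nonemptiness (via Convention~\ref{convention:Xadomain}) and the evident closure of self-bijections. The only difference is cosmetic: in the composition step the paper simply writes $\hat{s}_1\hat{s}_2=\coprod_{(i,j)}\hat{s}_{1i}\hat{s}_{2j}$ and appeals directly to closure of $S$ under composition of partial bijections, whereas you take a slightly longer path through Lemma~\ref{lemma:domainproperties} and Corollary~\ref{corollary:closureunderrestriction} to justify the restrictions first; both arrive at the same partition into pieces indexed by $(i,j)$.
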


\begin{proof}
It suffices to show that $\widehat{S}$ is an inverse semigroup. It will follow that $\Gamma_{S}$ is a group, since $\Gamma_{S}$ is non-empty by Convention \ref{convention:Xadomain} and the property of being a self-bijection of $X$ is closed under taking compositions and inverses.

Let $\hat{s} \in \widehat{S}$. We can write
\[ \hat{s} = \coprod_{i=1}^{m} s_{i}, \]
where $s_{i}: A_{i} \rightarrow B_{i}$ and $s_{i} \in S$, for $i=1, \ldots, m$, and where each of $\{ A_{i} \mid i = 1, \ldots, m \}$ and $\{ B_{i} \mid i = 1, \ldots, m \}$ is a collection of pairwise disjoint domains. It follows directly that
\[ \hat{s}^{-1} = \coprod_{i=1}^{m} s^{-1}_{i}. \]
Thus $\hat{s}^{-1} \in \widehat{S}$ because each $s^{-1}_{i} \in S$, due to the fact that $S$ is closed under inverses. It follows that $\widehat{S}$ is closed under inverses.

Let $\hat{s}_{1}, \hat{s}_{2} \in \widehat{S}$. We write 
\[ \hat{s}_{1} = \coprod_{i=1}^{m} \hat{s}_{1i}; \quad \hat{s}_{2} = \coprod_{j=1}^{n} \hat{s}_{2j}. \]
The composition is
\[ \hat{s}_{1}\hat{s}_{2} = \coprod_{(i,j) \in \mathcal{I}} \hat{s}_{1i}\hat{s}_{2j}, \]
where $(i,j) \in \mathcal{I}$ if and only if $\widehat{D}_{1i} \cap \widehat{E}_{2j} \neq \emptyset$, where $\widehat{D}_{k\ell}$ and $\widehat{E}_{k\ell}$ are the domain and image (respectively) of $\hat{s}_{k \ell}$. It follows that $\widehat{S}$ is closed under compositions as well. Thus, $\widehat{S}$ is an inverse semigroup.
\end{proof}

\subsection{The compact ultrametric property} \label{subsection:CUP}

Thompson's groups $F$, $T$, and $V$ all admit descriptions as transformations of the Cantor set $\mathcal{C} = \prod_{n=1}^{\infty} \{ 0, 1 \}$. A natural metric   makes $\mathcal{C}$ into a compact ultrametric space (see Example \ref{example:clarifying} for additional details). In fact, many of the generalizations of Thompson's groups from the literature can also be described via actions on a suitable compact ultrametric space. Inverse semigroup actions on compact ultrametric spaces will be a significant source of examples in this paper, and carry the benefit of being particularly easy to work with.

For our purposes, it is usually not important to work directly with the metric. Indeed, it can be burdensome in practice to produce such a metric in the first place. Our approach will be to abstract the basic properties of balls in a compact ultrametric space. We will say that the domains in a set $X$ satisfy the ``compact ultrametric property'' (Definition \ref{definition:umetricproperty}) if the domains have combinatorial properties like the balls in a compact ultrametric space. The presence (or even existence) of an ultrametric is unimportant.  

Our main goal here is to describe the combinatorics of the domains in question.

\begin{definition} (the compact ultrametric property) \label{definition:umetricproperty} Assume that
\begin{enumerate}
\item (nested domains) if $D_{1}, D_{2} \in \mathcal{D}_{S}$ and $D_{1} \cap D_{2} \neq \emptyset$, then $D_{1} \subseteq D_{2}$ or $D_{2} \subseteq D_{1}$, and
\item (finite complementation) if $D \in \mathcal{D}_{S}$, then the complement $X-D$ may be written as the union of finitely many members of $\mathcal{D}_{S}$.
\end{enumerate}
We say that $\mathcal{D}_{S}$ has the \emph{compact ultrametric property}. 
\end{definition}

\begin{example} \label{example:clarifying} Let $X$ be the set of all infinite binary strings. We define a metric on $X$ as follows: if 
\[a = a_{1}a_{2}a_{3}\ldots \in X \quad \text{ and } \quad  b = b_{1}b_{2}b_{3}\ldots \in X,\] we let $p(a,b)$ denote the length of the greatest prefix common to both. (Thus, if $a_{1}a_{2}\ldots a_{k} = b_{1}b_{2}\ldots b_{k}$ but $a_{k+1} \neq b_{k+1}$, then $p(a,b) = k$.) We then define $d(a,b) = 2^{-p(a,b)}$. It can be checked that the function $d: X \times X \rightarrow \mathbb{R}$ is an \emph{ultrametric}; i.e., $d$ is a metric that satisfies the following strong version of the triangle inequality: if $x,y,z \in X$, then 
\[ d(x,y) \leq \mathrm{max}\{  d(x,z), d(y,z) \}.\]
Each finite string $\omega = \omega_{1}\ldots \omega_{k}$ determines a ball $B_{\omega}$, which simply consists of all infinite strings in $X$ having $\omega$ as a prefix. All balls in $X$ can be described in this way. Additionally, the metric space $(X,d)$ is
compact, although this seems somewhat less obvious.

In Example \ref{example:V}, we will define an inverse semigroup $S_{V}$ acting on $X$ with the property that the set $\mathcal{D}^{+}_{S_{V}}$ of domains consists precisely of the balls $B_{\omega}$. The group $\Gamma_{S_{V}}$ is Thompson's group $V$. Note that it is entirely straightforward to check that $\mathcal{D}^{+}_{S_{V}}$ satisfies the conditions from Definition \ref{definition:umetricproperty}.
\end{example} 

\begin{convention} \label{convention:standingumetric}
We will assume that $\mathcal{D}_{S}$ has the compact ultrametric property for the remainder of this subsection.
\end{convention}

\begin{remark} \label{remark:partitioning}
We note that, in Definition \ref{definition:umetricproperty}(2), the difference $X-D$ may be written as the \emph{disjoint} union of finitely many members of $\mathcal{D}_{S}$, because of property (1): if any two domains in the union overlap, one must be contained in the other, so we throw out the smaller domain and repeat as necessary, until we obtain a partition of $X - D$.

We note also that properties (1) and (2) result in a finite difference property: if $D_{1}$ and $D_{2}$ are domains and $D_{1} \subseteq D_{2}$, then $D_{2} - D_{1}$ may be written as a disjoint union of finitely many domains. (Simply consider a partition $\mathcal{P}$ of $X-D_{1}$ by finitely many domains. The set $\mathcal{P}' = \{ D_{2} \cap P \mid P \in \mathcal{P} \text{ and }D_{2} \cap P \neq \emptyset  \}$ is the desired partition.)
\end{remark}

\begin{proposition} (bounding ascending chains in $\mathcal{D}_{S}$) \label{proposition:boundedascending}
If $D \in \mathcal{D}^{+}_{S}$, then there is a constant $m \in \mathbb{N}$ such that every ascending chain
\[ D= D_{1} \subsetneq D_{2} \subsetneq D_{3} \subsetneq \ldots. \]
has total length no more than $m$.

In particular, there is no infinite strictly ascending chain of domains starting at $D$.
\end{proposition}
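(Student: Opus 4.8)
The plan is to use the ``finite complementation'' property (Definition \ref{definition:umetricproperty}(2)) to produce, from the given domain $D$, a finite partition of $X$ into domains, and then to argue that no domain can strictly contain too many of the pieces of a refinement of this partition. More precisely, since $D \in \mathcal{D}^{+}_{S}$, I would first fix a finite partition $X = D \sqcup P_{1} \sqcup \ldots \sqcup P_{k}$ of $X$ into nonempty domains, using Definition \ref{definition:umetricproperty}(2) together with the observation in Remark \ref{remark:partitioning} that the complement $X - D$ can be taken to be a disjoint union. I would set $m = k+1$ and claim this works.

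Now suppose $D = D_{1} \subsetneq D_{2} \subsetneq \ldots \subsetneq D_{r}$ is a strictly ascending chain of domains. The key step is to show $r \leq m = k+1$. For each $j$ with $2 \leq j \leq r$, the set $D_{j} - D_{j-1}$ is nonempty, and I want to show it contains at least one of the pieces $P_{i}$ entirely. Fix a point $x \in D_{j} - D_{j-1}$; since $x \notin D = D_{1} \subseteq D_{j-1}$, we have $x \in P_{i}$ for some $i$. Now apply the nested domains property (Definition \ref{definition:umetricproperty}(1)) to $P_{i}$ and $D_{j}$: they intersect (at $x$), so either $P_{i} \subseteq D_{j}$ or $D_{j} \subseteq P_{i}$. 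The latter is impossible because $D \subseteq D_{j}$ but $D \cap P_{i} = \emptyset$ (as $D$ and $P_{i}$ are distinct blocks of a partition) while $D \neq \emptyset$, so $D_{j}$ cannot be a subset of $P_{i}$. Hence $P_{i} \subseteq D_{j}$. On the other hand $P_{i} \not\subseteq D_{j-1}$: again by nestedness, if $P_{i} \cap D_{j-1} \neq \emptyset$ then $P_{i} \subseteq D_{j-1}$ or $D_{j-1} \subseteq P_{i}$; the second fails as before (since $D \subseteq D_{j-1}$, $D \neq \emptyset$, $D \cap P_{i} = \emptyset$), and the first would force $x \in P_{i} \subseteq D_{j-1}$, contradicting $x \notin D_{j-1}$. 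So $P_{i} \cap D_{j-1} = \emptyset$, i.e. $P_{i} \subseteq D_{j} - D_{j-1}$.

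This assigns to each index $j \in \{2, \ldots, r\}$ an index $\iota(j) \in \{1, \ldots, k\}$ with $P_{\iota(j)} \subseteq D_{j} - D_{j-1}$. Since the sets $D_{j} - D_{j-1}$ are pairwise disjoint (the chain is a chain), the map $\iota$ is injective, so $r - 1 \leq k$, giving $r \leq k + 1 = m$. Thus every strictly ascending chain starting at $D$ has length at most $m$, and in particular there is no infinite such chain.

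The main obstacle — really the only point requiring care — is the repeated invocation of the nested domains property to rule out the ``wrong'' inclusion $D_{j} \subseteq P_{i}$; the argument hinges on choosing the base partition so that each $P_{i}$ is disjoint from $D$ and $D$ is nonempty, which is exactly what Definition \ref{definition:umetricproperty}(2) and Remark \ref{remark:partitioning} provide. One should also note at the outset that one may as well assume all domains in the chain are nonempty (they contain $D \neq \emptyset$), so the nested domains property applies cleanly at each stage. Everything else is bookkeeping.
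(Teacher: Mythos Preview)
Your proof is correct and follows essentially the same approach as the paper: both fix a finite partition $X = D \sqcup P_{1} \sqcup \ldots \sqcup P_{k}$ into domains and show that each new step in the chain absorbs a fresh $P_{i}$, bounding the chain length by $k+1$. The only cosmetic difference is that you package the counting as an explicit injection $\{2,\ldots,r\} \to \{1,\ldots,k\}$, whereas the paper chooses points $x_{j} \in D_{j} - D_{j-1}$ and applies the pigeonhole principle to derive a contradiction; the underlying use of the nested-domains property to rule out the ``wrong'' inclusion is identical in both.
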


\begin{proof}
By Definition \ref{definition:umetricproperty}(2) and Remark \ref{remark:partitioning}, we can express $X-D$ as a finite disjoint union of domains $\widehat{D}_{1}, \widehat{D}_{2}, \ldots, 
\widehat{D}_{n}$. It follows that
\[ \mathcal{P} = \{ D, \widehat{D}_{1}, \ldots, \widehat{D}_{n} \} \]
is a partition of $X$. We write $D = \widehat{D}_{0}$.

We claim that every chain
\[ D = D_{1} \subsetneq D_{2} \subsetneq \ldots \]
has length no more than $n+1$ (i.e., we can set $m=n+1$). 

If not, then we can pick $x_{1} \in D_{1}$, $x_{2} \in D_{2}-D_{1}, x_{3} \in D_{3} - D_{2}, \ldots$ such that 
$T = \{ x_{1}, x_{2}, \ldots \}$ has at least $n+2$ elements. It follows from the Pigeonhole principle that there are $x_{i}, x_{j} \in T$, $i<j$, such that both $x_{i}$ and $x_{j}$ are in $\widehat{D}_{k}$, for some $k \in \{ 0, 1, \ldots, n \}$. Since $x_{i} \in D_{i} \cap \widehat{D}_{k}$, we must have $D_{i} \subseteq \widehat{D}_{k}$ or $\widehat{D}_{k} \subseteq D_{i}$; the former possibility is ruled out because it implies $D = D_{i} \cap D \subseteq \widehat{D}_{k} \cap \widehat{D}_{0} = \emptyset$, a contradiction. Thus, $\widehat{D}_{k} \subseteq D_{i}$, which implies $x_{j} \in D_{i} \subseteq D_{j-1}$, a contradiction.
\end{proof}

\begin{definition} \label{definition:depth}
Let $D$, $E$ be nonempty domains in $\mathcal{D}_{S}$, with $D \subseteq E$. We let 
$\mathrm{depth}_{E}(D)$ be the length of the largest increasing chain of domains beginning at $D$ and ending with $E$.  

If $\mathcal{P}$ is a finite partition of $E$, then
\[ \mathrm{depth}_{E}(\mathcal{P}) = \text{max} \{ \mathrm{depth}_{E}(D) \mid D \in \mathcal{P}\}. \]
\end{definition}

\begin{remark} \label{remark:shallow}
In both cases of Definition \ref{definition:depth}, the depth is a positive integer. E.g., $\mathrm{depth}_{E}(E)=1$.
\end{remark}

\begin{lemma} (greatest proper subdomains) \label{proposition:maximaldomains} Let $D \in \mathcal{D}^{+}$. Either
\begin{enumerate}
\item $D$ has no proper non-empty subdomain, or
\item for each $x \in D$, there is a greatest proper subdomain $D_{x}$ of $D$ such that $x \in D_{x}$.
\end{enumerate}
\end{lemma}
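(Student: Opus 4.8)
The plan is to fix $D \in \mathcal{D}^{+}$, assume that alternative (1) fails, and deduce alternative (2). So suppose $D$ has at least one proper non-empty subdomain, and fix $x \in D$. First I would assemble the collection
\[ \mathcal{A}_x = \{ E \in \mathcal{D}^{+} \mid x \in E \subsetneq D \}. \]
By Proposition \ref{proposition:boundedascending}, every strictly ascending chain of domains starting at a fixed non-empty domain has bounded length; in particular there is a uniform bound $m$ on the length of any such chain inside $D$. I claim $\mathcal{A}_x$ is non-empty: this is the one place the hypothesis ``alternative (1) fails'' must be used, and it needs a small argument, since a priori the proper subdomain witnessing the failure of (1) need not contain $x$. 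The argument is that if $E_0 \subsetneq D$ is any proper non-empty subdomain and $x \notin E_0$, then I can still produce one containing $x$: pick $y \in E_0$ and some $s \in S$ with domain $D$ (using that $D \in \mathcal{D}$, via $\id_D \in S$), and... actually the cleanest route is different — see the next paragraph.

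The cleaner approach uses the nested property (Definition \ref{definition:umetricproperty}(1)) directly. Among all non-empty subdomains of $D$ that do \emph{not} contain $x$, and also among all that \emph{do}, the nested property severely constrains the poset. Concretely: let $\mathcal{B}_x = \{ E \in \mathcal{D}^{+} \mid x \in E \text{ and } E \subseteq D \}$. This set is non-empty (it contains $D$) and, by Proposition \ref{proposition:boundedascending} applied to any minimal-so-far element, or rather by the dual finiteness, it has a minimal element $D_x^{\min}$; more to the point, I want a \emph{maximal proper} element. Here is the key structural observation to establish: \textbf{any two members of $\mathcal{B}_x$ are comparable}. Indeed if $E_1, E_2 \in \mathcal{B}_x$ then $x \in E_1 \cap E_2 \neq \emptyset$, so by Definition \ref{definition:umetricproperty}(1), $E_1 \subseteq E_2$ or $E_2 \subseteq E_1$. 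Thus $\mathcal{B}_x$ is a chain. Now the proper part $\mathcal{A}_x = \mathcal{B}_x \setminus \{D\}$ is also a chain; if it is empty then $D$ has no proper non-empty subdomain containing $x$ — but I still need to rule this out using the failure of (1). If $\mathcal{A}_x$ is non-empty, then since $\mathcal{A}_x$ is a chain and, by Proposition \ref{proposition:boundedascending}, there is no infinite strictly ascending chain of domains starting at the bottom of $\mathcal{A}_x$ (equivalently: every chain of domains inside $D$ has length $\leq m$), $\mathcal{A}_x$ is a \emph{finite} chain and hence has a greatest element $D_x$. This $D_x$ is a proper non-empty subdomain of $D$ containing $x$, and it is the greatest such by construction. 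That is exactly conclusion (2) for this $x$; since $x$ was arbitrary, (2) holds.

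So the remaining gap, and the main obstacle, is the bookkeeping step: \emph{if $D$ has some proper non-empty subdomain, then for every $x \in D$, $\mathcal{A}_x \neq \emptyset$}. I would handle this by the translation property of $\mathcal{D}$ (Lemma \ref{lemma:domainproperties}(2)). Suppose $E \subsetneq D$ is proper, non-empty, and let $y \in E$. By the finite difference property (Remark \ref{remark:partitioning}), since $E \subseteq D$ we may partition $D$ into finitely many domains, one of which is $E$ (or contains $y$ strictly inside $D$); in any case $D$ is \emph{not} itself a minimal non-empty domain, so by the translation property there should be a subdomain containing $x$. More precisely: take $s \in S$ with domain $D$ realizing $\id_D$, restrict to get, for the subdomain $E'$ of the partition of $D$ containing $x$, a domain $E' \in \mathcal{D}^{+}$ with $x \in E' \subseteq D$; and $E' \subsetneq D$ because the partition has at least two parts (it contains $E$, which is proper). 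Then $E' \in \mathcal{A}_x$. I expect this last paragraph to require the most care, precisely because it is where one must convert ``$D$ has \emph{a} proper subdomain'' into ``$D$ has a proper subdomain \emph{through each point}'', and the natural tool is the partition/translation machinery of Lemma \ref{lemma:domainproperties} and Remark \ref{remark:partitioning} rather than anything about chains.
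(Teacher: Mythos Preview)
Your approach is essentially the same as the paper's: form the set $\mathcal{A}_x$ of proper non-empty subdomains of $D$ containing $x$, observe it is a chain by the nested property, show it is non-empty via the partition of $D$ coming from Remark~\ref{remark:partitioning}, and extract a maximal element using Proposition~\ref{proposition:boundedascending}. The paper does exactly this, though more directly and without the detours.

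There is one genuine slip. You assert that $\mathcal{A}_x$ is a \emph{finite} chain, invoking the parenthetical ``every chain of domains inside $D$ has length $\leq m$''. Proposition~\ref{proposition:boundedascending} does not say this: it bounds \emph{ascending} chains starting from a given domain, not arbitrary chains. In fact $\mathcal{A}_x$ can be infinite (e.g., for Thompson's $V$, the balls $B_{\omega}$ containing a fixed infinite string form an infinite descending chain). Your reference to ``the bottom of $\mathcal{A}_x$'' is similarly unjustified --- there need not be one. The paper's fix is the obvious one: if $\mathcal{A}_x$ had no maximal element, one could inductively build an infinite strictly ascending chain inside it, contradicting Proposition~\ref{proposition:boundedascending}. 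Equivalently, pick any $E \in \mathcal{A}_x$ and note that $\{F \in \mathcal{A}_x : E \subseteq F\}$ is finite by that proposition, hence has a maximum, which is then the maximum of the whole chain $\mathcal{A}_x$.

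Your non-emptiness argument is fine and matches the paper's, though you can streamline it: once Remark~\ref{remark:partitioning} gives a partition $\{D', D_1, \ldots, D_m\}$ of $D$ into proper non-empty subdomains, the piece containing $x$ lies in $\mathcal{A}_x$. The appeal to $\id_D \in S$ and ``translation'' is unnecessary.
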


\begin{proof}
Let $D \in \mathcal{D}^{+}$, and suppose that $D' \subseteq D$ is a proper non-empty subdomain. By Remark \ref{remark:partitioning}, $D-D'$ can be partitioned by finitely many domains; thus, we have a partition of $D$ in the form
\[ \mathcal{P} = \{ D', D_{1}, D_{2}, \ldots, D_{m} \}, \]
where each member of $\mathcal{P}$ is a non-empty domain.

Let $x \in D$. The set $\mathcal{D}_{x} = \{ \widehat{D} \in \mathcal{D}^{+} \mid x \in \widehat{D} \subsetneq D \}$ is necessarily a chain, by the nested domains property from Definition \ref{definition:umetricproperty}. The set $\mathcal{D}_{x}$ is clearly non-empty (since, in particular, some domain from $\mathcal{P}$ must be in $\mathcal{D}_{x}$), and must therefore contain a maximal element; otherwise, we could select an infinite ascending chain
\[ \widehat{D}_{1} \subsetneq \widehat{D}_{2} \subsetneq \widehat{D}_{3} \subsetneq \ldots \]
from $\mathcal{D}_{x}$; this would contradict Proposition \ref{proposition:boundedascending}. 
A maximal element $D_{x}$ of $\mathcal{D}_{x}$ is the desired greatest proper subdomain containing $x$.
\end{proof}   

\begin{corollary} (the maximal partition of $D$) \label{corollary:maximal} Let $D \in \mathcal{D}^{+}$, and assume that $D$ properly contains some non-empty subdomain. For $x \in D$, we let $D_{x}$ denote the maximal proper subdomain of $D$ that contains $x$. 

The collection 
\[ \mathcal{P}_{D} = \{ D_{x} \mid x \in D \}. \]
is a finite partition of $D$ by proper subdomains, and any other partition $\mathcal{P}'$ of $D$ by proper subdomains is a refinement of $\mathcal{P}_{D}$.
\end{corollary}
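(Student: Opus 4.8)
The plan is to show three things about $\mathcal{P}_D = \{D_x \mid x \in D\}$: that it is a partition of $D$, that it is finite, and that it is refined by every partition of $D$ into proper subdomains. The second and third facts will essentially imply the first, so I would organize the argument around them.

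First I would verify that the sets $D_x$ cover $D$ and are pairwise disjoint or equal. Covering is immediate since $x \in D_x$ for every $x \in D$. For the disjointness, suppose $D_x \cap D_y \neq \emptyset$; by the nested domains property (Definition \ref{definition:umetricproperty}(1)) one of them contains the other, say $D_x \subseteq D_y$. Then $D_y$ is a proper subdomain of $D$ containing $x$, so by maximality of $D_x$ (Lemma \ref{proposition:maximaldomains}) we get $D_y \subseteq D_x$, hence $D_x = D_y$. Thus the distinct sets among the $D_x$ form a partition of $D$ into proper non-empty subdomains.

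Next I would prove the refinement property, which simultaneously delivers finiteness. Let $\mathcal{P}'$ be any partition of $D$ into proper subdomains, and let $P' \in \mathcal{P}'$. Pick any $x \in P'$. Since $P'$ is a proper non-empty subdomain of $D$ containing $x$, it lies in the chain $\mathcal{D}_x = \{\widehat{D} \in \mathcal{D}^{+} \mid x \in \widehat{D} \subsetneq D\}$, of which $D_x$ is the maximal element; hence $P' \subseteq D_x$. This shows every block of $\mathcal{P}'$ is contained in a block of $\mathcal{P}_D$, i.e. $\mathcal{P}'$ refines $\mathcal{P}_D$. Applying this with $\mathcal{P}'$ taken to be the partition $\mathcal{P} = \{D', D_1, \ldots, D_m\}$ furnished by Remark \ref{remark:partitioning} (as in the proof of Lemma \ref{proposition:maximaldomains}, using that $D$ properly contains a non-empty subdomain), we see that the finitely many blocks of $\mathcal{P}$ each sit inside a block of $\mathcal{P}_D$; since the blocks of $\mathcal{P}$ already cover $D$ and the blocks of $\mathcal{P}_D$ are pairwise disjoint, each block of $\mathcal{P}_D$ is a union of blocks of $\mathcal{P}$, so $\mathcal{P}_D$ has at most $m+1$ blocks and is therefore finite.

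I do not expect a serious obstacle here; the only point requiring a little care is making sure the logical dependence is not circular — that is, deducing finiteness from the refinement property rather than assuming it. The key inputs are the nested domains property, Proposition \ref{proposition:boundedascending} (already used to produce the maximal elements $D_x$ in Lemma \ref{proposition:maximaldomains}), and the finite partitionability of $D - D'$ from Remark \ref{remark:partitioning}. Everything else is bookkeeping with the chain $\mathcal{D}_x$.
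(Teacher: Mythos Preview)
Your proposal is correct and follows essentially the same approach as the paper: both use the nested-domains property plus maximality to get disjointness, both invoke the finite partition $\mathcal{P}$ from Remark \ref{remark:partitioning} (via the proof of Lemma \ref{proposition:maximaldomains}) to establish finiteness, and both deduce refinement from maximality of $D_x$. The only difference is organizational --- you prove the general refinement statement first and then specialize it to $\mathcal{P}$ to extract finiteness, whereas the paper argues finiteness directly from $\mathcal{P}$ and then proves refinement separately --- but the underlying ideas are identical.
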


\begin{proof}
First, 
each set $D_{x}$ is clearly non-empty. Assume that $D_{x_{1}} \cap D_{x_{2}} \neq \emptyset$. By the nested domains condition from Definition \ref{definition:umetricproperty}, $D_{x_{1}} \subseteq D_{x_{2}}$ or $D_{x_{2}} \subseteq D_{x_{1}}$. 
Neither inclusion can be proper, by the maximality of 
$D_{x_{1}}$ and $D_{x_{2}}$, so $D_{x_{1}} = D_{x_{2}}$. Thus, 
the elements of $\mathcal{P}_{D}$ are pairwise disjoint. It is also clear that 
\[ D = \bigcup_{x \in D} D_{x}, \]
so $\mathcal{P}_{D}$ is a partition of $D$ by proper subdomains.

We must show that $\mathcal{P}_{D}$ is finite. As in the proof of Proposition \ref{proposition:maximaldomains}, we can produce a partition $\mathcal{P} = \{ \widehat{D}_{1}, \ldots, \widehat{D}_{m} \}$ of $D$ into proper subdomains. Since the elements of $\mathcal{P}_{D}$ are maximal with respect to inclusion, and each $\widehat{D}_{i}$ intersects some $D_{x}$ non-trivially, we must have $\widehat{D}_{i} \subseteq D_{x_{i}}$, for some $x_{i} \in D$. It follows that $\{ D_{x_{1}}, \ldots, D_{x_{m}} \} \subseteq \mathcal{P}_{D}$ is a cover of $D$ and thus $\{ D_{x_{1}}, \ldots, D_{x_{m}} \} = \mathcal{P}_{D}$. Thus, $\mathcal{P}_{D}$ is finite.

Now we show that every other partition $\mathcal{P}'$ of $D$ by proper subdomains is a refinement of $\mathcal{P}_{D}$. Let $D' \in \mathcal{P}'$. Since $D' \neq \emptyset$, there is some $x \in D' \subsetneq D$. It follows from the maximality of $D_{x}$ that $D' \subseteq D_{x}$. Thus, $\mathcal{P}'$ is a refinement of $\mathcal{P}_{D}$.
\end{proof}

\begin{definition} (the maximal partition)  \label{definition:maximal} Let $D$ be a domain. If $D$ properly contains a non-empty subdomain, then we let $\mathcal{P}_{D}$ denote the partition of $D$ from Corollary \ref{corollary:maximal}; otherwise, we let
$\mathcal{P}_{D} = \{ D \}$.
In either case, 
we call $\mathcal{P}_{D}$ \emph{the maximal partition of $D$}.
\end{definition}

\begin{remark} (maximal vs. minimal) \label{remark:maxormin}
The adjective ``maximal'' is slightly at odds with later definitions, notably the definition of expansion (Definition \ref{definition:expansion}). In the latter definition, taking refinements 
will result in larger objects, rather than smaller ones. The maximal partition will therefore represent a \emph{minimal} upper bound under the expansion relation.  
\end{remark}

\begin{lemma} \label{lemma:maximalgeneration} (maximal partitions generate all partitions) Let $D$ be a domain. For every finite partition $\mathcal{P}$ of $D$ into proper subdomains, there is a sequence of partitions
\[ \{ D \} = \mathcal{P}_{0}, \mathcal{P}_{1}, \ldots, \mathcal{P}_{\ell} = \mathcal{P}, \]
where, for $i=0, 1, \ldots, \ell-1$, 
\[ \mathcal{P}_{i+1} = (\mathcal{P}_{i} - \{ D' \}) \cup \mathcal{P}_{D'} \]
for some $D' \in \mathcal{P}_{i}$.
\end{lemma}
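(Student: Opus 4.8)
The plan is to induct on $\depth_{D}(\mathcal{P})$, the maximal depth of a member of $\mathcal{P}$ inside $D$ (Definition \ref{definition:depth}). If $\depth_{D}(\mathcal{P}) = 1$, then every member of $\mathcal{P}$ is equal to $D$, which forces $\mathcal{P} = \{ D \}$; but $\mathcal{P}$ consists of \emph{proper} subdomains, so this case is vacuous (or, if one prefers to allow $\mathcal{P} = \{D\}$, the empty sequence works). The first genuine case is $\depth_{D}(\mathcal{P}) = 2$: then every member of $\mathcal{P}$ has a greatest proper superdomain equal to $D$, and by the "any other partition refines $\mathcal{P}_{D}$" clause of Corollary \ref{corollary:maximal}, $\mathcal{P}$ is a refinement of $\mathcal{P}_{D}$; but a refinement of $\mathcal{P}_{D}$ all of whose members have depth $2$ must equal $\mathcal{P}_{D}$ (a member strictly contained in some $D_{x} \in \mathcal{P}_D$ would have depth $\geq 3$). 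So $\mathcal{P} = \mathcal{P}_{D} = \mathcal{P}_{D'}$ with $D' = D$, and the one-step sequence $\{D\} = \mathcal{P}_{0}, \mathcal{P}_{1} = \mathcal{P}$ does the job.

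For the inductive step, assume $\depth_{D}(\mathcal{P}) = k \geq 3$ and that the lemma holds for all domains and all partitions of smaller depth. By Corollary \ref{corollary:maximal}, $\mathcal{P}$ refines the maximal partition $\mathcal{P}_{D} = \{ D_{1}, \ldots, D_{m} \}$ of $D$ (here I use that $k \geq 2$ guarantees $D$ properly contains a nonempty subdomain, so $\mathcal{P}_D$ in the sense of Definition \ref{definition:maximal} is the Corollary \ref{corollary:maximal} partition). For each $j$, let $\mathcal{P}^{(j)} = \{ P \in \mathcal{P} \mid P \subseteq D_{j} \}$; since $\mathcal{P}$ refines $\mathcal{P}_{D}$, each $\mathcal{P}^{(j)}$ is a finite partition of $D_{j}$ into subdomains, and $\depth_{D_{j}}(\mathcal{P}^{(j)}) \leq k - 1 < k$ because an increasing chain from $P \in \mathcal{P}^{(j)}$ up to $D_{j}$ extended by $D_{j} \subsetneq D$ is an increasing chain from $P$ to $D$. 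Now I split $\mathcal{P}^{(j)}$ according to whether it is the trivial partition $\{D_j\}$ of $D_j$ or a partition into \emph{proper} subdomains of $D_{j}$: in the latter case the induction hypothesis applied to the domain $D_{j}$ yields a sequence of elementary moves $\{D_{j}\} = \mathcal{Q}^{(j)}_{0}, \ldots, \mathcal{Q}^{(j)}_{\ell_j} = \mathcal{P}^{(j)}$, each step replacing one block $D'$ by $\mathcal{P}_{D'}$.

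Finally I assemble the global sequence. Start from $\{D\}$, apply the single move $\{D\} \mapsto \mathcal{P}_{D} = \{D_{1}, \ldots, D_{m}\}$ (this is the move with $D' = D$), and then, for $j = 1, \ldots, m$ in turn, carry out the moves of the sequence $\mathcal{Q}^{(j)}_{0}, \ldots, \mathcal{Q}^{(j)}_{\ell_j}$ \emph{inside the block $D_{j}$}, leaving all other blocks untouched. Each such step has exactly the required form $\mathcal{P}_{i+1} = (\mathcal{P}_{i} - \{D'\}) \cup \mathcal{P}_{D'}$ for the appropriate $D' \subseteq D_{j}$, because replacing a block of $\mathcal{P}^{(j)}$ does not disturb the partition structure on the complementary blocks. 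After processing all $m$ blocks we reach $\bigcup_{j=1}^{m} \mathcal{P}^{(j)} = \mathcal{P}$, as desired. The only point requiring care — and the part I expect to be the main obstacle to a clean writeup rather than a deep difficulty — is the bookkeeping that the depth genuinely drops when passing to $\mathcal{P}^{(j)}$ inside $D_{j}$, and the edge case where some $\mathcal{P}^{(j)}$ is already $\{D_{j}\}$ (no moves needed there) versus where $D_{j}$ itself has no proper nonempty subdomain (then necessarily $\mathcal{P}^{(j)} = \{D_j\}$, again no moves); handling these uniformly is what makes the base case $k=2$ worth isolating.
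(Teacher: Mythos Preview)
Your proposal is correct and follows essentially the same approach as the paper: induct on $\depth_{D}(\mathcal{P})$, use Corollary~\ref{corollary:maximal} to factor $\mathcal{P}$ through $\mathcal{P}_{D}$, observe that the restricted partitions $\mathcal{P}^{(j)}$ have strictly smaller depth in $D_{j}$, and assemble. Your writeup is in fact more careful than the paper's about the edge cases (trivial $\mathcal{P}^{(j)}$, and $D_{j}$ with no proper subdomain) and about the bookkeeping of the assembly step, which the paper leaves implicit.
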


\begin{proof}
We prove the lemma by induction on $\mathrm{depth}_{D}(\mathcal{P})$, where $D$ is an arbitrary domain and $\mathcal{P}$ is an arbitrary partition of $D$. If $\mathrm{depth}_{D}(\mathcal{P}) = 1$, then $\mathcal{P} = \{ D \}$; if $\mathrm{depth}_{D}(\mathcal{P}) = 2$, then $\mathcal{P} = \mathcal{P}_{D}$. We may therefore assume that
$\mathrm{depth}_{D}(\mathcal{P})  = n \geq 3$.  By Corollary \ref{corollary:maximal}, $\mathcal{P}$ is a refinement of $\mathcal{P}_{D}$. Let
\[ \mathcal{P}_{D} = \{ E_{1}, \ldots, E_{m} \}. \]
For $i=1, \ldots, m$, let $\mathcal{P}_{i}$ denote the subset of $\mathcal{P}$ that partitions $E_{i}$. It suffices to show that $\mathrm{depth}_{E_{i}}(\mathcal{P}_{i}) \leq n-1$. This is clear; suppose $D' \in \mathcal{P}_{i}$ is such that there is a sequence
\[ D' = D''_{1} \subsetneq D''_{2} \subsetneq \ldots \subsetneq D''_{k} = E_{i}, \]
where $k \geq n$. It follows that we can append $D$ to the end of this sequence, resulting in a sequence of length at least $n+1$. This shows that the depth of $\mathcal{P}$ is at least $n+1$, a contradiction.
\end{proof}

Finally, we record a straightforward proposition for future reference.

\begin{proposition} \label{proposition:Sinvarianceofmaximalsubdivision}($S$-invariance of the maximal partition) 
If $s \in S$ and $D_{1}, D_{2} \in \mathcal{D}^{+}$ are the domain and image of $s$, respectively, then $s(\mathcal{P}_{D_{1}}) = \mathcal{P}_{D_{2}}$.
\qed
\end{proposition}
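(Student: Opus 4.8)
The plan is to exploit the fact that $s \colon D_1 \to D_2$ is a bijection lying in $S$, together with its inverse $s^{-1} \colon D_2 \to D_1$, which also lies in $S$ since $S$ is closed under inverses. The first step is to record the key structural observation: $s$ restricts to an inclusion-preserving bijection between the set of subdomains of $D_1$ and the set of subdomains of $D_2$. Indeed, if $D \subseteq D_1$ is a domain, then $s(D) \in \mathcal{D}$ by Lemma~\ref{lemma:domainproperties}(2), and $s(D) \subseteq D_2$ because $D_2$ is the image of $s$; symmetrically $s^{-1}$ carries subdomains of $D_2$ to subdomains of $D_1$, and these two maps are mutually inverse. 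Since $s$ is a bijection of sets, it preserves and reflects both nonemptiness and proper inclusions; consequently $D_1$ has a proper nonempty subdomain if and only if $D_2$ does.

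Next I would dispose of the degenerate case. If $D_1$ has no proper nonempty subdomain, then by the previous paragraph neither does $D_2$, so $\mathcal{P}_{D_1} = \{D_1\}$ and $\mathcal{P}_{D_2} = \{D_2\}$ by Definition~\ref{definition:maximal}, and $s(\mathcal{P}_{D_1}) = \{s(D_1)\} = \{D_2\} = \mathcal{P}_{D_2}$.

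In the remaining case, both $D_1$ and $D_2$ properly contain nonempty subdomains, so Corollary~\ref{corollary:maximal} applies to each: $\mathcal{P}_{D_1} = \{ (D_1)_x \mid x \in D_1 \}$, where $(D_1)_x$ denotes the greatest proper subdomain of $D_1$ containing $x$, and similarly for $D_2$. The heart of the argument is then to show $s\bigl((D_1)_x\bigr) = (D_2)_{s(x)}$ for every $x \in D_1$. The set $s\bigl((D_1)_x\bigr)$ is a proper subdomain of $D_2$ (proper because $(D_1)_x \neq D_1$ and $s$ is a bijection) containing $s(x)$; and if $E$ is any proper subdomain of $D_2$ with $s(x) \in E$, then $s^{-1}(E)$ is a proper subdomain of $D_1$ containing $x$, hence $s^{-1}(E) \subseteq (D_1)_x$ by maximality of $(D_1)_x$, hence $E \subseteq s\bigl((D_1)_x\bigr)$. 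Thus $s\bigl((D_1)_x\bigr)$ is the greatest proper subdomain of $D_2$ containing $s(x)$, as claimed. Applying $s$ to $\mathcal{P}_{D_1}$ and using that $x \mapsto s(x)$ is a bijection of $D_1$ onto $D_2$ then yields $s(\mathcal{P}_{D_1}) = \{ (D_2)_{s(x)} \mid x \in D_1 \} = \{ (D_2)_y \mid y \in D_2 \} = \mathcal{P}_{D_2}$.

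The only real obstacle is the bookkeeping in the first paragraph — verifying that $s$ and $s^{-1}$ genuinely set up an order-isomorphism between the posets of subdomains of $D_1$ and of $D_2$ — but this is immediate from Lemma~\ref{lemma:domainproperties}(2) and closure of $S$ under inverses; once it is in place, the rest is a routine transport of the maximality property through the bijection.
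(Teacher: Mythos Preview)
Your argument is correct. The paper itself gives no proof of this proposition: it is recorded as ``a straightforward proposition for future reference'' and marked with a \qed\ immediately after the statement. Your proposal supplies exactly the kind of routine verification the authors chose to omit, using Lemma~\ref{lemma:domainproperties}(2) and closure of $S$ under inverses to set up the order-isomorphism between subdomain posets, and then transporting the defining maximality property of $(D_1)_x$ through the bijection $s$.
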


\subsection{Product actions} \label{subsection:productaction}

In this subsection, we briefly consider inverse semigroup actions on products $X_{1} \times \ldots \times X_{n}$. Our main goal here is to set some terminology. 

A secondary goal is to present Example \ref{example:pathology}, which will influence our definition of ``$S$-structures'' (Definition \ref{definition:sstructure}), and, thus, the definition of ``expansion'' (Definition \ref{definition:expansion}).

\begin{definition} \label{definition:products}
 (product actions) If $X_{1}, \ldots, X_{n}$ are sets and $S_{1}, \ldots, S_{n}$ are inverse semigroups such that, for $i=1, \ldots, n$, $S_{i}$ acts on $X_{i}$, then an $n$-tuple $(s_{1}, \ldots, s_{n}) \in S_{1} \times \ldots \times S_{n}$ defines a partial bijection of $X_{1} \times \ldots \times X_{n}$ as follows. If any of the $s_{i}$ are $0$, then $(s_{1}, \ldots, s_{n})$ determines the empty function. If none of the $s_{i}$ are $0$, then $(s_{1},s_{2}, \ldots, s_{n})$ is defined by the rule
 \[ (s_{1}, \ldots, s_{n})(x_{1}, \ldots, x_{n}) = (s_{1}(x_{1}), \ldots, s_{n}(x_{n})), \]
 where $x_{i}$ is in the domain of $s_{i}$, for $i=1, \ldots, n$. 
 
It will be convenient to call the above action the \emph{product action} of $S_{1} \times \ldots \times S_{n}$ on $X_{1} \times \ldots \times X_{n}$, even though the above collection of partial bijections is not isomorphic to the usual direct product (see Remark \ref{remark:Rees}). 

If $X_{1}= X_{2} = \ldots = X_{n}$ and $S_{1} = S_{2} = \ldots = S_{n}$, we will denote the product semigroup by $nS$. More generally, we may sometimes use the notation $S_{(1,\ldots,n)}$ to refer to the product action of $S_{1} \times \ldots \times S_{n}$.
\end{definition}

\begin{remark} \label{remark:Rees}
The above assignment of a partial bijection to an $n$-tuple $(s_{1}, \ldots, s_{n})$ descends to an action of the inverse semigroup 
 \[ (S_{1} \times \ldots \times S_{n})/I, \]
 on $X_{1} \times \ldots \times X_{n}$, where $I$ is the two-sided ideal 
 \[ I= \{ (s_{1}, \ldots, s_{n}) \mid s_{i} = 0, \text{ for some } i\}, \]
and the above quotient is the usual Rees quotient \cite{SemigroupBook} of the product semigroup $S_{1} \times \ldots \times S_{n}$ by the ideal $I$.
\end{remark}
 
\begin{remark} \label{remark:productdomains} (the set of domains for the product action)
If $S_{(1,\ldots,n)}$ is the product semigroup, then the corresponding set of domains is
\[ \mathcal{D}^{+}_{S_{(1,\ldots,n)}} = \{ D_{1} \times \ldots \times
D_{n} \mid D_{i} \in \mathcal{D}^{+}_{S_{i}} \}. \]
\end{remark}

\begin{example} \label{example:pathology} (partitions with no non-trivial proper coarsenings)
Partitions of a given domain into smaller domains will become increasingly important in later sections. It will be especially important to have a degree of control over the form that such partitions can take.

A case in point occurs when the set of domains $\mathcal{D}^{+}_{S}$ has the compact ultrametric property (Definition \ref{definition:umetricproperty}).  In such a case, as we have seen, any given non-empty domain $D$ admits a maximal partition $\mathcal{P}_{D}$, of which any other non-trivial partition $\mathcal{P}$ of $D$ is a refinement (see Corollary \ref{corollary:maximal}). Thus, every non-trivial partition of $D$ ``factors through'' a unique partition $\mathcal{P}_{D}$. Our example exhibits a strong contrast to this property.         

Let $X$ be the set of all infinite binary strings (as in Example \ref{example:clarifying}). There is an inverse semigroup $S_{V}$ such that 
\[ \mathcal{D}^{+}_{S_{V}} = \{ B_{\omega} \mid \omega \text{ is a  finite binary string} \} \]
(see Example \ref{example:V}). The set $\mathcal{D}^{+}_{S_{V}}$ satisfies the compact ultrametic property. Consider the product semigroup $3S_{V}$, which acts on triples of infinite strings. A domain for an element of $3S_{V}$ is a dyadic brick $B_{\omega_{1}} \times B_{\omega_{2}} \times B_{\omega_{3}}$, and all such bricks are domains. 

We denote the length of a finite binary string $\omega$ by $|\omega|$.
Consider the partition 
\begin{align*}
\mathcal{P}_{n} = &\{ B_{0} \times B_{0} \times B_{\epsilon} \}
\cup \{ B_{1} \times B_{\epsilon} \times B_{\omega 0} \mid |\omega|=n \} 
 \cup  \{ B_{0} \times B_{1} \times B_{\omega 0} \mid |\omega|=n \} \\
& \cup  \{ B_{\epsilon} \times B_{1} \times B_{\omega 1} \mid |\omega|=n \}
\cup  \{ B_{1} \times B_{0} \times B_{\omega 1} \mid |\omega|=n \}
\end{align*}
of $X^{3}$. 

Let $\mathcal{P}$ be any partition of $X^{3}$ into dyadic bricks, and suppose that $\mathcal{P}_{n}$ properly refines $\mathcal{P}$. It follows that there is some dyadic brick
$B = B_{\omega_{1}} \times B_{\omega_{2}} \times B_{\omega_{3}} \in \mathcal{P}$ such that $B$ contains two or more bricks from $\mathcal{P}_{n}$. Note also that $B$ is partitioned by bricks from $\mathcal{P}_{n}$, by the definition of refinement. We can use these two properties to prove that $\mathcal{P} = \{ X^{3} \}$. Thus, the only proper coarsening of $\mathcal{P}_{n}$ via dyadic bricks is the trivial partition $\{ X^{3} \}$. 

The proof involves analyzing several cases. Let us consider just part of the proof in case $n=1$; the rest can be argued similarly. Assume that $\mathcal{P}_{1}$ refines $\mathcal{P}$; let $B = B_{\omega_{1}} \times B_{\omega_{2}} \times B_{\omega_{3}} \in \mathcal{P}$ be a dyadic brick that contains two or more bricks from $\mathcal{P}_{1}$. Assume (for instance) that 
\[ B_{1} \times B_{\epsilon} \times B_{00}, B_{1} \times B_{\epsilon} \times B_{10} \subseteq B_{\omega_{1}} \times B_{\omega_{2}} \times B_{\omega_{3}}. \]
It follows that $B_{1} \subseteq B_{\omega_{1}}$, $B_{\epsilon} \subseteq B_{\omega_{2}}$, and $B_{10}, B_{00} \subseteq B_{\omega_{3}}$. We easily conclude that $B_{\omega_{1}} = B_{1}$ or $B_{\epsilon}$, while $B_{\omega_{2}} = B_{\omega_{3}} = B_{\epsilon}$. 
Now note that $B$ must have a non-empty intersection with the brick
$B_{\epsilon} \times B_{1} \times B_{01} \in \mathcal{P}_{1}$. It follows that $B$ contains the latter brick, which also forces $B_{\omega_{1}} = B_{\epsilon}$. Thus, $B = B_{\epsilon}^{3} = X^{3}$, which implies that $\mathcal{P} = \{ X^{3} \}$, as claimed.

It follows that if $\mathcal{T}$ is a family of partitions of $X^{3}$ into dyadic bricks, and $\mathcal{T}$ has the property that every proper partition of $X^{3}$ into dyadic bricks factors through (i.e., refines) one of the partitions in $\mathcal{T}$, then $\mathcal{T}$ is infinite.

The above property of $X^{3}$ is highly undesirable in the applications (to finiteness properties, for instance). We will therefore need to put careful restrictions on the allowable partitions in order to proceed successfully.    
 \end{example}

\subsection{Examples of the groups $\Gamma_{S}$} \label{subsection:examples}

\begin{example} \cite{Brown, CFP} \label{example:V} (Thompson's group $V$)
Let $X$ be the set of all infinite binary strings. If $\omega$ is any finite binary string, then we define $B_{\omega}$ to be the set of infinite binary strings having $\omega$ as a prefix. We let $\epsilon$ denote the empty binary string; thus, $B_{\epsilon}=X$.

For each pair $(\omega_{1}, \omega_{2})$ of finite binary strings, we define a transformation $\sigma_{\omega_{1}, \omega_{2}}: B_{\omega_{1}} \rightarrow B_{\omega_{2}}$ by the rule
\[ \sigma_{\omega_{1}, \omega_{2}}(\omega_{1}a_{1}a_{2}\ldots a_{n}\ldots) = \omega_{2}a_{1}a_{2}\ldots a_{n} \ldots. \]
Here the $a_{i}$ denote binary digits. Thus, $\sigma_{\omega_{1},\omega_{2}}$ removes the prefix $\omega_{1}$ and replaces it with $\omega_{2}$. It is clear that each $\sigma_{\omega_{1},\omega_{2}}$ is a partial bijection of $X$, and that $\sigma^{-1}_{\omega_{1}, \omega_{2}} = \sigma_{\omega_{2},\omega_{1}}$.

We let 
\[ S_{V} = \{ \sigma_{\omega_{1}, \omega_{2}} \mid \omega_{1},\omega_{2} \text{ are finite binary strings} \} \cup \{ 0 \}, \]
where $0$ denotes the empty function. It is straightforward to check that $S_{V}$ is an inverse semigroup. The associated set of domains is
\[ \mathcal{D}_{S_{V}}^{+} = \{ B_{\omega} \mid \omega \text{ is a finite binary string} \}. \]
We note that $\mathcal{D}_{S_{V}}^{+}$ satisfies the compact ultrametric property.
The group $\Gamma_{S_{V}}$ is Thompson's group $V$.
\end{example}

\begin{example} \cite{QV, Nucinkis} \label{example:QV} (The group $QV$) Let $\mathcal{T}$ be the rooted infinite binary tree. Let $X = \mathcal{T}^{0}$, the set of vertices of $\mathcal{T}$. Thus, $X$ is the set of all finite binary strings, including the empty string $\epsilon$, which is the root of $\mathcal{T}$. For any finite binary string $\omega$, we let
$\mathcal{T}_{\omega}$ be the infinite binary subtree having $\omega$ as its root. The set $\mathcal{T}^{0}_{\omega}$ therefore consists of all finite binary strings having $\omega$ as a (not necessarily proper) prefix. 

We define two types of transformations. If $\omega_{1}, \omega_{2}$ are finite binary strings, then define $\tau_{\omega_{1},\omega_{2}}$ to be
the unique bijection from $\{ \omega_{1} \}$ to $\{ \omega_{2} \}$. We define $\sigma_{\omega_{1},\omega_{2}}: \mathcal{T}_{\omega_{1}}^{0} \rightarrow \mathcal{T}_{\omega_{2}}^{0}$ by the rule
\[ \sigma_{\omega_{1},\omega_{2}}(\omega_{1}a_{1}\ldots a_{n}) = \omega_{2}a_{1} \ldots a_{n}. \]
Thus, $\sigma_{\omega_{1},\omega_{2}}$ removes the prefix $\omega_{1}$ and attaches the prefix $\omega_{2}$ as in Example \ref{example:V}, although this time $\sigma_{\omega_{1},\omega_{2}}$ determines a transformation of finite binary strings.

We let
\[ S_{QV} = \{ \sigma_{\omega_{1},\omega_{2}} \mid \omega_{1}, \omega_{2} \in X \} \cup \{ \tau_{\omega_{1}, \omega_{2}} \mid \omega_{1}, \omega_{2} \in X \} \cup \{ 0 \}, \]
 where $0$ is the empty function. 
 
 It is straightforward to check that $S_{QV}$ is an inverse semigroup under composition. The associated set of domains is
 \[ \mathcal{D}^{+}_{S_{QV}} = \{ \{ \omega \} \mid \omega \in X \} \cup \{ \mathcal{T}_{\omega}^{0} \mid \omega \in X \}. \]
 We note that $\mathcal{D}^{+}_{S_{QV}}$ satisfies the compact ultrametric property.
 The group $\Gamma_{S_{QV}}$ is isomorphic to $QV$.
 \end{example}
 
 
 \begin{example} \label{example:nV} \cite{BrinHighD, nV}(The Brin-Thompson groups $nV$) 
 Let $X$ denote the set of all infinite binary strings (as in Example \ref{example:V}). Consider the product action of $nS_{V}$ on $X^{n}$ (see Definition \ref{definition:products}). 
 
 The corresponding set of domains is
 \[ \mathcal{D}^{+}_{nS_{V}} = \{ B_{u_{1}} \times \ldots \times B_{u_{n}} \mid u_{i} \text{ is a finite binary string for each } i \}. \]
 Here $\mathcal{D}^{+}_{nS_{V}}$ does not satisfy the compact ultrametric property. (For instance, if $n=2$, the sets $B_{\epsilon} \times B_{0}$ and $B_{1} \times B_{\epsilon}$ intersect, but neither is contained in the other.) 
 The group $\Gamma_{nS_{V}}$ is isomorphic to $nV$.
 \end{example}

\begin{example} \label{example:Rover} \cite{BelkMatucci, Nek, Rover} (The R\"{o}ver group) If $x \in \{ 0, 1\}$, then let $\overline{x}$ denote the opposite binary digit; i.e., $\overline{0} = 1$ and $\overline{1} = 0$. We will let $X$ denote the set of all infinite binary strings (as in Example \ref{example:V}). 

We define four transformations of $a,b,c,d:X \rightarrow X$ by the following rules, where each $x_{i}$ represents an individual binary digit:
\begin{align*}
a(x_{1}x_{2}\ldots) &= \overline{x}_{1}x_{2}x_{3}\ldots; \\
b(0x_{2}x_{3} \ldots) & = 0a(x_{2}\ldots); \\
b(1x_{2}x_{3} \ldots) & = 1c(x_{2}\ldots); \\
c(0x_{2}x_{3} \ldots) & = 0a(x_{2}\ldots); \\
c(1x_{2}x_{3} \ldots) & = 1d(x_{2}\ldots); \\
d(0x_{2}x_{3} \ldots) & = 0x_{2}x_{3}\ldots; \\
d(1x_{2}x_{3} \ldots) & = 1b(x_{2}x_{3}\ldots).
\end{align*}
The transformations $a$,$b$,$c$,$d$ are the generators of Grigorchuk's first group $G$; each determines a homeomorphism of the Cantor set $X$ \cite{Grigorchuk}.
We define the transformations $\sigma_{\omega_{1},\omega_{2}}$ as from Example \ref{example:V}. We further define, for each $g \in G$,
\[ g_{\omega_{1},\omega_{2}} = \sigma_{\epsilon,\omega_{2}}g\sigma_{\omega_{1},\epsilon}. \]
Let 
\[ S_{R} = \{ g_{\omega_{1},\omega_{2}} \mid \omega_{1},\omega_{2} \text{ are finite binary strings} \} \cup \{ 0 \}. \]
The set $S_{R}$ is an inverse semigroup. The proof uses the self-similarity property of $G$ in an essential way. (A reference for the self-similarity property is \cite{Grigorchuk}.)  The set of domains is
\[ \mathcal{D}^{+}_{R} = \{ B_{\omega} \mid \omega \text{ is a finite binary string} \}. \] The set $\mathcal{D}^{+}_{R}$ satisfies the compact ultrametric property.
The group $\Gamma_{S_{R}}$ is the R\"{o}ver group \cite{Rover}. 

Nekrashevych \cite{Nek} subsequently studied a more general class of groups, which are now often called the \emph{Nekrashevych-R\"{o}ver} groups. In this paper, we will concentrate on the group originally considered by R\"{o}ver, although the range of the applicability of our methods to more general Nekrashevych-R\"{o}ver groups remains an interesting question. 
\end{example} 

\begin{example} \label{example:Houghton} \cite{Brown, Houghton} (Houghton's groups) Let
$X_{n} = \{1, \ldots, n \} \times \mathbb{N}$. We define two types of basic transformations.
\begin{enumerate}
\item If $(j_{1},k_{1}), (j_{2},k_{2}) \in X$, then let $\alpha_{(j_{1},k_{1})}^{(j_{2},k_{2})}$ be the unique function with domain $\{ (j_{1}, k_{1}) \}$ and codomain $\{ (j_{2}, k_{2}) \}$. 
\item If $(j,k) \in X_{n}$  and $\ell \in \mathbb{Z}$ is such that $k+\ell > 0$, then define
$\beta_{(j,k)}^{\ell}: \{ j \} \times \{ k, k+1, \ldots \} \rightarrow \{ j \} \times \{ k+\ell, k+\ell+1, \ldots \}$ by the rule
$\beta_{(j,k)}^{\ell}(j,x) = (j,x+\ell)$.
\end{enumerate}

The set 
$S_{H_{n}}$ of all the $\alpha_{(j_{1},k_{1})}^{(j_{2},k_{2})}$, all of  the $\beta_{(j,k)}^{\ell}$, and the empty function $0$
is an inverse semigroup.

The corresponding set of domains is 
\[ \mathcal{D}^{+}_{S_{H_{n}}} = \{ \{ (j,k) \} \mid (j,k) \in X \} \cup 
\{ \{j \} \times \{ k, k+1, \ldots \} \mid (j,k) \in X \}, \]
which satisfies the compact ultrametric property.
The group $\Gamma_{S_{H_{n}}}$ is Houghton's group $H_{n}$.
\end{example}

\begin{example} \label{example:bieri}
(Bieri-Sach examples) Consider the inverse semigroup $S_{H_{2}}$ from Example \ref{example:Houghton} and the associated set $X_{2} = \{ 1, 2 \} \times \mathbb{N}$. We identify $X_{2}$ with the set of integers, matching $(1,k)$ with $k-1$ and $(2,k)$ with $-k$. This makes $S_{H_{2}}$ an inverse semigroup of partial bijections of $\mathbb{Z}$.

It will be convenient to introduce notation for the domains of 
$S_{H_{2}}$. Define
\begin{align*}
R^{+}_{k} &= \{ k, k+1, \ldots \}, \text{ for }k \geq 0; \\
R^{-}_{k} &= \{ \ldots, k-1, k \}, \text{ for }k < 0; \\
P_{k} &= \{ k \}, \text{ for }k \in \mathbb{Z}.
\end{align*}

 There are three types of transformations in $S_{H_{2}}$:
\begin{enumerate}
\item bijections between singleton sets $\tau_{k,\ell}: P_{k} \rightarrow P_{\ell}$;
\item shifts $S^{+}_{k \rightarrow \ell}: R^{+}_{k} \rightarrow R^{+}_{\ell}$, where $k, \ell \geq 0$;
\item shifts $S^{-}_{k \rightarrow \ell}: R^{-}_{k} \rightarrow R^{-}_{\ell}$, where $k,\ell <0$.
\end{enumerate}
All of these are restrictions of suitable translations, and are bijections between the given domains and codomains.

We consider the product action of $nS_{H_{2}}$ on $\mathbb{Z}^{n}$.
A domain from a transformation $s \in nS_{H_{2}}$ is a polyhedral subset of $\mathbb{Z}^{n}$. For instance, letting $n=2$, we find nine domain types in $\mathcal{D}^{+}_{2S_{H_{2}}}$:  

\begin{enumerate}
\item quadrants: $R^{+}_{k} \times R^{+}_{\ell}$, $R^{+}_{k} \times R^{-}_{\ell}$,
$R^{-}_{k} \times R^{-}_{\ell}$, $R^{-}_{k} \times R^{+}_{\ell}$;
\item singletons: $P_{k} \times P_{\ell}$;
\item strips:  $R^{+}_{k} \times P_{\ell}$, $R^{-}_{k} \times P_{\ell}$, $P_{k} \times R^{+}_{\ell}$, $P_{k} \times R^{-}_{\ell}$.
\end{enumerate}

The set $\mathcal{D}^{+}_{nS_{H_{2}}}$ does not satisfy the compact ultrametric property when $n \geq 2$.
The associated groups $\Gamma_{nS_{H_{2}}}$ are examples from the class of groups considered in \cite{Bieri}.
\end{example}

\section{$S$-structures and pseudovertices} \label{section:Sstructures}

In this section, we will produce a partially ordered set upon which the group $\Gamma_{S}$ acts. The set and partial order depend on the choice of $S$-structure (Definition \ref{definition:sstructure}). Subsection \ref{subsection:sstructures} defines $S$-structures. Subsection \ref{subsection:po} contains basic information about the expansion relation, which determines the partial order on pseudovertices. Subsection \ref{subsection:theaction} defines the action of $\Gamma_{S}$ on various sets of pseudovertices. Subsection \ref{subsection:directed} shows that the pseudovertices form a directed set (under relevant hypotheses). The section concludes with Subsection \ref{subsection:sexamples}, which describes the expansion relation in a number of examples.

\subsection{$S$-structures} \label{subsection:sstructures}
 In this subsection, we define $S$-structures, which will be used to generate a partially ordered set upon which the group $\Gamma_{S}$ acts. The $S$-structures are therefore essential in everything that follows.

\begin{definition} (meet and restriction of partitions) \label{definition:meetrestrict}
Let $\mathcal{P}_{1}$ and $\mathcal{P}_{2}$ be partitions of a set $Z$. The \emph{meet} of $\mathcal{P}_{1}$ and $\mathcal{P}_{2}$, denoted
$\mathcal{P}_{1} \wedge \mathcal{P}_{2}$, is defined by the equation
\[ \mathcal{P}_{1} \wedge \mathcal{P}_{2} = \{ P_{1} \cap P_{2} \mid P_{1} \in \mathcal{P}_{1}, P_{2} \in \mathcal{P}_{2}, P_{1} \cap P_{2} \neq \emptyset \}. \]
In words, the meet is the coarsest common refinement of $\mathcal{P}_{1}$ and $\mathcal{P}_{2}$. It is a partition of $Z$.

If $\mathcal{P}$ is a partition of $Z$ and $Y \subseteq Z$, then 
\[ \mathcal{P}_{\mid Y} = \{ P \cap Y \mid P \in \mathcal{P}; P \cap Y \neq \emptyset \} \]
is the  \emph{restriction of $\mathcal{P}$ to $Y$}; it is a partition of $Y$.
\end{definition}

\begin{definition} ($S$-structure) \label{definition:sstructure}
Let $2^{S}$ denote the power set of $S$. 

An \emph{$S$-structure} is a pair $(\mathbb{S}, \mathbb{P})$, where  
 $\mathbb{S}: \mathcal{D}^{+}_{S} \times \mathcal{D}^{+}_{S} \rightarrow 2^{S}$ and $\mathbb{P}$ assigns to each domain $D$ a collection 
 $\mathbb{P}(D)$ of partitions of $D$. The functions $\mathbb{S}$ and $\mathbb{P}$ must satisfy the following properties:  
 
\begin{enumerate}
\item[(P1)] $\{ D \} \in \mathbb{P}(D)$, for each $D \in \mathcal{D}^{+}_{S}$;
\item[(P2)] each $\mathcal{P} \in \mathbb{P}(D)$ is a finite partition of $D$ into domains;

\item[(P3)] (restriction) If $\mathcal{P} \in \mathbb{P}(D)$ and $E$ is a non-empty domain contained in $D$, then $\mathcal{P}_{\mid E} \in \mathbb{P}(E)$.
\item[(P4)] (patchwork) if $\mathcal{P} = \{ D_{1}, \ldots, D_{m} \} \in \mathbb{P}(D)$ and, for $i=1, \ldots, m$, $\mathcal{P}_{i}\in \mathbb{P}(D_{i})$, then
$\mathcal{P}_{1} \cup \ldots \cup \mathcal{P}_{m} \in \mathbb{P}(D)$.
\item[(P5)] (cofinality) for each $D \in \mathcal{D}^{+}$ and each finite partition of $D$ into domains $\mathcal{P}'$, there is some $\mathcal{P} \in \mathbb{P}(D)$ refining $\mathcal{P}'$.
\item[(S1)] if $s \in \mathbb{S}(D_{1}, D_{2})$, then the domain of $s$ is $D_{1}$ and the image of $s$ is $D_{2}$;
\item[(S2)] (identities) for each $D \in \mathcal{D}^{+}$, $id_{D} \in \mathbb{S}(D, D)$;
\item[(S3)] (inverses) if $s \in \mathbb{S}(D_{1}, D_{2})$, then $s^{-1} \in \mathbb{S}(D_{2}, D_{1})$;
\item[(S4)] (compositions) if $s_{1} \in \mathbb{S}(D_{1}, D_{2})$ and $s_{2} \in \mathbb{S}(D_{2}, D_{3})$, then
$s_{2}s_{1} \in \mathbb{S}(D_{1}, D_{3})$.
\item[(S5)] (generation) if $D \in \mathcal{D}^{+}$ and $f: D \rightarrow X$ is in $\widehat{S}$ (see Definition \ref{definition:hatS}), then there is $\mathcal{P} \in \mathbb{P}(D)$
such that, for each $E \in \mathcal{P}$, there is $s \in S$ such that $E$ is the domain of $s$, $f_{\mid E}= s$, and $s \in \mathbb{S}(E, s(E))$.
\item[(S6)] ($\mathbb{S}$-invariance of $\mathbb{P}$) if $\mathcal{P}_{1} \in \mathbb{P}(D_{1})$ and $s \in \mathbb{S}(D_{1},D_{2})$, then 
$s(\mathcal{P}_{1}) \in \mathbb{P}(D_{2})$.
\end{enumerate}
The function $\mathbb{S}$ is the \emph{structure function} and $\mathbb{P}$ is the \emph{pattern function}. The sets $\mathbb{S}(D_{1},D_{2})$ are \emph{structure sets}. A member of $\mathbb{P}(D)$ is called a \emph{pattern of $D$}.
\end{definition}

\begin{remark} \label{remark:sstructuredescription} Thus, each pattern in $\mathbb{P}(D)$ must be a finite partition of $D$ into domains, and the patterns in question must contain the trivial partition and be closed under restriction to subdomains. Additionally, every finite partition of $D$ into domains must have a refinement in $\mathbb{P}(D)$ (cofinality). The patchwork condition ensures that if each piece of a pattern is itself replaced by a pattern, then the result is a pattern. 

The structure set $\mathbb{S}(D_{1},D_{2})$ consists of transformations from $S$ having domain $D_{1}$ and image $D_{2}$; the structure sets collectively satisfy ``groupoid-like'' properties (S1-S4). In particular, $\mathbb{S}(D,D)$ is a group, for each $D \in \mathcal{D}^{+}_{S}$. There is a straightforward compatibility requirement (S6). 

The generation property (S5) helps ensure that both $\mathbb{S}$ and $\mathbb{P}$ are ``sufficiently rich'', in an appropriate sense. For each embedding $f: D \rightarrow X$ that is locally determined by $S$, $\mathbb{P}(D)$ is large enough that we can find a pattern of $D$, such that the restriction of $f$ to each member of the pattern is an element of $S$; moreover, $\mathbb{S}$ is sufficiently rich in the sense that each such restriction is a member of the appropriate structure set. 
\end{remark}

\begin{example} \label{example:maximalSstructure} (the maximal $S$-structure)
Let $S$ be an inverse semigroup acting on $X$. For all $D_{1}, D_{2} \in \mathcal{D}^{+}_{S}$, we let 
\[ \mathbb{S}(D_{1},D_{2}) = \{ s \in S \mid dom(s) = D_{1}; im(s) = D_{2} \}, \]
and
\[ \mathbb{P}(D_{1}) = \{ \mathcal{P} \mid \mathcal{P} \text{ is a finite partition of }D_{1}\text{ into domains} \}. \]
If $\mathbb{S}$ and $\mathbb{P}$ are as above, then we refer to $(\mathbb{S},\mathbb{P})$ as the \emph{maximal $S$-structure}. It is easy to check both that $(\mathbb{S},\mathbb{P})$ is an $S$-structure and that it is the largest possible $S$-structure.

The maximal $S$-structure will be used in many of our applications. It has many advantages: it is simple and, very often, it is the only natural $S$-structure. We call attention to two potential disadvantages: 
\begin{enumerate}
\item The groups $\mathbb{S}(D,D)$ will contribute to the size of cell stabilizers (see Proposition \ref{proposition:stab}). In certain cases (see, for instance, Example \ref{example:Roever}), the maximal structure function will yield infinite groups $\mathbb{S}(D,D)$, and, therefore, infinite stabilizer groups. Even worse, these infinite groups may have bad finiteness properties, which will make it impossible to deduce useful finiteness properties for the larger group $\Gamma_{S}$.
\item When we consider product actions, the maximal pattern function would allow partitions like those from Example \ref{example:pathology}. Such examples make it difficult or impossible to establish finiteness properties of the groups $\Gamma_{S}$.
It will therefore be desirable to restrict the possible allowed partitions by specifying smaller pattern sets $\mathbb{P}(D)$.  
\end{enumerate}
\end{example}

\begin{example} \label{example:Brin} (Brin's patterns) 
Let $X$ denote the set of infinite binary strings, and carry over all other notation from Examples \ref{example:clarifying}, \ref{example:V} and \ref{example:nV}. Brin \cite{BrinHighD} specified a certain family of partitions of $X^{2}$, which he called ``patterns''. The \emph{patterns in $X^{2}$} are defined inductively as follows. The trivial partition $\{ X^{2} \}$ is a pattern. If $\mathcal{P}$ is a pattern and $B_{\alpha} \times B_{\beta} \in \mathcal{P}$, then
\[ (\mathcal{P} - \{ B_{\alpha} \times B_{\beta} \}) \cup \{ B_{\alpha0} \times B_{\beta}, B_{\alpha1} \times B_{\beta} \} \]
and
 \[ (\mathcal{P} - \{ B_{\alpha} \times B_{\beta} \}) \cup \{ B_{\alpha} \times B_{\beta0}, B_{\alpha} \times B_{\beta1} \} \]
are also patterns. 

Thus, in words, every pattern is a partition of $X^{2}$ into dyadic rectangles. The trivial partition is a pattern, and a new pattern can be obtained from another by dividing a given rectangle in half, either in the first or second coordinate. 

Brin similarly defines patterns in $X^{n}$, for all $n \geq 2$. It appears to the authors that patterns in $X^{2}$ are simply the partitions of $X^{2}$ into finitely many dyadic rectangles (or into finitely many domains, where the implied semigroup is $2S_{V}$). The patterns are, however, a special type of partition into finitely many domains when $n \geq 3$. For instance, the partitions $\mathcal{P}_{n}$ from Example \ref{example:pathology} are clearly not patterns. 

The definition of pattern extends easily to general products (see Definition \ref{definition:structuresonproducts}). The resulting definition rules out pathologies like the one from Example \ref{example:pathology} and proves to be useful in applications to finiteness properties.
\end{example}

\begin{remark} \label{remark:pip} (patterns in practice)
We are aware of two natural choices for the pattern function $\mathbb{P}$: (1) the unrestricted pattern function, which simply says that every partition of a domain $D$ into finitely many domains is a pattern (as in Example \ref{example:maximalSstructure}); (2) the pattern function that mimics the patterns from Example \ref{example:Brin} (see Example \ref{definition:structuresonproducts}).
We will use no other examples in this paper. 
\end{remark}

\begin{definition} \label{definition:domaintypes} (domain types) Let $D_{1}$, $D_{2} \in \mathcal{D}^{+}$. We say that $D_{1}$ and $D_{2}$ have the \emph{same domain type relative to $\mathbb{S}$} if $\mathbb{S}(D_{1},D_{2}) \neq \emptyset$. We will frequently omit mention of the structure function $\mathbb{S}$, and say that $D_{1}$ and $D_{2}$ have the same domain type, if $\mathbb{S}$ is understood.

We note that having the same domain type is an equivalence relation on $\mathcal{D}^{+}$. We let $[D]$ denote the equivalence class of $D$ under this relation.

We say that $\mathbb{S}$ has \emph{finitely many domain types} if the number of equivalence classes of domains is finite.
\end{definition}

\begin{definition} \label{definition:popattern} (a partial order on partitions)
Let $Y \subseteq X$ be a finite non-empty union of disjoint domains. Let $\mathcal{P}_{1} = \{ D_{1}, \ldots, D_{m} \}$ and $\mathcal{P}_{2}$ be finite partitions of $Y$ into domains.
We write $\mathcal{P}_{1} \preccurlyeq \mathcal{P}_{2}$ if 
\[ \mathcal{P}_{2} = \bigcup_{i=1}^{m} \widehat{\mathcal{P}}_{i}, \]
for some $\widehat{\mathcal{P}}_{i} \in \mathbb{P}(D_{i})$ ($i=1, \ldots, m$).

It is straightforward to check that $\preccurlyeq$ is a partial order on the set of partitions of $Y$ into finitely many domains. We note that the property of transitivity relies on the patchwork property (P4) from Definition \ref{definition:sstructure}
\end{definition}

\begin{proposition} \label{proposition:dspatterns} (A directed set property) Let $Y \subseteq X$ be a finite non-empty union of disjoint domains. Let 
$\mathcal{P}_{1}$ and $\mathcal{P}_{2}$ be finite partitions of $Y$ into domains.

There is $\mathcal{P}_{3}$ such that $\mathcal{P}_{1} \preccurlyeq \mathcal{P}_{3}$ and $\mathcal{P}_{2} \preccurlyeq \mathcal{P}_{3}$.
\end{proposition}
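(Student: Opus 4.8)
The plan is to take the common refinement (meet) of $\mathcal{P}_1$ and $\mathcal{P}_2$ and then pass to a pattern refining it, using cofinality (P5) together with the way $\preccurlyeq$ is defined. First I would form $\mathcal{P}' = \mathcal{P}_1 \wedge \mathcal{P}_2$, the meet from Definition \ref{definition:meetrestrict}. Since $\mathcal{P}_1$ and $\mathcal{P}_2$ are finite partitions of $Y$ into domains and $\mathcal{D}$ is closed under intersection (Lemma \ref{lemma:domainproperties}(1)), $\mathcal{P}'$ is again a finite partition of $Y$ into domains, and it refines both $\mathcal{P}_1$ and $\mathcal{P}_2$.

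Next I would, for each block $D_i \in \mathcal{P}_1$, consider $\mathcal{P}'_{\mid D_i}$, which is a finite partition of $D_i$ into domains (the restriction of $\mathcal{P}'$ to the block $D_i$). By the cofinality axiom (P5), there is a pattern $\widehat{\mathcal{P}}_i \in \mathbb{P}(D_i)$ refining $\mathcal{P}'_{\mid D_i}$. Setting $\mathcal{P}_3 = \bigcup_{i=1}^m \widehat{\mathcal{P}}_i$, the definition of $\preccurlyeq$ (Definition \ref{definition:popattern}) immediately gives $\mathcal{P}_1 \preccurlyeq \mathcal{P}_3$, since $\mathcal{P}_3$ is exactly a union of patterns of the blocks of $\mathcal{P}_1$. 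Moreover $\mathcal{P}_3$ refines $\mathcal{P}'$ (each $\widehat{\mathcal{P}}_i$ refines $\mathcal{P}'_{\mid D_i}$), and $\mathcal{P}'$ refines $\mathcal{P}_2$, so $\mathcal{P}_3$ refines $\mathcal{P}_2$ as an ordinary partition.

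The one genuine obstacle is that $\mathcal{P}_1 \preccurlyeq \mathcal{P}_3$ comes for free from the construction, but $\mathcal{P}_2 \preccurlyeq \mathcal{P}_3$ does not: $\preccurlyeq$ is strictly stronger than ``is a refinement of'' — it requires that $\mathcal{P}_3$ restricted to each block of $\mathcal{P}_2$ be an element of $\mathbb{P}$ of that block, not merely some finite partition into domains. So I would need to verify this separately. The key point is the restriction axiom (P3): for each block $E \in \mathcal{P}_2$, the partition of $E$ induced by $\mathcal{P}_3$ is $\bigcup_i (\widehat{\mathcal{P}}_i)_{\mid E}$, and each $(\widehat{\mathcal{P}}_i)_{\mid D_i \cap E}$ lies in $\mathbb{P}(D_i \cap E)$ by (P3) applied to $\widehat{\mathcal{P}}_i \in \mathbb{P}(D_i)$ and the subdomain $D_i \cap E \subseteq D_i$ (nonempty blocks only). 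Since $\{D_i \cap E : D_i \cap E \neq \emptyset\}$ is precisely $\mathcal{P}_1{}_{\mid E}$, a finite partition of $E$ into domains, and since each piece is refined by a pattern, the patchwork axiom (P4) — applied after first using (P5) to realize $\mathcal{P}_1{}_{\mid E}$ itself, if necessary, as below a pattern of $E$ — assembles these into an element of $\mathbb{P}(E)$. Here a small care is needed: to invoke (P4) one wants $\mathcal{P}_1{}_{\mid E}$ to be a pattern of $E$, which is not automatic, so the cleanest route is instead to choose, at the outset, the blocks $\widehat{\mathcal{P}}_i$ more carefully, or to reorganize the argument so that the common refinement is itself first absorbed into patterns of the $\mathcal{P}_2$-blocks and then patched; in any case the combination of (P3), (P4), and (P5) is exactly what makes $\preccurlyeq$ directed, and the transitivity remark already recorded after Definition \ref{definition:popattern} signals that (P4) is the load-bearing axiom. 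I expect the bookkeeping of which partition is being restricted to which domain, and ensuring every invocation of (P3)/(P4)/(P5) is applied to a genuine pattern rather than an arbitrary partition, to be the only delicate part.
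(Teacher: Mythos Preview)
Your setup is exactly the paper's: form the meet $\mathcal{P}_1\wedge\mathcal{P}_2$, use (P5) on each block $D_i\in\mathcal{P}_1$ to get $\widehat{\mathcal{P}}_i\in\mathbb{P}(D_i)$ refining $(\mathcal{P}_1\wedge\mathcal{P}_2)_{\mid D_i}$, and set $\mathcal{P}'=\bigcup_i\widehat{\mathcal{P}}_i$, so $\mathcal{P}_1\preccurlyeq\mathcal{P}'$. You then correctly spot the real difficulty: this $\mathcal{P}'$ need not satisfy $\mathcal{P}_2\preccurlyeq\mathcal{P}'$, because (P4) would require $(\mathcal{P}_1)_{\mid E}\in\mathbb{P}(E)$ for each $E\in\mathcal{P}_2$, and that is simply not given. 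Your proposal stops at acknowledging this and gesturing at a reorganization, without actually carrying one out; as written it is not yet a proof.

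The paper's resolution is the reorganization you are reaching for, and it is clean: do not try to prove $\mathcal{P}_2\preccurlyeq\mathcal{P}'$. Instead prove the weaker statement $\mathcal{P}_1\wedge\mathcal{P}_2\preccurlyeq\mathcal{P}'$. This needs only (P3), not (P4): each block $E$ of $\mathcal{P}_1\wedge\mathcal{P}_2$ lies entirely inside a single $D_{\sigma(i)}\in\mathcal{P}_1$, so $\mathcal{P}'_{\mid E}=(\widehat{\mathcal{P}}_{\sigma(i)})_{\mid E}\in\mathbb{P}(E)$ by restriction. Now iterate: run the identical construction with $\mathcal{P}_2$ in the role of $\mathcal{P}_1$ and $\mathcal{P}'$ in the role of $\mathcal{P}_2$. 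This yields $\mathcal{P}''$ with $\mathcal{P}_2\preccurlyeq\mathcal{P}''$ and $\mathcal{P}_2\wedge\mathcal{P}'\preccurlyeq\mathcal{P}''$; but $\mathcal{P}'$ already refines $\mathcal{P}_2$, so $\mathcal{P}_2\wedge\mathcal{P}'=\mathcal{P}'$, giving $\mathcal{P}'\preccurlyeq\mathcal{P}''$. Transitivity (which is where (P4) enters) then gives $\mathcal{P}_1\preccurlyeq\mathcal{P}'\preccurlyeq\mathcal{P}''$, and $\mathcal{P}_3=\mathcal{P}''$ works. The missing idea in your sketch is precisely this two-pass trick: prove an asymmetric auxiliary statement that only needs (P3), then apply it twice and chain with transitivity.
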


\begin{proof}
 First, we will show that there is a partition $\mathcal{P}'$ of $Y$ into finitely many domains such that $\mathcal{P}_{1} \preccurlyeq \mathcal{P}'$ and
$\mathcal{P}_{1} \wedge \mathcal{P}_{2} \preccurlyeq \mathcal{P}'$.

The meet $\mathcal{P}_{1} \wedge \mathcal{P}_{2}$ (see Definition \ref{definition:meetrestrict}) 
is a common refinement of both $\mathcal{P}_{1}$ and $\mathcal{P}_{2}$, and each member of the meet is a domain by Lemma \ref{lemma:domainproperties}(1). 
Assume that $\mathcal{P}_{1} = \{ D_{1}, \ldots, D_{m} \}$. For each $D_{i} \in \mathcal{P}_{1}$, there is a subset of $\mathcal{P}_{1} \wedge \mathcal{P}_{2}$ that partitions $D_{i}$. By the cofinality property (P5) from Definition \ref{definition:sstructure}, we can find a partition $\widehat{\mathcal{P}}_{i} \in \mathbb{P}(D_{i})$ that refines the latter partition. Letting
\[ \mathcal{P}' = \bigcup_{i=1}^{m} \widehat{\mathcal{P}}_{i}, \]
we find that $\mathcal{P}_{1} \preccurlyeq \mathcal{P}'$. 

We claim that, moreover, $\mathcal{P}_{1} \wedge \mathcal{P}_{2} \preccurlyeq \mathcal{P}'$. Indeed, let $\mathcal{P}_{1} \wedge \mathcal{P}_{2} = \{ E_{1}, \ldots, E_{n} \}$. For $i=1, \ldots, n$, there is some unique $\sigma(i) \in \{ 1, \ldots, m \}$ such that $E_{i} \subseteq D_{\sigma(i)}$. We have the equality
\[ \mathcal{P}'_{\mid E_{i}} = \left(\mathcal{P}'_{\mid D_{\sigma(i)}}\right)_{\mid E_{i}}, \]
by definition of the restriction (see Definition \ref{definition:meetrestrict}).  Since
\[ \mathcal{P}'_{\mid D_{\sigma(i)}} = \widehat{\mathcal{P}}_{\sigma(i)}, \]
and the latter is in $\mathbb{P}(D_{\sigma(i)})$, property (P3) implies that $\mathcal{P}'_{\mid E_{i}} \in \mathbb{P}(E_{i})$. The equality
\[ \mathcal{P}' = \bigcup_{i=1}^{n} \mathcal{P}'_{\mid E_{i}} \]
now implies that $\mathcal{P}_{1} \wedge \mathcal{P}_{2} \preccurlyeq \mathcal{P}'$, as desired.

Next, we simply apply the preceding argument with $\mathcal{P}_{2}$ playing the role of $\mathcal{P}_{1}$, and $\mathcal{P}'$ playing the role of $\mathcal{P}_{2}$. We conclude that there is $\mathcal{P}''$ such that $\mathcal{P}_{2} \preccurlyeq \mathcal{P}''$ and $\mathcal{P}_{2} \wedge \mathcal{P}' = \mathcal{P}' \preccurlyeq \mathcal{P}''$. Set $\mathcal{P}_{3} = \mathcal{P}''$.
\end{proof}

\subsection{The partially ordered set of pseudovertices} \label{subsection:po}

For the rest of this section, we fix an $S$-structure $(\mathbb{S},\mathbb{P})$.

\begin{definition}  \label{definition:pairs} (the fundamental equivalence relation)  Let  
\[ \mathcal{A} = \{ (f,D) \mid D \text{ is a domain and }f:D \rightarrow X \text{ is locally determined by }S \}. \]
We write 
\[ (f_{1}, D_{1}) \sim (f_{2}, D_{2}) \]
if there is some $h \in \mathbb{S}(D_{1}, D_{2})$ such that $f_{1} = f_{2} \circ h$. We note for future reference that 
$(f_{1}, D_{1}) \sim (f_{2}, D_{2})$ implies $f_{1}(D_{1}) = f_{2}(D_{2})$. 

The relation $\sim$ is an equivalence relation
on $\mathcal{A}$. We denote the equivalence class of $(f,D)$ by $[f,D]$, and the set of all such equivalence classes by $\mathcal{B}$.
\end{definition}

\begin{remark} \label{remark:convenient}
It will frequently be convenient to write $[f,D]$ instead of $[f_{\mid D}, D]$ when the domain $f$ is larger than $D$. We will freely do so in what follows, for the sake of simplicity in notation.
\end{remark}
 
\begin{definition} \label{definition:pseudovertex} (pseudovertex) 
A non-empty subset
\[ v = \{ [f_{1}, D_{1}], \ldots, [f_{m}, D_{m}] \} \]
of $\mathcal{B}$ is called a \emph{pseudovertex} if the images $f_{1}(D_1)$, $\ldots$, $f_{m}(D_{m})$ are pairwise disjoint. The \emph{image of $v$},  denoted $im(v)$, is the set
\[ f_{1}(D_{1}) \cup \ldots \cup f_{m}(D_{m}). \]
We let $\mathcal{PV}_{\mathbb{S}}$ denote the set of all pseudovertices. If $Y \subseteq X$ is expressible as a finite disjoint union of domains, then we let $\mathcal{PV}_{\mathbb{S},Y}$ denote the set of pseudovertices $v$ satisfying $im(v) = Y$. 

If $Y=X$, we may write $\mathcal{V}_{\mathbb{S}}$ instead of $\mathcal{PV}_{\mathbb{S}, X}$. 
\end{definition}

\begin{remark} \label{remark:vertices}
The set $\mathcal{V}_{\mathbb{S}}$ will ultimately be the vertices of the complexes of interest to us.
\end{remark}

\begin{remark} \label{remark:pedantic} We note that the definition of
$\mathcal{PV}_{\mathbb{S},Y}$ depends upon the structure sets $\mathbb{S}(D_{1},D_{2})$, not the pattern function $\mathbb{P}$, which justifies omitting $\mathbb{P}$ from the notation. However, $\mathbb{P}$ will affect the partial order  (Definition \ref{definition:expansion}). 
\end{remark}

\begin{definition} \label{definition:vt} (the type of a pseudovertex)
We say that two pseudovertices 
\[ v_{1} = \{ [f_{1},D_{1}], \ldots, [f_{m},D_{m}] \}
\text{ and } v_{2} = \{ [g_{1},E_{1}], \ldots, [g_{n}, E_{n}] \} \]
have the same \emph{type} if $m=n$ and $[D_{i}] = [E_{i}]$ for $i = 1, \ldots, m$ (possibly after reordering). Recall that $[D]$ denotes the domain type of $D$ (Definition \ref{definition:domaintypes}).
\end{definition}

\begin{remark}
It is straightforward to check that the type of a pseudovertex is well-defined; i.e., if
\[ \{ [f_{1},D_{1}], \ldots, [f_{m},D_{m}] \} = v =
\{ [g_{1},E_{1}], \ldots, [g_{n}, E_{n}] \}, \]
then $m=n$ and $[D_{i}] = [E_{i}]$ for all $i$, possibly after rearrangement.
\end{remark}

\begin{definition} \label{definition:vertexrank} (rank of a pseudovertex)
Let $v$ be a pseudovertex. Since $v \subseteq \mathcal{B}$, $|v|$ has its usual definition (as the cardinality of a set). We call $|v|$ the \emph{rank} of the pseudovertex $v$. 
\end{definition}

\begin{definition} \label{definition:expansion} (expansion) 
Let 
\[ v = \{ [f_{1}, D_{1}] , \ldots, [f_{m}, D_{m}] \} \]
be a pseudovertex. Let $[f_{i}, D_{i}] \in v$. Let $\widehat{D} \in \mathcal{D}^{+}$ have the same domain type as $D_{i}$, let $h \in \mathbb{S}(\widehat{D},D_{i})$, and let $\mathcal{P} \in \mathbb{P}(\widehat{D})$ be non-trivial (i.e., $|\mathcal{P}| \geq 2$).  
We say that
\[ v' = \left(v - \{ [f_{i}, D_{i}] \}\right) \cup \{ [f_{i}h, D] \mid D \in \mathcal{P} \} \]
is an \emph{expansion from $v$ at $[f_{i}, D_{i}]$} (or simply an \emph{expansion}). We write $v \nearrow v'$.  

If there is a (possibly empty) sequence of
expansions
\[ v = v_{0} \nearrow v_{1} \nearrow v_{2} \ldots \nearrow v_{n} = v', \]
then we write $v \leq v'$.
\end{definition}

\begin{remark} \label{remark:futurerefrank}
We note that expansion necessarily increases the rank of a pseudovertex.
\end{remark}

\begin{proposition} (The partial order induced by expansion) \label{proposition:expansion}
If $v$ is a pseudovertex and $v \nearrow v'$, then $v'$ is a pseudovertex having the same image.  Moreover:
\begin{enumerate}
\item The expansion relation $v \nearrow v'$ is well-defined; that is, if 
$v$ is a pseudovertex, $v = \hat{v}$, and $v \nearrow v'$, then 
$\hat{v} \nearrow v'$;
\item The relation $\leq$ is a partial order on $\mathcal{PV}_{\mathbb{S}}$. 
\end{enumerate}
\end{proposition}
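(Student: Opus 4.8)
The plan is to verify three things in sequence: (i) that an expansion $v \nearrow v'$ produces a legitimate pseudovertex with the same image; (ii) that the expansion relation is well-defined, i.e.\ independent of the choice of representatives in the equivalence classes making up $v$; and (iii) that the resulting reachability relation $\leq$ is a partial order. The bulk of the content is in (i) and (ii); transitivity and reflexivity of $\leq$ are immediate from the definition of $\leq$ as the existence of a (possibly empty) chain of expansions, and antisymmetry follows from Remark \ref{remark:futurerefrank}, since expansion strictly increases rank, so $v \leq v'$ and $v' \leq v$ force $|v| = |v'|$ and hence the chains in both directions are empty, giving $v = v'$.

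\medskip

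\textbf{Step (i): $v'$ is a pseudovertex with $\image(v') = \image(v)$.} Write the expansion as replacing $[f_i, D_i]$ by the classes $[f_i h, D]$, $D \in \mathcal{P}$, where $h \in \mathbb{S}(\widehat{D}, D_i)$ and $\mathcal{P} \in \mathbb{P}(\widehat{D})$. First I would check that each $[f_i h, D]$ is a legitimate element of $\mathcal{B}$: this requires $D$ to be a domain (true, by (P2), since $\mathcal{P}$ is a partition of $\widehat D$ into domains) and $f_i h_{\mid D}$ to be locally determined by $S$; the latter holds because $h_{\mid D} \in S$ by Corollary \ref{corollary:closureunderrestriction}, $f_i \in \widehat S$, and $\widehat S$ is an inverse semigroup. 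Next, the images: since $h$ maps $\widehat D$ bijectively onto $D_i$ and $\mathcal{P}$ partitions $\widehat D$, the sets $h(D)$, $D \in \mathcal{P}$, partition $D_i$, so the sets $f_i h(D) = f_i(h(D))$, $D \in \mathcal{P}$, partition $f_i(D_i)$. Hence $\bigcup_{D \in \mathcal{P}} f_i h(D) = f_i(D_i)$, which gives $\image(v') = \image(v)$ directly, and these new image pieces are pairwise disjoint and disjoint from all $f_j(D_j)$ ($j \neq i$) since they all lie inside $f_i(D_i)$, which was disjoint from the others. So $v'$ is a pseudovertex.

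\medskip

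\textbf{Step (ii): well-definedness of $\nearrow$.} This is the step I expect to be the main obstacle, though it is a matter of careful bookkeeping rather than any real difficulty. Suppose $v = \hat v$ as sets of equivalence classes, and $v \nearrow v'$ is the expansion described above at $[f_i, D_i]$. The only subtlety is that $[f_i, D_i]$ may be named by a different representative $(\hat f, \hat D)$ in $\hat v$, with $(\hat f, \hat D) \sim (f_i, D_i)$, so there is $g \in \mathbb{S}(\hat D, D_i)$ with $\hat f = f_i \circ g$. I must show the expansion of $\hat v$ at $[\hat f, \hat D]$ can be taken to produce the same $v'$. To do this, choose the "upstairs" domain for the $\hat v$-expansion to again be $\widehat D$, with translating element $g^{-1} h \in \mathbb{S}(\widehat D, \hat D)$ (well-defined and in the structure set by (S3), (S4)), and the same pattern $\mathcal{P} \in \mathbb{P}(\widehat D)$. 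The resulting new classes are $[\hat f (g^{-1} h), D] = [f_i g g^{-1} h, D]$. Here one checks $g g^{-1} h = h$ as partial bijections — since $h$ has image $D_i$ which is the domain of $g^{-1}$ (viewing $g g^{-1} = \id_{D_i}$, where $\mathbb{S}(D_i,D_i)$ is a group by (S1)--(S4)) — and hence $[\hat f(g^{-1}h), D] = [f_i h, D]$ for each $D \in \mathcal{P}$. Therefore the expansion of $\hat v$ at $[\hat f, \hat D]$ yields exactly $v'$, so $\hat v \nearrow v'$. Combining with (i) and the remarks on $\leq$ above completes the proof.
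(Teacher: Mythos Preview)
Your proof is correct and follows essentially the same approach as the paper. Both arguments handle well-definedness by keeping the same auxiliary domain $\widehat{D}$ and pattern $\mathcal{P}$ while replacing the structure element $h \in \mathbb{S}(\widehat{D}, D_i)$ by its composite with the element witnessing the equivalence of representatives (your $g^{-1}h$ is the paper's $h'h$), and both derive antisymmetry from the rank increase under expansion; your Step (i) merely spells out in detail what the paper declares clear.
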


\begin{proof}
If $v \nearrow v'$ and $v$ is a pseudovertex, then it is clear from the definition of the expansion relation (Definition \ref{definition:expansion}) that $v'$ is also a pseudovertex, and that $im(v) = im(v')$. 

We prove (1). Suppose that $v = \hat{v}$, where $v$ is a pseudovertex. Let $v = \{ [f_{1}, D_{1}], \ldots, [f_{n}, D_{n}] \}$ and
$\hat{v} = \{ [g_{1}, E_{1}], \ldots, [g_{n}, E_{n}] \}$, where $[f_{i}, D_{i}] = [g_{i}, E_{i}]$ for $i = 1, \ldots, n$. We assume that
$v \nearrow v'$ by an expansion at $[f_{1}, D_{1}]$. Thus, there is a domain $\widehat{D}$, an $h \in \mathbb{S}(\widehat{D}, D_{1})$ and a non-trivial $\mathcal{P}\in \mathbb{P}(\widehat{D})$ such that
\[ v' = \left( v - \{ [f_{1}, D_{1}] \} \right) \cup \{ [f_{1}h, D] \mid D \in \mathcal{P} \}. \]
Since $[f_{1}, D_{1}] = [g_{1}, E_{1}]$,
there is $h' \in \mathbb{S}(D_{1}, E_{1})$ such that $g_{1}h' = f_{1}$. It follows directly that $g_{1}h'h = f_{1}h$. Note that
$h'h \in \mathbb{S}(\widehat{D}, E_{1})$ by the compositions property of $S$-structures (property (S4) from Definition \ref{definition:sstructure}).  It follows that $\hat{v} \nearrow \hat{v}'$, where
\[
\hat{v}' 
= \left( \hat{v} - \{ [g_{1}, E_{1}] \} \right) \cup \{ [ g_{1}h'h, D] \mid D \in \mathcal{P} \}. \]
The equality $\hat{v}' = v'$ now follows from the assumption that $[f_{i},D_{i}] = [g_{i},E_{i}]$ for $i = 1, \ldots, n$, and from the fact that  $g_{1}h'h = f_{1}h$. It follows that $\hat{v} \nearrow v'$, as desired.

To prove (2) we first note that Definition \ref{definition:expansion} implies that $\leq$ is both reflexive and transitive. 
The fact that $\leq$ is also antisymmetric follows easily from the fact that expansion increases the cardinality of a pseudovertex.

\end{proof}

\begin{remark} \label{remark:refinement}
Given a pseudovertex $v = \{ [f_{1},D_{1}], \ldots, [f_{m},D_{m}] \}$, the set $\mathcal{P}_{v} = \{ f_{i}(D_{i}) \mid i \in \{ 1, \ldots, m \} \}$ is a partition of $im(v)$. 

We note for future reference that, when $v_{1} < v_{2}$, $\mathcal{P}_{v_{2}}$ is a proper refinement of $\mathcal{P}_{v_{1}}$. 
\end{remark}

\begin{proposition} \label{proposition:orderlocal} 
(the partial order is locally determined) Let $v_{1}$ and $v_{2}$ be pseudovertices and assume that $v_{1} \leq v_{2}$.
\begin{enumerate}
\item  For every pseudovertex $v'_{1} \subseteq v_{1}$, there is a unique pseudovertex $v'_{2} \subseteq v_{2}$ such that $im(v'_{1}) = im(v'_{2})$;
\item If $v'_{1}$ and $v'_{2}$ satisfy $v'_{i} \subseteq v_{i}$ ($i=1,2$) and $im(v'_{1}) = im(v'_{2})$, then $v'_{1} \leq v'_{2}$.
\end{enumerate}
\end{proposition}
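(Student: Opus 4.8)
## Proof Proposal for Proposition \ref{proposition:orderlocal}

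The plan is to reduce to the case of a single expansion step and then argue by induction on the length of an expansion sequence $v_1 = w_0 \nearrow w_1 \nearrow \cdots \nearrow w_n = v_2$. First I would establish the following key observation about a single expansion $w \nearrow w'$: if the expansion occurs at $[f_i, D_i] \in w$, replacing it with $\{[f_ih, D] \mid D \in \mathcal{P}\}$, then for any subset $u \subseteq w$ there is a canonically associated subset $u' \subseteq w'$ with $im(u) = im(u')$ — namely, if $[f_i,D_i] \notin u$ then $u' = u$, and if $[f_i,D_i] \in u$ then $u' = (u - \{[f_i,D_i]\}) \cup \{[f_ih,D] \mid D \in \mathcal{P}\}$. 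In the latter case $u \nearrow u'$ is itself an expansion (at the same pair, with the same $h$ and $\mathcal{P}$), and in the former case $u = u'$; either way $u \leq u'$ and $im(u) = im(u')$. Moreover $u'$ is a pseudovertex since its elements have pairwise disjoint images (being a subset of the images occurring in $w'$).

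For existence in part (1), I would iterate this observation along the expansion sequence: starting from $v_1' \subseteq v_1 = w_0$, produce $v_1' = u_0 \subseteq u_1 \subseteq \cdots$, where $u_{j+1} \subseteq w_{j+1}$ is associated to $u_j \subseteq w_j$ as above, with $im(u_j) = im(v_1')$ throughout and $u_j \leq u_{j+1}$. Setting $v_2' = u_n$ gives $v_2' \subseteq v_2$ with $im(v_2') = im(v_1')$, and simultaneously $v_1' \leq v_2'$ (by transitivity of $\leq$), which also takes care of the ``$\subseteq$'' direction needed later. For uniqueness, suppose $w_2' \subseteq v_2$ also satisfies $im(w_2') = im(v_1')$. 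Since $\mathcal{P}_{v_2}$ (the partition $\{g(E) : [g,E] \in v_2\}$ of $im(v_2)$, as in Remark \ref{remark:refinement}) is a refinement of $\mathcal{P}_{v_1}$, and $im(v_1') = \bigcup_{[g,E] \in v_1'} g(E)$ is a union of blocks of $\mathcal{P}_{v_1}$, the set $im(v_1')$ is a union of blocks of $\mathcal{P}_{v_2}$; hence the collection of $[g,E] \in v_2$ with $g(E) \subseteq im(v_1')$ is forced, and both $v_2'$ and $w_2'$ must equal exactly this collection, since a pseudovertex is determined by which elements of $v_2$ it contains and the images $g(E)$ are pairwise disjoint. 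This forces $v_2' = w_2'$.

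For part (2), suppose $v_1' \subseteq v_1$ and $v_2' \subseteq v_2$ with $im(v_1') = im(v_2')$. Apply the existence construction of part (1) to $v_1' \subseteq v_1$ to obtain a pseudovertex $\widetilde{v}_2 \subseteq v_2$ with $im(\widetilde{v}_2) = im(v_1')$ and $v_1' \leq \widetilde{v}_2$. But then $im(\widetilde{v}_2) = im(v_1') = im(v_2')$ with both $\widetilde{v}_2, v_2' \subseteq v_2$, so by the uniqueness just proved $\widetilde{v}_2 = v_2'$. Hence $v_1' \leq v_2'$, as claimed.

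The main obstacle is bookkeeping rather than anything deep: one must be careful that the ``associated subset'' construction for a single expansion step is genuinely well-defined (it depends only on the combinatorial data of which pair is expanded and by which $h, \mathcal{P}$, all of which are part of the given expansion $w_j \nearrow w_{j+1}$), and that it commutes properly with passing to subsets — i.e. that when $[f_i,D_i] \notin u$ the expanded pieces do not accidentally intersect $im(u)$. This last point is exactly guaranteed by the fact that expansion preserves images and produces a pseudovertex (Proposition \ref{proposition:expansion}): the new pieces $f_ih(D)$ partition the old image $f_i(D_i)$, which is disjoint from $im(u)$. The uniqueness argument, resting on Remark \ref{remark:refinement}, is the other place requiring care, but it is a clean consequence of the refinement property once stated correctly.
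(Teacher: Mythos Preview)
Your proposal is correct and follows essentially the same approach as the paper: reduce to a single expansion $v_1 \nearrow v_2$, split into the two cases according to whether the expanded pair lies in the given subset, and induct along an expansion sequence. The only minor difference is in the uniqueness argument for (1): the paper observes directly that since the images $f_i(D_i)$ in a pseudovertex are non-empty and pairwise disjoint, at most one subset of $v_2$ can have any given image, whereas you route the same conclusion through the refinement property of Remark~\ref{remark:refinement}; both are fine, though the paper's version is slightly more direct.
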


\begin{proof}
Let $v_{1}$ and $v_{2}$ be pseudovertices such that $v_{1} \leq v_{2}$.

We prove (1). Let $v'_{1} \subseteq v_{1}$. We will first prove the existence of a suitable $v'_{2}$ and later consider uniqueness.
 By an easy induction, it suffices to consider the case in which   $v_{1} \nearrow v_{2}$. If the expansion in question occurs at a pair $[f,D] \notin v'_{1}$, then $v'_{1} \subseteq v_{2}$ and we can set $v'_{2} = v'_{1}$. If the expansion occurs at some $[f,D] \in v'_{1}$, then, by Definition \ref{definition:pseudovertex},
\[ v_{2} = (v_{1} - \{ [f,D] \}) \cup \{ [fh, E] \mid E \in \mathcal{P} \}, \]
where $\widehat{D} \in \mathcal{D}^{+}$, $h \in \mathbb{S}(\widehat{D}, D)$, and $\mathcal{P} \in \mathbb{P}(\widehat{D})$ is non-trivial. Since $\{ [f, D] \}$
and $\{ [fh, E] \mid E \in \mathcal{P} \}$ are pseudovertices with the same image, we can let 
\[ v'_{2} = (v'_{1} - \{ [f,D] \}) \cup \{ [fh, E] \mid E \in \mathcal{P} \}, \]
and $im(v'_{1}) = im(v'_{2})$. This demonstrates the existence of a pseudovertex $v'_{2} \subseteq v_{2}$ such that $im(v'_{2}) = im(v'_{1})$. 

The uniqueness of $v'_{2}$ is straightforward: in any pseudovertex 
\[ v = \{ [f_{1}, D_{1}], \ldots, [f_{m}, D_{m}] \} \]      
the images $f_{i}(D_{i})$ are non-empty and pairwise disjoint (see Definition \ref{definition:pseudovertex}), so there is at most one subset of $v$ having any given image. Uniqueness follows directly; this proves (1).

We now prove (2). Assume that $v'_{1} \subseteq v_{1}$, $v'_{2} \subseteq v_{2}$, and $im(v'_{1}) = im(v'_{2})$. We will assume further that $v_{1} \nearrow v_{2}$, since the general case follows from this one by an easy induction. Assume that the expansion in question occurs at $[f,D] \in v_{1}$. 

The proof of (1) shows that either $v'_{2} = v'_{1}$ or   
\[ v'_{2} = (v'_{1} - \{ [f,D] \}) \cup \{ [fh, E] \mid E \in \mathcal{P} \}, \]
where $\mathcal{P}$ and $h$ are as described above. In either case $v'_{1} \leq v'_{2}$. This proves (2).
\end{proof}

\subsection{The action on pseudovertices} \label{subsection:theaction}

Recall that $\widehat{S}$ denotes the collection of all partial bijections of $X$ that are locally determined by $S$ (see Definition \ref{definition:hatS}).

\begin{definition} \label{definition:hatSaction} 
(the partial action of $\widehat{S}$ on pseudovertices)  Let $\hat{s} \in \widehat{S}$, and let
$[f,D] \in \mathcal{B}$. If $f(D)$ is contained in the domain of $\hat{s}$, then we define
\[ \hat{s} \cdot [f,D] = [\hat{s}f,D]. \]
More generally, if
\[ v = \{ [f_{1}, D_{1}], \ldots, [f_{m}, D_{m}] \} \]
is a pseudovertex such that $im(v)$ is contained in the domain of $\hat{s}$, then 
we define
\[ \hat{s} \cdot v = \{ [ \hat{s} f_{1}, D_{1}], \ldots, [ \hat{s} f_{m}, D_{m}] \}. \]
\end{definition}

\begin{proposition} \label{proposition:hatSset} 
(the partial action on $\mathcal{PV}_{\mathbb{S}}$)
Let $v_{1}, v_{2} \in \mathcal{PV}_{\mathbb{S}}$ and suppose $v_{1} \leq v_{2}$. Assume that  $\hat{s} \in \widehat{S}$ and that the domain of $\hat{s}$ contains $im(v_{1}) = im(v_{2})$.
\begin{enumerate}
\item the action of $\hat{s}$ is well-defined on $\mathcal{B}$ and on $\mathcal{PV}$; i.e., if $v_{1} = \tilde{v}_{1} \in \mathcal{PV}$, then $\hat{s} \cdot v_{1} = \hat{s} \cdot \tilde{v}_{1}$;
\item the action of $\hat{s}$ is order-preserving:
$\hat{s} \cdot v_{1} \leq \hat{s} \cdot v_{2}$.
\item $im(\hat{s} \cdot v_{1}) = \hat{s}(im(v_{1}))$.
\end{enumerate}
\end{proposition}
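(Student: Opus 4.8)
The plan is to verify each of the three claims in turn, treating (1) as the substantive one and (2)--(3) as more or less immediate consequences of the definitions.

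First I would prove (1). The key observation is that the action of $\hat s$ on a pair is defined by post-composition, $\hat s\cdot[f,D]=[\hat s f,D]$, and that the equivalence relation $\sim$ (Definition \ref{definition:pairs}) is defined by pre-composition with an element of a structure set: $[f_1,D_1]=[f_2,D_2]$ means $f_1=f_2\circ h$ for some $h\in\mathbb{S}(D_1,D_2)$. So if $[f,D]=[g,E]$, say $f=g\circ h$ with $h\in\mathbb{S}(E,D)$, then $\hat s f=\hat s g h$, whence $[\hat s f,D]=[\hat s g,E]$, i.e.\ $\hat s\cdot[f,D]=\hat s\cdot[g,E]$; here one should note that both $\hat s f$ and $\hat s g$ lie in $\widehat S$ by the closure of $\widehat S$ under composition, and that $im([f,D])=f(D)=g(E)=im([g,E])$ is contained in the domain of $\hat s$, so the expressions are defined. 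For pseudovertices, write $v_1=\{[f_1,D_1],\dots,[f_m,D_m]\}$; applying the pair-level statement termwise shows $\hat s\cdot v_1$ does not depend on the chosen representatives. One must also check that $\hat s\cdot v_1$ is again a pseudovertex, i.e.\ the images $\hat s f_i(D_i)$ are pairwise disjoint: since $\hat s$ is a bijection on its domain and the $f_i(D_i)$ are pairwise disjoint subsets of that domain, their images under $\hat s$ are pairwise disjoint. (If $v_1=\tilde v_1$ as sets of equivalence classes, then after matching up equal classes the termwise computation gives $\hat s\cdot v_1=\hat s\cdot\tilde v_1$.)

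Next, (3) is essentially a restatement of the definition: $im(\hat s\cdot v_1)=\bigcup_i (\hat s f_i)(D_i)=\hat s\bigl(\bigcup_i f_i(D_i)\bigr)=\hat s(im(v_1))$, the middle equality again using that $\hat s$ is a function. I would dispatch this in one line.

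Finally, (2). By an easy induction on the length of a chain of expansions it suffices to treat the case $v_1\nearrow v_2$, an expansion at some $[f_i,D_i]\in v_1$: by Definition \ref{definition:expansion} there are $\widehat D\in\mathcal{D}^+$, $h\in\mathbb{S}(\widehat D,D_i)$, and a non-trivial $\mathcal P\in\mathbb{P}(\widehat D)$ with $v_2=(v_1-\{[f_i,D_i]\})\cup\{[f_ih,D]\mid D\in\mathcal P\}$. Applying $\hat s$ termwise and using $\hat s\cdot[f_ih,D]=[\hat s f_ih,D]=[(\hat s f_i)h,D]$ together with $\hat s\cdot[f_i,D_i]=[\hat s f_i,D_i]$, we see that $\hat s\cdot v_2$ is exactly the expansion of $\hat s\cdot v_1$ at $[\hat s f_i,D_i]$ using the same $\widehat D$, $h$, and $\mathcal P$; hence $\hat s\cdot v_1\nearrow\hat s\cdot v_2$, and in particular $\hat s\cdot v_1\le\hat s\cdot v_2$. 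The only mild point to keep in mind is that all the relevant images stay inside the domain of $\hat s$: $im(v_1)=im(v_2)$ by Proposition \ref{proposition:expansion}, and this is contained in the domain of $\hat s$ by hypothesis, so every application of $\hat s$ above is legitimate.

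The main obstacle, such as it is, is purely bookkeeping: being careful that the two compositional operations in play — pre-composition by structure-set elements (which implements $\sim$ and expansion) and post-composition by $\hat s$ (which implements the action) — genuinely commute, and that at every stage the relevant images lie in the domain of $\hat s$ so that all composites are defined and live in $\widehat S$. There is no real difficulty beyond this.
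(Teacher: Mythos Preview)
Your proof is correct and follows essentially the same approach as the paper: verify well-definedness on $\mathcal{B}$ via the commutation of post-composition by $\hat s$ with pre-composition by structure-set elements, then reduce (2) to the case of a single expansion and observe that the same data $(\widehat D,h,\mathcal P)$ witness $\hat s\cdot v_1\nearrow\hat s\cdot v_2$. One small slip: when you unpack $[f,D]=[g,E]$ you should have $h\in\mathbb{S}(D,E)$ rather than $\mathbb{S}(E,D)$ (see Definition~\ref{definition:pairs}), but this does not affect the argument.
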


\begin{proof}
We prove (1); the statement about $\mathcal{B}$ will be proved in the course of establishing the well-definedness of the action on $\mathcal{PV}$. Let 
\[ v_{1} = \{ [f_{1}, D_{1}], \ldots, [f_{m}, D_{m}] \} \quad \text{and} \quad \tilde{v}_{1} = \{ [g_{1}, E_{1}], \ldots, [g_{m}, E_{m}] \}, \]
where $[f_{i}, D_{i}] = [g_{i}, E_{i}]$, for $i=1, \ldots, m$. 

We choose a subscript $i$. Since $[f_{i},D_{i}] = [g_{i},E_{i}]$, there is $h \in \mathbb{S}(D_{i},E_{i})$ such that $g_{i}h = f_{i}$. Thus,
$\hat{s} g_{i} h = \hat{s} f_{i}$, which proves that $[\hat{s} f_{i}, D_{i}] = [\hat{s} g_{i}, E_{i}]$. 
Since $i$ was arbitrary, we find that $\hat{s} \cdot v_{1} = \hat{s} \cdot \tilde{v}_{1}$, as required.

We next prove (2). Assume that $v_{1} \leq v_{2}$.
It suffices to show that $\hat{s} \cdot v_{1} \leq \hat{s} \cdot v_{2}$ in the special case 
where $v_{1} \nearrow v_{2}$.

Let 
\[ v_{1} = \{ [f_{1}, D_{1}], \ldots, [f_{m}, D_{m}] \} \]
and suppose that $v_{2}$ is an expansion from $v_{1}$ at $[f_{1}, D_{1}]$. Thus there is $\widehat{D} \in \mathcal{D}$,
$h \in \mathbb{S}(\widehat{D}, D_{1})$, and a non-trivial $\mathcal{P} \in \mathbb{P}(\widehat{D})$ such that
\[ v_{2} = \{ [f_{2}, D_{2}], \ldots, [f_{m}, D_{m}] \} \cup \{ [ f_{1}h, E] \mid E \in \mathcal{P} \}. \]

We note that
\[ \hat{s} \cdot v_{1} = \{ [\hat{s} f_{1}, D_{1}], \ldots, [\hat{s} f_{m}, D_{m}] \} \]
and
\[ \hat{s} \cdot v_{2} = \{ [\hat{s} f_{2}, D_{2}], \ldots, [\hat{s} f_{m}, D_{m}] \} \cup \{ [\hat{s} f_{1}h, E] \mid E \in \mathcal{P} \}. \]
It follows that $\hat{s} \cdot v_{2}$ is an expansion from $\hat{s} \cdot v_{1}$ at $[\hat{s} f_{1}, D_{1}]$ via the 
same $\widehat{D}$, $h$, and $\mathcal{P}$ as above. Thus, $\hat{s} \cdot v_{1} \nearrow \hat{s} \cdot v_{2}$.

The final statement follows directly from the definition of the action 
(Definition \ref{definition:hatSaction}).
\end{proof}

\subsection{Directed sets of pseudovertices} \label{subsection:directed}

\begin{lemma} \label{lemma:expansionintocone} (directed set property)
\begin{enumerate}
\item Let 
\[ v = \{ [f_{1}, D_{1}], \ldots, [f_{m}, D_{m}] \} \] 
be a pseudovertex. There exists a pseudovertex $v'$ such that $v \leq v'$ and
\[ v' = \{ [id_{E_{1}}, E_{1}], \ldots, [id_{E_{n}}, E_{n}] \}, \]
where $\{ E_{1}, \ldots, E_{n} \}$ is a finite partition of $im(v)$ into domains.
\item The pseudovertices  
\[ v = \{ [f_{1},D_{1}], \ldots, [f_{m},D_{m}] \} \quad \text{ and } \quad
 v' = \{ [f'_{1},D'_{1}], \ldots, [f'_{n},D'_{n}] \} \]
have a common upper bound if and only if $im(v)=im(v')$. 
\end{enumerate}
\end{lemma}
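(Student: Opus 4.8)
The plan is to treat the two directions of the biconditional separately, the forward implication being essentially immediate. If $v$ and $v'$ admit a common upper bound $w$, then $v \leq w$ and $v' \leq w$, and since a single expansion leaves the image of a pseudovertex unchanged (Proposition \ref{proposition:expansion}), an easy induction on the length of an expansion chain gives $im(v) = im(w) = im(v')$. So for the converse I would assume $im(v) = im(v') =: Y$; by hypothesis $Y$ is a finite disjoint union of domains, and the task is to construct a common upper bound.

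The first step is to reduce to pseudovertices built from identity maps. Applying part (1) of the lemma to $v$ and to $v'$ produces pseudovertices $v_1 \geq v$ and $v_1' \geq v'$ of the form $v_1 = \{ [id_{E_1}, E_1], \ldots, [id_{E_n}, E_n] \}$ and $v_1' = \{ [id_{F_1}, F_1], \ldots, [id_{F_k}, F_k] \}$, where $\mathcal{P} := \{ E_1, \ldots, E_n \}$ and $\mathcal{Q} := \{ F_1, \ldots, F_k \}$ are finite partitions of $Y$ into domains; since $\leq$ is transitive it suffices to find a common upper bound of $v_1$ and $v_1'$. The key point is that for identity-type pseudovertices the expansion order mirrors the partition order $\preccurlyeq$ of Definition \ref{definition:popattern}: if $\mathcal{P} \preccurlyeq \mathcal{P}^{\ast}$, say $\mathcal{P}^{\ast} = \bigcup_{i=1}^{n} \widehat{\mathcal{P}}_i$ with $\widehat{\mathcal{P}}_i \in \mathbb{P}(E_i)$, then expanding $v_1$ successively at each $[id_{E_i}, E_i]$ for which $\widehat{\mathcal{P}}_i$ is nontrivial — using $\widehat{D} = E_i$, $h = id_{E_i} \in \mathbb{S}(E_i, E_i)$, and the pattern $\widehat{\mathcal{P}}_i$ — turns $v_1$ into $\{ [id_D, D] \mid D \in \mathcal{P}^{\ast} \}$ (using $id_{E_i} \circ id_{E_i} = id_{E_i}$ and Remark \ref{remark:convenient}), so this pseudovertex lies above $v_1$; the same holds for $v_1'$ and $\mathcal{Q}$. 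Now Proposition \ref{proposition:dspatterns} furnishes a partition $\mathcal{P}_3$ of $Y$ into domains with $\mathcal{P} \preccurlyeq \mathcal{P}_3$ and $\mathcal{Q} \preccurlyeq \mathcal{P}_3$, and then $w := \{ [id_D, D] \mid D \in \mathcal{P}_3 \}$ is a common upper bound of $v_1$ and $v_1'$, hence of $v$ and $v'$, completing the converse.

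I do not anticipate a genuine obstacle; the work is essentially bookkeeping. One should verify that the chain of expansions from $v_1$ to $\{ [id_D, D] \mid D \in \mathcal{P}_3 \}$ is legitimate — each step removes a single pair still present in the current pseudovertex and uses a pattern with at least two blocks, with the trivial $\widehat{\mathcal{P}}_i$ simply skipped — and that $w$ is genuinely a pseudovertex, i.e.\ the images $id_D(D) = D$ for $D \in \mathcal{P}_3$ are pairwise disjoint, which holds because $\mathcal{P}_3$ is a partition. Everything else is a direct appeal to part (1) and to the directed-set property for $\preccurlyeq$ (Proposition \ref{proposition:dspatterns}).
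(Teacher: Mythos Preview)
Your treatment of part (2) is correct and matches the paper's argument essentially line for line: reduce via part (1) to identity-type pseudovertices, invoke Proposition \ref{proposition:dspatterns} to obtain a common $\preccurlyeq$-upper bound $\mathcal{P}_3$, and take $w = \{[id_D,D]\mid D\in\mathcal{P}_3\}$. The forward direction via image-preservation under expansion is also fine (the paper leaves this direction implicit).

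The gap is that you do not prove part (1); you merely invoke it. The paper's proof of (1) is where the substantive content of the lemma lies, and it relies on the generation axiom (S5) of the $S$-structure, which your proposal never mentions. Concretely: given $[f_i,D_i]$, property (S5) produces a pattern $\mathcal{P}\in\mathbb{P}(D_i)$ such that for each $E\in\mathcal{P}$ the restriction $f_{i\mid E}$ coincides with some $s_E\in\mathbb{S}(E,s_E(E))$; expanding at $[f_i,D_i]$ via this pattern (with $\widehat{D}=D_i$ and $h=id_{D_i}$) yields pairs $[f_i,E]=[s_E,E]=[id_{s_E(E)},s_E(E)]$, and repeating for each $i$ gives the required identity-form $v'$. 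Without (S5) there is no reason the equivalence classes $[f_i,D_i]$ can be rewritten in identity form after expansion, so part (1) is not automatic and your proof of (2) rests on it unproved.
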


\begin{proof}
We first prove (1).
The function $f_{1}: D_{1} \rightarrow X$ is locally determined by $S$. It follows from the generation property (property (S5) from Definition \ref{definition:sstructure}) that there is a non-trivial $\mathcal{P} \in \mathbb{P}(D)$ such that, for $E \in \mathcal{P}$, there is $s_{E} \in S$ such that $E$ is the domain of $s_{E}$, $f_{1 \mid E} = s_{E}$, and $s_{E} \in \mathbb{S}(E, s_{E}(E))$. It follows that $v \nearrow v_{1}$, where 
\[ v_{1} = \{ [f_{2}, D_{2}], \ldots, [f_{m}, D_{m}] \} \cup \{ [f_{1}, E] \mid E \in \mathcal{P} \}, \]
where we let $\widehat{D} = D$ and $h \in \mathbb{S}(\widehat{D},D)$ be $id_{D}$ in the definition of expansion (Definition \ref{definition:expansion}). We note that $[f_{1}, E] = [s_{E}, E]$ for each $E \in \mathcal{P}$, since $f_{1 \mid E} = s_{E}$. Since $s_{E} \in \mathbb{S}(E, s_{E}(E))$, we have the equality $[s_{E}, E] = [id, s_{E}(E)]$. Recall from Lemma \ref{lemma:domainproperties}(2) that the set of domains is closed under translation by elements of $S$. It follows that $s_{E}(E) = E'$, for some domain $E' \in \mathcal{D}^{+}$. We rewrite $v_{1}$ using this fact, and find that 
\[ v_{1} = \{ [f_{2}, D_{2}], \ldots, [f_{m}, D_{m}] \} \cup \{ [id, E'] \mid E' \in \mathcal{P}' \}, \]
where $\mathcal{P}'$ is a finite partition of $f_{1}(D_{1})$ into domains. 

Replacing the pairs $[f_{2}, D_{2}], \ldots, [f_{m}, D_{m}]$ in the same manner, we produce a finite sequence of pseudovertices
\[ v = v_{0} \nearrow v_{1} \nearrow v_{2} \nearrow \ldots \nearrow v_{m}, \]
 and
\[ v_{m} = \{ [id_{\widehat{E}}, \widehat{E}] \mid \widehat{E} \in \widehat{\mathcal{P}} \}, \]
where $\widehat{\mathcal{P}}$ is a finite partition of $im(v)$ into domains. This proves (1). 

Now we prove (2). We can assume, after applying (1), that 
\[ v = \{ [id,D] \mid D \in \mathcal{P}_{1} \} \quad \text{and} \quad
v' = \{ [id,E] \mid E \in \mathcal{P}_{2} \} \]
for some partitions $\mathcal{P}_{1}, \mathcal{P}_{2} \subseteq \mathcal{D}^{+}_{S}$. By Proposition \ref{proposition:dspatterns}, there is a partition $\mathcal{P}_{3}$ such that $\mathcal{P}_{1} \preccurlyeq \mathcal{P}_{3}$ and $\mathcal{P}_{2} \preccurlyeq \mathcal{P}_{3}$. Setting 
\[ v'' = \{ [id,F] \mid F \in \mathcal{P}_{3} \}, \]
we find that $v \leq v''$ and $v' \leq v''$, where the expansions leading from $v$ to $v''$ 
consist of replacing each singleton $\{ [id,D] \}$ ($D \in \mathcal{P}_{1}$) with  
\[ \{ [id,F] \mid F \in \left(\mathcal{P}_{1} \right)_{\mid D} \}, \]
and the expansions leading from $v'$ to $v''$ are defined analogously.
\end{proof}

\begin{corollary} \label{corollary:directedset} (a family of ranked directed sets)
If $v_{1}, v_{2} \in \mathcal{PV}_{\mathbb{S}}$ have identical images (i.e., $im(v_{1}) = im(v_{2})$), then there is $v' \in \mathcal{PV}_{\mathbb{S}}$ such that $v_{1} \leq v'$
and $v_{2} \leq v'$.

In particular, $\mathcal{PV}_{\mathbb{S},Y}$ is a ranked directed set whenever $Y$ is a non-empty finite disjoint union of domains.
\end{corollary}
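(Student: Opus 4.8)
The plan is to obtain the Corollary almost immediately from Lemma \ref{lemma:expansionintocone}(2), with the only extra ingredient being that expansion preserves images. For the main assertion, given $v_{1}, v_{2} \in \mathcal{PV}_{\mathbb{S}}$ with $im(v_{1}) = im(v_{2})$, I would simply invoke Lemma \ref{lemma:expansionintocone}(2): it states that two pseudovertices have a common upper bound if and only if their images agree, so the hypothesis $im(v_{1}) = im(v_{2})$ produces the desired $v'$ with $v_{1} \leq v'$ and $v_{2} \leq v'$.

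For the ``in particular'' clause, I would argue as follows. Any $v_{1}, v_{2} \in \mathcal{PV}_{\mathbb{S},Y}$ satisfy $im(v_{1}) = Y = im(v_{2})$, so the first part yields $v' \in \mathcal{PV}_{\mathbb{S}}$ with $v_{1} \leq v'$ and $v_{2} \leq v'$. Since $v_{1} \leq v'$ is realized by a finite chain of expansions and each single expansion preserves the image (Proposition \ref{proposition:expansion}), an easy induction gives $im(v') = im(v_{1}) = Y$, so $v' \in \mathcal{PV}_{\mathbb{S},Y}$. Hence $\mathcal{PV}_{\mathbb{S},Y}$ is a directed set. To see it is \emph{ranked}, I would take the ranking function $r \colon \mathcal{PV}_{\mathbb{S},Y} \rightarrow \mathbb{N}$ given by $r(v) = |v|$, the rank of $v$ from Definition \ref{definition:vertexrank}; this is a well-defined positive integer because every pseudovertex is a non-empty finite subset of $\mathcal{B}$ (Definition \ref{definition:pseudovertex}). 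If $v_{1} < v_{2}$, then $v_{1} \leq v_{2}$ via a chain of expansions which must be non-empty (since $v_{1} \neq v_{2}$), and each expansion strictly increases the rank by Remark \ref{remark:futurerefrank}; therefore $r(v_{1}) = |v_{1}| < |v_{2}| = r(v_{2})$, as required.

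The argument is essentially bookkeeping, so there is no substantial obstacle; the one point that genuinely needs to be checked is that the common upper bound $v'$ furnished by Lemma \ref{lemma:expansionintocone}(2) lies in $\mathcal{PV}_{\mathbb{S},Y}$ and not merely in the larger set $\mathcal{PV}_{\mathbb{S}}$, which is exactly what the image-preservation property of expansion guarantees.
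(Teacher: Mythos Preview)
Your proof is correct and follows essentially the same approach as the paper's, which simply cites Lemma \ref{lemma:expansionintocone}, Definitions \ref{definition:rankdir} and \ref{definition:vertexrank}, and Remark \ref{remark:futurerefrank}. Your explicit verification that the common upper bound $v'$ lies in $\mathcal{PV}_{\mathbb{S},Y}$ (via image-preservation under expansion) is a helpful clarification that the paper leaves implicit.
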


\begin{proof}
This is now immediate from Lemma \ref{lemma:expansionintocone}, from Definitions \ref{definition:rankdir} and \ref{definition:vertexrank}, and from Remark \ref{remark:futurerefrank}.
\end{proof}

\subsection{Examples: $S$-structures and the expansion relation} \label{subsection:sexamples}

We will now offer some examples of $S$-structures and their associated expansion relations. We will concentrate on the case of maximal $S$-structures, although we will also consider a non-maximal structure for R\"{o}ver's group (see Example \ref{example:Roever}).

\subsubsection{The expansion relation in the maximal $S$-structure}

The expansion relation (Definition \ref{definition:expansion}) takes an especially simple form in the maximal $S$-structure ($\mathbb{S}$,$\mathbb{P}$) (see Example \ref{example:maximalSstructure}). Namely, if $[f,D] \in \mathcal{B}$, any expansion at
$[f,D]$ simply consists of replacing $[f,D]$ with various pairs $[f,E]$, where the domains $E$ range over an arbitrary finite partition of $D$ into domains. This is Proposition \ref{proposition:expansioninSmax}. 

We recall that, if $(\mathbb{S},\mathbb{P})$ is the maximal $S$-structure and $D$ is a domain, then $\mathcal{P} \in \mathbb{P}(D)$ exactly when $\mathcal{P}$ is a finite partition of $D$ into domains.

\begin{proposition}
\label{proposition:expansioninSmax}
(The expansion relation in the maximal $S$-structure) Assume that the inverse semigroup $S$ acts on $X$, and let $(\mathbb{S},\mathbb{P})$ be the maximal $S$-structure.
If $v_{1} = \{ [f_{1},D_{1}], \ldots, [f_{m},D_{m}] \}$ and $v_{1} \nearrow v_{2}$, then there is some $j \in \{1, \ldots, m \}$ and a finite partition $\mathcal{P}$ of $D_{j}$ into domains such that 
\[ v_{2} = \left( \{ [f_{1},D_{1}], \ldots, [f_{m},D_{m}] \} - \{ [f_{j},D_{j}] \} \right) \cup \{ [f_{j},E] \mid E \in \mathcal{P} \}. \]
\end{proposition}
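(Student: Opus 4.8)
The statement to prove is that, in the maximal $S$-structure, a single expansion $v_1 \nearrow v_2$ at a pair $[f_j, D_j]$ has the concrete form asserted: it just replaces $[f_j,D_j]$ by the family $\{[f_j,E] \mid E \in \mathcal{P}\}$ for some finite partition $\mathcal{P}$ of $D_j$ into domains. The plan is to unwind Definition \ref{definition:expansion} and exploit the fact that in the maximal $S$-structure $\mathbb{P}(\widehat{D})$ consists of \emph{all} finite partitions of $\widehat{D}$ into domains, so there is no real constraint coming from the pattern function.

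\textbf{Step 1: Unwind the definition of expansion.} By Definition \ref{definition:expansion}, since $v_1 \nearrow v_2$, there is an index $j$, a domain $\widehat{D} \in \mathcal{D}^+$ with the same domain type as $D_j$, an element $h \in \mathbb{S}(\widehat{D}, D_j)$, and a non-trivial $\widehat{\mathcal{P}} \in \mathbb{P}(\widehat{D})$ such that
\[ v_2 = \left(v_1 - \{[f_j, D_j]\}\right) \cup \{[f_j h, \widehat{E}] \mid \widehat{E} \in \widehat{\mathcal{P}}\}. \]
So I must show the family $\{[f_j h, \widehat{E}] \mid \widehat{E} \in \widehat{\mathcal{P}}\}$ equals $\{[f_j, E] \mid E \in \mathcal{P}\}$ for a suitable partition $\mathcal{P}$ of $D_j$.

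\textbf{Step 2: Transport the partition through $h$.} The key observation is that $h \in \mathbb{S}(\widehat{D}, D_j)$ is a bijection $\widehat{D} \to D_j$ lying in $S$, and by Lemma \ref{lemma:domainproperties}(2) it carries domains to domains. Hence $\mathcal{P} := h(\widehat{\mathcal{P}}) = \{h(\widehat{E}) \mid \widehat{E} \in \widehat{\mathcal{P}}\}$ is a finite partition of $D_j = h(\widehat{D})$ into domains, and it is non-trivial since $h$ is a bijection and $|\widehat{\mathcal{P}}| \geq 2$. For each $\widehat{E} \in \widehat{\mathcal{P}}$, set $E = h(\widehat{E})$. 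By Corollary \ref{corollary:closureunderrestriction} the restriction $h_{\mid \widehat{E}}$ lies in $S$, and in fact $h_{\mid \widehat{E}} \in \mathbb{S}(\widehat{E}, E)$: this restriction is $h \cdot \mathrm{id}_{\widehat{E}}$ up to the evident identification, and $\mathrm{id}_{\widehat{E}} \in \mathbb{S}(\widehat{E},\widehat{E})$ by (S2), $h \in \mathbb{S}(\widehat{D}, D_j)$ restricts appropriately — more cleanly, in the maximal $S$-structure $\mathbb{S}(\widehat{E}, E)$ is \emph{all} of $\{s \in S \mid \mathrm{dom}(s) = \widehat{E},\ \mathrm{im}(s) = E\}$, so $h_{\mid\widehat{E}} \in \mathbb{S}(\widehat{E},E)$ automatically. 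Therefore $(f_j h)_{\mid \widehat{E}} = f_j \circ (h_{\mid \widehat{E}})$ with $h_{\mid \widehat{E}} \in \mathbb{S}(\widehat{E}, E)$, which gives $[f_j h, \widehat{E}] = [f_j, E]$ by the definition of the equivalence relation $\sim$ (Definition \ref{definition:pairs}).

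\textbf{Step 3: Assemble.} Running over all $\widehat{E} \in \widehat{\mathcal{P}}$, we get $\{[f_j h, \widehat{E}] \mid \widehat{E} \in \widehat{\mathcal{P}}\} = \{[f_j, E] \mid E \in \mathcal{P}\}$, and since $v_1 = \{[f_1,D_1],\ldots,[f_m,D_m]\}$, substituting into the formula from Step 1 yields exactly
\[ v_2 = \left(\{[f_1,D_1], \ldots, [f_m,D_m]\} - \{[f_j, D_j]\}\right) \cup \{[f_j, E] \mid E \in \mathcal{P}\}, \]
as claimed. \textbf{The main (mild) obstacle} is Step 2: checking that $h_{\mid \widehat{E}} \in \mathbb{S}(\widehat{E}, h(\widehat{E}))$. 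In the maximal $S$-structure this is immediate because the structure sets are as large as possible, so this is really a bookkeeping point rather than a genuine difficulty; one just has to be careful that the restriction genuinely lies in $S$ (Corollary \ref{corollary:closureunderrestriction}) and that its image is a domain (Lemma \ref{lemma:domainproperties}(2), together with Remark \ref{remark:ranges}).
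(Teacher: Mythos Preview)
Your proof is correct and follows essentially the same approach as the paper's own proof: unwind the definition of expansion, push the partition $\widehat{\mathcal{P}}$ forward through $h$ to obtain $\mathcal{P} = h(\widehat{\mathcal{P}})$, use Corollary \ref{corollary:closureunderrestriction} to see $h_{\mid \widehat{E}} \in S$, invoke maximality of $\mathbb{S}$ to place $h_{\mid \widehat{E}}$ in the appropriate structure set, and conclude $[f_j h, \widehat{E}] = [f_j, h(\widehat{E})]$ via Definition \ref{definition:pairs}. The paper's argument is organized identically and cites the same supporting results.
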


\begin{proof}
Since $v_{1} \nearrow v_{2}$, there is some pair $[f_{j},D_{j}]$, a domain $\widehat{D}$, an $h \in \mathbb{S}(\widehat{D},D_{j})$ and a partition $\widehat{\mathcal{P}}$ of $\widehat{D}$ into finitely many domains such that
\[ v_{2} = \left( \{ [f_{1},D_{1}], \ldots, [f_{m},D_{m}] \} - \{ [f_{j},D_{j}] \} \right) \cup \{ [f_{j}h,E] \mid E \in \widehat{\mathcal{P}} \}. \]
It therefore suffices to show that 
\[ \{ [f_{j}h,E] \mid E \in \widehat{\mathcal{P}} \} = \{ [f_{j},E'] \mid E' \in \mathcal{P} \} \]
for some partition $\mathcal{P}$ of $D_{j}$ into finitely many domains. 

Note that by Corollary \ref{corollary:closureunderrestriction}, $h_{\mid E} \in S$ for each $E \in \widehat{\mathcal{P}}$. Since the domain of $h_{\mid E}$ is $E$ and the codomain of $h_{\mid E}$ is $h(E)$, we must have $h_{\mid E} \in \mathbb{S}(E,h(E))$ by the definition of $(\mathbb{S},\mathbb{P})$. Consider the partition $\mathcal{P} = \{ h(E) \mid E \in \widehat{\mathcal{P}} \}$ of $D_{j}$. We find that 
\[ [f_{j}h,E] = [f_{j},h(E)] \]
for each $E \in \widehat{\mathcal{P}}$, by the definition of the equivalence relation on $\mathcal{B}$ (Definition \ref{definition:pairs}). It follows directly that 
\[ \{ [f_{j}h,E] \mid E \in \widehat{\mathcal{P}} \} = \{ [f_{j}, E'] \mid
E' \in \mathcal{P} \} \]
for  $\mathcal{P}$ as defined above, completing the proof.
\end{proof}

\begin{remark} \label{remark:closedrestrictions}
We say that the structure sets are \emph{closed under restrictions} if, whenever $h \in \mathbb{S}(D_{1},D_{2})$ and $D$ is a domain contained in $D_{1}$, $h \in \mathbb{S}(D, h(D))$.
The above argument generalizes naturally to $S$-structures $(\mathbb{S},\mathbb{P})$ when the structure sets are closed under restrictions. Note that the finite partition 
$\mathcal{P}$ must still come from $\mathbb{P}(D_{j})$ in this more general setting.
\end{remark}

\begin{remark}
\label{remark:easyexpansion}
If we simply replace a pair $[f,D]$ with a list of pairs
\[ [f,E_{1}], \ldots, [f,E_{k}], \]
where $\{ E_{1}, \ldots, E_{k} \} \in \mathbb{P}(D)$, then the result is always an expansion, no matter which $S$-structure $(\mathbb{S},\mathbb{P})$ we are working with. Proposition \ref{proposition:expansioninSmax} asserts that this is the only kind of expansion when we are using the maximal $S$-structure.

There are other kinds of expansions in more general (i.e., more restricted) $S$-structures.
\end{remark}

\subsubsection{Expansions for domains satisfying the compact ultrametric property} \label{subsubsection:expansions}

\begin{definition}
Let $S$ be such that $\mathcal{D}^{+}_{S}$ satisfies the compact ultrametric property. Assume that $v_{1}$ and $v_{2}$ are pseudovertices, $v_{1} = \{ [f_{1},D_{1}], \ldots, [f_{m},D_{m}] \}$, and $v_{1} \nearrow v_{2}$.

We say that $v_{1} \nearrow v_{2}$ is a \emph{simple expansion} if there is a $j \in \{ 1, \ldots, m \}$, a domain $\widehat{D}$, and a transformation
$h \in \mathbb{S}(\widehat{D}, D_{j})$ such that
\[ v_{2} = \left( \{ [f_{1},D_{1}], \ldots, [f_{m},D_{m}] \} - \{ [f_{j},D_{j}] \} \right) \cup \{ [f_{j}h,E] \mid E \in \mathcal{P}_{\widehat{D}} \}. \]
Recall that $\mathcal{P}_{\widehat{D}}$ is the maximal partition of $\widehat{D}$ (Definition \ref{definition:maximal}). 
\end{definition}

\begin{proposition} \label{proposition:factorization} (factorization into simple expansions) Suppose that $\mathcal{D}^{+}_{S}$ satisfies the compact ultrametric property. Let $v_{1}, v_{2}$ be pseudovertices such that $v_{1} \nearrow v_{2}$. We can find a sequence
\[ v_{1} = w_{0} \nearrow w_{1} \nearrow \ldots \nearrow w_{n} = v_{2}, \]
where each expansion $w_{i} \nearrow w_{i+1}$ is simple.
\end{proposition}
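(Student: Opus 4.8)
The plan is to take the single expansion $v_{1} \nearrow v_{2}$, which is governed by a non-trivial pattern $\mathcal{P} \in \mathbb{P}(\widehat{D})$ on a domain $\widehat{D}$, and to break $\mathcal{P}$ into a succession of ``maximal-partition'' refinement steps by means of Lemma \ref{lemma:maximalgeneration}. Each such step will translate directly into a simple expansion, and concatenating them will realize $v_{1} \nearrow v_{2}$ as a composite of simple expansions.

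More precisely, I would first unwind Definition \ref{definition:expansion}: the expansion $v_{1} \nearrow v_{2}$ occurs at some $[f_{j}, D_{j}] \in v_{1}$, and there are a domain $\widehat{D}$ of the same domain type as $D_{j}$, an $h \in \mathbb{S}(\widehat{D}, D_{j})$, and a non-trivial $\mathcal{P} \in \mathbb{P}(\widehat{D})$ with
\[ v_{2} = \bigl(v_{1} - \{[f_{j}, D_{j}]\}\bigr) \cup \{[f_{j}h, E] \mid E \in \mathcal{P}\}. \]
Since $|\mathcal{P}| \geq 2$, each member of $\mathcal{P}$ is a proper subdomain of $\widehat{D}$, so $\mathcal{P}$ is a partition of $\widehat{D}$ into proper subdomains and Lemma \ref{lemma:maximalgeneration} produces a sequence $\{\widehat{D}\} = \mathcal{P}_{0}, \mathcal{P}_{1}, \ldots, \mathcal{P}_{\ell} = \mathcal{P}$ with $\mathcal{P}_{r+1} = (\mathcal{P}_{r} - \{D'_{r}\}) \cup \mathcal{P}_{D'_{r}}$ for some $D'_{r} \in \mathcal{P}_{r}$. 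Discarding the (trivial) steps at which $D'_{r}$ has no proper non-empty subdomain, so that $\mathcal{P}_{D'_{r}}$ is always non-trivial, and noting by an easy induction that every $\mathcal{P}_{r}$ consists of domains, I would then define
\[ w_{r} = \bigl(v_{1} - \{[f_{j}, D_{j}]\}\bigr) \cup \{[f_{j}h, E] \mid E \in \mathcal{P}_{r}\}. \]
Each $w_{r}$ is a pseudovertex: $h$ carries the partition $\mathcal{P}_{r}$ of $\widehat{D}$ to a partition of $D_{j}$, hence $f_{j}h$ carries it to a partition of $f_{j}(D_{j})$, which is disjoint from the other images occurring in $v_{1}$. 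We have $w_{0} = v_{1}$, because $\mathcal{P}_{0} = \{\widehat{D}\}$ and $[f_{j}h, \widehat{D}] = [f_{j}, D_{j}]$ by Definition \ref{definition:pairs} (with witness $h \in \mathbb{S}(\widehat{D}, D_{j})$), and $w_{\ell} = v_{2}$ by construction. Finally, for each $r$ the passage $w_{r} \nearrow w_{r+1}$ is precisely the expansion at $[f_{j}h, D'_{r}] \in w_{r}$ carried out with the domain $D'_{r}$ itself, the element $id_{D'_{r}} \in \mathbb{S}(D'_{r}, D'_{r})$ (property (S2)), and the partition $\mathcal{P}_{D'_{r}}$; since composing $f_{j}h$ with $id_{D'_{r}}$ changes nothing and $\mathcal{P}_{D'_{r}}$ is the maximal partition of $D'_{r}$, this is a simple expansion by definition. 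Concatenation then gives $v_{1} = w_{0} \nearrow w_{1} \nearrow \cdots \nearrow w_{\ell} = v_{2}$, all of whose steps are simple.

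The step I expect to need the most care is verifying that each $w_{r} \nearrow w_{r+1}$ is a legitimate expansion at all, i.e.\ that the maximal partition $\mathcal{P}_{D'_{r}}$ actually lies in $\mathbb{P}(D'_{r})$. This is what makes genuine use of the pattern function: in the compact ultrametric setting the pattern function is the unrestricted one (cf.\ Remark \ref{remark:pip}), for which $\mathbb{P}(D)$ consists of \emph{all} finite partitions of $D$ into domains and hence contains $\mathcal{P}_{D'_{r}}$ automatically. Granting this, the remaining points --- that each $w_{r}$ is a pseudovertex and that $w_{0} = v_{1}$, $w_{\ell} = v_{2}$ --- are routine manipulations with the equivalence relation of Definition \ref{definition:pairs} and the disjointness built into Definition \ref{definition:pseudovertex}.
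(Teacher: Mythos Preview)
Your argument is correct and rests on the same underlying idea as the paper's, but it is organized differently. The paper shows only that whenever $v_{1}\nearrow v_{2}$ one can find an intermediate $\hat v$ with $v_{1}\nearrow\hat v$ simple and $\hat v\le v_{2}$ (using that $\mathcal{P}$ refines $\mathcal{P}_{\widehat D}$), and then appeals to an implicit induction on rank to finish. You instead invoke Lemma~\ref{lemma:maximalgeneration} once to produce the entire chain $\{\widehat D\}=\mathcal P_{0},\dots,\mathcal P_{\ell}=\mathcal P$ and translate each step into a simple expansion, which gives the factorization in one stroke without a separate induction. Both routes use the same combinatorics of maximal partitions; yours is more explicit, while the paper's is more inductive. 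Your caveat that each step requires $\mathcal P_{D'_r}\in\mathbb P(D'_r)$, handled via the unrestricted pattern function of Remark~\ref{remark:pip}, is exactly the same implicit hypothesis the paper's proof needs for $v_{1}\nearrow\hat v$ to be an expansion.
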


\begin{proof}
It suffices to show that, whenever $v_{1} \nearrow v_{2}$, there is some pseudovertex $\hat{v}$ such that $v_{1} \nearrow \hat{v}$ is a simple expansion and $\hat{v} \leq v_{2}$. 

Let $v_{1} = \{ [f_{1},D_{1}], \ldots, [f_{m},D_{m}] \}$. Since
$v_{1} \nearrow v_{2}$, there is some $j \in \{1, \ldots, m \}$, a domain $\widehat{D} \in \mathcal{D}^{+}_{S}$, a partition $\mathcal{P}$ of $\widehat{D}$, and an 
$h \in \mathbb{S}(\widehat{D},D_{j})$ such that 
\[ v_{2} = \left( \{ [f_{1},D_{1}], \ldots, [f_{m},D_{m}] \} - \{ [f_{j},D_{j}] \} \right) \cup \{ [f_{j}h,E] \mid E \in \mathcal{P} \}. \]
Let $\mathcal{P}_{\widehat{D}}$ denote the maximal partition of $\widehat{D}$ (Definition \ref{definition:maximal}). Define $\hat{v}$ as follows:
\[ \hat{v} =  \left( \{ [f_{1},D_{1}], \ldots, [f_{m},D_{m}] \} - \{ [f_{j},D_{j}] \} \right) \cup \{ [f_{j}h,E] \mid E \in \mathcal{P}_{\widehat{D}} \}. \]
Since $\mathcal{P}$ is  necessarily a refinement of $\mathcal{P}_{\widehat{D}}$ (by Lemma \ref{lemma:maximalgeneration}), it follows that $\hat{v} \leq v_{2}$, by Remark \ref{remark:easyexpansion}. Finally, $v_{1} \nearrow \hat{v}$ is a simple expansion, completing the proof.
\end{proof}

\subsubsection{Product $S$-structures}

\begin{definition} \label{definition:structuresonproducts} ($S$-structures for products)
For $i=1, \ldots, n$, let $X_{i}$ be a set, and let $S_{i}$ be an inverse semigroup acting on $X_{i}$. Assume that $\mathcal{D}^{+}_{S_{i}}$ satisfies the compact ultrametric property for $i = 1, \ldots, n$.
We consider the product action of $S_{(1,\ldots,n)}$ on 
$X_{1} \times \ldots \times X_{n}$ (Definition \ref{definition:products}). 

We define an $S_{(1,\ldots,n)}$-structure $(\mathbb{S},\mathbb{P})$ as follows. The structure function $\mathbb{S}$ is the maximal one; i.e., 
\begin{align*}
\mathbb{S}(D_{1} \times \ldots \times D_{n}&,
D'_{1} \times \ldots \times D'_{n}) = \\ 
&\{ (s_{1}, \ldots, s_{n}) \mid dom(s_{i}) = D_{i} \text{ and } im(s_{i}) = D'_{i}, \text{ for }i=1, \ldots,n \}, 
\end{align*}
for all pairs $(D_{1} \times \ldots \times D_{n}, D'_{1} \times \ldots \times D'_{n})$.

The pattern function $\mathbb{P}$ is defined inductively, following the example set by Brin (see Example \ref{example:Brin}). The set $\mathbb{P}(D_{1} \times \ldots \times D_{n})$ contains the partition $\{ D_{1} \times \ldots \times D_{n} \}$. If $\mathcal{P} \in \mathbb{P}(D_{1} \times \ldots \times D_{n})$, $E_{1} \times \ldots \times E_{n} \in \mathcal{P}$, and $j \in \{ 1, \ldots, n \}$, then  
\[ (\mathcal{P}- \{E_{1} \times \ldots \times E_{n}\}) \cup \{ E_{1} \times \ldots \times E_{j-1} \times \widehat{E} \times E_{j+1} \times \ldots \times E_{n} \mid \widehat{E} \in \mathcal{P}_{E_{j}} \} \] 
is also in $\mathbb{P}(D_{1} \times \ldots \times D_{n})$. This completes the inductive definition.
\end{definition}

\begin{remark}
It is straightforward to check that the above $(\mathbb{S},\mathbb{P})$ is an $S_{(1,\ldots,n)}$-structure. We omit the proof.
\end{remark}

\subsubsection{The expansion relation in some examples from Subsection \ref{subsection:examples}}

\begin{example} \label{example:basicexpansion} (The expansion relation when $\mathcal{D}^{+}$  satisfies the compact ultrametric property and the $S$-structure is maximal)  
Propositions \ref{proposition:expansioninSmax} and \ref{proposition:factorization} offer an easy description of the expansion relation when the $S$-structure is maximal and the set of domains $\mathcal{D}^{+}$ satisfies the compact ultrametric property. Namely, a simple expansion simply consists of replacing a pair $[f,D]$ with the list
\[ [f,E_{1}], \ldots, [f,E_{k}], \]
where $\{ E_{1}, \ldots, E_{k} \} = \mathcal{P}_{D}$ is the maximal partition of $D$ (Definition \ref{definition:maximal}). Any expansion is obtainable by repeating simple expansions. 

Consider Thompson's group $V$ (Example \ref{example:V}). We use
 the maximal $S_{V}$-structure. This means that 
\[ \mathbb{S}(B_{\omega_{1}}, B_{\omega_{2}}) = \{ \sigma_{\omega_{1},\omega_{2}} \}, \]
for each pair of finite binary strings $\omega_{1}, \omega_{2}$, and $\mathbb{P}(B_{\omega})$ is the set of all partitions of $B_{\omega}$ into domains.
A typical simple expansion consists of replacing $[f,B_{\omega}] \in \mathcal{B}$
with the elements $[f,B_{\omega 0}]$ and $[f,B_{\omega 1}]$. (We note that
$\{ B_{\omega 0}, B_{\omega 1} \}$ is the maximal partition of $B_{\omega}$.) 
The above completely describes simple expansions and, by extension, the entire partial order on pseudovertices.

In the case of Houghton's group 
$H_{n}$ (Example \ref{example:Houghton}), it is also natural to use the maximal $S_{H_{n}}$-structure. The domains of $S_{H_{n}}$ are as follows: 
\begin{enumerate}
\item singleton sets $\{ (j,k) \}$, where $j \in \{ 1, \ldots, n \}$
and $k \in \mathbb{N}$, which we denote $P_{j,k}$;
\item rays $\{ \{ j \} \times \{ k, k+1, \ldots \} \mid (j,k) \in \{ 1, \ldots, n \} \times \mathbb{N} \}$, which we denote $R_{j,k}$. 
\end{enumerate}
The maximal structure function $\mathbb{S}$ is determined by the following assignments:
\begin{enumerate}
\item $\displaystyle \mathbb{S}(P_{j_{1},k_{1}},P_{j_{2},k_{2}}) = \{ \alpha_{(j_{1},k_{1})}^{(j_{2},k_{2})} \}$, for all pairs $(j_{1},k_{1}), (j_{2},k_{2})$;
\item $\displaystyle \mathbb{S}(R_{j,k}, R_{j,k'}) = \{ \beta_{(j,k)}^{k'-k} \}$, for all $j \in \{1, \ldots, n \}$ and $k,k' \in \mathbb{N}$.
\end{enumerate}
The structure sets $\mathbb{S}(D_{1},D_{2})$ are empty in all remaining cases.

Two rays $R_{j_{1},k_{1}}$ and $R_{j_{2},k_{2}}$ thus have the same domain type (in the sense of Definition \ref{definition:domaintypes}) if and only if $j_{1} = j_{2}$, while any two singleton sets have the same domain type. This makes $n+1$ domain types in all.

We note that the maximal partition of $R_{j,k}$ is $\{ P_{j,k}, R_{j,k+1} \}$, while the maximal partition of $P_{j,k}$ is $\{ P_{j,k} \}$. Simple expansions take a unique form; namely, one replaces $[f,R_{j,k}] \in \mathcal{B}$ with $[f,P_{j,k}]$ and $[f,R_{j,k+1}]$. Such simple expansions totally determine the partial order on pseudovertices. 
\end{example}
 
 \begin{example} \label{example:Roever} (The expansion relation in R\"{o}ver's group) In the case of R\"{o}ver's group, the maximal $S_{R}$-structure leads to infinite structure sets
 $\mathbb{S}(B_{\omega_{1}}, B_{\omega_{2}})$. It will be advantageous to work with a smaller $S_{R}$-structure, especially when we later consider finiteness properties.  
 
 We instead define a structure function $\mathbb{S}$ by the rule
 \[ \mathbb{S}(B_{\omega_{1}},B_{\omega_{2}}) = \{ \sigma_{\omega_{1},\omega_{2}}, b_{\omega_{1},\omega_{2}}, c_{\omega_{1},\omega_{2}}, d_{\omega_{1},\omega_{2}} \}, \]
where $\omega_{1}$ and $\omega_{2}$ are arbitrary finite binary strings. We let $\mathbb{P}$ be the maximal pattern function; i.e., $\mathcal{P} \in \mathbb{P}(B_{\omega})$ if and only if $\mathcal{P}$ is a finite partition of $B_{\omega}$ into domains. 
The verification that $(\mathbb{S},\mathbb{P})$ is indeed an $S_{R}$-structure uses the self-similarity and contracting properties of Grigorchuk's group $G$ \cite{Grigorchuk}, as well as the fact that the elements $\{ 1, b, c, d \}$ are a subgroup of $G$.

We note that the structure sets are not closed under restrictions, since, for instance, 
$b_{\epsilon,\epsilon} \in \mathbb{S}(B_{\epsilon},B_{\epsilon})$, but $b_{\epsilon,\epsilon \mid B_{0}} = a_{0,0} \not \in \mathbb{S}(B_{0},B_{0})$.

Next, we turn to a description of simple expansions. Let $[f,B_{\omega}] \in \mathcal{B}$. A simple expansion at $[f,B_{\omega}]$ is determined by a choice of domain $B_{\omega'}$ and an element $h \in \mathbb{S}(B_{\omega'},B_{\omega})$; the simple expansion is then performed by replacing $[f,B_{\omega}]$ with $[fh,B_{\omega'0}]$ and $[fh,B_{\omega'1}]$. We must now consider various cases, which are determined by the element $h$. By the above description of the set 
$\mathbb{S}(B_{\omega'},B_{\omega})$, we have 
$h \in \{ \sigma_{\omega',\omega}, b_{\omega',\omega}, c_{\omega',\omega}, d_{\omega',\omega} \}$. 

\begin{enumerate}
\item If $h = \sigma_{\omega',\omega}$, then the corresponding expansion is the standard one, as from Thompson's group $V$. Thus, the expansion produces the elements $[f,B_{\omega0}]$ and $[f,B_{\omega1}]$.
\item If $h = b_{\omega',\omega}$, then we have 
$[fb_{\omega',\omega},B_{\omega'0}]$ and $[fb_{\omega',\omega},B_{\omega'1}]$. A straightforward calculation shows that the restrictions of $b_{\omega',\omega}$ to $B_{\omega'0}$ and $B_{\omega'1}$ are $a_{\omega'0,\omega0}$ and $c_{\omega'1,\omega1}$, respectively. Thus, the simple expansion outputs the pairs $[fa_{\omega'0,\omega0},B_{\omega'0}]$ and $[fc_{\omega'1,\omega1},B_{\omega'1}]$. We can factor the elements $a_{\omega'0,\omega0}$ and $c_{\omega'1,\omega1}$ as 
\[ a_{\omega0,\omega0}\sigma_{\omega'0,\omega0} \quad \text{ and } \quad
c_{\omega1,\omega1}\sigma_{\omega'1,\omega1}, \] respectively. The equivalence relation on $\mathcal{B}$ now implies that the simple expansion produces the pairs $[fa_{\omega0,\omega0},B_{\omega0}]$
and $[fc_{\omega1,\omega1},B_{\omega1}] = [f,B_{\omega1}]$.
\item If $h = c_{\omega',\omega}$, then a near-identical analysis shows that the simple expansion outputs the pairs $[fa_{\omega0,\omega0},B_{\omega0}]$ and $[fd_{\omega1,\omega1},B_{\omega_{1}}] = [f,B_{\omega1}]$. Thus, the resulting simple expansion is identical to the one from the previous case.
\item If $h = d_{\omega',\omega}$, then a straightforward calculation shows that the restrictions of $d_{\omega',\omega}$ to $B_{\omega'0}$ and $B_{\omega'1}$ are $\sigma_{\omega'0,\omega0}$ and $b_{\omega'1,\omega1} = b_{\omega1,\omega1} \sigma_{\omega'1,\omega1}$, respectively. The resulting simple expansion therefore outputs 
$[f\sigma_{\omega'0,\omega0},B_{\omega'0}] = [f,B_{\omega0}]$ and
$[fb_{\omega1,\omega1} \sigma_{\omega'1,\omega1}, B_{\omega'1}] =
[f,B_{\omega1}]$. Thus, the result is the standard simple expansion
$[f,B_{\omega0}]$ and $[f,B_{\omega1}]$, exactly as from (1).
 \end{enumerate}
We therefore conclude that there are two simple expansions from 
$[f,B_{\omega}]$:
\begin{enumerate}
\item the ``standard'' one, which outputs $[f,B_{\omega0}]$ and $[f,B_{\omega1}]$, and 
\item a nonstandard one, which outputs $[fa_{\omega0,\omega0},B_{\omega0}]$ and $[f,B_{\omega1}]$.
\end{enumerate}
These two simple expansions completely determine the partial order on pseudovertices.
\end{example}

\section{Two basic constructions of $\Gamma_{S}$-complexes} \label{section:5}

In this section, we will consider two constructions of contractible $\Gamma_{S}$-complexes. Both arise as the order complexes associated to directed sets. 

The first complex, $\Delta(\mathcal{V}_{\mathbb{S}})$, to be considered in Subsection \ref{subsection:directedsets}, will be improved upon in Section \ref{section:expansionscheme}, and the results will ultimately be used when we consider finiteness properties of groups.

The second complex, $\Delta(\mathcal{V}^{ord}_{\mathbb{S}})$, will be considered in Subsection \ref{subsection:ordered}. It holds promise as a $\Gamma_{S}$-complex with smaller stabilizers, but we will not make direct use of it in what follows.   

\subsection{Complexes defined by the directed sets $\mathcal{V}_{\mathbb{S}}$} \label{subsection:directedsets}

Corollary \ref{corollary:directedset} shows that there is a certain natural directed set upon which $\Gamma_{S}$ acts, namely the set $\mathcal{V}_{\mathbb{S}}$ (recall Definition \ref{definition:pseudovertex}). The order complex of a directed set is well-known to be contractible, so this leads directly to the definition of a contractible $\Gamma_{S}$-simplicial complex. In this subsection, we will describe this complex, and also give some information about the orbits and stabilizers of the associated group action. 

In a similar way, we associate contractible simplicial complexes to the directed sets 
$\mathcal{PV}_{\mathbb{S},Y}$, where $Y$ is an arbitrary finite disjoint union of non-empty domains. These complexes will be vital to establishing finiteness properties in subsequent sections.

\begin{theorem} \label{theorem:Hdirectedsets} (directed sets)
The set $\mathcal{V}_{\mathbb{S}}$ is a directed $\Gamma_{S}$-set. 

If $\emptyset \neq Y \subseteq X$ is a finite disjoint union of  domains, then $\mathcal{PV}_{\mathbb{S},Y}$ is a directed set.
\end{theorem}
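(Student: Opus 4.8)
The plan is to deduce Theorem \ref{theorem:Hdirectedsets} directly from the results already assembled in Section \ref{section:Sstructures}, since essentially all the work has been done. First I would observe that the second statement is literally Corollary \ref{corollary:directedset}: for a non-empty finite disjoint union of domains $Y \subseteq X$, the set $\mathcal{PV}_{\mathbb{S},Y}$ consists of all pseudovertices $v$ with $im(v) = Y$, and Corollary \ref{corollary:directedset} asserts that any two such pseudovertices have a common upper bound under $\leq$, so $(\mathcal{PV}_{\mathbb{S},Y}, \leq)$ is a directed set. (The fact that $\leq$ is a genuine partial order is Proposition \ref{proposition:expansion}(2).) So the only thing that needs a short argument is that $\mathcal{V}_{\mathbb{S}} = \mathcal{PV}_{\mathbb{S},X}$ is stable under the $\Gamma_{S}$-action and that the action is by order-preserving bijections.

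For the $\Gamma_{S}$-set claim, I would recall that for $\gamma \in \Gamma_{S} \subseteq \widehat{S}$ the domain of $\gamma$ is all of $X$, so for any pseudovertex $v$ with $im(v) = X$, Definition \ref{definition:hatSaction} applies and $\gamma \cdot v$ is defined. By Proposition \ref{proposition:hatSset}(3), $im(\gamma \cdot v) = \gamma(im(v)) = \gamma(X) = X$, so $\gamma \cdot v \in \mathcal{V}_{\mathbb{S}}$; thus $\Gamma_{S}$ preserves $\mathcal{V}_{\mathbb{S}}$. The action is well-defined by Proposition \ref{proposition:hatSset}(1), and it is an action in the group-theoretic sense because composition of partial bijections is associative and $id_{X}$ acts trivially. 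That it is by bijections follows because $\gamma^{-1} \in \Gamma_{S}$ acts as an inverse: $\gamma^{-1} \cdot (\gamma \cdot v) = (\gamma^{-1}\gamma) \cdot v = id_{X} \cdot v = v$, using that $\hat{s}_1 \cdot (\hat{s}_2 \cdot v) = (\hat{s}_1 \hat{s}_2) \cdot v$ whenever the relevant compositions are defined (immediate from Definition \ref{definition:hatSaction}, since $[\hat{s}_1 \hat{s}_2 f, D] = [\hat{s}_1(\hat{s}_2 f), D]$). Finally, directedness of $\mathcal{V}_{\mathbb{S}}$ is again Corollary \ref{corollary:directedset} applied with $Y = X$ (recall Convention \ref{convention:Xadomain}, which guarantees $X$ is a finite disjoint union of domains, so $\mathcal{V}_{\mathbb{S}}$ is non-empty and the corollary applies).

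There is no real obstacle here; the proof is a bookkeeping exercise in assembling Corollary \ref{corollary:directedset}, Proposition \ref{proposition:expansion}, and Proposition \ref{proposition:hatSset}. The only mildly delicate point — and the one I would be most careful to state cleanly — is that the $\Gamma_{S}$-action is order-preserving, i.e.\ if $v_1 \leq v_2$ in $\mathcal{V}_{\mathbb{S}}$ then $\gamma \cdot v_1 \leq \gamma \cdot v_2$; but this is exactly Proposition \ref{proposition:hatSset}(2) (with $\hat{s} = \gamma$, whose domain $X$ trivially contains $im(v_1) = im(v_2) = X$). So the write-up amounts to: (i) invoke Corollary \ref{corollary:directedset} for the directed-set assertions; (ii) check closure of $\mathcal{V}_{\mathbb{S}}$ under $\Gamma_{S}$ and the group-action axioms via Proposition \ref{proposition:hatSset} and the associativity built into Definition \ref{definition:hatSaction}.

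\begin{proof}
The second assertion is immediate: if $\emptyset \neq Y \subseteq X$ is a finite disjoint union of domains, then $(\mathcal{PV}_{\mathbb{S},Y}, \leq)$ is a partially ordered set by Proposition \ref{proposition:expansion}(2), and Corollary \ref{corollary:directedset} states that any two elements of $\mathcal{PV}_{\mathbb{S},Y}$ admit a common upper bound. Hence $\mathcal{PV}_{\mathbb{S},Y}$ is a directed set.

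We now verify that $\mathcal{V}_{\mathbb{S}} = \mathcal{PV}_{\mathbb{S},X}$ is a directed $\Gamma_{S}$-set. By Convention \ref{convention:Xadomain}, $X$ is a finite disjoint union of domains, so the previous paragraph applies with $Y = X$ and shows that $\mathcal{V}_{\mathbb{S}}$ is a directed set (it is non-empty since $\{ [id_{E}, E] \mid E \in \mathcal{P} \} \in \mathcal{V}_{\mathbb{S}}$ for any finite partition $\mathcal{P}$ of $X$ into domains).

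It remains to check that $\Gamma_{S}$ acts on $\mathcal{V}_{\mathbb{S}}$ by order-preserving bijections. Let $\gamma \in \Gamma_{S}$. Since $\gamma$ is a bijection of $X$, its domain is all of $X$, so for every $v \in \mathcal{V}_{\mathbb{S}}$ the pseudovertex $\gamma \cdot v$ is defined (Definition \ref{definition:hatSaction}), and by Proposition \ref{proposition:hatSset}(3) we have $im(\gamma \cdot v) = \gamma(im(v)) = \gamma(X) = X$; thus $\gamma \cdot v \in \mathcal{V}_{\mathbb{S}}$. This action is well-defined on $\mathcal{V}_{\mathbb{S}}$ by Proposition \ref{proposition:hatSset}(1). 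For $\gamma_{1}, \gamma_{2} \in \Gamma_{S}$ and $v \in \mathcal{V}_{\mathbb{S}}$ with $v = \{ [f_{1}, D_{1}], \ldots, [f_{m}, D_{m}] \}$, Definition \ref{definition:hatSaction} gives
\[ \gamma_{1} \cdot (\gamma_{2} \cdot v) = \{ [\gamma_{1}(\gamma_{2} f_{i}), D_{i}] \}_{i=1}^{m} = \{ [(\gamma_{1} \gamma_{2}) f_{i}, D_{i}] \}_{i=1}^{m} = (\gamma_{1}\gamma_{2}) \cdot v, \]
and $id_{X} \cdot v = v$, so this is a group action. In particular $\gamma^{-1} \cdot (\gamma \cdot v) = v$, so each $\gamma$ acts as a bijection of $\mathcal{V}_{\mathbb{S}}$. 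Finally, if $v_{1} \leq v_{2}$ in $\mathcal{V}_{\mathbb{S}}$, then since the domain of $\gamma$ (namely $X$) contains $im(v_{1}) = im(v_{2}) = X$, Proposition \ref{proposition:hatSset}(2) gives $\gamma \cdot v_{1} \leq \gamma \cdot v_{2}$. Thus $\Gamma_{S}$ acts on $\mathcal{V}_{\mathbb{S}}$ by order-preserving bijections, completing the proof.
\end{proof}
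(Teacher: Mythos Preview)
Your proof is correct and takes essentially the same approach as the paper: both invoke Corollary \ref{corollary:directedset} for directedness and Proposition \ref{proposition:hatSset} (applied to $\Gamma_{S} \subseteq \widehat{S}$) for the $\Gamma_{S}$-set structure. You simply spell out in more detail the verification of the group-action axioms and the order-preserving property, which the paper leaves implicit.
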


\begin{proof}
Corollary \ref{corollary:directedset} says that $\mathcal{V}_{\mathbb{S}}$ and $\mathcal{PV}_{\mathbb{S},Y}$ are directed sets. The fact that $\mathcal{V}_{\mathbb{S}}$ is a $\Gamma_{S}$-set follows by applying Proposition \ref{proposition:hatSset} to $\Gamma_{S} \subseteq \widehat{S}$.

\end{proof}

\begin{theorem} \label{theorem:contractiblecomplexes} (directed set constructions of contractible complexes) The order complex
$\Delta(\mathcal{V}_{\mathbb{S}})$ is a contractible $\Gamma_{S}$-complex. Each complex $\Delta(\mathcal{PV}_{\mathbb{S},Y})$ is contractible.
\end{theorem}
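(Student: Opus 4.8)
The plan is to reduce both assertions to the standard fact that the order complex of a nonempty directed poset is contractible, and then to observe that the $\Gamma_{S}$-action is simplicial for purely formal reasons. Both $\mathcal{V}_{\mathbb{S}}$ and $\mathcal{PV}_{\mathbb{S},Y}$ are directed by Theorem \ref{theorem:Hdirectedsets}, and both are nonempty: taking a finite partition $\mathcal{P}$ of $X$ (respectively of $Y$) into domains, which exists by Convention \ref{convention:Xadomain} (respectively by the standing hypothesis on $Y$), the set $\{[id_{D},D] \mid D \in \mathcal{P}\}$ is a pseudovertex with image $X$ (respectively $Y$). So it suffices to prove the general claim.

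First I would establish that $\Delta(P)$ is contractible for any nonempty directed poset $P$. For path-connectedness: given vertices $p,p'$, directedness yields $q$ with $p,q \leq q$ and $p',q \leq q$, so $\{p,q\}$ and $\{p',q\}$ are chains, hence edges joining $p$ and $p'$ through $q$. For the vanishing of higher homotopy, let $f\colon S^{n}\to\Delta(P)$ be continuous; its image is compact and therefore lies in a finite subcomplex $K$, which involves only finitely many vertices $p_{1},\dots,p_{k}$. Choose $q \in P$ with $p_{i}\leq q$ for all $i$, and let $P_{\leq q}$ denote the subposet of elements $\leq q$. Every chain among the $p_{i}$ remains a chain after adjoining $q$, so $K$ is a subcomplex of $\Delta(P_{\leq q})$; moreover $\Delta(P_{\leq q})$ is the cone with apex $q$, since for any chain $\sigma\subseteq P_{\leq q}$ the set $\sigma\cup\{q\}$ is again a chain, and a cone is contractible. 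Hence $f$ is nullhomotopic in $\Delta(P)$. This shows $\Delta(P)$ is weakly contractible, and since it is a CW complex, Whitehead's theorem gives that it is contractible. Applying this with $P=\mathcal{V}_{\mathbb{S}}$ and with $P=\mathcal{PV}_{\mathbb{S},Y}$ yields contractibility of $\Delta(\mathcal{V}_{\mathbb{S}})$ and of each $\Delta(\mathcal{PV}_{\mathbb{S},Y})$.

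For the $\Gamma_{S}$-equivariance of the first complex: by Theorem \ref{theorem:Hdirectedsets} (via Proposition \ref{proposition:hatSset}), $\Gamma_{S}$ acts on $\mathcal{V}_{\mathbb{S}}$ by order-preserving bijections. An order-preserving bijection of a poset carries chains to chains, hence induces a simplicial automorphism of the order complex; so $\Gamma_{S}$ acts on $\Delta(\mathcal{V}_{\mathbb{S}})$ by simplicial automorphisms, making it a contractible $\Gamma_{S}$-complex.

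I expect no serious obstacle here: the only points requiring care are checking nonemptiness and being precise that one obtains weak contractibility first and then invokes Whitehead's theorem (these complexes are typically infinite-dimensional, so one cannot simply collapse them). If one wishes to avoid the compactness-plus-Whitehead route, an alternative is to write $\Delta(P)$ as the directed union of the subcones $\Delta(P_{\leq q})$ and appeal to the fact that a directed colimit of contractible complexes along the inclusion maps is contractible; either way the argument is routine, and the substance of the theorem is entirely contained in the directed-set property already proved in Corollary \ref{corollary:directedset} and Theorem \ref{theorem:Hdirectedsets}.
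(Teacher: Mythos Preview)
Your proposal is correct and takes essentially the same approach as the paper: both reduce everything to the standard fact that the order complex of a nonempty directed poset is contractible, invoking Theorem \ref{theorem:Hdirectedsets} for directedness. The paper simply cites this fact (referencing Geoghegan's book), whereas you spell out the compactness-plus-cone-plus-Whitehead argument; apart from a small typo (you wrote ``$p,q \leq q$'' where you meant ``$p \leq q$''), your version is a faithful expansion of what the paper leaves implicit.
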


\begin{proof}
The order complex of a directed set is well-known to be contractible. (See Proposition 9.3.14 from \cite{BookbyRoss}, for instance.)  Otherwise, this is an entirely straightforward consequence of Theorem \ref{theorem:Hdirectedsets}.
\end{proof}

We next consider basic properties of the group action.

\begin{proposition} \label{proposition:stab} (A virtual description of vertex stabilizers)
Let 
\[ v = \{ [f_{1}, D_{1}], \ldots, [f_{m}, D_{m}] \}  \]
be a vertex of the order complex $\Delta(\mathcal{V}_{\mathbb{S}})$. We write $H$ in place of $\Gamma_{S}$.

The stabilizer group $H_{v}$ has a finite index subgroup $H'_{v}$ that is isomorphic to the group 
\[ \prod_{i=1}^{m} \mathbb{S}(D_{i}, D_{i}). \]

In particular, if $|\mathbb{S}(D,D)| < \infty$ for all domains $D$, then $H_{v}$ is always finite.
\end{proposition}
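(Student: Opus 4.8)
The plan is to analyze the stabilizer $H_v$ by understanding how an element $\gamma \in \Gamma_S$ that fixes $v$ must act on the pairs $[f_i, D_i]$. First I would observe that $\gamma$ fixing $v$ means $\gamma \cdot v = v$ as sets, so $\gamma$ permutes the pairs $\{[f_1,D_1],\ldots,[f_m,D_m]\}$; since $\gamma \cdot [f_i,D_i] = [\gamma f_i, D_i]$ and $[\gamma f_i, D_i] \sim [f_j, D_j]$ forces (by Definition \ref{definition:pairs}) the equality of images $\gamma(f_i(D_i)) = f_j(D_j)$, each $\gamma \in H_v$ induces a permutation $\rho(\gamma) \in \mathrm{Sym}(m)$ of the index set that preserves domain types, in the sense that $[D_i] = [D_{\rho(\gamma)(i)}]$. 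This gives a homomorphism $\rho: H_v \to \mathrm{Sym}(m)$, whose image lies in the (finite) subgroup of permutations preserving the domain-type labels. Set $H'_v = \ker \rho$; it has finite index in $H_v$.

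Next I would identify $H'_v$ with $\prod_{i=1}^m \mathbb{S}(D_i, D_i)$. An element $\gamma \in H'_v$ fixes each pair individually: $[\gamma f_i, D_i] = [f_i, D_i]$, so there is a unique $h_i \in \mathbb{S}(D_i, D_i)$ with $\gamma f_i = f_i h_i$ — uniqueness because $f_i$ is injective (both $\gamma f_i$ and $f_i h_i$ are maps $D_i \to f_i(D_i)$ and $f_i$ has a left inverse on its image). Define $\Phi: H'_v \to \prod_i \mathbb{S}(D_i,D_i)$ by $\Phi(\gamma) = (h_1,\ldots,h_m)$. I would check $\Phi$ is a homomorphism: if $\gamma' f_i = f_i h'_i$ as well, then $(\gamma\gamma')f_i = \gamma(f_i h'_i)$, and writing $\gamma$ on the image $f_i(D_i) = (f_i h'_i)(D_i)$ we get $\gamma f_i h'_i = f_i h_i h'_i$, so the $i$-th component of $\Phi(\gamma\gamma')$ is $h_i h'_i$; the component-wise group structure of $\prod \mathbb{S}(D_i,D_i)$ matches (each $\mathbb{S}(D_i,D_i)$ is a group by (S2),(S3),(S4)). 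Injectivity: if all $h_i = \mathrm{id}_{D_i}$ then $\gamma f_i = f_i$ on each $D_i$, and since the images $f_i(D_i)$ partition $X$ (as $v \in \mathcal{V}_{\mathbb{S}}$), $\gamma = \mathrm{id}_X$. Surjectivity: given $(h_1,\ldots,h_m)$, define $\gamma$ on $f_i(D_i)$ by $\gamma = f_i h_i f_i^{-1}$; each such piece lies in $\widehat{S}$ (it is a composition of elements of $\widehat S$ on a domain), the pieces have disjoint domains and images covering $X$, so $\gamma \in \widehat{S}$ is a bijection of $X$, i.e. $\gamma \in \Gamma_S$, and clearly $\gamma f_i = f_i h_i$ so $\gamma \in H'_v$ with $\Phi(\gamma) = (h_i)$.

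The last sentence follows immediately: if $|\mathbb{S}(D,D)| < \infty$ for every domain $D$, then $H'_v \cong \prod_{i=1}^m \mathbb{S}(D_i,D_i)$ is finite, and since $[H_v : H'_v] \leq m! < \infty$, the group $H_v$ is finite.

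The main obstacle I anticipate is bookkeeping around the equivalence relation $\sim$ and the partial action: one must be careful that the ``unique $h_i$'' really is unique and really lands in $\mathbb{S}(D_i,D_i)$ rather than merely in $S$, which is where the injectivity of $f_i$ and the hypothesis $v \in \mathcal{V}_{\mathbb{S}}$ (so the $f_i(D_i)$ genuinely partition $X$) both get used; the group-theoretic packaging is then routine. I would also note that this argument is essentially the proof of the full Proposition \ref{proposition:stab}, of which the final statement is a corollary, so in the write-up the last sentence is a one-line consequence rather than a separate argument.
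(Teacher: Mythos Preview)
Your proposal is correct and follows essentially the same approach as the paper: define the permutation homomorphism $H_v \to S_m$, take $H'_v$ to be its kernel, and identify $H'_v$ with $\prod_i \mathbb{S}(D_i,D_i)$ via $\gamma \mapsto (h_1,\ldots,h_m)$ where $\gamma f_i = f_i h_i$. Your write-up is in fact slightly more careful than the paper's, in that you explicitly verify uniqueness of the $h_i$, the homomorphism property, injectivity, and surjectivity, whereas the paper simply asserts that the assignment is an embedding and then observes it is surjective.
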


\begin{proof}
Let $v = \{ [f_{1}, D_{1}], \ldots, [f_{m}, D_{m}] \}$. Since the stabilizer group $H_{v}$ fixes $v$, we have, for a given $\gamma \in H_{v}$, the equality
\[ \{ [f_{1}, D_{1}], \ldots, [f_{m}, D_{m}] \} = \{ [\gamma f_{1}, D_{1}], \ldots, [\gamma f_{m}, D_{m}] \}, \]
from which it follows that $[\gamma f_{i}, D_{i}] = [f_{\sigma_{\gamma}(i)}, D_{\sigma_{\gamma}(i)}]$,
for $i = 1, \ldots, m$, where $\sigma_{\gamma}$ is an element of the symmetric group $S_{m}$. The assignment $\psi: H_{v} \rightarrow S_{m}$ sending $\gamma$ to $\sigma_{\gamma}$ is a homomorphism. Denote the kernel of $\psi$ by $H'_{v}$. It follows that $H'_{v}$ has finite index in $H_{v}$, and, for all $\gamma \in H'_{v}$, $[\gamma f_{i}, D_{i}] = [f_{i}, D_{i}]$, for $i=1, \ldots, m$. Fix $\gamma \in H'_{v}$ for what follows. For $i = 1, \ldots, m$, there is an $h_{i} \in \mathbb{S}(D_{i}, D_{i})$ such that $\gamma f_{i} = f_{i}h_{i}$, by the definition of $\sim$ (see Definition \ref{definition:pairs}). Solving for $\gamma$, we find that $\gamma = f_{i}h_{i}f_{i}^{-1}$ on the 
subset $f_{i}(D_{i})$ of $X$. Since the $f_{i}(D_{i})$ partition $X$, the elements $h_{i}$ completely determine $\gamma$. The desired embedding sends $\gamma$ to $(h_{1}, \ldots, h_{m})$, where the $h_{i}$ are as defined above (and, of course, depend on $\gamma$); this assignment is also a homomorphism.

One easily observes that the above embedding is, in fact, also a surjection.
\end{proof}

\begin{corollary} (construction of $E_{fin}(\Gamma_{S})$) \label{theorem:classifyingspace}
Let $(\mathbb{S}, \mathbb{P})$ be an $S$-structure such that the structure set $\mathbb{S}(D,D)$ is finite, for all 
$D \in \mathcal{D}^{+}$. The complex
$\Delta(\mathcal{V}_{\mathbb{S}})$ is a model for $E_{fin}(\Gamma_{S})$. 
\end{corollary}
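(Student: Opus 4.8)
The plan is to verify the two defining properties of a model for $E_{fin}(\Gamma_S)$: that every finite subgroup of $\Gamma_S$ has a fixed point, and that the fixed-point set of every finite subgroup is contractible (equivalently, by Theorem \ref{theorem:contractiblecomplexes}, that the ambient complex is contractible and all finite-subgroup fixed-point sets are contractible). Since we already know from Theorem \ref{theorem:contractiblecomplexes} that $\Delta(\mathcal{V}_{\mathbb{S}})$ is contractible, and from Proposition \ref{proposition:stab} that all cell stabilizers are finite (using the hypothesis $|\mathbb{S}(D,D)| < \infty$), the action is automatically proper in the relevant sense. So the real content is the statement about fixed-point sets of finite subgroups.

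First I would recall the standard fact that for a simplicial action of a group $G$ on a CW-complex (here, an action without inversions after passing to the barycentric subdivision if necessary), a finite subgroup $Q \leq \Gamma_S$ has nonempty fixed-point set precisely when it stabilizes some simplex, and one can then show $\Delta(\mathcal{V}_{\mathbb{S}})^{Q}$ is itself contractible by exhibiting it as (the order complex of) a directed subposet. Concretely, let $Q \leq \Gamma_S$ be finite. I would first produce a vertex fixed by $Q$: start with any vertex $v_0 \in \mathcal{V}_{\mathbb{S}}$, and use the directed-set property (Corollary \ref{corollary:directedset}, noting all vertices have image $X$) to find a common upper bound $v$ of the finite set $\{q \cdot v_0 : q \in Q\}$; then a standard ``averaging'' argument using directedness — iterating the common-upper-bound construction — yields a vertex $w$ with $q \cdot w = w$ for all $q \in Q$. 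More carefully: since the action of $\Gamma_S$ on $\mathcal{V}_{\mathbb{S}}$ is order-preserving (Proposition \ref{proposition:hatSset}(2)) and $Q$ permutes the finite set of upper bounds one builds, one takes the directed join over the $Q$-orbit repeatedly; because ranks strictly increase under expansion and $Q$ preserves rank, this terminates (or one argues directly that the least common upper bound of a $Q$-orbit, if it exists, is $Q$-fixed). The cleanest route: given any finite $Q$-invariant set of vertices, directedness gives an upper bound; apply this to $Q \cdot v_0$ to get $v_1$; then $Q \cdot v_1$ is again finite, giving $v_2 \geq v_1$; one shows one can arrange $v_{n+1} \geq q v_n$ for all $q$ and that the construction stabilizes, or simply invokes that in a directed poset where the $Q$-action is order-preserving and cofinal chains behave well, $\mathcal{V}_{\mathbb{S}}^{Q}$ is nonempty and directed.

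Then I would show $\mathcal{V}_{\mathbb{S}}^{Q}$ is a directed set under the restricted order: given $w_1, w_2 \in \mathcal{V}_{\mathbb{S}}^{Q}$, both have image $X$, so by Corollary \ref{corollary:directedset} they have a common upper bound in $\mathcal{V}_{\mathbb{S}}$; applying the $Q$-fixing procedure above to this upper bound (and using that the order is $Q$-equivariant so that the set of common upper bounds of $\{w_1, w_2\}$ is $Q$-invariant) produces a $Q$-fixed common upper bound. Hence $\mathcal{V}_{\mathbb{S}}^{Q}$ is directed, so its order complex is contractible by the same fact cited in Theorem \ref{theorem:contractiblecomplexes} (Proposition 9.3.14 of \cite{BookbyRoss}). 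Finally I would note $\Delta(\mathcal{V}_{\mathbb{S}})^{Q} = \Delta(\mathcal{V}_{\mathbb{S}}^{Q})$: a simplex (finite chain) is fixed setwise by $Q$ iff each vertex is fixed, because $Q$ preserves the order and hence cannot nontrivially permute a chain. Together with properness (finite stabilizers, Proposition \ref{proposition:stab}) and the cocompactness-irrelevant observation that $E_{fin}$ requires no finiteness of the complex, this gives that $\Delta(\mathcal{V}_{\mathbb{S}})$ is a model for $E_{fin}(\Gamma_S)$.

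The main obstacle I anticipate is the fixed-vertex argument: turning ``$Q$ permutes the upper bounds of a finite invariant set'' into an actual $Q$-fixed vertex. The slick way is to observe that the action of $\Gamma_S$ is \emph{rank-preserving} (an element of $\Gamma_S$ sends a pseudovertex to one of the same rank, since it just composes with the $f_i$), so within the directed set one can take, among all common upper bounds of $Q \cdot v_0$ of minimal rank, ... — but minimal-rank upper bounds need not be unique, so instead I would use the explicit expansion combinatorics: expansions correspond to refining the partition $\mathcal{P}_v$ of $X$ (Remark \ref{remark:refinement}), two vertices have a common upper bound given by a common refinement of their partitions, and the ``canonical'' common refinement (meet, suitably enlarged via cofinality as in Proposition \ref{proposition:dspatterns}) can be chosen $Q$-equivariantly — but equivariance of the choice is exactly the subtlety, since $Q$ acts by genuine bijections of $X$ mixing the pieces. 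The robust fix is the iteration argument: build $v_0 \leq v_1 \leq v_2 \leq \cdots$ with $q v_n \leq v_{n+1}$ for all $q \in Q$ and all $n$ (possible by directedness applied to the finite set $\{q v_n : q\in Q\} \cup \{v_n\}$), note each $v_n$ has some rank $r_n$ with $r_n \leq r_{n+1}$, and observe that $q v_n \le v_{n+1}$ forces, after finitely many steps, $\mathcal{P}_{v_n}$ to be $Q$-invariant as a partition of $X$ refined compatibly — here one uses that ascending chains of partitions under expansion, while infinite in general, become $Q$-stable because $Q$ is finite and rank is bounded along any particular orbit closure; making this precise, e.g. via a well-chosen ranking function on $Q$-orbits of vertices, is the step requiring the most care.
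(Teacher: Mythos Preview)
Your route is quite different from the paper's. The paper's proof is two sentences: it simply asserts that ``model for $E_{fin}(\Gamma_S)$'' here means \emph{contractible with finite cell stabilizers}, and then cites Theorem~\ref{theorem:contractiblecomplexes} and Proposition~\ref{proposition:stab}. It does not attempt to verify the fixed-point-set characterization at all. So you are proving a strictly stronger statement than the paper actually argues, and doing much more work to get there.

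That said, your argument has a genuine gap exactly where you flag it: producing a $Q$-fixed vertex for finite $Q\leq\Gamma_S$. None of the iteration sketches you give close. Directedness alone does not force a finite group acting order-preservingly to have a fixed point (think of $\mathbb{Z}/2$ swapping two copies of $\mathbb{N}$ inside a directed set), and your rank-based termination idea fails because the ranks $r_n$ can increase without bound. The minimal-rank-upper-bound idea also fails since minimal-rank upper bounds need not be unique, and $Q$ can permute them. What actually works here is specific to this poset: use property~(S5) and Proposition~\ref{proposition:dspatterns} to build, for the finite set $Q$, a partition $\mathcal{P}$ of $X$ into domains such that (i) each $q\in Q$ restricted to each $D\in\mathcal{P}$ lies in the appropriate structure set, and (ii) $\mathcal{P}$ is $Q$-invariant as a set (take the meet $\bigwedge_{q\in Q} q(\mathcal{P}_0)$ of a sufficiently fine starting partition, then refine once more via cofinality to return to patterns). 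The vertex $w=\{[id_D,D]:D\in\mathcal{P}\}$ is then $Q$-fixed, since $q\cdot[id_D,D]=[q,D]=[id_{q(D)},q(D)]$. Once you have one fixed vertex, your directedness-of-$\mathcal{V}^Q$ argument goes through cleanly, because a common upper bound of two $Q$-fixed vertices can itself be pushed up to a $Q$-fixed vertex by the same construction.
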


\begin{proof}
The statement that $\Delta(\mathcal{V}_{\mathbb{S}})$ is a model for $E_{fin}(\Gamma_{S})$ simply means that $\Delta(\mathcal{V}_{\mathbb{S}})$ is contractible and $\Gamma_{S}$ acts with finite cell stabilizers.
These properties follow immediately from Theorem \ref{theorem:contractiblecomplexes} and Proposition \ref{proposition:stab}.
\end{proof}

\begin{proposition} \label{proposition:vertexorbits} (description of orbits in $\Delta(\mathcal{V}_{\mathbb{S}})$ and $\Delta(\mathcal{PV}_{\mathbb{S}})$) 
Two vertices $v_{1}, v_{2} \in \Delta(\mathcal{V}_{\mathbb{S}})$ 
are in the same $\Gamma_{S}$-orbit if and only if they have the same type. Two vertices $v_{1}, v_{2} \in \Delta(\mathcal{PV}_{\mathbb{S}})$ are in the same $\widehat{S}$-orbit if and only if they have the same type.
\end{proposition}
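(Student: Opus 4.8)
The plan is to prove the two claims separately, but with the same underlying mechanism in both cases. Consider the statement about $\Delta(\mathcal{V}_{\mathbb{S}})$ first. The ``only if'' direction is essentially immediate: if $\gamma \in \Gamma_{S}$ and $\gamma \cdot v_{1} = v_{2}$, then writing $v_{1} = \{ [f_{1},D_{1}], \ldots, [f_{m},D_{m}] \}$ we have $\gamma \cdot v_{1} = \{ [\gamma f_{1},D_{1}], \ldots, [\gamma f_{m},D_{m}] \}$ by Definition \ref{definition:hatSaction}, so $v_{2}$ consists of $m$ pairs whose domains are exactly $D_{1}, \ldots, D_{m}$ (up to reordering); in particular $v_{1}$ and $v_{2}$ have the same type in the sense of Definition \ref{definition:vt} (since $[D_{i}] = [D_{i}]$ trivially). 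For the ``if'' direction, suppose $v_{1} = \{ [f_{1},D_{1}], \ldots, [f_{m},D_{m}] \}$ and $v_{2} = \{ [g_{1},E_{1}], \ldots, [g_{m},E_{m}] \}$ have the same type, so after reordering $[D_{i}] = [E_{i}]$ for each $i$, i.e.\ there is $h_{i} \in \mathbb{S}(D_{i},E_{i})$. The idea is to build $\gamma$ piecewise: on the subset $f_{i}(D_{i}) \subseteq X$ define $\gamma$ to be $g_{i} h_{i} f_{i}^{-1}$. Since $f_{i}^{-1} \in \widehat{S}$ (it is a restriction of an element locally determined by $S$, as $[f_{i},D_{i}] \in \mathcal{B}$ means $f_{i}$ is locally determined by $S$), and $h_{i} \in S$, and $g_{i} \in \widehat{S}$, the composite $g_{i}h_{i}f_{i}^{-1}$ is a partial bijection locally determined by $S$ with domain $f_{i}(D_{i})$ and image $g_{i}(E_{i})$. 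Because both $\{ f_{i}(D_{i}) \}$ and $\{ g_{i}(E_{i}) \}$ are partitions of $X$ (this is exactly the defining property of $\mathcal{V}_{\mathbb{S}} = \mathcal{PV}_{\mathbb{S},X}$), the union $\gamma = \bigsqcup_{i=1}^{m} g_{i}h_{i}f_{i}^{-1}$ is a well-defined bijection of $X$, hence $\gamma \in \Gamma_{S}$. One then checks $\gamma f_{i} = g_{i} h_{i}$ on $D_{i}$, so $[\gamma f_{i}, D_{i}] = [g_{i} h_{i}, D_{i}] = [g_{i}, E_{i}]$ using the equivalence relation of Definition \ref{definition:pairs} (with the witness $h_{i} \in \mathbb{S}(D_{i},E_{i})$). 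Thus $\gamma \cdot v_{1} = v_{2}$.

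For the statement about $\Delta(\mathcal{PV}_{\mathbb{S}})$ and the $\widehat{S}$-action, the same construction applies almost verbatim. The ``only if'' direction is identical. For the ``if'' direction, given $v_{1}, v_{2} \in \mathcal{PV}_{\mathbb{S}}$ of the same type, define $\hat{s}$ on $im(v_{1}) = \bigsqcup f_{i}(D_{i})$ piecewise by $\hat{s} = \bigsqcup g_{i}h_{i}f_{i}^{-1}$ exactly as before; now its domain is $im(v_{1})$ and its image is $im(v_{2})$, and $\hat{s} \in \widehat{S}$ since it is a finite disjoint union of elements locally determined by $S$. By Definition \ref{definition:hatSaction} (applied since $im(v_{1})$ is the domain of $\hat{s}$), $\hat{s} \cdot v_{1}$ is defined and equals $v_{2}$ by the same computation. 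Note here we do not need the images to partition $X$; we only need them to be pairwise disjoint, which is the pseudovertex condition, and the partial action of $\widehat{S}$ handles exactly this case.

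The only genuine subtlety — and the one step I would be most careful about — is verifying that $\gamma$ (resp.\ $\hat{s}$) is honestly an element of $\Gamma_{S}$ (resp.\ $\widehat{S}$), i.e.\ that gluing together the pieces $g_{i}h_{i}f_{i}^{-1}$ produces something in $\widehat{S}$. This requires knowing that each $f_{i}^{-1}$ lies in $\widehat{S}$ and that $\widehat{S}$ is closed under the relevant compositions and finite disjoint unions; the former follows because membership $[f_{i},D_{i}] \in \mathcal{B}$ forces $f_{i}$ to be locally determined by $S$ (Definition \ref{definition:pairs} together with Definition \ref{definition:hatS}), so its inverse is too, and $\widehat{S}$ is an inverse semigroup closed under composition (the proposition following Definition \ref{definition:hatS}); the latter is immediate from the definition of $\widehat{S}$ as the set of partial bijections locally determined by $S$, since a disjoint union of finitely many such is again locally determined by $S$. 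Everything else — well-definedness of the whole argument under the choice of representatives, and the type being well-defined — is already recorded in the excerpt (the remark following Definition \ref{definition:vt}, and Proposition \ref{proposition:hatSset}(1)), so no further work is needed there.
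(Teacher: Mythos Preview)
Your proof is correct and follows essentially the same approach as the paper: both construct the required element $\gamma$ (resp.\ $\hat{s}$) piecewise by setting $\gamma_{\mid f_{i}(D_{i})} = g_{i}h_{i}f_{i}^{-1}$ for chosen $h_{i} \in \mathbb{S}(D_{i},E_{i})$, and then verify $\gamma \cdot [f_{i},D_{i}] = [g_{i}h_{i},D_{i}] = [g_{i},E_{i}]$. Your added discussion of why the glued map lies in $\widehat{S}$ (resp.\ $\Gamma_{S}$) is more detailed than the paper's, which simply asserts this from the partition property.
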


\begin{proof}
The type of a vertex is invariant under the action of $\Gamma_{S}$, by Definition \ref{definition:hatSaction}. Thus, it suffices to show that if two vertices have the same type, then they are in the same orbit.

Assume that $v_{1} = \{ [f_{1},D_{1}], \ldots, [f_{m},D_{m}] \}$ and $v_{2} = \{ [g_{1},E_{1}], \ldots, [g_{m},E_{m}] \}$, where $[D_{i}] = [E_{i}]$ for $i=1, \ldots, m$. Since $D_{i}$ and $E_{i}$ have the same domain type, $\mathbb{S}(D_{i},E_{i}) \neq \emptyset$, for $i=1, \ldots, m$; for each $i$, we choose $h_{i} \in \mathbb{S}(D_{i},E_{i})$. 

Define $\gamma : X \rightarrow X$ by letting $\gamma_{\mid f_{i}(D_{i})} = g_{i}h_{i}f_{i}^{-1}$. The fact that $\{ f_{i}(D_{i}) \mid i = 1, \ldots, m \}$ is a partition of $X$ shows that $\gamma \in \Gamma_{S}$. For each $i$, we then have
\begin{align*}
\gamma \cdot [f_{i},D_{i}] &= [g_{i}h_{i}, D_{i}] \\
&= [g_{i}, h_{i}(D_{i})] \\
&= [g_{i}, E_{i}]. 
\end{align*}
(The second-to-last inequality appeals to the definition of the equivalence relation on $\mathcal{A}$; see Definition \ref{definition:pairs}.)     
It follows that $\gamma \cdot v_{1} = v_{2}$.

Given two pseudovertices $v_{1}$ and $v_{2}$
of the same type, essentially the same method produces an element $\hat{s} \in \widehat{S}$ such that $\hat{s} \cdot v_{1} = v_{2}$. 
\end{proof}

\begin{proposition} \label{proposition:isomorphism}
(isomorphisms between $\Delta(\mathcal{PV}_{\mathbb{S},Y_{1}})$ and $\Delta(\mathcal{PV}_{\mathbb{S},Y_{2}})$)
If $Y_{1}$ and $Y_{2}$ are non-empty finite disjoint unions of domains, and $\hat{s} \in \widehat{S}$ is such that $\hat{s}(Y_{1}) = Y_{2}$, then $\hat{s}$ induces a simplicial isomorphism between the complexes
$\Delta(\mathcal{PV}_{\mathbb{S},Y_{1}})$ and $\Delta(\mathcal{PV}_{\mathbb{S},Y_{2}})$. 
\end{proposition}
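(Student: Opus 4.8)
The plan is to show that the partial action of $\hat{s}$ on pseudovertices (Definition \ref{definition:hatSaction}) restricts to an order-isomorphism from $\mathcal{PV}_{\mathbb{S},Y_{1}}$ onto $\mathcal{PV}_{\mathbb{S},Y_{2}}$, and then to invoke the elementary fact that an isomorphism of partially ordered sets induces a simplicial isomorphism of the associated order complexes (whose simplices are precisely the finite chains).

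First I would check that $v \mapsto \hat{s}\cdot v$ maps $\mathcal{PV}_{\mathbb{S},Y_{1}}$ into $\mathcal{PV}_{\mathbb{S},Y_{2}}$. Since $\hat{s}(Y_{1}) = Y_{2}$, the set $Y_{1}$ is contained in the domain of $\hat{s}$; hence for any $v \in \mathcal{PV}_{\mathbb{S},Y_{1}}$, which by definition has $im(v) = Y_{1}$, the expression $\hat{s}\cdot v$ is defined, is again a pseudovertex, and satisfies $im(\hat{s}\cdot v) = \hat{s}(im(v)) = \hat{s}(Y_{1}) = Y_{2}$ by Proposition \ref{proposition:hatSset}(3). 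Thus $\hat{s}\cdot v \in \mathcal{PV}_{\mathbb{S},Y_{2}}$, and by Proposition \ref{proposition:hatSset}(1) this assignment is well defined on pseudovertices.

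Next I would exhibit $\hat{s}^{-1}\cdot(-)$ as a two-sided inverse. Since $\widehat{S}$ is an inverse semigroup we have $\hat{s}^{-1} \in \widehat{S}$, and $\hat{s}^{-1}(Y_{2}) = Y_{1}$, so by the symmetric argument $v \mapsto \hat{s}^{-1}\cdot v$ carries $\mathcal{PV}_{\mathbb{S},Y_{2}}$ into $\mathcal{PV}_{\mathbb{S},Y_{1}}$. For a pseudovertex $v = \{ [f_{1},D_{1}], \ldots, [f_{m},D_{m}] \}$ of $\mathcal{PV}_{\mathbb{S},Y_{1}}$, each image $f_{i}(D_{i})$ lies in $Y_{1}$ and hence in the domain of $\hat{s}$, so $\hat{s}^{-1}\hat{s}f_{i} = f_{i}$ as partial bijections; therefore $[\hat{s}^{-1}\hat{s}f_{i},D_{i}] = [f_{i},D_{i}]$ for each $i$, i.e. $\hat{s}^{-1}\cdot(\hat{s}\cdot v) = v$. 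The reverse composition is handled identically, so the two maps are mutually inverse bijections between the vertex sets of $\Delta(\mathcal{PV}_{\mathbb{S},Y_{1}})$ and $\Delta(\mathcal{PV}_{\mathbb{S},Y_{2}})$.

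Finally I would note that both maps are order-preserving by Proposition \ref{proposition:hatSset}(2): $v \leq v'$ implies $\hat{s}\cdot v \leq \hat{s}\cdot v'$, and likewise for $\hat{s}^{-1}$; hence $v \mapsto \hat{s}\cdot v$ is an isomorphism of posets. Since $\Delta(\mathcal{PV}_{\mathbb{S},Y_{i}})$ is the order complex of $\mathcal{PV}_{\mathbb{S},Y_{i}}$, this order-isomorphism carries finite chains bijectively to finite chains and therefore induces a simplicial isomorphism, completing the argument. I do not expect a genuine obstacle here; the only point needing a little care is the verification that $\hat{s}\cdot(-)$ and $\hat{s}^{-1}\cdot(-)$ are mutually inverse, which rests on the identity $\hat{s}^{-1}\hat{s} = id_{\mathrm{dom}(\hat{s})}$ in the inverse semigroup $\widehat{S}$ together with the fact that identity maps act trivially on $\mathcal{B}$.
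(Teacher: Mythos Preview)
Your proposal is correct and follows essentially the same approach as the paper: the paper's proof simply states that Proposition \ref{proposition:hatSset} makes $\hat{s}$ an isomorphism of the directed sets $\mathcal{PV}_{\mathbb{S},Y_{1}}$ and $\mathcal{PV}_{\mathbb{S},Y_{2}}$, from which the simplicial isomorphism is immediate. You have written out exactly the details behind that one-line appeal, including the verification that $\hat{s}\cdot(-)$ and $\hat{s}^{-1}\cdot(-)$ are mutually inverse and order-preserving.
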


\begin{proof}
The fact that $\hat{s}$ determines an isomorphism between the directed sets $\mathcal{PV}_{\mathbb{S},Y_{1}}$ and $\mathcal{PV}_{\mathbb{S},Y_{2}}$ is a simple consequence of Proposition \ref{proposition:hatSset}. It then follows immediately that $\hat{s}$ induces the required simplicial isomorphism.
\end{proof}

\subsection{Complexes defined by ordered vertices} \label{subsection:ordered}

Write $\mathcal{V}$ in place of $\mathcal{V}_{\mathbb{S}}$. Our goal in this subsection will be to build a class of contractible $\Gamma_{S}$-complexes that will have smaller vertex stabilizers than the complexes from Theorem \ref{theorem:contractiblecomplexes}. In some cases, we will obtain free actions of $\Gamma_{S}$.

In what follows, we will need to fix a linear ordering $\preceq$ on the set $\{ [D] \mid D \in \mathcal{D}^{+} \}$
of domain types (see Definition \ref{definition:domaintypes}).

\begin{definition} (the ordered vertices $\mathcal{V}^{ord}$) 
Let $\mathcal{V}^{ord}$ be the set of all $m$-tuples  
\[ ([f_{1},D_{1}], \ldots, [f_{m},D_{m}]) \] 
where $m \in \mathbb{N}$, $\{ [f_{1},D_{1}], \ldots,
[f_{m},D_{m}] \} \in \mathcal{V}$, and $[D_{i}] \preceq [D_{j}]$  if  $i<j$.
\end{definition}

\begin{remark} \label{remark:explainordervertices} 
An element of $\mathcal{V}^{ord}$ is simply the result of ordering the members of a vertex 
\[ \{ [f_{1}, D_{1}], \ldots, [f_{m}, D_{m}] \} \in \mathcal{V} \]
into an $m$-tuple. The additional constraint imposed by $\preceq$ is unnecessary for the theory, but including it helps to minimize the number of orbits of vertices under the action of $\Gamma_{S}$ (respectively, $H$).
\end{remark}

\begin{definition} (order-forgetting function) 
Define $F: \mathcal{V}^{ord} \rightarrow \mathcal{V}$ by the rule
\[ F( [f_{1}, D_{1}], \ldots, [f_{m}, D_{m}]) = \{ [f_{1}, D_{1}], \ldots, [f_{m}, D_{m}] \}. \]
The function $F$ is the \emph{order-forgetting function}.
\end{definition}
    
\begin{definition} (the partial order on $\mathcal{V}^{ord}$) 
Let $v_{1}, v_{2} \in \mathcal{V}^{ord}$. We write $v_{1} < v_{2}$ if $F(v_{1}) < F(v_{2})$. (I.e., $v_{1} \leq v_{2}$ if $v_{1} = v_{2}$ or if $F(v_{2})$ is obtainable from $F(v_{1})$ by a finite, non-empty sequence of expansions.) It is routine to check that $\leq$ is a partial order on $\mathcal{V}^{ord}$.
\end{definition}

\begin{definition} (the action on $\mathcal{V}^{ord}$)
If $\mathcal{V} = \mathcal{V}_{\mathbb{S}}$, then $\mathcal{V}^{ord}$ is a $\Gamma_{S}$-set. 
The action of a given $\gamma \in \Gamma_{S}$ is as follows:
\[ \gamma \cdot ([f_{1},D_{1}], \ldots, [f_{m},D_{m}]) = (\gamma \cdot [f_{1},D_{1}], \ldots,
\gamma \cdot [f_{m}, D_{m}]). \]
The proof that the above assignment determines an action $\cdot : \Gamma_{S} \times
\mathcal{V}^{ord} \rightarrow \mathcal{V}^{ord}$ is a minor modification of the proof of Proposition \ref{proposition:hatSset} and will be omitted.
\end{definition}

\begin{theorem} (the complex $\Delta(\mathcal{V}^{ord})$) Assume that each vertex $v \in \mathcal{V}$ admits at least one expansion.
The $\Gamma_{S}$-complex  $\Delta(\mathcal{V}^{ord})$ is  contractible 
\end{theorem}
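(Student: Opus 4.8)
The plan is to show that $\Delta(\mathcal{V}^{ord})$ is contractible by relating it directly to the already-known contractible complex $\Delta(\mathcal{V})$ from Theorem \ref{theorem:contractiblecomplexes}. The order-forgetting function $F \colon \mathcal{V}^{ord} \to \mathcal{V}$ is surjective, order-preserving, and order-reflecting in the following sense: $v_1 < v_2$ in $\mathcal{V}^{ord}$ exactly when $F(v_1) < F(v_2)$ in $\mathcal{V}$. Moreover, the fibers $F^{-1}(w)$ for $w \in \mathcal{V}$ are \emph{antichains}: two distinct orderings of the same finite set $w$ are never comparable in $\mathcal{V}^{ord}$, since comparability requires the underlying sets to differ by a nonempty sequence of expansions (which strictly increases cardinality). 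So $F$ is a covering-like map of posets whose fibers carry the trivial (discrete) order.

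First I would prove that $\mathcal{V}^{ord}$ is itself a directed set. Given $v_1, v_2 \in \mathcal{V}^{ord}$, apply Corollary \ref{corollary:directedset} (via $\mathrm{im}(F(v_1)) = \mathrm{im}(F(v_2)) = X$) to obtain $w \in \mathcal{V}$ with $F(v_1) \le w$ and $F(v_2) \le w$. If $F(v_1) = w$, the hypothesis that every vertex admits at least one expansion lets us replace $w$ by a strict expansion of it, so we may assume $F(v_1) < w$ and $F(v_2) < w$, i.e. $w$ is obtained from each $F(v_i)$ by a nonempty sequence of expansions. Now choose \emph{any} element $v \in F^{-1}(w)$ (nonempty since the domain types of $w$ can always be linearly ordered by $\preceq$). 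Then $F(v_i) < F(v) = w$ gives $v_i < v$ in $\mathcal{V}^{ord}$ for $i = 1,2$. Hence $\mathcal{V}^{ord}$ is directed. Then, exactly as in the proof of Theorem \ref{theorem:contractiblecomplexes}, the order complex of a directed set is contractible (Proposition 9.3.14 of \cite{BookbyRoss}), so $\Delta(\mathcal{V}^{ord})$ is contractible. One should also note that $F$ and the action are compatible, so this is genuinely a $\Gamma_S$-complex, though contractibility is the only assertion in the statement.

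The main obstacle — really the only subtle point — is the role of the hypothesis that every vertex admits an expansion: without it, one might worry that $v_1$ and $v_2$ have no strict common upper bound when their underlying sets already coincide and equal a maximal vertex $w$, in which case $F^{-1}(w)$ contains many incomparable reorderings and $v_1, v_2$ fail to be bounded. The hypothesis rules this out by guaranteeing we can always push up to a genuinely larger vertex before picking a representative in the fiber. I expect the bookkeeping here to be entirely routine once this point is isolated; no delicate homotopy-theoretic argument is needed, since we are piggybacking on the standard contractibility of directed-set order complexes rather than re-deriving it.

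It is worth recording explicitly why $F^{-1}(w) \neq \emptyset$ for every $w \in \mathcal{V}$: given $w = \{[f_1,D_1],\ldots,[f_m,D_m]\}$, sort the indices so that $[D_{i}] \preceq [D_{j}]$ whenever $i < j$ (using the fixed linear order $\preceq$ on domain types), and the resulting tuple lies in $\mathcal{V}^{ord}$ and maps to $w$ under $F$. This is all that is needed to make the directedness argument above go through.
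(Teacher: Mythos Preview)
Your proposal is correct and follows essentially the same argument as the paper: both prove that $\mathcal{V}^{ord}$ is directed by lifting a common upper bound from $\mathcal{V}$ via the surjective order-forgetting map $F$, using the expansion hypothesis to arrange strict inequalities $F(v_i) < w$ so that any preimage $v \in F^{-1}(w)$ dominates both $v_i$. Your additional remarks (that fibers of $F$ are antichains, and the explicit verification that $F$ is surjective) are not in the paper's proof but are helpful clarifications; the only minor imprecision is that you should also handle the case $F(v_2) = w$, not just $F(v_1) = w$, when passing to a strict expansion.
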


\begin{proof}
We first show that $(\mathcal{V}^{ord}, \leq)$ is a directed set. Let $v_{1}, v_{2} \in \mathcal{V}^{ord}$. Since $(\mathcal{V}, \leq)$ is a directed set by Theorem \ref{theorem:Hdirectedsets}, there is some $\hat{v} \in \mathcal{V}$ such that
$F(v_{1}) \leq \hat{v}$ and $F(v_{2}) \leq \hat{v}$. After expansion from $\hat{v}$ if necessary (here we use the assumption that $\hat{v}$ admits at least one expansion), we can assume without loss of generality that $F(v_{1}) < \hat{v}$ and $F(v_{2}) < \hat{v}$. Since the order-forgetting map is clearly surjective, we have $\hat{v} = F(v)$, for some $v \in \mathcal{V}^{ord}$. It follows that $v_{1}, v_{2} < v$, so $(\mathcal{V}^{ord}, \leq)$ is a directed set. It follows that
$\Delta(\mathcal{V}^{ord})$ is contractible.

We now show that $\Gamma_{S}$ acts in an order-preserving way. Let $v_{1} < v_{2}$, where $v_{i} \in \mathcal{V}^{ord}$ for $i=1,2$. It follows that $F(v_{1}) < F(v_{2})$. For $\gamma \in \Gamma_{S}$, we have $\gamma \cdot F(v_{1}) < \gamma \cdot F(v_{2})$, 
since the action of $\Gamma_{S}$ on $\mathcal{V}$ is order-preserving (Proposition \ref{proposition:hatSset} and Theorem \ref{theorem:contractiblecomplexes}). It is clear from the definition of $F$ that $\gamma \cdot F(v) = F( \gamma \cdot v)$, for all $\gamma \in \Gamma_{S}$ and $v \in \mathcal{V}^{ord}$, so $F(\gamma \cdot v_{1}) < F(\gamma \cdot v_{2})$. It then follows that $\gamma \cdot v_{1} < \gamma \cdot v_{2}$. Thus, $\Gamma_{S}$ preserves the order on $\mathcal{V}^{ord}$.
\end{proof}

 \begin{proposition} \label{proposition:stabord} (A virtual description of vertex stabilizers in $\Delta(\mathcal{V}^{ord})$)
Write $H$ in place of $\Gamma_{S}$.
Let 
\[ v = ([f_{1}, D_{1}], \ldots, [f_{m}, D_{m}]) \in \mathcal{V}^{ord} \]
be a vertex in $\mathcal{V}^{ord}$.

The stabilizer group $H_{v}$ is isomorphic to
\[ \prod_{i=1}^{m} \mathbb{S}(D_{i}, D_{i}). \]

In particular, if $|\mathbb{S}(D,D)| < \infty$ for all domains $D$, then $H_{v}$ is always finite. If $\mathbb{S}(D,D)$ is the trivial group for all domains $D$, then the action of $H$ on $K(\mathcal{V}^{ord})$ is free.
\end{proposition}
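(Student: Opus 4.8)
The plan is to adapt the argument of Proposition \ref{proposition:stab}, noting that the ordered-tuple structure of $\mathcal{V}^{ord}$ removes the permutation homomorphism $\psi$ entirely: an element fixing $v$ must fix each coordinate $[f_i,D_i]$ exactly, so the stabilizer coincides with the group called $H'_v$ there, and no passage to a finite-index subgroup is needed.

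First I would take $\gamma \in H$ with $\gamma \cdot v = v$, where $v = ([f_1,D_1],\ldots,[f_m,D_m])$. By the definition of the action on $\mathcal{V}^{ord}$ this reads $(\gamma\cdot[f_1,D_1],\ldots,\gamma\cdot[f_m,D_m]) = v$, hence $\gamma\cdot[f_i,D_i] = [f_i,D_i]$, i.e.\ $[\gamma f_i, D_i] = [f_i, D_i]$, for each $i$. By the definition of $\sim$ (Definition \ref{definition:pairs}) there is $h_i^\gamma \in \mathbb{S}(D_i,D_i)$ with $\gamma f_i = f_i h_i^\gamma$ on $D_i$. Since $F(v) \in \mathcal{V} = \mathcal{PV}_{\mathbb{S},X}$, the sets $f_1(D_1),\ldots,f_m(D_m)$ partition $X$, so $\gamma$ is completely determined by the equalities $\gamma|_{f_i(D_i)} = f_i h_i^\gamma f_i^{-1}$; thus $\gamma \mapsto (h_1^\gamma,\ldots,h_m^\gamma)$ is a well-defined injection $H_v \to \prod_{i=1}^m \mathbb{S}(D_i,D_i)$, and the computation $\gamma\gamma' f_i = \gamma f_i h_i^{\gamma'} = f_i h_i^\gamma h_i^{\gamma'}$ shows it is a homomorphism.

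For surjectivity, given $(h_1,\ldots,h_m) \in \prod_{i=1}^m \mathbb{S}(D_i,D_i)$ I would define $\gamma\colon X \to X$ by $\gamma|_{f_i(D_i)} = f_i h_i f_i^{-1}$. Each $f_i$ lies in $\widehat{S}$ and $\widehat{S}$ is an inverse semigroup, so each piece $f_i h_i f_i^{-1}$ is locally determined by $S$; and because $h_i \in \mathbb{S}(D_i,D_i)$ satisfies $h_i(D_i) = D_i$, the domains $f_i(D_i)$ and the images $f_i(D_i)$ each form a partition of $X$, whence $\gamma \in \widehat{S}$ is a self-bijection of $X$, i.e.\ $\gamma \in \Gamma_S = H$. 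Moreover $\gamma f_i = f_i h_i$ on $D_i$, so $\gamma\cdot[f_i,D_i] = [f_i h_i, D_i] = [f_i,D_i]$ (using again $h_i \in \mathbb{S}(D_i,D_i)$), hence $\gamma \in H_v$ and maps to $(h_1,\ldots,h_m)$. This gives the isomorphism $H_v \cong \prod_{i=1}^m \mathbb{S}(D_i,D_i)$.

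The two concluding assertions then follow at once. If each $\mathbb{S}(D,D)$ is finite, $H_v$ is a finite product of finite groups and so is finite. If each $\mathbb{S}(D,D)$ is trivial, every vertex stabilizer is trivial; since $H$ acts on $\Delta(\mathcal{V}^{ord})$ by order-preserving simplicial automorphisms, an element fixing a simplex (a chain) setwise must fix each of its vertices, so the stabilizer of any simplex is the intersection of the stabilizers of its vertices and hence trivial, making the action free. I do not expect a genuine obstacle here; the only points requiring care are verifying that the $\gamma$ produced in the surjectivity step really lies in $\Gamma_S$ — which rests on $F(v)$ being an honest vertex, so that the $f_i(D_i)$ partition $X$ — and checking that, in contrast to Proposition \ref{proposition:stab}, no symmetric-group factor intervenes because $v$ is a tuple rather than a set.
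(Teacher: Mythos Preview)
Your proof is correct and follows essentially the same approach as the paper: adapt Proposition \ref{proposition:stab} by observing that the tuple structure forces $\gamma$ to fix each coordinate, eliminating the permutation homomorphism and identifying $H_v$ directly with $\prod_i \mathbb{S}(D_i,D_i)$. You supply more detail than the paper does (in particular on surjectivity and on why trivial vertex stabilizers yield a free simplicial action), but the underlying argument is the same.
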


\begin{proof}
This is just like the proof of Proposition \ref{proposition:stab}, except that a given $\gamma \in H_{v}$ fixes each individual  coordinate of $v$. Thus,
\[ \gamma \cdot [f_{i},D_{i}] = [\gamma f_{i},D_{i}], \]
for each $i$. The proof of Proposition \ref{proposition:stab} therefore embeds $H_{v}$ itself into $\prod \mathbb{S}(D_{i},D_{i})$, rather than a finite index subgroup $H'_{v}$
of $H_{v}$.
\end{proof}

\section{Complexes defined by expansion schemes} \label{section:expansionscheme}

In this section, we introduce expansion schemes, a device for simplifying the directed set construction from Subsection \ref{subsection:directedsets}. We denote an arbitrary expansion scheme by the letter $\mathcal{E}$; the ``$\mathcal{E}$'' could just as easily stand for ``elementary interval'', since the expansion scheme idea generalizes the latter idea from \cite{Stein}. Roughly speaking, the complexes $\Delta(\mathcal{V}_{\mathbb{S}})$ of Subsection\ref{subsection:directedsets} mirror those of \cite{Brown}, while the complexes $\Delta^{\mathcal{E}}(\mathcal{V}_{\mathbb{S}})$ are analogous to those from \cite{Stein}. Thus, the complexes defined in this section are often locally finite, for instance. In subsequent sections, we will establish finiteness properties of the groups 
$\Gamma_{S}$ using these simplified complexes. 

The section is structured as follows. Subsection \ref{subsection:E} defines expansion schemes and the simplicial complexes $\Delta^{\mathcal{E}}(\mathcal{V}_{\mathbb{S}})$ associated to them. The next several subsections help to set up the application of Brown's Finiteness Criterion (Theorem \ref{theorem:Brown}). Subsection \ref{subsection:nconnectedE} offers a sufficient condition for a given complex $\Delta^{\mathcal{E}}(\mathcal{V}_{\mathbb{S}})$ to be $n$-connected. Subsection \ref{subsection:Efilt} defines a natural filtration, and summarizes its properties. Subsection \ref{subsection:fnstab} states a sufficient condition for simplex stabilizer groups to have type $F_{n}$. 

The section concludes with a subsection about the generation of expansion schemes (Subsection \ref{subsection:Egeneration}) and a subsection containing some general classes of expansion schemes (Subsection \ref{subsection:Eexamples}).

\subsection{Expansion schemes} \label{subsection:E}

We write $\mathcal{V}$ in place of $\mathcal{V}_{\mathbb{S}}$. If $Y \subseteq X$ is a non-empty finite disjoint union of domains, then we write $\mathcal{PV}_{Y}$ in place of  $\mathcal{PV}_{\mathbb{S},Y}$. We let $\mathcal{PV}$ denote the union of all $\mathcal{PV}_{Y}$, as $Y$ ranges over all non-empty finite disjoint unions of domains.

In what follows, $2^{X}$ denotes the power set of $X$.

\begin{definition} \label{definition:scheme} 
($\mathcal{E}$-expansion; expansion scheme)
Assume that  $\mathcal{E}: \mathcal{B} \rightarrow 2^{\mathcal{PV}}$ satisfies (1)-(3), for each $[f,D] \in \mathcal{B}$:
\begin{enumerate}
\item Each $w \in \mathcal{E}([f,D])$ is the result of a sequence of expansions from $\{[f,D]\}$; i.e., for each $w \in \mathcal{E}([f,D])$, $\{ [f,D] \} \leq w$;
\item $\{ [f,D] \} \in \mathcal{E}([f,D])$;
\item ($\widehat{S}$-invariance) for each $\hat{s} \in \widehat{S}$, and each $b \in \mathcal{B}$ for which $\hat{s} \cdot b$ is defined (see Proposition \ref{definition:hatSaction}), $\hat{s} \cdot \mathcal{E}(b) = \mathcal{E}(\hat{s} \cdot b)$. 
\end{enumerate}
Let $v \in \mathcal{PV}$; we write $v = \{ b_{1}, \ldots, b_{m} \}$, where $b_{1}$, $\ldots$, $b_{m} \in \mathcal{B}$. We say that $v'$ is a result of \emph{$\mathcal{E}$-expansion from $v$} if there are $v'_{i} \in \mathcal{E}(b_{i})$, for $i = 1, \ldots, m$, such that 
\[ v' = \bigcup_{i=1}^{m} v'_{i}. \]
We say that $\mathcal{E}$ is an \emph{expansion scheme} if
\begin{enumerate}
\item[(4)] for every $[f,D] \in \mathcal{B}$ and every $w_{1}, w_{2} \in \mathcal{E}([f,D])$ such that $w_{1} \leq w_{2}$, $w_{2}$ is the result of $\mathcal{E}$-expansion from $w_{1}$.
\end{enumerate}
\end{definition}

\begin{definition} ($\mathcal{E}$-chains; $\Delta^{\mathcal{E}}$) Let $\mathcal{E}$ be an expansion scheme. A sequence of pseudovertices
\[ v_{1} \lneq v_{2} \lneq v_{3} \lneq \ldots \lneq v_{m} \]
is called an \emph{$\mathcal{E}$-chain} if each $v_{i}$ is the result of an $\mathcal{E}$-expansion from $v_{1}$.

Let $\Delta^{\mathcal{E}} = ( \mathcal{V}^{\mathcal{E}}, \mathcal{S}^{\mathcal{E}})$, where
$\mathcal{V}^{\mathcal{E}} = \mathcal{V}$ and $\mathcal{S}^{\mathcal{E}}$ is the set of all finite non-empty $\mathcal{E}$-chains in $\mathcal{V}$.

In an analogous way, we can define the complexes $\Delta^{\mathcal{E}}(\mathcal{PV})$ and $\Delta^{\mathcal{E}}(\mathcal{PV}_{Y})$.
\end{definition}

\begin{lemma} \label{lemma:Esubset} (subset of an $\mathcal{E}$-chain is an $\mathcal{E}$-chain) Let $\mathcal{E}$ be an expansion scheme. If 
\[ v_{1} \lneq v_{2} \lneq \ldots \lneq v_{m} \]
is an $\mathcal{E}$-chain and $m \geq 2$, then so is
\[ v_{1} \lneq v_{2} \lneq \ldots \lneq \hat{v}_{i} \lneq \ldots \lneq v_{m}, \]
for $i \in \{ 1, \ldots, m \}$, where $\hat{v}_{i}$ indicates that $v_{i}$ is to be omitted.
\end{lemma}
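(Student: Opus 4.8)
The plan is to unwind the definition of an $\mathcal{E}$-chain and observe that only one case requires real work. By definition, an $\mathcal{E}$-chain $v_1 \lneq \cdots \lneq v_m$ is a strictly increasing sequence in which every $v_i$ is an $\mathcal{E}$-expansion of $v_1$. Deleting the term $v_i$ certainly leaves a strictly increasing sequence (transitivity and antisymmetry of $\leq$ give $v_{i-1} \lneq v_{i+1}$ when $1 < i < m$), so the only thing left to verify is that each surviving term is an $\mathcal{E}$-expansion of the new first term. When $i \geq 2$ the first term is unchanged and this is immediate from the hypothesis, so the whole argument reduces to the case $i = 1$.

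In the case $i=1$, the new initial vertex is $v_2$. First I would dispose of the term $v_2$ itself: it is an $\mathcal{E}$-expansion of $v_2$ because $\{b\} \in \mathcal{E}(b)$ for every $b \in \mathcal{B}$ (Definition \ref{definition:scheme}(2)). The substance is to show, for $2 < j \leq m$, that $v_j$ is an $\mathcal{E}$-expansion of $v_2$. Writing $v_1 = \{b_1, \ldots, b_k\}$, the fact that $v_2$ and $v_j$ are $\mathcal{E}$-expansions of $v_1$ lets me decompose $v_2 = \bigcup_{\ell} v_2^{(\ell)}$ and $v_j = \bigcup_{\ell} v_j^{(\ell)}$ with $v_2^{(\ell)}, v_j^{(\ell)} \in \mathcal{E}(b_\ell)$; since members of $\mathcal{E}(b_\ell)$ are obtained from $\{b_\ell\}$ by expansions, which preserve images (Proposition \ref{proposition:expansion}), each of $v_2^{(\ell)}$, $v_j^{(\ell)}$, and $\{b_\ell\}$ has the same image.

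Next I would invoke Proposition \ref{proposition:orderlocal}. Since $v_2 \leq v_j$ and $v_2^{(\ell)} \subseteq v_2$ has the same image as $v_j^{(\ell)} \subseteq v_j$, part (1) forces $v_j^{(\ell)}$ to be the unique subset of $v_j$ that is a pseudovertex with that image, and part (2) gives $v_2^{(\ell)} \leq v_j^{(\ell)}$. Now both lie in $\mathcal{E}(b_\ell)$, so property (4) of the expansion scheme (Definition \ref{definition:scheme}) says $v_j^{(\ell)}$ is an $\mathcal{E}$-expansion of $v_2^{(\ell)}$. Taking the union over $\ell$ and unwinding the definition — an $\mathcal{E}$-expansion performed on each block $v_2^{(\ell)}$ of the pseudovertex $v_2$ and then reassembled is exactly an $\mathcal{E}$-expansion of $v_2$ — yields that $v_j$ is an $\mathcal{E}$-expansion of $v_2$, completing the case $i = 1$ and hence the lemma.

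I expect the main obstacle to be precisely this last piece of bookkeeping: property (4) is stated only for subsets of $\mathcal{E}([f,D])$ for a single $[f,D] \in \mathcal{B}$, so one has to localize the relation $v_2 \leq v_j$ along the partition $\{b_1, \ldots, b_k\}$ before it can be applied, and then reglobalize. Proposition \ref{proposition:orderlocal} is exactly the tool that makes both the localization and the uniqueness of the decomposition work, so the argument is short once that proposition is brought in.
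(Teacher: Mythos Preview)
Your proposal is correct and follows essentially the same route as the paper's own proof: reduce to the case $i=1$, decompose $v_2$ and $v_j$ along the partition $v_1 = \{b_1,\ldots,b_k\}$ using the definition of $\mathcal{E}$-expansion, invoke Proposition~\ref{proposition:orderlocal} to obtain $v_2^{(\ell)} \leq v_j^{(\ell)}$ inside $\mathcal{E}(b_\ell)$, apply property~(4) of Definition~\ref{definition:scheme}, and reassemble. The paper is slightly terser (it does not separately mention that $v_2$ is an $\mathcal{E}$-expansion of itself, nor the uniqueness part of Proposition~\ref{proposition:orderlocal}), but the argument is the same.
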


\begin{proof}
This is immediate from the definition if $i>1$. It therefore suffices to consider the sequence
\[ v_{2} \lneq v_{3} \lneq \ldots \lneq v_{m}, \]
where $m \geq 2$. To prove that the above is an $\mathcal{E}$-chain, it suffices to show that each $v_{k}$ ($2 \leq k \leq m$) is obtained by $\mathcal{E}$-expansion from $v_{2}$.

Let $v_{1} = \{ b_{1}, \ldots, b_{n} \}$, where the $b_{i}$ are from $\mathcal{B}$. Since $v_{2}$ is the result of $\mathcal{E}$-expansion from $v_{1}$, we can write
\[ v_{2} = \bigcup_{i=1}^{n} v'_{i}, \]
where $v'_{i} \in \mathcal{E}(b_{i})$ for each $i$. Similarly, we can write
\[ v_{k} = \bigcup_{i=1}^{n} v''_{i}, \]
where $v''_{i} \in \mathcal{E}(b_{i})$ for each $i$. Since the pseudovertices $v'_{i}$ and $v''_{i}$ are obtained by (possibly repeated) expansion from $\{ b_{i} \}$ (see Definition 
\ref{definition:scheme}(1)), $im(v'_{i}) = im(\{ b_{i} \}) = im(v''_{i})$, for each $i$, by Proposition \ref{proposition:expansion}. Proposition \ref{proposition:orderlocal}(2) now implies that $v'_{i} \leq v''_{i}$ for each $i$, since $v_{2} \leq v_{k}$. Property (4) of Definition \ref{definition:scheme} implies that $v''_{i}$ is the result of $\mathcal{E}$-expansion from $v'_{i}$, for $i = 1, \ldots, n$. It follows that $v_{k}$ is itself the result of $\mathcal{E}$-expansion from $v_{2}$, completing the proof. 
\end{proof}

\begin{theorem}
(the simplicial complexes $\Delta^{\mathcal{E}}$)
\label{theorem:Ecomplex}
The pair $\Delta^{\mathcal{E}} = (\mathcal{V}^{\mathcal{E}}, \mathcal{S}^{\mathcal{E}})$ is an abstract $\Gamma_{S}$-simplicial complex if $\mathcal{E}$ is an expansion scheme. 

If $Y_{1}, Y_{2} \subseteq X$ can be expressed as non-empty finite disjoint unions of domains, and there is some $\hat{s} \in \widehat{S}$ such that $Y_{1}$ is contained in the domain of $\hat{s}$ and $\hat{s}(Y_{1}) = Y_{2}$, then $\hat{s}$ induces an isomorphism between the complexes 
$\Delta^{\mathcal{E}}(\mathcal{PV}_{Y_{1}})$ and 
$\Delta^{\mathcal{E}}(\mathcal{PV}_{Y_{2}})$.
\end{theorem}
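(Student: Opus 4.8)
The plan is to establish the two assertions separately, each time reducing to facts already proved about $\mathcal{E}$-chains and about the action of $\widehat{S}$ on pseudovertices.

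For the first assertion, I would first check that $\Delta^{\mathcal{E}}$ is genuinely an abstract simplicial complex on the vertex set $\mathcal{V}$. Every singleton $\{v\}$ with $v = \{b_1,\ldots,b_m\} \in \mathcal{V}$ is an $\mathcal{E}$-chain, since $\{b_i\} \in \mathcal{E}(b_i)$ by Definition \ref{definition:scheme}(2) exhibits $v = \bigcup_i \{b_i\}$ as an $\mathcal{E}$-expansion from $v$; and Lemma \ref{lemma:Esubset} together with an obvious induction shows that every non-empty subset of an $\mathcal{E}$-chain is again an $\mathcal{E}$-chain, so $\mathcal{S}^{\mathcal{E}}$ is closed under passing to non-empty subsets. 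For the $\Gamma_S$-action, recall that $\Gamma_S \subseteq \widehat{S}$ acts on $\mathcal{V}$ by strictly order-preserving bijections (Theorem \ref{theorem:Hdirectedsets} and Proposition \ref{proposition:hatSset}); it then remains to check that $\gamma \in \Gamma_S$ carries $\mathcal{E}$-chains to $\mathcal{E}$-chains. Given an $\mathcal{E}$-chain $v_1 \lneq \cdots \lneq v_m$, write $v_1 = \{b_1,\ldots,b_n\}$ and, for each $k$, $v_k = \bigcup_{j} v_{k,j}$ with $v_{k,j} \in \mathcal{E}(b_j)$; then $\gamma \cdot v_k = \bigcup_j \gamma \cdot v_{k,j}$ with $\gamma \cdot v_{k,j} \in \gamma \cdot \mathcal{E}(b_j) = \mathcal{E}(\gamma \cdot b_j)$ by the $\widehat{S}$-invariance property Definition \ref{definition:scheme}(3), and since $\gamma$ is strictly order-preserving, $\gamma \cdot v_1 \lneq \cdots \lneq \gamma \cdot v_m$ is an $\mathcal{E}$-chain on $\gamma \cdot v_1 = \{\gamma \cdot b_1,\ldots,\gamma \cdot b_n\}$. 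Hence $\Gamma_S$ acts simplicially on $\Delta^{\mathcal{E}}$.

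For the second assertion, I would first invoke Proposition \ref{proposition:isomorphism} (equivalently, apply Proposition \ref{proposition:hatSset} to $\hat{s}$ and to a restriction of $\hat{s}^{-1} \in \widehat{S}$) to conclude that $v \mapsto \hat{s} \cdot v$ is an order-isomorphism $\mathcal{PV}_{Y_1} \to \mathcal{PV}_{Y_2}$; here $\hat{s}\cdot v$ is defined for every $v \in \mathcal{PV}_{Y_1}$ because $im(v) = Y_1$ lies in the domain of $\hat{s}$, and $im(\hat{s}\cdot v) = \hat{s}(Y_1) = Y_2$. It then suffices to show this order-isomorphism carries $\mathcal{E}$-chains to $\mathcal{E}$-chains; applying the same to the inverse isomorphism (induced by the restriction of $\hat s^{-1}$ to $Y_2$) gives the converse and hence the simplicial isomorphism. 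The argument is verbatim the one of the previous paragraph with $\gamma$ replaced by $\hat{s}$: if $v_1 \lneq \cdots \lneq v_m$ is an $\mathcal{E}$-chain in $\mathcal{PV}_{Y_1}$ with $v_1 = \{b_1,\ldots,b_n\}$ and $v_k = \bigcup_j v_{k,j}$, $v_{k,j} \in \mathcal{E}(b_j)$, then each $b_j$ and each piece $v_{k,j}$ — being an expansion of $\{b_j\}$, hence sharing its image by Proposition \ref{proposition:expansion} — has image contained in $Y_1 \subseteq \mathrm{dom}(\hat s)$, so $\hat s$ is defined on it and Definition \ref{definition:scheme}(3) applies, giving $\hat s \cdot v_k = \bigcup_j \hat s \cdot v_{k,j}$ as an $\mathcal{E}$-expansion from $\hat s \cdot v_1 = \{\hat s \cdot b_1,\ldots,\hat s \cdot b_n\}$.

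The proof is essentially routine bookkeeping. The one point requiring care — which I do not expect to be a genuine obstacle — is to track throughout the decompositions $v_k = \bigcup_j v_{k,j}$ witnessing each $v_k$ as an $\mathcal{E}$-expansion from $v_1$, and to confirm that the element $\hat s$ (resp. $\gamma$) is defined on all the pieces that appear; this follows immediately from the observation that all those pieces have image contained in $im(v_1)$.
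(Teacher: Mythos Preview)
Your proposal is correct and follows essentially the same approach as the paper: both invoke Lemma \ref{lemma:Esubset} for closure of $\mathcal{S}^{\mathcal{E}}$ under non-empty subsets, and both appeal to the $\widehat{S}$-invariance property (Definition \ref{definition:scheme}(3)) together with Proposition \ref{proposition:hatSset} (or its consequence Proposition \ref{proposition:isomorphism}) to obtain the $\Gamma_S$-action and the isomorphism between $\Delta^{\mathcal{E}}(\mathcal{PV}_{Y_1})$ and $\Delta^{\mathcal{E}}(\mathcal{PV}_{Y_2})$. Your write-up simply unpacks in more detail the verification that $\mathcal{E}$-chains are carried to $\mathcal{E}$-chains, which the paper leaves implicit.
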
 

\begin{proof}
Lemma \ref{lemma:Esubset} easily shows that the set $\mathcal{S}^{\mathcal{E}}$ is closed under taking non-empty subsets. This implies that $\Delta^{\mathcal{E}}$ is an abstract simplicial complex. 

It is now clear that $\Delta^{\mathcal{E}}$ is a subcomplex of $\Delta(\mathcal{V})$. Thus, $\Delta^{\mathcal{E}}$ is contained in a complex with  a $\Gamma_{S}$-action. The $\Gamma_{S}$-invariance of $\Delta^{\mathcal{E}}$ follows directly from property (3) from
Definition \ref{definition:scheme}. This completes the proofs of the statements about $\Delta^{\mathcal{E}}$.

We obtain an isomorphism between the complexes
$\Delta^{\mathcal{E}}(\mathcal{PV}_{Y_{1}})$ and $\Delta^{\mathcal{E}}(\mathcal{PV}_{Y_{2}})$
by combining Proposition \ref{proposition:hatSset} with $\widehat{S}$-invariance of $\mathcal{E}$ (property (3) from Definition \ref{definition:scheme}).
 \end{proof}

\begin{example} (the trivial and maximal expansion schemes)
We can define $\mathcal{E}: \mathcal{B} \rightarrow 2^{\mathcal{PV}}$ by the 
rule: 
\[ \mathcal{E}([f,D]) = \{ \{ [f,D] \} \}. \]
This assignment necessarily satisfies (1)-(4) from Definition \ref{definition:scheme}. The resulting simplicial complex $\Delta^{\mathcal{E}}$ is simply the discrete set of vertices $\mathcal{V}$.

At the opposite extreme, we can let
\[ \mathcal{E}([f,D]) = \{ w \in \mathcal{PV} \mid \{ [f,D] \} \leq w \}. \]
The resulting simplicial complex $\Delta^{\mathcal{E}}$ is 
isomorphic to $\Delta(\mathcal{V})$.

Thus, we will need to choose the expansion scheme $\mathcal{E}$ carefully in order to guarantee that $\Delta^{\mathcal{E}}$ has useful topological properties, and also that $\Delta^{\mathcal{E}}$ is more economical than the construction from Subsection \ref{subsection:directedsets}. 
\end{example}

\subsection{$n$-connected expansion schemes} \label{subsection:nconnectedE}

\begin{definition} \label{definition:nconnectedE}
Let $\mathcal{E}$ be an expansion scheme. We say that $\mathcal{E}$ is \emph{$n$-connected}
if, for each $b \in \mathcal{B}$ and each pseudovertex $v$ satisfying $\{ b \} \lneq v$, 
\[ lk(\{b\}, \Delta(\mathcal{E}(b))_{[\{b\},v]}) \]
is $(n-1)$-connected.
\end{definition}

\begin{remark}
It may be useful to unpack the above definition somewhat. For each $b \in \mathcal{B}$, the set $\mathcal{E}(b)$ is partially ordered by $\leq$. We can therefore consider the order complex $\Delta(\mathcal{E}(b))$. If $v$ is an arbitrary pseudovertex obtained by a non-empty sequence of expansions from $\{ b \}$, then we consider the subcomplex determined by the interval $[\{b\}, v]$; i.e., 
\[ \Delta(\mathcal{E}(b))_{[\{b\},v]}. \]
(We emphasize that $v$ is  truly arbitrary. For instance, $v$ need not be a member of $\mathcal{E}(b)$.)
We then consider the link of $\{ b\}$ in the above complex. The expansion scheme $\mathcal{E}$ is said to be \emph{$n$-connected} if the resulting link is $(n-1)$-connected, for every possible choice of $b$ and $v$.
\end{remark}

\begin{remark}
The property of $\mathcal{E}$ being $n$-connected is dependent upon the $S$-structure $(\mathbb{S},\mathbb{P})$, even though we have suppressed any explicit mention of the $S$-structure in the definition.
\end{remark}

\begin{theorem} \label{theorem:bigone}
($n$-connectedness of $\Delta^{\mathcal{E}}$)
If $\mathcal{E}$ is an $n$-connected expansion scheme, then the complexes
$\Delta^{\mathcal{E}}$ and $\Delta^{\mathcal{E}}(\mathcal{PV}_{Y})$ are $n$-connected. 
\end{theorem}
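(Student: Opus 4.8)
The plan is to deduce Theorem \ref{theorem:bigone} from the general connectivity criterion of Lemma \ref{lemma:nconnectedness} by verifying its hypotheses for the complex $K = \Delta^{\mathcal{E}}(\mathcal{PV}_{Y})$ (the case of $\Delta^{\mathcal{E}}$ being identical, with $Y = X$). First I would record that $\mathcal{PV}_{Y}$ is a ranked directed set: Corollary \ref{corollary:directedset} gives the directed set property, and the rank function of Definition \ref{definition:vertexrank} is strictly monotone under expansion by Remark \ref{remark:futurerefrank}, hence strictly monotone under the (finer) relation on $\mathcal{PV}_{Y}$. So $(\mathcal{PV}_{Y}, \leq)$ with $r = |\cdot|$ is a ranked directed set, and $\Delta^{\mathcal{E}}(\mathcal{PV}_{Y})$ is a simplicial complex on it by Theorem \ref{theorem:Ecomplex}. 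By Lemma \ref{lemma:nconnectedness}, it then suffices to show that for every $x < y$ in $\mathcal{PV}_{Y}$ the link $lk(x, K_{[x,y]})$ is $(n-1)$-connected.

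The heart of the argument is to identify $K_{[x,y]}$ with a product of the ``local'' complexes $\Delta(\mathcal{E}(b))_{[\{b\},\cdot]}$. Write $x = \{ b_1, \ldots, b_m \}$ with $b_i \in \mathcal{B}$. Since $x < y$ and $y$ is obtained from $x$ by expansions that refine the partition $\mathcal{P}_x$ (Remark \ref{remark:refinement}), Proposition \ref{proposition:orderlocal}(1) shows $y$ decomposes uniquely as $y = \bigcup_{i=1}^m y_i$ with $y_i \subseteq y$, $im(y_i) = im(\{b_i\})$, and $\{b_i\} \leq y_i$; moreover, because $x < y$ lies inside $K$, the pair $x < y$ is an $\mathcal{E}$-chain, so each $y_i \in \mathcal{E}(b_i)$. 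I claim that a pseudovertex $z$ with $x \leq z \leq y$ lies in $K_{[x,y]}$ (i.e., $x \leq z$ and $z \leq y$ are $\mathcal{E}$-expansions) exactly when $z = \bigcup_i z_i$ with $z_i \in \mathcal{E}(b_i)$ and $\{b_i\} \leq z_i \leq y_i$ for each $i$; this is the content of Definition \ref{definition:scheme}(4) together with Lemma \ref{lemma:Esubset}-type reasoning (and Proposition \ref{proposition:orderlocal}(2) to get the local comparisons $z_i \leq y_i$). Furthermore a chain $x = z^{(0)} < z^{(1)} < \cdots < z^{(k)}$ in $K_{[x,y]}$ is an $\mathcal{E}$-chain iff each of its coordinate projections is a chain in $\Delta(\mathcal{E}(b_i))_{[\{b_i\}, y_i]}$, because $\mathcal{E}$-expansion is performed coordinate-by-coordinate on the $b_i$. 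This gives a poset isomorphism $\{ z : x \leq z \leq y \text{ in } K \} \cong \prod_{i=1}^m \bigl( \mathcal{E}(b_i) \cap [\{b_i\}, y_i] \bigr)$ under which $\mathcal{E}$-chains correspond exactly to simplices of the simplicial product (Definition \ref{definition:product}). Hence $K_{[x,y]}$ is the simplicial product of the complexes $L_i := \Delta(\mathcal{E}(b_i))_{[\{b_i\}, y_i]}$.

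Now I invoke the Product Theorem \ref{theorem:product}(2): the link of the vertex $x = (\{b_1\}, \ldots, \{b_m\})$ in this simplicial product is homeomorphic to the join $\Asterisk_{i=1}^m lk(\{b_i\}, L_i)$. By hypothesis $\mathcal{E}$ is an $n$-connected expansion scheme, so each $lk(\{b_i\}, L_i) = lk(\{b_i\}, \Delta(\mathcal{E}(b_i))_{[\{b_i\},y_i]})$ is $(n-1)$-connected (applying Definition \ref{definition:nconnectedE} with $b = b_i$ and $v = y_i$; note $\{b_i\} \lneq y_i$ since $x < y$ forces at least one coordinate to be a proper expansion — and for the coordinates where $\{b_i\} = y_i$ the factor $L_i$ is a single point, contributing nothing to the join). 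A join of $m' \geq 1$ spaces each $(n-1)$-connected is $(n - 1 + m'\cdot 0 + \ldots)$-connected; more precisely, the join of spaces that are $(n-1)$-connected is itself $(n-1)$-connected (and better), so $lk(x, K_{[x,y]})$ is $(n-1)$-connected. This verifies the hypothesis of Lemma \ref{lemma:nconnectedness}, which then yields that $K_{(-\infty,b]}$, $K_{[b,\infty)}$, $K_{[a,b]}$ and $K$ itself are $n$-connected; in particular $\Delta^{\mathcal{E}}(\mathcal{PV}_{Y})$ and (taking $Y = X$) $\Delta^{\mathcal{E}}$ are $n$-connected.

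\textbf{Main obstacle.} The delicate point is the product decomposition of $K_{[x,y]}$: one must check carefully that ``being an $\mathcal{E}$-chain inside the interval $[x,y]$'' is genuinely a coordinate-wise condition. This requires (a) that every $z$ with $x \leq z \leq y$ obtained by $\mathcal{E}$-expansion from $x$ splits as $\bigcup z_i$ with $z_i \in \mathcal{E}(b_i)$ — immediate from the definition of $\mathcal{E}$-expansion — and, more subtly, (b) that if $x \leq z \leq z' \leq y$ with $z, z'$ both $\mathcal{E}$-expansions of $x$, then $z'$ is an $\mathcal{E}$-expansion of $z$, so that the chain condition only sees consecutive steps; this is exactly where Definition \ref{definition:scheme}(4) and the argument of Lemma \ref{lemma:Esubset} (using Proposition \ref{proposition:orderlocal} to pass to coordinates) are essential. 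Once this bookkeeping is in place, the topological input is entirely standard (Product Theorem plus connectivity of joins plus Lemma \ref{lemma:nconnectedness}).
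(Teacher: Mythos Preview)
Your approach is essentially the paper's: reduce to Lemma \ref{lemma:nconnectedness}, decompose the interval coordinate-wise, identify the link with a join via Theorem \ref{theorem:product}(2), and invoke the $n$-connected hypothesis on each factor. Two points need correcting, however.

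First, in Lemma \ref{lemma:nconnectedness} the pair $x<y$ ranges over \emph{all} comparable pairs in the poset $P$, not just those forming an edge of $K$. So you are not entitled to say ``the pair $x<y$ is an $\mathcal{E}$-chain'' or to conclude $y_i\in\mathcal{E}(b_i)$. Fortunately this is harmless: Definition \ref{definition:nconnectedE} explicitly allows the upper bound $v$ to be arbitrary (see the Remark following it), so the factor links $lk(\{b_i\},\Delta(\mathcal{E}(b_i))_{[\{b_i\},y_i]})$ are $(n-1)$-connected regardless of whether $y_i\in\mathcal{E}(b_i)$.

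Second, your claimed poset isomorphism $\{z:x\le z\le y\}\cong\prod_i(\mathcal{E}(b_i)\cap[\{b_i\},y_i])$ and the resulting identification $K_{[x,y]}\cong\prod_i L_i$ are false in general: a vertex $z$ of $K_{[x,y]}$ is simply any pseudovertex with $x\le z\le y$, and such a $z$ need not be an $\mathcal{E}$-expansion of $x$, so its pieces $z_i$ need not lie in $\mathcal{E}(b_i)$. What \emph{is} true---and all that is needed---is the isomorphism of links
\[
lk(x,K_{[x,y]})\;\cong\;lk\bigl((\{b_1\},\ldots,\{b_m\}),\textstyle\prod_i L_i\bigr),
\]
since a vertex of the left-hand link must form an edge with $x$ in $K$, hence must be an $\mathcal{E}$-expansion of $x$, and then your coordinate-wise description goes through. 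The paper states only this link isomorphism, not the stronger (and false) product decomposition of $K_{[x,y]}$ itself. Once you restrict your claim to the links, the rest of your argument is correct and matches the paper's.
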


\begin{proof}
We will prove the theorem only in the case of $\Delta^{\mathcal{E}}$, the case of $\Delta^{\mathcal{E}}(\mathcal{PV}_{Y})$ being similar.

Recall that $\mathcal{V}$ is a ranked directed set by Corollary \ref{corollary:directedset}, where $r:\mathcal{V} \rightarrow \mathbb{N}$ defined by $r(v) = |v|$ is a ranking function. We note, furthermore, that $\Delta^{\mathcal{E}}$ is a simplicial complex on $\mathcal{V}$.

We now try to follow the approach of Lemma \ref{lemma:nconnectedness}. Let $v_{1}$ and $v_{2}$ be arbitrary vertices of $\mathcal{V}$, where $v_{1} < v_{2}$. It suffices, by Lemma \ref{lemma:nconnectedness}, to prove that 
$lk(v_{1},\Delta^{\mathcal{E}}_{[v_{1},v_{2}]})$ is $(n-1)$-connected. 

We write $v_{1} = \{ b_{1}, \ldots, b_{m} \}$, where the $b_{i}$ are elements of $\mathcal{B}$. By Proposition \ref{proposition:orderlocal}(1), there are pseudovertices $w_{1}, \ldots,
w_{m}$  such that
\begin{itemize}
\item $v_{2} = \bigcup_{k=1}^{m} w_{k}$;
\item for each $k$, $\{ b_{k} \} \leq w_{k}$.
\end{itemize}
There is a natural isomorphism of simplicial complexes
\[ lk\left((\{b_{1}\}, \ldots, \{b_{m}\}), \prod_{k=1}^{m} \Delta(\mathcal{E}(b_{k}))_{[\{b_{k}\}, w_{k}]}\right) \cong lk(v_{1}, \Delta^{\mathcal{E}}_{[v_{1},v_{2}]}).\]
(Note that the product in the above formula is the simplicial product; see Definition \ref{definition:product}.) By Theorem \ref{theorem:product}, 
\[ lk\left((\{b_{1}\}, \ldots, \{b_{m}\}), \prod_{k=1}^{m} \Delta(\mathcal{E}(b_{k}))_{[\{b_{k}\}, w_{k}]}\right) \cong \Asterisk_{k=1}^{m} lk\left(\{b_{k}\}, \Delta(\mathcal{E}(b_{k}))_{[\{b_{k}\}, w_{k}]}\right). \]
At least one of the links in the latter join is non-empty, since $v_{1}<v_{2}$. Since the join operation is associative up to isomorphism of simplicial complexes, the join of an $n_{1}$-connected
simplicial complex with an $n_{2}$-connected simplicial complex is $(n_{1}+n_{2}+2)$-connected, and each non-empty factor in the join is $(n-1)$-connected by hypothesis, we conclude that the latter join is at least $(n-1)$-connected. It now follows from Lemma \ref{lemma:nconnectedness} that $\Delta^{\mathcal{E}}$ is $n$-connected.
\end{proof}

\subsection{A filtration of $\Delta^{\mathcal{E}}(\mathcal{V})$ by $\Gamma_{S}$-finite subcomplexes} \label{subsection:Efilt}

We continue to write $\mathcal{V}$ in place of $\mathcal{V}_{\mathbb{S}}$. We will define a natural filtration $\{ \Delta^{\mathcal{E}}(\mathcal{V})_{n} \mid n \in \mathbb{N} \}$ of any $\Delta^{\mathcal{E}}(\mathcal{V})$ by $\Gamma_{S}$-invariant subcomplexes. We will also give a sufficient condition for the action of $\Gamma_{S}$ to be cocompact on each subcomplex in the filtration.

\begin{definition} (a filtration of $\Delta^{\mathcal{E}}(\mathcal{V})$) \label{definition:thefiltration}
We let $\Delta^{\mathcal{E}}(\mathcal{V})_{n}$ denote the subcomplex of $\Delta^{\mathcal{E}}(\mathcal{V})$ spanned by vertices of rank less than or equal to $n$.
\end{definition}

\begin{proposition} \label{proposition:findiminv}
Each $\Delta^{\mathcal{E}}(\mathcal{V})_{n}$ is a finite-dimensional, $\Gamma_{S}$-invariant subcomplex of $\Delta^{\mathcal{E}}(\mathcal{V})$, and 
\[ \Delta^{\mathcal{E}}(\mathcal{V}) = \bigcup_{n=1}^{\infty} \Delta^{\mathcal{E}}(\mathcal{V})_{n}. \] 
\end{proposition}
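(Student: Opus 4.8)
The plan is to check the three assertions in turn, each being a short consequence of material already in place. For the subcomplex claim: by Definition \ref{definition:thefiltration}, $\Delta^{\mathcal{E}}(\mathcal{V})_{n}$ is the subcomplex spanned by the vertices of rank at most $n$, so a simplex lies in it precisely when it is an $\mathcal{E}$-chain all of whose vertices have rank $\leq n$. I would verify this collection of simplices is closed under passage to non-empty faces: a face of an $\mathcal{E}$-chain is again an $\mathcal{E}$-chain by Lemma \ref{lemma:Esubset}, and clearly its vertices still have rank $\leq n$. Hence $\Delta^{\mathcal{E}}(\mathcal{V})_{n}$ is genuinely a subcomplex of $\Delta^{\mathcal{E}}(\mathcal{V})$.

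For finite-dimensionality and $\Gamma_{S}$-invariance I would use two elementary observations. First, $r(v) = |v|$ (Definition \ref{definition:vertexrank}), every pseudovertex is non-empty so $r(v) \geq 1$, and expansion strictly increases rank (Remark \ref{remark:futurerefrank}); therefore in an $\mathcal{E}$-chain $v_{1} \lneq \cdots \lneq v_{m}$ lying in $\Delta^{\mathcal{E}}(\mathcal{V})_{n}$ one has $1 \leq r(v_{1}) < \cdots < r(v_{m}) \leq n$, so $m \leq n$ and $\dim \Delta^{\mathcal{E}}(\mathcal{V})_{n} \leq n-1$. Second, $\Delta^{\mathcal{E}}(\mathcal{V})$ carries a $\Gamma_{S}$-action by Theorem \ref{theorem:Ecomplex}, and this action preserves rank: for $\gamma \in \Gamma_{S}$ the induced map on $\mathcal{B}$ is a bijection (well-defined by Proposition \ref{proposition:hatSset}(1), with inverse induced by $\gamma^{-1}$), so $|\gamma \cdot v| = |v|$ for every $v \in \mathcal{V}$. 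Thus $\gamma$ carries the set of vertices of rank $\leq n$ to itself, hence carries $\Delta^{\mathcal{E}}(\mathcal{V})_{n}$ to itself.

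Finally, for the displayed equality: every pseudovertex is a finite subset of $\mathcal{B}$, so each vertex of $\mathcal{V}$ has some finite rank $n$ and lies in $\Delta^{\mathcal{E}}(\mathcal{V})_{n}$, while every simplex, being a finite $\mathcal{E}$-chain, has all of its (finitely many) vertices of rank at most some $N$ and so lies in $\Delta^{\mathcal{E}}(\mathcal{V})_{N}$; taking the union over all $n$ recovers every simplex, which gives the stated equality. I do not anticipate a genuine obstacle — the argument is routine. The one point worth stating explicitly is the finiteness of pseudovertices, which is built into the definition and already underlies the ranked-directed-set structure of Corollary \ref{corollary:directedset}.
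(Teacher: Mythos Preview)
Your proposal is correct and follows essentially the same approach as the paper's own proof: strict increase of rank along a chain bounds the dimension by $n-1$, rank-preservation of the $\Gamma_{S}$-action gives invariance, and finiteness of each simplex gives the union equality. You are a bit more explicit than the paper in justifying the subcomplex claim via Lemma \ref{lemma:Esubset}, but the substance is identical.
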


\begin{proof}
It is clear that $\Delta^{\mathcal{E}}(\mathcal{V})_{n}$ is a subcomplex of $\Delta^{\mathcal{E}}(\mathcal{V})$ for each $n$. A $j$-simplex in 
$\Delta^{\mathcal{E}}(\mathcal{V})_{n}$
is an $\mathcal{E}$-chain $v_{0} \lneq v_{1} \lneq \ldots \lneq v_{j}$. Since the ranks of the vertices are positive integers satisfying 
$|v_{0}| < |v_{1}| < |v_{2}| < \ldots < |v_{j}|$, $|v_{j}|$ is at least $j+1$. Thus, $j+1 \leq n$. It follows easily that each $\Delta^{\mathcal{E}}(\mathcal{V})_{n}$ is at most $(n-1)$-dimensional.

To show that $\Delta^{\mathcal{E}}(\mathcal{V})_{n}$ is $\Gamma_{S}$-invariant, it suffices to show that the action of $\Gamma_{S}$ on the vertices is rank-preserving. But this is an immediate consequence of the definition of the action (see Definition \ref{definition:hatSaction}). 

Finally, we establish the inclusion $\Delta^{\mathcal{E}}(\mathcal{V}) \subseteq \bigcup_{n=1}^{\infty} \Delta^{\mathcal{E}}(\mathcal{V})_{n}$, the reverse inclusion being trivial. Each point $x$ of $\Delta^{\mathcal{E}}(\mathcal{V})$ is contained in some simplex \[ \sigma = v_{0} < v_{1} < \ldots < v_{j}.\] We have the corresponding inequalities $|v_{0}| < \ldots < |v_{j}|$, so $x \in \sigma \subseteq \Delta^{\mathcal{E}}(\mathcal{V})_{|v_{j}|}$. 

\end{proof}



\begin{definition} ($\mathbb{S}$-finite expansion scheme)
Let $\mathcal{E}$ be an expansion scheme. Let $[f,D] \in \mathcal{B}$. The group $\mathbb{S}(D,D)$ acts on $\mathcal{E}([f,D])$ by the rule 
\[ h \star v = (fhf^{-1}) \cdot v, \]
where $h \in \mathbb{S}(D,D)$. This action extends to a natural action of
$\mathbb{S}(D,D)$ on the simplicial complex $\Delta(\mathcal{E}([f,D]))$. 

We say that $\mathcal{E}$ is \emph{$\mathbb{S}$-finite} if the above action is cocompact, for each $[f,D] \in \mathcal{B}$.
\end{definition}

\begin{proposition} \label{proposition:cocompact} (sufficient condition for cocompactness)
If the expansion scheme $\mathcal{E}$ is $\mathbb{S}$-finite and $\mathbb{S}$ has finitely many domain types, then the action
of $\Gamma_{S}$ on each subcomplex $\Delta^{\mathcal{E}}(\mathcal{V})_{n}$ is cocompact.
\end{proposition}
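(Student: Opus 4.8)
The plan is to show that $\Gamma_{S}$ acts on $\Delta^{\mathcal{E}}(\mathcal{V})_{n}$ with only finitely many orbits of simplices; since $\Delta^{\mathcal{E}}(\mathcal{V})_{n}$ is finite-dimensional (Proposition \ref{proposition:findiminv}), this is equivalent to cocompactness. A simplex of $\Delta^{\mathcal{E}}(\mathcal{V})_{n}$ is an $\mathcal{E}$-chain $v_{0} \lneq v_{1} \lneq \ldots \lneq v_{j}$ with $|v_{j}| \leq n$, and the $\Gamma_{S}$-action, being order- and rank-preserving (Definition \ref{definition:hatSaction}), carries each simplex's minimal vertex to the minimal vertex of the image. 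Thus the number of orbits of simplices is at most the sum, over $\Gamma_{S}$-orbits of vertices $v_{0}$ of rank $\leq n$, of the number of orbits of simplices with minimal vertex $v_{0}$ under its stabilizer $H_{v_{0}} := (\Gamma_{S})_{v_{0}}$. By Proposition \ref{proposition:vertexorbits}, two vertices lie in the same $\Gamma_{S}$-orbit iff they have the same type, i.e. the same multiset of domain types; as $\mathbb{S}$ has finitely many domain types and $|v_{0}| \leq n$, there are finitely many such orbits. So it remains to fix a vertex $v_{0} = \{b_{1}, \ldots, b_{m}\}$ of rank $m \leq n$, with $b_{i} = [f_{i}, D_{i}]$, and bound the number of $H_{v_{0}}$-orbits of $\mathcal{E}$-chains $v_{0} \lneq v_{1} \lneq \ldots \lneq v_{j}$ with $|v_{j}| \leq n$.

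First I would pass to the finite-index subgroup $H'_{v_{0}} \cong \prod_{i=1}^{m} \mathbb{S}(D_{i}, D_{i})$ of Proposition \ref{proposition:stab}; it suffices to bound the number of $H'_{v_{0}}$-orbits. By the definition of an $\mathcal{E}$-chain, each $v_{k}$ is an $\mathcal{E}$-expansion of $v_{0}$, hence $v_{k} = \bigcup_{i=1}^{m} w_{k}^{(i)}$ with $w_{k}^{(i)} \in \mathcal{E}(b_{i})$, uniquely since the images $im(b_{i})$ are disjoint; setting $w_{0}^{(i)} = \{b_{i}\}$, Proposition \ref{proposition:orderlocal} gives $v_{k} \leq v_{k'}$ iff $w_{k}^{(i)} \leq w_{k'}^{(i)}$ for all $i$. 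So an $\mathcal{E}$-chain of this kind is encoded by an $m$-tuple of weakly increasing sequences $\bigl(w_{0}^{(i)} \leq w_{1}^{(i)} \leq \ldots \leq w_{j}^{(i)}\bigr)$ in $\mathcal{E}(b_{i})$, each starting at $\{b_{i}\}$, and — using $\sum_{i}|w_{k}^{(i)}| = |v_{k}| \leq n$ and $|v_{0}| < |v_{1}| < \ldots < |v_{j}|$ — of length $j+1 \leq n$ with all terms of rank $\leq n$. Unwinding the description in Proposition \ref{proposition:stab} whereby $\gamma \in H'_{v_{0}}$ restricts to $f_{i}h_{i}f_{i}^{-1}$ on $f_{i}(D_{i})$, the action of $H'_{v_{0}} \cong \prod_{i=1}^{m}\mathbb{S}(D_{i}, D_{i})$ on these tuples is the coordinate-wise product action, $\mathbb{S}(D_{i},D_{i})$ acting on $\mathcal{E}(b_{i})$ via the $\star$-action of the definition of $\mathbb{S}$-finiteness. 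Hence the number of $H'_{v_{0}}$-orbits is at most $\prod_{i=1}^{m} N_{i}$, where $N_{i}$ is the number of $\mathbb{S}(D_{i},D_{i})$-orbits of weakly increasing sequences of length $\leq n$ in $\mathcal{E}(b_{i})$ with terms of rank $\leq n$.

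The only point requiring genuine use of the hypotheses is that each $N_{i}$ is finite. A weakly increasing sequence of length $\ell \leq n$ is determined by its underlying strictly increasing subsequence — a chain, hence a simplex $C$ of $\Delta(\mathcal{E}(b_{i}))$ with at most $n$ vertices, all of rank $\leq n$ — together with a combinatorial repetition pattern recording for each of the $\ell$ positions which vertex of $C$ it equals; there are only finitely many such patterns once $\ell \leq n$. Since $\mathbb{S}(D_{i},D_{i})$ fixes the pattern and acts on $C$ as it does on simplices, $N_{i}$ is at most (the number of patterns) times (the number of $\mathbb{S}(D_{i},D_{i})$-orbits of simplices of $\Delta(\mathcal{E}(b_{i}))$ with at most $n$ vertices, each of rank $\leq n$). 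By the $\mathbb{S}$-finiteness of $\mathcal{E}$, $\mathbb{S}(D_{i},D_{i})$ acts cocompactly on $\Delta(\mathcal{E}(b_{i}))$, hence with finitely many orbits of simplices, so $N_{i} < \infty$ and the count is complete. The main obstacle will be organizational rather than deep: matching the product decomposition of an $\mathcal{E}$-expansion of $v_{0}$ as $\prod_{i}\mathcal{E}(b_{i})$ with the product structure of $H'_{v_{0}}$ from Proposition \ref{proposition:stab}, and dealing with the fact that the ``columns'' of an $\mathcal{E}$-chain are only weakly, not strictly, increasing.
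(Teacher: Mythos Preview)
Your proof is correct and follows essentially the same approach as the paper's: reduce to finitely many vertex orbits via Proposition \ref{proposition:vertexorbits} and the finiteness of domain types, fix a minimal vertex $v_{0}=\{b_{1},\ldots,b_{m}\}$, decompose each $\mathcal{E}$-chain into its ``columns'' $w_{k}^{(i)}\in\mathcal{E}(b_{i})$, and use the product action of $\prod_{i}\mathbb{S}(D_{i},D_{i})$ together with $\mathbb{S}$-finiteness to conclude there are only finitely many orbits of such column-tuples. The paper phrases the last step as a pigeonhole/contradiction argument (an infinite family $\Sigma$ of simplices would map to a finite product of orbit sets $\prod_{i}\mathcal{O}_{i}$), whereas you give a direct counting bound $\prod_{i}N_{i}$ and make explicit the passage from weakly increasing sequences to simplices-plus-repetition-patterns; these are cosmetic differences only.
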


\begin{proof}
Since $\mathbb{S}$ has finitely many domain types, there can be only finitely many types of vertices of a fixed height. It therefore follows from Proposition \ref{proposition:vertexorbits} that there are only finitely many $\Gamma_{S}$-orbits of vertices in $\Delta^{\mathcal{E}}(\mathcal{V})_{n}$, for any given $n$. 

To prove that the action of $\Gamma_{S}$ on each $\Delta^{\mathcal{E}}(\mathcal{V})_{n}$ is cocompact, it will suffice to show that $\Delta^{\mathcal{E}}(\mathcal{V})_{n}$ has only finitely many $\Gamma_{S}$-orbits of simplices in each dimension, since $\Delta^{\mathcal{E}}(\mathcal{V})_{n}$ is finite-dimensional by 
Proposition \ref{proposition:findiminv}. We suppose, for a contradiction, that there is a dimension $j$ and an infinite collection $\Sigma$ of $j$-simplices in $\Delta^{\mathcal{E}}(\mathcal{V})_{n}$, no two of which are in the same $\Gamma_{S}$-orbit. Each simplex in $\Sigma$, as a finite chain, must contain a least vertex. Since there are only finitely many $\Gamma_{S}$-orbits of vertices in $\Delta^{\mathcal{E}}(\mathcal{V})_{n}$, there must be an infinite collection $\Sigma'$ of simplices in $\Sigma$ whose least vertices all lie in a single $\Gamma_{S}$-orbit. After passing to $\Sigma'$, and then replacing the simplices in $\Sigma'$ with suitable $\Gamma_{S}$-translates, we get a new infinite collection of $j$-simplices, all in different $\Gamma_{S}$-orbits and having a common least vertex; we will again denote this collection by $\Sigma$. Let
\[ v = \{ [f_{1}, D_{1}], \ldots, [f_{m}, D_{m}]\} \]
be the common least vertex of the simplices in $\Sigma$. We will write $b_{i}$ in place of $[f_{i},D_{i}]$, for brevity's sake. 

For $i = 1, \ldots, m$, let $X_{i}$ denote the set of all sequences
\[  b_{i} \leq b_{1i} \leq \ldots \leq b_{ji}, \]
where $b_{\ell i} \in \mathcal{E}(b_{i})$, for $\ell = 1, \ldots, j$. (We note that the sequences in $X_{i}$ are not necessarily strictly increasing.) Since the action of $\mathbb{S}(D_{i},D_{i})$ on $\Delta(\mathcal{E}(b_{i}))$ is cocompact and $j$ is fixed, the natural action of $\mathbb{S}(D_{i},D_{i})$ on $X_{i}$ has finitely many orbits. We let $\mathcal{O}_{i}$ denote the (finite) set of orbits under this action. 

A given simplex $\sigma \in \Sigma$ determines an element of
$\prod_{i=1}^{m} X_{i}$ as follows. Suppose that $\sigma$ is the chain
\[ v= v_{0} \lneq v_{1} \lneq v_{2} \lneq \ldots \lneq v_{j}. \]
For $i = 1, \ldots, m$, we can uniquely determine an element
\[ b_{0i} \leq b_{1i} \leq \ldots \leq b_{ji} \]
 of $X_{i}$ by letting $b_{\ell i}$ ($\ell = 0, \ldots, j$) denote the subset of $v_{\ell}$ having the same image as $b_{i}$; such a $b_{\ell i}$ exists and is unique by Proposition \ref{proposition:orderlocal}(1). The fact that $b_{\ell i} \in \mathcal{E}(b_{i})$ is a consequence of the definition of $\Delta^{\mathcal{E}}(\mathcal{V})$. 

We directly get a function $f: \Sigma \rightarrow \prod_{i=1}^{m} \mathcal{O}_{i}$. Since the domain of $f$ is infinite and the codomain is finite, $f$ cannot be injective, so we must have two simplices $\sigma, \sigma' \in \Sigma$ such that $f(\sigma) = f(\sigma')$. Thus the associated sequences 
\[ b_{0i} \leq b_{1i} \leq \ldots \leq b_{ji} \quad \text{ and }
\quad b'_{0i} \leq b'_{1i} \leq \ldots \leq b'_{ji} \]
are in the same $\mathbb{S}(D_{i},D_{i})$-orbit, for $i=1, \ldots, m$. Thus, we can find, for each $i$, an element $h_{i} \in \mathbb{S}(D_{i},D_{i})$ such that $f_{i}h_{i}f^{-1}_{i}$ sends the first sequence to the second. If we define $\gamma \in \Gamma_{S}$ by setting
$\gamma_{\mid f_{i}(D_{i})} = f_{i}h_{i}f^{-1}_{i}$ for each $i$, then we have $\gamma \cdot \sigma = \sigma'$, a contradiction to the definition of $\Sigma$. The action of $\Gamma_{S}$ on $\Delta^{\mathcal{E}}(\mathcal{V})_{n}$ is therefore cocompact.  
\end{proof}

\subsection{Finiteness properties of stabilizer subgroups} \label{subsection:fnstab}

Here we will show that, under appropriate hypotheses, the stabilizer group of a simplex in $\Delta^{\mathcal{E}}(\mathcal{V})$ has finite index in a vertex stabilizer group (Proposition \ref{proposition:finiteindex}). Thus, simplex stabilizer groups have good finiteness properties exactly when vertex stabilizer groups have these properties (Corollary \ref{corollary:Fnstabilizers}).

\begin{proposition} \label{proposition:finiteindex}
Let $\mathcal{E}$ be an expansion scheme. Assume that the action of $\mathbb{S}(D,D)$ on $\mathcal{E}([f,D])$ has finite orbits, for each $[f,D] \in \mathcal{B}$. 

If $\sigma$ is a simplex of $\Delta^{\mathcal{E}}(\mathcal{V})$, and $v$ is the least vertex of $\sigma$ in the partial order, then the stabilizer $\left(\Gamma_{S}\right)_{\sigma}$ has finite index in $\left(\Gamma_{S}\right)_{v}$.
\end{proposition}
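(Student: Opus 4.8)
The plan is to reduce the statement to an index computation inside the ``virtual product'' description of the vertex stabilizer furnished by Proposition \ref{proposition:stab}. Write $\sigma$ as the $\mathcal{E}$-chain
\[ v = v_{0} \lneq v_{1} \lneq \cdots \lneq v_{j}, \]
so that $v$ is the least vertex. Since the action of $\Gamma_{S}$ on $\mathcal{V}$ preserves rank (Definition \ref{definition:hatSaction}) and the ranks along $\sigma$ are strictly increasing, any $\gamma\in\Gamma_{S}$ that stabilizes $\sigma$ must fix each of its vertices; hence $\left(\Gamma_{S}\right)_{\sigma}=\bigcap_{i=0}^{j}\left(\Gamma_{S}\right)_{v_{i}}\subseteq\left(\Gamma_{S}\right)_{v}$. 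It therefore suffices to bound the index of this intersection in $\left(\Gamma_{S}\right)_{v}$.

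Next I would pass to the finite-index subgroup $H'_{v}\leq\left(\Gamma_{S}\right)_{v}$ of Proposition \ref{proposition:stab}. Writing $v=\{b_{1},\ldots,b_{m}\}$ with $b_{k}=[f_{k},D_{k}]$, that proposition supplies an isomorphism $H'_{v}\cong\prod_{k=1}^{m}\mathbb{S}(D_{k},D_{k})$ under which $\gamma\in H'_{v}$ corresponds to the tuple $(h_{1},\ldots,h_{m})$ determined by $\gamma_{\mid f_{k}(D_{k})}=f_{k}h_{k}f_{k}^{-1}$. Because $H'_{v}$ has finite index in $\left(\Gamma_{S}\right)_{v}$ and $H'_{v}\cap\left(\Gamma_{S}\right)_{\sigma}\subseteq\left(\Gamma_{S}\right)_{\sigma}\subseteq\left(\Gamma_{S}\right)_{v}$, it is enough to prove that $H'_{v}\cap\left(\Gamma_{S}\right)_{\sigma}$ has finite index in $H'_{v}$; one then gets $[\left(\Gamma_{S}\right)_{v}:\left(\Gamma_{S}\right)_{\sigma}]\leq[\left(\Gamma_{S}\right)_{v}:H'_{v}]\cdot[H'_{v}:H'_{v}\cap\left(\Gamma_{S}\right)_{\sigma}]<\infty$.

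To identify $H'_{v}\cap\left(\Gamma_{S}\right)_{\sigma}$ I would use that $\sigma$ is an $\mathcal{E}$-chain: each $v_{i}$ is an $\mathcal{E}$-expansion of $v$, so $v_{i}=\bigcup_{k=1}^{m}w_{i,k}$ with $w_{i,k}\in\mathcal{E}(b_{k})$, and since each $w_{i,k}$ has image $f_{k}(D_{k})$ and these images are pairwise disjoint, the $w_{i,k}$ are exactly the subsets of $v_{i}$ with image $f_{k}(D_{k})$ (consistent with the uniqueness in Proposition \ref{proposition:orderlocal}(1)). For $\gamma\in H'_{v}$ corresponding to $(h_{1},\ldots,h_{m})$, the restriction of $\gamma$ to $im(w_{i,k})=f_{k}(D_{k})$ is $f_{k}h_{k}f_{k}^{-1}$, and since $\hat{s}\cdot w$ depends only on $\hat{s}$ restricted to $im(w)$ (Definition \ref{definition:hatSaction}), $\gamma\cdot w_{i,k}=(f_{k}h_{k}f_{k}^{-1})\cdot w_{i,k}=h_{k}\star w_{i,k}$ in the notation of the $\star$-action. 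Using the disjointness of the $f_{k}(D_{k})$ once more, $\gamma$ fixes every $v_{i}$ if and only if $h_{k}$ fixes $w_{i,k}$ for all $i,k$, so
\[ H'_{v}\cap\left(\Gamma_{S}\right)_{\sigma}\;\cong\;\prod_{k=1}^{m}\ \bigcap_{i=0}^{j}\ \mathrm{Stab}_{\mathbb{S}(D_{k},D_{k})}(w_{i,k}). \]

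Finally I would invoke the hypothesis: the action of $\mathbb{S}(D_{k},D_{k})$ on $\mathcal{E}(b_{k})$ has finite orbits, so by orbit–stabilizer each $\mathrm{Stab}_{\mathbb{S}(D_{k},D_{k})}(w_{i,k})$ has finite index in $\mathbb{S}(D_{k},D_{k})$; a finite intersection of finite-index subgroups has finite index, so each factor above has finite index, and hence so does their product in $\prod_{k}\mathbb{S}(D_{k},D_{k})\cong H'_{v}$. This completes the argument. I expect the only real subtlety to be the penultimate paragraph — verifying that the $\Gamma_{S}$-action on $v_{i}$ decouples over the pieces $w_{i,k}$ exactly as the $\star$-actions do, which rests on the local determination of the expansion order (Proposition \ref{proposition:orderlocal}) and on the pairwise disjointness of images built into the notion of a pseudovertex.
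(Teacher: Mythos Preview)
Your proof is correct and follows essentially the same approach as the paper's: both pass to the finite-index subgroup of $(\Gamma_{S})_{v}$ that fixes $v$ pointwise, identify its action on the pieces $w_{i,k}\in\mathcal{E}(b_{k})$ with the $\star$-action of $\mathbb{S}(D_{k},D_{k})$, and then use the finite-orbit hypothesis. The only cosmetic difference is that the paper packages the data as a single point $(x_{1},\ldots,x_{m})$ in a product of sequence-spaces $\prod X_{i}$ with finite orbits, whereas you unwind orbit--stabilizer explicitly and take the finite intersection of point stabilizers; these are equivalent formulations of the same computation.
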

 
\begin{proof}
Let 
$ v = \{ [f_{1},D_{1}], \ldots, [f_{m},D_{m}] \}$, 
and let $\sigma$ be the $j$-simplex 
\[ v = v_{0} \lneq v_{1} \lneq \ldots \lneq v_{j}. \]
 
We define $X_{i}$ (for $i = 1, \ldots, m$) exactly as in the proof of Proposition \ref{proposition:cocompact}. 
We let $\Gamma'_{v}$ denote the finite index subgroup of $(\Gamma_{S})_{v}$ that fixes $v$ pointwise; i.e.,
for each $\gamma \in \Gamma'_{v}$ and for each $[f_{i},D_{i}] \in v$, 
\[ \gamma \cdot [f_{i},D_{i}] = [f_{i},D_{i}]. \]
It follows that $[\gamma f_{i},D_{i}] = [f_{i},D_{i}]$, so there is $h \in \mathbb{S}(D_{i},D_{i})$ such that $\gamma f_{i} = f_{i} h$. (See the definition of the equivalence relation on $\mathcal{B}$ (Definition \ref{definition:pairs}) and the definition of the action
(Definition \ref{definition:hatSaction}).) It follows that $\gamma_{\mid f_{i}(D_{i})} = (f_{i}hf_{i}^{-1})_{\mid f_{i}(D_{i})}$. We use this equality coordinate-by-coordinate, for $i = 1, \ldots, m$, to determine an action of $\gamma$ on $\prod_{i=1}^{m} X_{i}$, and, thus, an action of $\Gamma'_{v}$ on $\prod_{i=1}^{m} X_{i}$. Since the action of each $\mathbb{S}(D_{i},D_{i})$ on $X_{i}$ has finite orbits, and the action of $\Gamma'_{v}$ factors through the action of $\prod_{i=1}^{m} \mathbb{S}(D_{i},D_{i})$ on $\prod_{i=1}^{m} X_{i}$, the action of $\Gamma'_{v}$ on $\prod_{i=1}^{m} X_{i}$ has finite orbits.  

We let $(x_{1}, x_{2}, \ldots, x_{m}) \in \prod_{i=1}^{m} X_{i}$ be the $m$-tuple that corresponds to $\sigma$. (I.e., $x_{i}$ is the weakly increasing sequence 
\[ b_{0i} \leq b_{1i} \leq b_{2i} \leq \ldots \leq b_{ji}, \] 
where $b_{\ell i}$ is the subset of $v_{\ell}$ such that $im(b_{\ell i}) = f_{i}(D_{i})$, exactly as in the proof of Proposition \ref{proposition:cocompact}.)
Since the action of $\Gamma'_{v}$ on $\prod_{i=1}^{m} X_{i}$ has finite orbits, there is a finite index subgroup $\Gamma''_{v}$ of $\Gamma'_{v}$ that fixes $(x_{1}, \ldots, x_{m})$, and, thus, $\sigma$. It now follows that 
\[ \Gamma''_{v} \leq (\Gamma_{S})_{\sigma} \leq (\Gamma_{S})_{v}, \]
where $[(\Gamma_{S})_{v}: \Gamma''_{v} ] < \infty$. This proves the proposition.\end{proof}

\begin{corollary} \label{corollary:Fnstabilizers}
Let $\mathcal{E}$ be an expansion scheme. Assume that
\begin{enumerate}
\item the action of $\mathbb{S}(D,D)$ on $\mathcal{E}([f,D])$ has finite orbits, for each $[f,D] \in \mathcal{B}$, and
\item each group $\mathbb{S}(D,D)$ has type $F_{n}$.
\end{enumerate}
For every cell $\sigma \subseteq \Delta^{\mathcal{E}}(\mathcal{V})$,  $(\Gamma_{S})_{\sigma}$ is of type $F_{n}$.
\end{corollary}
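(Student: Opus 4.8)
The plan is to deduce this directly from Proposition \ref{proposition:finiteindex} and Proposition \ref{proposition:stab}, together with two standard facts about the finiteness property $F_{n}$: it is inherited by finite-index subgroups and by finite-index overgroups (commensurability invariance), and it is preserved under finite direct products. Both facts can be found in Geoghegan \cite{BookbyRoss}.

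First I would fix a cell $\sigma \subseteq \Delta^{\mathcal{E}}(\mathcal{V})$ and let $v = \{ [f_{1},D_{1}], \ldots, [f_{m},D_{m}] \}$ be its least vertex in the partial order. Hypothesis (1) of the corollary is precisely the hypothesis of Proposition \ref{proposition:finiteindex}, so that proposition applies and yields that $(\Gamma_{S})_{\sigma}$ has finite index in $(\Gamma_{S})_{v}$. Next I would invoke Proposition \ref{proposition:stab}: the vertex stabilizer $(\Gamma_{S})_{v}$ contains a finite-index subgroup $H'_{v} \cong \prod_{i=1}^{m} \mathbb{S}(D_{i},D_{i})$. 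By hypothesis (2), each factor $\mathbb{S}(D_{i},D_{i})$ is of type $F_{n}$, and a finite direct product of groups of type $F_{n}$ is again of type $F_{n}$, so $H'_{v}$ has type $F_{n}$.

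To finish, I would chain the finite-index inclusions: since $H'_{v} \leq (\Gamma_{S})_{v}$ has finite index and $H'_{v}$ has type $F_{n}$, commensurability invariance of $F_{n}$ gives that $(\Gamma_{S})_{v}$ has type $F_{n}$; and since $(\Gamma_{S})_{\sigma} \leq (\Gamma_{S})_{v}$ has finite index, the same principle (this time in the easier direction, passage to a finite-index subgroup) shows $(\Gamma_{S})_{\sigma}$ has type $F_{n}$.

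I do not anticipate a genuine obstacle here — the content is entirely in the two cited propositions. The only point requiring a little care is to make sure that the two "finite index" statements combine correctly (one is a finite-index overgroup, one is a finite-index subgroup), so I would state explicitly that $F_{n}$ passes both up and down finite-index inclusions and cite the relevant result in \cite{BookbyRoss}; likewise I would cite the standard statement that $F_{n}$ is closed under finite direct products rather than reprove it.
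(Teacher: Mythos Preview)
Your proposal is correct and follows essentially the same route as the paper's own proof: use Proposition~\ref{proposition:stab} to see that the vertex stabilizer is commensurable with $\prod_{i}\mathbb{S}(D_{i},D_{i})$ (hence of type $F_{n}$ by hypothesis~(2) and closure of $F_{n}$ under finite products), and then use Proposition~\ref{proposition:finiteindex} (via hypothesis~(1)) to pass to the general cell stabilizer by finite index. The only cosmetic difference is that the paper first treats all vertex stabilizers and then deduces the result for arbitrary cells, whereas you fix a cell at the outset and work backwards to its least vertex; the content is identical.
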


\begin{proof}
We first consider vertex stabilizers. Let $v = \{ [f_{1},D_{1}], \ldots, [f_{m},D_{m}] \}$ be a vertex. We define a map $\phi: \prod_{i=1}^{m} \mathbb{S}(D_{i},D_{i}) \rightarrow (\Gamma_{S})_{v}$ by sending $(h_{1}, \ldots, h_{m})$ to the bijection of $X$ determined by the rule $\gamma_{\mid f_{i}(D_{i})} = (f_{i}h_{i}f_{i}^{-1})_{\mid f_{i}(D_{i})}$. This assignment is an injective homomorphism. We let $(\Gamma_{S})'_{v}$ denote the image of this homomorphism. We note that $(\Gamma_{S})'_{v}$ has type $F_{n}$, since each $\mathbb{S}(D_{i},D_{i})$ has type $F_{n}$.

By Proposition \ref{proposition:stab}, the index of $(\Gamma_{S})'_{v}$ in $(\Gamma_{S})_{v}$ is finite. Thus, the vertex stabilizer $(\Gamma_{S})_{v}$ has type $F_{n}$, for all $v$. By Proposition \ref{proposition:finiteindex}, every cell stabilizer $(\Gamma_{S})_{\sigma}$ has finite index in some vertex stabilizer, and therefore must have type $F_{n}$ as well.
\end{proof}

\subsection{Generation of expansion schemes} \label{subsection:Egeneration}

In applications, we would like to have a rapid way of producing expansion schemes. Our approach in this subsection will be to consider generating sets for expansion schemes, which we call ``expansion preschemes''. These are analogous to generating sets for groups, or bases for vector spaces. The idea will be to define the expansion scheme on a few pairs of the form $[id_{D},D]$, and then extend uniquely to all pairs in $\mathcal{B}$, but the ``expansion prescheme'' idea will also help to handle all related questions of well-definedness.

\begin{definition} \label{definition:transversal} (transversal of $\mathcal{B}$)
Let $\{ [D_{i}] \mid i \in \mathcal{I} \}$ be the set of all domain types relative to $\mathbb{S}$ (Definition \ref{definition:domaintypes}). Assume that
$\widehat{\mathcal{T}}=\{ D_{i} \mid i \in \mathcal{I} \}$ consists of a selection of exactly one domain $D_{i}$ from each equivalence class $[D_{i}]$. 

Let $\mathcal{T} = \{ [id_{D},D] \mid D \in \widehat{\mathcal{T}} \}$
We say that $\mathcal{T}$ is a \emph{transversal of $\mathcal{B}$}.
\end{definition}

\begin{definition} \label{definition:prescheme} (expansion preschemes)
Let $\mathcal{T}$ be a transversal of $\mathcal{B}$. We say that $\mathcal{E}': \mathcal{T} \rightarrow 2^{\mathcal{PV}}$ is an \emph{expansion prescheme} if it satisfies (1)-(4), for each $[id_{D},D] \in \mathcal{T}$:
\begin{enumerate}
\item Each $w \in \mathcal{E}'([id_{D},D])$ is the result of a sequence of expansions from $\{[id_{D},D]\}$; i.e., for each $w \in \mathcal{E}'([id_{D},D])$, $\{ [id_{D},D] \} \leq w$;
\item $\{ [id_{D},D] \} \in \mathcal{E}'([id_{D},D])$;
\item ($\mathbb{S}$-invariance) for each $h \in \mathbb{S}(D,D)$, $h \cdot \mathcal{E}'([id_{D},D]) = \mathcal{E}'([id_{D},D])$. 
\item Let $w_{1}, w_{2} \in \mathcal{E}'([id_{D},D])$ be arbitrary pseudovertices such that $w_{1} < w_{2}$. Write $w_{1} = \{ b_{1}, \ldots, b_{m} \} \subseteq \mathcal{B}$ and
\[ w_{2} = \bigcup_{i=1}^{m} w_{2i}, \]
where $w_{2i}$ is a pseudovertex with the same image as $\{ b_{i} \}$, for $i=1, \ldots, m$. There exist $g_{i} \in \widehat{S}$ such that $g_{i} \cdot b_{i}\in \mathcal{T}$ 
and
$g_{i} \cdot w_{2i} \in \mathcal{E}'(g_{i} \cdot b_{i})$ for $i=1, \ldots, m$.
\end{enumerate}
\end{definition}

\begin{remark} \label{remark:transversal} We note that, for each $b \in \mathcal{B}$, there is a unique $\hat{b} \in \mathcal{T}$ such that $\hat{s} \cdot b = \hat{b}$, for some $\hat{s} \in \widehat{S}$. (The element $\hat{s}$ is not unique, however.)
\end{remark}

\begin{proposition}
\label{proposition:generationofschemes} (expansion preschemes generate expansion schemes)
Let $\mathcal{E}'$ be an expansion prescheme. For each $[g,E] \in \mathcal{B}$, choose some $\alpha \in \widehat{S}$ such that $\alpha \cdot [g,E] = [id_{D},D] \in \mathcal{T}$. Define $\mathcal{E}: \mathcal{B} \rightarrow 2^{\mathcal{PV}}$ by the rule
\[ \mathcal{E}([g,E]) = \alpha^{-1} \cdot \mathcal{E}'([id_{D},D]).\]
The assignment $\mathcal{E}$ is an expansion scheme and does not depend upon the choices of $\alpha$.
\end{proposition}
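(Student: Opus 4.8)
The plan is to verify the two assertions separately: first that $\mathcal{E}$ is well-defined (independent of the choice of $\alpha$), and then that it satisfies conditions (1)--(4) of Definition \ref{definition:scheme}. For well-definedness, suppose $\alpha, \beta \in \widehat{S}$ both carry $[g,E]$ into $\mathcal{T}$, say $\alpha \cdot [g,E] = [id_D, D]$ and $\beta \cdot [g,E] = [id_{D'}, D']$. By Remark \ref{remark:transversal} the target transversal element is unique, so $D = D'$ and $[id_D, D] = [id_{D'}, D']$. Then $\beta\alpha^{-1}$ (suitably restricted) carries $[id_D,D]$ to itself, so it agrees on $f(D)$ with $fhf^{-1}$ for some $h \in \mathbb{S}(D,D)$; in the transversal case $f = id_D$, so $\beta\alpha^{-1}$ acts on the relevant image as an element $h \in \mathbb{S}(D,D)$. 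Then $\beta^{-1} \cdot \mathcal{E}'([id_D,D]) = \alpha^{-1} h^{-1} \cdot \mathcal{E}'([id_D,D]) = \alpha^{-1} \cdot \mathcal{E}'([id_D,D])$ using the $\mathbb{S}$-invariance property (3) of the prescheme. This gives independence of the choice.

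Next I would check (1)--(3) of Definition \ref{definition:scheme}. Property (1) follows because each $w \in \mathcal{E}'([id_D,D])$ satisfies $\{[id_D,D]\} \leq w$ by prescheme property (1), and applying $\alpha^{-1}$ preserves the expansion order by Proposition \ref{proposition:hatSset}(2), so $\{[g,E]\} = \alpha^{-1}\cdot\{[id_D,D]\} \leq \alpha^{-1}\cdot w$. Property (2) is immediate: $\{[id_D,D]\} \in \mathcal{E}'([id_D,D])$ by prescheme property (2), so $\{[g,E]\} = \alpha^{-1}\cdot\{[id_D,D]\} \in \mathcal{E}([g,E])$. For $\widehat{S}$-invariance (property (3)), let $\hat{s} \in \widehat{S}$ and $b \in \mathcal{B}$ with $\hat{s}\cdot b$ defined; if $\alpha$ is the chosen element carrying $b$ to $\mathcal{T}$, then $\alpha\hat{s}^{-1}$ carries $\hat{s}\cdot b$ into $\mathcal{T}$ (to the same transversal element, by uniqueness), so by the well-definedness just proved, $\mathcal{E}(\hat{s}\cdot b) = (\alpha\hat{s}^{-1})^{-1}\cdot\mathcal{E}'([id_D,D]) = \hat{s}\alpha^{-1}\cdot\mathcal{E}'([id_D,D]) = \hat{s}\cdot\mathcal{E}(b)$.

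The main obstacle is property (4) of Definition \ref{definition:scheme}: given $w_1, w_2 \in \mathcal{E}([g,E])$ with $w_1 \leq w_2$, one must show $w_2$ is the result of an $\mathcal{E}$-expansion from $w_1$. Applying $\alpha$, it suffices to prove this when $[g,E] = [id_D,D] \in \mathcal{T}$ and $\mathcal{E} = \mathcal{E}'$ on that element, because $\mathcal{E}$-expansion is preserved under the $\widehat{S}$-action (by property (3), already established, together with the definition of $\mathcal{E}$-expansion as a union of local expansions). So reduce to: $w_1, w_2 \in \mathcal{E}'([id_D,D])$, $w_1 < w_2$ (the case $w_1 = w_2$ being trivial). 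Write $w_1 = \{b_1, \ldots, b_m\}$; by Proposition \ref{proposition:orderlocal}(1) decompose $w_2 = \bigcup_i w_{2i}$ with $\mathrm{im}(w_{2i}) = \mathrm{im}(\{b_i\})$ and $\{b_i\} \leq w_{2i}$. Prescheme property (4) supplies $g_i \in \widehat{S}$ with $g_i\cdot b_i \in \mathcal{T}$ and $g_i\cdot w_{2i} \in \mathcal{E}'(g_i\cdot b_i)$. Translating back by $g_i^{-1}$ and using the already-established $\widehat{S}$-invariance of $\mathcal{E}$, we get $w_{2i} = g_i^{-1}\cdot(g_i\cdot w_{2i}) \in g_i^{-1}\cdot\mathcal{E}'(g_i\cdot b_i) = \mathcal{E}(b_i)$. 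Since $w_2 = \bigcup_i w_{2i}$ with each $w_{2i} \in \mathcal{E}(b_i)$, this is precisely the statement that $w_2$ is the result of an $\mathcal{E}$-expansion from $w_1$, as required. The only delicate point is making sure the identification $g_i \cdot b_i \in \mathcal{T}$ together with the uniqueness in Remark \ref{remark:transversal} lets us apply the well-definedness of $\mathcal{E}$ to rewrite $g_i^{-1}\cdot\mathcal{E}'(g_i\cdot b_i)$ as $\mathcal{E}(b_i)$ — but this follows from the $\widehat{S}$-invariance property (3) of $\mathcal{E}$ proved above.
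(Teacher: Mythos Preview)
Your proof is correct and follows essentially the same route as the paper's. The only presentational differences are that the paper computes $\beta\alpha^{-1}$ explicitly as a composition $jkh^{-1}$ of structure-set elements (rather than invoking the stabilizer description abstractly), and the paper handles property~(4) directly for general $[g,E]$ whereas you first reduce to the transversal case by applying $\alpha$; both amount to the same argument.
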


\begin{proof}
We first check that the assignment $\mathcal{E}$ is well-defined.
Thus, assume that $[g_{1},E_{1}] = [g_{2},E_{2}]$, and choose elements $\alpha, \beta \in \widehat{S}$ such that
$\alpha \cdot [g_{1},E_{1}] = [id_{D},D] = \beta \cdot [g_{2},E_{2}]$, where $[id_{D},D] \in \mathcal{T}$. (We note that $E_{1}$ and $E_{2}$ have the same domain type, so if
$\alpha \cdot [g_{1},E_{1}] \in \mathcal{T}$ and
$\beta \cdot [g_{2},E_{2}] \in \mathcal{T}$, then the latter two must be equal.) It follows from the definition of the equivalence relation and the action (Definitions \ref{definition:pairs} and \ref{definition:hatSaction}, respectively) that $\alpha g_{1} = h$, $\beta g_{2} = j$, and $g_{1}=g_{2}k$, for some $h \in \mathbb{S}(E_{1},D)$, $j \in \mathbb{S}(E_{2},D)$, and $k \in \mathbb{S}(E_{1},E_{2})$.

We must show that $\alpha^{-1} \cdot \mathcal{E}'([id_{D},D]) = \beta^{-1} \cdot \mathcal{E}'([id_{D},D])$, or, equivalently, that $(\beta \alpha^{-1}) \cdot \mathcal{E}'([id_{D},D]) = \mathcal{E}'([id_{D},D])$. This follows from a direct calculation:
\begin{align*}
\beta \alpha^{-1} &= j g_{2}^{-1}g_{1}h^{-1} \\
&= j g_{2}^{-1} g_{2}k h^{-1} \\
&= jkh^{-1}. \
\end{align*}
By the definition of $j$, $k$, and $h$, and since the structure sets are closed under compositions and inverses (properties (S4) and (S3) from Definition \ref{definition:sstructure}, respectively), $jkh^{-1} \in \mathbb{S}(D,D)$. The required equality $(\beta \alpha^{-1}) \cdot \mathcal{E}'([id_{D},D]) = \mathcal{E}'([id_{D},D])$ is now a consequence of property (3) from Definition \ref{definition:prescheme}. This proves that $\mathcal{E}$ is well-defined and does not depend upon the specific choice of $\alpha$.

Properties (1) and (2) from Definition \ref{definition:scheme} hold trivially.   
We check property (3). Thus, suppose that $[g,E] \in \mathcal{B}$ and that the domain of $\hat{s} \in \widehat{S}$ contains $g(E)$. Let $\alpha \in \widehat{S}$ be such that $\alpha \cdot [g,E] = [id_{D},D]$, where $[id_{D},D] \in \mathcal{T}$.   
We have the equalities
\[ \mathcal{E}(\hat{s} \cdot [g,E]) = \mathcal{E}([\hat{s}g,E]) =
(\hat{s}\alpha^{-1}) \cdot \mathcal{E}'([id_{D},D]) = \hat{s} \cdot \mathcal{E}([g,E]), \]
where the second equality follows by letting $\alpha \hat{s}^{-1}$ play the role of $\alpha$ in the definition of $\mathcal{E}([\hat{s}g,E])$. Property (3) follows.

We next check property (4) from Definition \ref{definition:scheme}. Thus, suppose that $w_{1} \leq w_{2}$, where $w_{1}, w_{2} \in \mathcal{E}([g,E])$. Clearly, we can assume that $w_{1} < w_{2}$. We must show that $w_{2}$ is the result of $\mathcal{E}$-expansion from $w_{1}$. We write $w_{1} = \{ b_{1}, \ldots, b_{m} \} \subseteq \mathcal{B}$. By Proposition \ref{proposition:orderlocal}(1), we can write
\[ w_{2} = \{ w_{21}, \ldots, w_{2m} \}, \]
where each $w_{2i}$ is a pseudovertex with the same image as $\{ b_{i} \}$. For $i=1, \ldots, m$, there exist $g_{i} \in \widehat{S}$ such that $g_{i} \cdot b_{i} \in \mathcal{T}$ and 
$g_{i} \cdot w_{2i} \in \mathcal{E}'(g_{i} \cdot b_{i})$. It follows directly that
\[ w_{2i} \in g_{i}^{-1} \cdot \mathcal{E}'(g_{i} \cdot b_{i}) = \mathcal{E}(b_{i}), \]
for $i=1, \ldots, m$, establishing property (4).
\end{proof}

\begin{remark} \label{remark:nconnected} We note that the property of $n$-connectedness (Definition \ref{definition:nconnectedE}) can be established by checking the relevant property on the sets $\mathcal{E}'([id_{D},D])$. The details are straightforward and will be omitted.
\end{remark}

\subsection{Examples of expansion schemes} \label{subsection:Eexamples}

In this subsection, we offer some general classes of expansion schemes. In \ref{subsub:um}, we consider expansion schemes in the case when $\mathcal{D}_{S}$ satisfies the compact ultrametric condition (Definition \ref{definition:umetricproperty}), provided also that the $S$-structure in question is the maximal one. In \ref{subsub:Eprod}, we consider expansion schemes on compact ultrametric products. Finally, in \ref{subsub:R}, we give an expansion scheme for R\"{o}ver's group. 

\subsubsection{The case in which $X$ is a compact ultrametric space and $(\mathbb{S},\mathbb{P})$ is maximal} \label{subsub:um}

\begin{proposition} \label{proposition:expansionforcompactumetric} Let $S$ be an inverse semigroup acting on $X$ such that the set $\mathcal{D}^{+}_{S}$ satisfies the compact ultrametric property. Let $(\mathbb{S},\mathbb{P})$ be the maximal $S$-structure.

The assignment 
\[ \mathcal{E}([f,D]) = \{ \{ [f,D] \}, 
\{ [f,E] \mid E \in \mathcal{P}_{D} \} \} \]
is an $n$-connected expansion scheme, for all $n$. Note that 
$\mathcal{P}_{D}$ is the maximal partition of $D$ (Definition \ref{definition:maximal}).
\end{proposition}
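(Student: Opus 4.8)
The plan is to verify the four conditions of Definition \ref{definition:scheme} and then establish $n$-connectedness for all $n$ using $\widehat{S}$-invariance of the maximal $S$-structure, together with Proposition \ref{proposition:Sinvarianceofmaximalsubdivision} ($S$-invariance of the maximal partition). Conditions (1) and (2) are immediate: $\{[f,D]\}$ is trivially obtained by the empty sequence of expansions, and replacing $[f,D]$ by $\{[f,E]\mid E\in\mathcal{P}_D\}$ is an expansion in the sense of Definition \ref{definition:expansion} (take $\widehat{D}=D$, $h=\mathrm{id}_D$, and the non-trivial pattern $\mathcal{P}_D\in\mathbb{P}(D)$; note if $D$ has no proper non-empty subdomain then $\mathcal{P}_D=\{D\}$, so $\mathcal{E}([f,D])=\{\{[f,D]\}\}$ and everything below is trivial in that case). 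Condition (3), $\widehat{S}$-invariance, is where Proposition \ref{proposition:Sinvarianceofmaximalsubdivision} enters: for $\hat s\in\widehat{S}$ with $\hat s\cdot[f,D]=[\hat s f,D]$ defined, one checks $\hat s\cdot\{[f,E]\mid E\in\mathcal{P}_D\}=\{[\hat s f,E]\mid E\in\mathcal{P}_D\}$ directly from Definition \ref{definition:hatSaction}, and since $\mathcal{P}_D$ depends only on $D$ (not on $f$), this equals $\mathcal{E}(\hat s\cdot[f,D])$; here one uses that $[f,E]=[\hat s f, E]$-type matching is automatic because the maximal partition of a domain is intrinsic to that domain. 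Actually the cleanest route is: $\mathcal{E}$ is defined so that $\mathcal{E}([f,D])$ is determined by $D$ alone (as a set of pseudovertices indexed by $\mathcal{P}_D$, with $f$ pre-composed), so invariance is formal.

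Condition (4) is the genuine content among the scheme axioms: given $w_1,w_2\in\mathcal{E}([f,D])$ with $w_1\le w_2$, show $w_2$ is an $\mathcal{E}$-expansion of $w_1$. Since $|\mathcal{E}([f,D])|\le 2$ with elements $\{[f,D]\}$ and $w^\ast:=\{[f,E]\mid E\in\mathcal{P}_D\}$, the only non-trivial case is $w_1=\{[f,D]\}$, $w_2=w^\ast$. Then I must exhibit $w^\ast$ as $\bigcup$ of elements of $\mathcal{E}([f,D])$ indexed by the single element $[f,D]$ of $w_1$ — i.e.\ I need $w^\ast\in\mathcal{E}([f,D])$, which holds by definition. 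So condition (4) is essentially immediate from the cardinality of $\mathcal{E}([f,D])$. (If $\mathcal{P}_D=\{D\}$ there is nothing to check.)

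The main work is $n$-connectedness (Definition \ref{definition:nconnectedE}): for each $b=[f,D]\in\mathcal{B}$ and each pseudovertex $v$ with $\{b\}\lneq v$, the complex $lk(\{b\},\Delta(\mathcal{E}(b))_{[\{b\},v]})$ must be $(n-1)$-connected. But $\mathcal{E}(b)$ has at most two elements, $\{b\}$ and $w^\ast$, with $\{b\}<w^\ast$; so $\Delta(\mathcal{E}(b))$ is a single edge (or a point, when $\mathcal{P}_D$ is trivial, but then no $v$ with $\{b\}\lneq v$ lies in the relevant interval coming from $\mathcal{E}$ — I should check this boundary case carefully). Given $\{b\}\lneq v$, the interval $[\{b\},v]$ inside $\mathcal{E}(b)$ is either just $\{\{b\}\}$ (if $w^\ast\not\le v$) or $\{\{b\},w^\ast\}$ (if $w^\ast\le v$, which includes $w^\ast=v$). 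In the first case the link of $\{b\}$ is empty, and I must argue this case does not actually arise — here is the subtle point: if $\{b\}\lneq v$ with $v$ a result of (not necessarily $\mathcal{E}$-) expansion from $\{b\}$, then any such $v$ refines $\mathcal{P}_D$ by Lemma \ref{lemma:maximalgeneration} / Corollary \ref{corollary:maximal}, hence $w^\ast\le v$. In the second case, $lk(\{b\},\Delta(\mathcal{E}(b))_{[\{b\},v]})$ is the single vertex $\{w^\ast\}$, which is non-empty and connected, indeed $(n-1)$-connected for every $n\ge 0$ vacuously once we note that $(-1)$-connected just means non-empty and a point is $k$-connected for all $k$. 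Wait — a single point is contractible, hence $(n-1)$-connected for all $n$. So $\mathcal{E}$ is $n$-connected for every $n$, and Theorem \ref{theorem:bigone} then gives that $\Delta^{\mathcal{E}}$ and each $\Delta^{\mathcal{E}}(\mathcal{PV}_Y)$ are $n$-connected for all $n$.

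\textbf{Main obstacle.} The only place requiring care is confirming that the interval $[\{b\},v]$ in $\mathcal{E}(b)$ always contains $w^\ast$ when $\{b\}\lneq v$ — equivalently, that any pseudovertex strictly above $\{[f,D]\}$ automatically dominates the simple/maximal expansion $\{[f,E]\mid E\in\mathcal{P}_D\}$. This is exactly the maximality property of $\mathcal{P}_D$ packaged in Corollary \ref{corollary:maximal} and Lemma \ref{lemma:maximalgeneration}, transported from partitions of $D$ to pseudovertices via Remark \ref{remark:refinement} and Proposition \ref{proposition:expansioninSmax} (which identifies expansions in the maximal $S$-structure with refinements of the image partition). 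Once that is in hand, the $(n-1)$-connectivity is trivial since the relevant link is always a single point.
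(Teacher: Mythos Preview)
Your approach is essentially correct and, for $n$-connectedness, matches the paper's argument exactly: both observe that $\Delta(\mathcal{E}(b))$ is at most an edge, and that any $v\gneq\{[f,D]\}$ necessarily satisfies $w^\ast\le v$ because every non-trivial partition of $D$ into domains refines $\mathcal{P}_D$ (Corollary \ref{corollary:maximal}, transported to pseudovertices via Proposition \ref{proposition:expansioninSmax}; the paper cites Example \ref{example:basicexpansion}). The link in question is then a single point, hence contractible.

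The one real omission is well-definedness of $\mathcal{E}$ on $\mathcal{B}$. The formula $\mathcal{E}([f,D])=\{\{[f,D]\},\{[f,E]\mid E\in\mathcal{P}_D\}\}$ depends on a chosen representative $(f,D)$, since $\mathcal{P}_D$ is a function of $D$. If $[f,D]=[g,D']$ via some $h\in\mathbb{S}(D,D')$ with $gh=f$, you must verify $\{[f,E]\mid E\in\mathcal{P}_D\}=\{[g,E']\mid E'\in\mathcal{P}_{D'}\}$. This does hold: $h_{|E}\in\mathbb{S}(E,h(E))$ by maximality of the $S$-structure, so $[f,E]=[gh,E]=[g,h(E)]$, and $h(\mathcal{P}_D)=\mathcal{P}_{D'}$ by Proposition \ref{proposition:Sinvarianceofmaximalsubdivision}. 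You cite the needed ingredients but never assemble them into this check; your remark that ``$\mathcal{E}([f,D])$ is determined by $D$ alone'' is precisely what requires proof.

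The paper handles this point differently, going through the expansion-prescheme machinery (Definition \ref{definition:prescheme}, Proposition \ref{proposition:generationofschemes}): it defines $\mathcal{E}'$ only on a transversal $\{[id_D,D]\}$, verifies $\mathbb{S}(D,D)$-invariance there, invokes Proposition \ref{proposition:generationofschemes} to extend equivariantly (which packages well-definedness once and for all), and then computes that the extension agrees with the stated formula. Your direct verification of Definition \ref{definition:scheme} is shorter, but the prescheme route is what buys you well-definedness for free; once you add the missing paragraph above, the two arguments are equivalent.
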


\begin{proof}
Choose a transversal $\mathcal{T} \subseteq \mathcal{B}$. For each $[id,D] \in \mathcal{T}$, define
\[ \mathcal{E}'([id,D]) = \{ \{ [id,D] \}, \{ [id,E] \mid E \in \mathcal{P}_{D} \} \}. \]
We check that the assignment $\mathcal{E}'$ is an expansion prescheme. Properties (1) and (2) from Definition \ref{definition:prescheme} are clear.  If $h \in \mathbb{S}(D,D)$, then
\begin{align*}
h \cdot \{ [id,E] \mid E \in \mathcal{P}_{D} \} &=
\{ [h,E] \mid E \in \mathcal{P}_{D} \} \\
&= \{ [id, h(E)] \mid E \in \mathcal{P}_{D} \} \\
&= \{ [id,E] \mid E \in \mathcal{P}_{D} \}.
\end{align*}
Here the second-to-last equality is due to the fact that $h_{\mid E} \in \mathbb{S}(E,h(E))$ (by maximality of the $S$-structure), and the final equality is due to Proposition \ref{proposition:Sinvarianceofmaximalsubdivision}. Clearly 
\[ h \cdot \{ [id,D] \} = \{ [id,D] \} \in \mathcal{E}'([id,D]);\] this proves (3).

Next we prove (4) from Definition \ref{definition:prescheme}. Let $w_{1} \leq w_{2}$, where $w_{1}, w_{2} \in \mathcal{E}'([id,D])$ and $[id,D] \in \mathcal{T}$. We may assume that $w_{1} \neq w_{2}$, since there is nothing to prove otherwise. Thus, $w_{1} = \{ [id,D] \}$
and $w_{2} = \{ [id,E] \mid E \in \mathcal{P}_{D} \}$. We must find an $f \in \widehat{S}$ such that $f \cdot [id,D] \in \mathcal{T}$ and
\[ f \cdot \{ [id,E] \mid E \in \mathcal{P}_{D} \} \in \mathcal{E}'(f \cdot [id,D]). \]
We can let $f = id_{D}$. This proves (4).

Next, we extend equivariantly to obtain an expansion scheme, as in Proposition \ref{proposition:generationofschemes}. Thus, for a given $[\hat{f}, \widehat{D}] \in \mathcal{B}$, we define
\[ \mathcal{E}([\hat{f}, \widehat{D}]) = \alpha^{-1} \cdot \mathcal{E}'([id,\widetilde{D}]), \]
where $[id,\widetilde{D}] \in \mathcal{T}$ and $\alpha \in \widehat{S}$ is such that $\alpha \cdot [\hat{f},\widehat{D}] = [id,\widetilde{D}]$. We must next show that this expansion scheme is the same as the one defined in the statement of the current proposition. Note that, by the definition of the equivalence relation on $\mathcal{B}$, there is some $h \in \mathbb{S}(\widetilde{D},\widehat{D})$ such that $\hat{f} h = \alpha^{-1}$. We compute:

\begin{align*}
\mathcal{E}([\hat{f},\widehat{D}]) &= \alpha^{-1} \cdot
\mathcal{E}'([id,\widetilde{D}]) \\
&= \{ \{ [\alpha^{-1}, \widetilde{D}] \}, 
\{ [\alpha^{-1}, \widetilde{E}] \mid \widetilde{E} \in \mathcal{P}_{\widetilde{D}} \} \} \\
&= \{ \{ [\hat{f}h, \widetilde{D}] \}, 
\{ [\hat{f}h, \widetilde{E}] \mid \widetilde{E} \in \mathcal{P}_{\widetilde{D}} \} \} \\
&= \{ \{ [\hat{f}, \widehat{D}] \}, 
\{ [\hat{f}, \widehat{E}] \mid \widehat{E} \in \mathcal{P}_{\widehat{D}} \} \}. \\
\end{align*} 
The final equality uses Proposition \ref{proposition:Sinvarianceofmaximalsubdivision}. This proves that the assignment from the statement of the proposition is an expansion scheme.

It remains to show that the expansion scheme is $n$-connected, for each $n \geq 0$. Thus, we let $w_{2} > \{ [f,D] \} = w_{1}$, and consider the link of $w_{1}$ in the simplicial complex
\[ \Delta(\mathcal{E}([f,D]))_{[w_{1},w_{2}]}. \]
The simplicial complex
$\Delta(\mathcal{E}([f,D]))$ consists of two vertices connected by an edge. Since every expansion from $\{ [f,D] \}$
factors through $\{ [f,E] \mid E \in \mathcal{P}_{D} \}$ (Example \ref{example:basicexpansion}), we have that
$\{ [f,E] \mid E \in \mathcal{P}_{D} \} \leq w_{2}$, so 
\[ \Delta(\mathcal{E}([f,D])_{[w_{1},w_{2}]} = 
\Delta(\mathcal{E}([f,D]). \]
It follows that the link in question is always a point, which is $(n-1)$-connected for all $n$. 
\end{proof}

\subsubsection{Expansion schemes on compact ultrametric products}
\label{subsub:Eprod}

\begin{definition} \label{definition:specialpartitions}(some special partitions) For $i=1, \ldots, n$, let $S_{i}$ be an inverse semigroup acting on $X_{i}$.
Assume that each $\mathcal{D}^{+}_{S_{i}}$ satisfies the compact ultrametric property.
 
Fix a domain $D = D_{1} \times \ldots \times D_{n} \in \mathcal{D}^{+}_{S_{(1,\ldots,n)}}$. For a subset $U \subset \{ 1, \ldots, n \}$, define
\[ \mathcal{P}_{D_{U}} = \{ E_{1} \times \ldots \times E_{n} \mid
E_{i} = D_{i} \text{ if }i \notin U; E_{i} \in \mathcal{P}_{D_{i}} \text{ if }i\in U \}. \]
Recall that $\mathcal{P}_{D}$ denotes the maximal partition of $D$; see Definition \ref{definition:maximal}.
\end{definition}

\begin{proposition} \label{proposition:expansionschemeproduct}
(an expansion scheme for compact ultrametric products) 
Let $X_{i}$ be a set, and let $S_{i}$ be an inverse semigroup acting on $X_{i}$, for $i=1, \ldots, n$. We consider the action of $S_{(1,\ldots,n)}$ on the product $X_{1} \times \ldots \times X_{n}$. Let the $S_{(1,\ldots,n)}$-structure $(\mathbb{S},\mathbb{P})$ be as given in Definition 
\ref{definition:structuresonproducts}.

We define $\mathcal{E}: \mathcal{B} \rightarrow 2^{\mathcal{PV}}$ as follows:
\[ \mathcal{E}([f,D]) = \{ \{ [f,E] \mid E \in \mathcal{P}_{D_{U}}\} \mid U \subseteq \{ 1, \ldots, n \} \}. \]
The map $\mathcal{E}$ is an $m$-connected expansion scheme, for all $m$.
\end{proposition}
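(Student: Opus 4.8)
The plan is to follow the same two-step strategy used for Proposition \ref{proposition:expansionforcompactumetric}: first exhibit an expansion prescheme on a transversal $\mathcal{T}$, then invoke Proposition \ref{proposition:generationofschemes} to extend equivariantly, then verify $m$-connectedness via Remark \ref{remark:nconnected}. So first I would fix a transversal $\mathcal{T}\subseteq\mathcal{B}$ and, for each $[\id_{D},D]\in\mathcal{T}$ with $D=D_1\times\cdots\times D_n$, define
\[ \mathcal{E}'([\id_{D},D]) = \{ \{ [\id_{D},E]\mid E\in\mathcal{P}_{D_U}\}\mid U\subseteq\{1,\ldots,n\}\}. \]
Properties (1) and (2) of Definition \ref{definition:prescheme} are immediate (note $U=\emptyset$ gives $\{[\id_D,D]\}$, and for $U\neq\emptyset$ each $\mathcal{P}_{D_U}$ is reached from $\{D\}$ by successive coordinate-wise maximal refinements, which are patterns by Definition \ref{definition:structuresonproducts}). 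For (3), $\mathbb{S}$-invariance, I would note that an element of $\mathbb{S}(D,D)$ is an $n$-tuple $(h_1,\ldots,h_n)$ with $h_i\in\mathbb{S}(D_i,D_i)$, and use $S$-invariance of the maximal partition (Proposition \ref{proposition:Sinvarianceofmaximalsubdivision}) coordinate-by-coordinate to conclude $(h_1,\ldots,h_n)(\mathcal{P}_{D_U})=\mathcal{P}_{D_U}$; combined with the closed-under-restrictions computation $[h,E]=[\id,h(E)]$ as in the earlier proof, this gives $h\cdot\mathcal{E}'([\id_D,D])=\mathcal{E}'([\id_D,D])$.

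The real work is property (4) of Definition \ref{definition:prescheme}. Here I would observe that the partial order on $\{\mathcal{P}_{D_U}\mid U\subseteq\{1,\ldots,n\}\}$, viewed inside $\mathcal{E}'([\id_D,D])$, is the Boolean lattice on $\{1,\ldots,n\}$: $\{[\id_D,E]\mid E\in\mathcal{P}_{D_U}\}\leq\{[\id_D,E]\mid E\in\mathcal{P}_{D_{U'}}\}$ precisely when $U\subseteq U'$, because refining in coordinate $i\in U'\setminus U$ is exactly the expansion step replacing each brick by its maximal-partition refinement in the $i$-th factor (this uses that $\mathcal{D}^+_{S_i}$ satisfies the compact ultrametric property, so a brick either is $S_i$-minimal or has a maximal partition). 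Given $w_1<w_2$ in $\mathcal{E}'([\id_D,D])$, say $w_1$ corresponds to $U_1$ and $w_2$ to $U_2$ with $U_1\subsetneq U_2$, I would write $w_1=\{b_1,\ldots,b_m\}$ where each $b_k=[\id_D,E_k]$ with $E_k=E_{k,1}\times\cdots\times E_{k,n}$ and $E_{k,i}\in\mathcal{P}_{D_i}$ for $i\in U_1$, $E_{k,i}=D_i$ otherwise. The portion $w_{2k}$ of $w_2$ above $b_k$ is $\{[\id_D,F]\mid F\in\mathcal{P}_{(E_k)_{U_2\setminus U_1}}\}$. For each $k$ I need $g_k\in\widehat{S}$ with $g_k\cdot b_k\in\mathcal{T}$ and $g_k\cdot w_{2k}\in\mathcal{E}'(g_k\cdot b_k)$: since $E_k$ has the same domain type as the unique transversal representative $\widetilde{E}_k$ of its type, pick $g_k\in\mathbb{S}(E_k,\widetilde{E}_k)$ (composed with the identity-on-$D$ ambient extension), and check using $\mathbb{S}$-invariance of $\mathbb{P}$ (property (S6)) plus the product structure that $g_k$ carries $\mathcal{P}_{(E_k)_{U_2\setminus U_1}}$ to a partition of the form $\mathcal{P}_{(\widetilde{E}_k)_{V}}$ for the matching subset $V$, hence lands in $\mathcal{E}'(g_k\cdot b_k)$. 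This is the main obstacle — making sure the transversal representatives of subdomains of $E_k$ interact correctly with the coordinate decomposition, and that the chosen $g_k$ really respects the product of maximal partitions — but it is a coordinate-wise bookkeeping argument once the Boolean-lattice picture is in place.

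Finally, for $m$-connectedness I would apply Remark \ref{remark:nconnected}: it suffices to show that for each $[\id_D,D]\in\mathcal{T}$ and each pseudovertex $v$ with $w_1:=\{[\id_D,D]\}\lneq v$, the link $lk(w_1,\Delta(\mathcal{E}'([\id_D,D]))_{[w_1,v]})$ is $(m-1)$-connected for all $m$, i.e.\ contractible or at least highly connected regardless of $m$. Here the key point is that $\mathcal{E}'([\id_D,D])$, as a poset, is the Boolean lattice $2^{\{1,\ldots,n\}}$ ordered by inclusion, with $w_1$ the bottom element; every non-trivial expansion from $\{[\id_D,D]\}$ lies above the atom $\mathcal{P}_{D_{\{i\}}}$ for at least one $i$, so once we restrict to $[w_1,v]$ the poset $\mathcal{E}'([\id_D,D])_{[w_1,v]}$ is still a Boolean lattice (on the set of coordinates $i$ for which $\mathcal{P}_{D_{\{i\}}}\leq v$), which is non-empty since $v>w_1$. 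The link of the bottom vertex in the order complex of $[w_1, v]\setminus\{w_1\}$ is then the order complex of a poset with a minimum element (the join of the relevant atoms, i.e.\ $\mathcal{P}_{D_{\{i\}}}$ when exactly one coordinate is available, or more generally $lk$ of $w_1$ deformation-retracts onto the order complex of $\mathcal{E}'([\id_D,D])_{(w_1,v]}$ which has a cone point); in every case it is contractible. Hence $lk(w_1,\cdots)$ is $(m-1)$-connected for every $m$, so $\mathcal{E}$ is $m$-connected for all $m$, and the proposition follows from Theorem \ref{theorem:bigone}.
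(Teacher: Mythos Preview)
Your approach is essentially the same as the paper's: set up an expansion prescheme, verify properties (1)--(4), extend via Proposition \ref{proposition:generationofschemes}, and then argue $m$-connectedness by recognizing the Boolean-lattice structure on $\mathcal{E}'([\id_D,D])$. The paper in fact omits the prescheme verification entirely (relying on the similarity to Proposition \ref{proposition:expansionforcompactumetric}) and focuses only on the $m$-connectivity step, where it phrases the key point as: if $v_{U_1}\leq v_2$ and $v_{U_2}\leq v_2$ then $v_{U_1\cup U_2}\leq v_2$, so the link is (the order complex of) a directed set and hence contractible.

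One small slip in your connectivity paragraph: the poset $\mathcal{E}'([\id_D,D])_{(w_1,v]}$ does \emph{not} have a minimum element when more than one coordinate is available; rather, it has a \emph{maximum} element, namely $v_{U^*}$ where $U^*=\{i:v_{\{i\}}\leq v\}$ (this is the ``join of the relevant atoms'' you mention, but the join is the top, not the bottom). That maximum is your cone point, and the closure-under-union fact you implicitly use to guarantee $v_{U^*}\leq v$ is exactly what the paper isolates as the crux of the argument. With that correction your reasoning goes through.
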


\begin{proof}
The proof is similar to that of Proposition \ref{proposition:expansionforcompactumetric}, which is a special case of the current proposition. We will therefore omit most details, considering only the question of $m$-connectivity for $m \geq 0$.

The complexes $\Delta(\mathcal{E}([f,D]))$ are subdivided cubes of dimension at most $n$. (The dimension is exactly $n$ unless there is a factor $D_{i}$ such that $D_{i}$ has no proper non-empty subdomains; such a factor does not contribute to the dimension. We will assume that the dimension is exactly $n$ for the sake of this discussion.)  The pseudovertices in $\mathcal{E}([f,D])$ label the corners of an $n$-cube by the following rule: a pseudovertex
\[ v_{U} = \{ [f,E] \mid E \in \mathcal{P}_{D_{U}} \} \]
 corresponds to the $n$-tuple $(a_{1}, \ldots, a_{n})$, where $a_{i} = 1$ if $i \in U$ and $a_{i} = 0$ if not.   

If $v_{\emptyset} = \{ [f,D] \} \leq v_{2}$, then 
\[ v_{2} = \{ [f,E] \mid E \in \mathcal{P} \}, \]
for some $\mathcal{P} \in \mathbb{P}(D)$. It is straightforward to check that 
if $v_{U_{1}} \leq v_{2}$ and $v_{U_{2}} \leq v_{2}$, then 
$v_{U_{1} \cup U_{2}} \leq v_{2}$. Moreover, there is at least one $U \neq \emptyset$ such that $v_{U} \leq v_{2}$, by the choice of $\mathbb{P}$. (Here the careful choice of $\mathbb{P}$ avoids the pathologies indicated in Example \ref{example:pathology}.)
It follows directly that the link of $v_{\emptyset}$ in 
\[ K = \Delta(\mathcal{E}([f,D]))_{[v_{\emptyset},v_{2}]} \]
is a directed set. (In fact, we can say more: $K$ is a subdivided face of the $n$-dimensional cube, and the link in question is the link of $v_{\emptyset}$ in that face.) 

Thus, the link in question is always contractible, completing the proof that $\mathcal{E}$ is $m$-connected for all $m$.
\end{proof}

\subsubsection{An expansion scheme for R\"{o}ver's group} \label{subsub:R}

\begin{proposition} \label{proposition:Roeverexpansion}
(an expansion scheme for R\"{o}ver's group)
Let $X$ be the set of infinite binary strings and let $S_{R}$ be the semigroup defined in Example \ref{example:Rover}. We let $\mathbb{S}$ be the $S_{R}$-structure defined in Example \ref{example:Roever}. 

The assignment
\[ \mathcal{E}([f,B_{\omega}]) = 
\{ \{[f,B_{\omega}]\}, \{ [f,B_{\omega0}], [f,B_{\omega1}] \}, \{ [fa_{\omega0,\omega0},B_{\omega0}], [f,B_{\omega1}] \}, \] \[ \{ [f,B_{\omega00}], [f,B_{\omega01}], [f,B_{\omega1}] \} \}, \]
is an $n$-connected expansion scheme for all $n$.
\end{proposition}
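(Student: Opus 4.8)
The plan is to follow the template of Proposition~\ref{proposition:expansionforcompactumetric}, producing the expansion scheme from an expansion prescheme and then verifying $n$-connectedness by a direct inspection of the finitely many complexes $\Delta(\mathcal{E}'([id_{D},D]))$. First I would choose a transversal $\mathcal{T} \subseteq \mathcal{B}$; since $S_{R}$ has the single domain type $[B_{\epsilon}]$ (all $B_{\omega}$ have the same type under the structure function $\mathbb{S}$ of Example~\ref{example:Roever}, because $\sigma_{\omega_{1},\omega_{2}} \in \mathbb{S}(B_{\omega_{1}},B_{\omega_{2}})$ always), I can take $\mathcal{T} = \{ [id_{X},X] \}$ and define
\[ \mathcal{E}'([id,X]) = \{ \{[id,X]\}, \{ [id,B_{0}], [id,B_{1}] \}, \{ [a_{0,0},B_{0}], [id,B_{1}] \}, \{ [id,B_{00}], [id,B_{01}], [id,B_{1}] \} \}. \]
The four required properties from Definition~\ref{definition:prescheme} must be checked. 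Properties (1) and (2) are immediate. For property (3), $\mathbb{S}$-invariance, I need $h \cdot \mathcal{E}'([id,X]) = \mathcal{E}'([id,X])$ for $h \in \mathbb{S}(X,X) = \{ \sigma_{\epsilon,\epsilon}=id, b_{\epsilon,\epsilon}, c_{\epsilon,\epsilon}, d_{\epsilon,\epsilon} \}$; here I would use exactly the four-case restriction computations carried out in Example~\ref{example:Roever} to see that each of $b,c,d$ permutes the three nontrivial pseudovertices in the set (for instance, $b_{\epsilon,\epsilon}$ sends $\{[id,B_0],[id,B_1]\}$ to $\{[a_{0,0},B_0],[id,B_1]\}$ and fixes the other two, and similarly for $c,d$). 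Property (4) follows because for any pair $w_1 < w_2$ in $\mathcal{E}'([id,X])$, decomposing $w_2$ over the pieces of $w_1$ and translating each piece back into $\mathcal{T}$ by a suitable element of $\widehat{S}$ lands the corresponding restricted pseudovertex inside $\mathcal{E}'$ of that transversal element — this is a finite check against the list, using that $\{[id,B_{00}],[id,B_{01}],[id,B_1]\}$ is an $\mathcal{E}'$-expansion of $\{[id,B_0],[id,B_1]\}$ (expand the $B_0$-piece via the standard move).

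Next I would invoke Proposition~\ref{proposition:generationofschemes} to extend $\mathcal{E}'$ equivariantly to an expansion scheme $\mathcal{E}$ on all of $\mathcal{B}$, and then verify that this $\mathcal{E}$ agrees with the explicit formula in the statement. For a pair $[f,B_\omega]$, pick $\alpha \in \widehat{S}$ with $\alpha \cdot [f,B_\omega] = [id,X]$; then $\mathcal{E}([f,B_\omega]) = \alpha^{-1} \cdot \mathcal{E}'([id,X])$. Writing $\alpha^{-1} = fh$ for the appropriate $h \in \mathbb{S}(X,B_\omega)$ (as in the computation at the end of the proof of Proposition~\ref{proposition:expansionforcompactumetric}) and pushing $h$ through, the four members of $\mathcal{E}'([id,X])$ map respectively to $\{[f,B_\omega]\}$, $\{[f,B_{\omega 0}],[f,B_{\omega 1}]\}$, $\{[fa_{\omega 0,\omega 0},B_{\omega 0}],[f,B_{\omega 1}]\}$, and $\{[f,B_{\omega 00}],[f,B_{\omega 01}],[f,B_{\omega 1}]\}$, using $h_{\mid B_0} = \sigma$, $h_{\mid B_1}=\sigma$ on the standard strands and tracking the $a$-strand via the self-similarity identity $a_{\omega 0,\omega 0}\sigma_{\omega' 0,\omega 0} = \sigma_{\epsilon,\omega 0} a \sigma_{\omega' 0,\epsilon}$-type bookkeeping from Example~\ref{example:Roever}. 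This recovers exactly the claimed assignment.

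Finally, for $n$-connectedness (Definition~\ref{definition:nconnectedE}), by Remark~\ref{remark:nconnected} it suffices to check the links on $\mathcal{E}'([id,X])$. The order complex $\Delta(\mathcal{E}'([id,X]))$ has four vertices: a minimum $w_0 = \{[id,X]\}$, two ``middle'' vertices $w_a = \{[id,B_0],[id,B_1]\}$ and $w_b = \{[a_{0,0},B_0],[id,B_1]\}$ each covering $w_0$, and a top vertex $w_t = \{[id,B_{00}],[id,B_{01}],[id,B_1]\}$; one checks that both $w_a < w_t$ and $w_b < w_t$ hold (the first via the standard split of $B_0$, the second via the nonstandard split at $B_0$ followed by a standard split — indeed applying $a_{0,0}$ intertwines the two splittings of $B_0$). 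So $\Delta(\mathcal{E}'([id,X]))$ is the suspension-like poset whose realization is a single $2$-simplex-free complex but is in fact contractible: it is the cone on $\{w_a,w_b,w_t\}$ with apex $w_0$, hence contractible, and for any $w_2 > w_0$ the interval subcomplex $\Delta(\mathcal{E}'([id,X]))_{[w_0,w_2]}$ is again a cone on $w_0$ — every nontrivial expansion from $\{[id,X]\}$ factors through one of $w_a$ or $w_b$, and if it factors through $w_t$ it factors through both — so $lk(w_0, \Delta(\mathcal{E}'([id,X]))_{[w_0,w_2]})$ is a nonempty full subcomplex of $\{w_a,w_b,w_t\}$, which in every case ($\{w_a\}$, $\{w_b\}$, $\{w_a,w_b\}$ with $w_a,w_b$ joined through $w_t$ — wait, $w_a$ and $w_b$ are not comparable, so the link is then the discrete pair, plus $w_t$ making it a path) is contractible; hence $(n-1)$-connected for every $n$. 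The main obstacle will be this last point: correctly determining the comparabilities among $w_a$, $w_b$, $w_t$ and confirming that every $\mathcal{E}'$-interval link is contractible (equivalently, that $w_a$ and $w_b$ always have a common upper bound in the interval, namely $w_t$, whenever $w_2 \geq w_t$, and that otherwise at most one of them lies below $w_2$). I would handle this by the explicit four-element poset analysis above rather than any general principle.
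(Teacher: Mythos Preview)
Your overall strategy matches the paper's: define $\mathcal{E}'$ on the single transversal element $[id,X]$, verify the prescheme axioms (where your treatment of (1)--(4) is essentially right), extend via Proposition~\ref{proposition:generationofschemes}, and then analyse the link of $w_{0}=\{[id,X]\}$ in the four-element poset $\{w_{0},w_{a},w_{b},w_{t}\}$.

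The gap is in the $n$-connectedness step. You correctly isolate the crucial implication --- if $w_{a}\le v_{2}$ and $w_{b}\le v_{2}$ then $w_{t}\le v_{2}$ --- but your proposed method (``explicit four-element poset analysis'') cannot establish it. The four-element poset is internal to $\mathcal{E}'([id,X])$, whereas $v_{2}$ in Definition~\ref{definition:nconnectedE} is an \emph{arbitrary} pseudovertex above $w_{0}$, living in the full poset $\mathcal{PV}_{\mathbb{S}}$. Knowing all order relations among $w_{0},w_{a},w_{b},w_{t}$ tells you nothing about whether an external $v_{2}$ dominating both $w_{a}$ and $w_{b}$ must dominate $w_{t}$. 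Concretely, you must show: whenever the restriction of $v_{2}$ to $B_{0}$ is a common upper bound of $\{[id,B_{0}]\}$ and $\{[a_{0,0},B_{0}]\}$, it also dominates $\{[id,B_{00}],[id,B_{01}]\}$.

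The paper does this by proving a separate claim by induction on rank: for any $[f,B_{\omega}]$, every common upper bound of $\{[f,B_{\omega}]\}$ and $\{[fa_{\omega,\omega},B_{\omega}]\}$ dominates one obtained from $\{[f,B_{\omega}]\}$ by \emph{standard} simple expansions only. The inductive step requires splitting into cases according to which simple expansion (standard or nonstandard) occurs first on each side, and tracking how the $a$-twists propagate into the two halves using the self-similarity relations of Grigorchuk's group. This is a genuine computation in $\mathcal{PV}_{\mathbb{S}}$, not in the four-element poset, and your proposal does not contain it.
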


\begin{proof}
The singleton set $\{ [id_{X},X] \}$ is a transversal. We will first show that the assignment
\begin{align*}
\mathcal{E}'([id_{X},X]) = &\{ \{ [id,X] \}, \{ [id,B_{0}], [id,B_{1}] \}, \{ [a_{0,0},B_{0}], [id,B_{1}] \}, \\ 
 &\{ [id,B_{00}], [id,B_{01}], [id,B_{1}] \} \} 
\end{align*}
is an expansion prescheme.

Indeed, properties (1) and (2) from Definition \ref{definition:prescheme} are clear. (Refer to the discussion of the expansion relation in Example \ref{example:Roever}.) Property (3) follows from an easy calculation: we must verify that the group
\[ \mathbb{S}(X,X) = \{ 1, b, c, d \} \]
leaves the set $\mathcal{E}'([id,X])$ invariant. We omit the details of the calculation, but summarize the results: 
\begin{itemize}
\item the pseudovertex $\{ [id,X] \}$ is stabilized by $\{ 1, b, c, d \}$;
\item the pseudovertices $\{ [id,B_{0}], [id,B_{1}] \}$ 
and $\{ [a_{0,0},B_{0}], [id,B_{1}] \}$ are interchanged by the elements $b$, and $c$, but are each stabilized by $1$ and $d$;
\item the pseudovertex $\{ [id,B_{00}], [id,B_{01}], [id,B_{1}] \}$ is also stabilized by $\{ 1, b, c, d \}$, but $b$ and $c$ non-trivially permute the individual elements.
\end{itemize}
This establishes property (3).

Property (4) is also easy to check. Given $w_{1} < w_{2}$, where $w_{1}, w_{2} \in \mathcal{E}'([id,X])$, we must produce the relevant $g_{i} \in \widehat{S}$ from Definition \ref{definition:prescheme}(4). This is trivial if $w_{1} = \{ [id,X] \}$ (we can simply let $g_{1} = id$), which leaves only two cases to consider:
\begin{enumerate}
\item $w_{1} = \{ [id,B_{0}], [id,B_{1}] \}$
and $w_{2} = \{ [id,B_{00}], [id,B_{01}], [id,B_{1}] \}$;
\item $w_{1} = \{ [a_{0,0},B_{0}], [id,B_{1}] \}$
and $w_{2} = \{ [id,B_{00}], [id,B_{01}], [id,B_{1}] \}$.
\end{enumerate}
If we assign the labels $b_{1}$ and $b_{2}$ to the elements of $w_{1}$ (respectively, in the order that they are listed), then we can let $g_{1} = \sigma_{0,\epsilon}$ and $g_{2} = \sigma_{1,\epsilon}$ in (1), and let $g_{1} = a_{0,\epsilon}$ and $g_{2} = \sigma_{1,\epsilon}$ in (2). An easy check then establishes (4) from Definition \ref{definition:prescheme}. It follows that the function $\mathcal{E}'$
is an expansion prescheme.

We conclude that the assignment 
\[ \mathcal{E}([f,B_{\omega}]) = f \sigma_{\epsilon,\omega} \cdot
\mathcal{E}'([id,X]) \]
is an expansion scheme, by Proposition \ref{proposition:generationofschemes}. The latter assignment is easily seen to be equivalent to the one in the statement of the proposition.

Next we turn to a proof that the expansion scheme is $n$-connected for all $n$. Note that the simplicial complex $\Delta(\mathcal{E}'([id,X]))$ is a subdivided square $[0,1]^{2} \subseteq \mathbb{R}^{2}$, in which we may take $\{ [id,X] \}$ to label $(0,0)$, the two pseudovertices of rank two to label $(1,0)$ and $(0,1)$, and the remaining pseudovertex $v_{T}$ to label $(1,1)$. If $\{ [id,X] \} < v_{2}$, then $v_{2}$ is the result of performing a sequence of simple expansions from $\{ [id,X] \}$. The first of these expansions must result in either $v_{L} = \{ [id,B_{0}], [id,B_{1}] \}$
or $v_{R} = \{ [a_{0,0},B_{0}], [id,B_{1}] \}$, by Example \ref{example:Roever}. It follows that the link of $v_{1} = \{ [id,X] \}$ in
\[ \Delta(\mathcal{E}([id,X]))_{[v_{1},v_{2}]} \} \]
is non-empty. 

To prove that $\mathcal{E}$ is $n$-connected for all $n$, it is now sufficient to show that if $v_{L} \leq v_{2}$ and $v_{R} \leq v_{2}$, then $v_{T} \leq v_{2}$. This is a consequence of the following claim.

\begin{claim}
For every $[f,B_{\omega}] \in \mathcal{B}$ and for every common upper bound $v_{1}$ of $\{ [f,B_{\omega}] \}$ and $\{ [fa_{\omega,\omega},B_{\omega}] \}$, there is another common upper bound $\hat{v}$
such that $\hat{v} \leq v_{1}$ and $\hat{v}$ may be obtained from 
$\{ [f,B_{\omega}] \}$ using only standard simple expansions. 
\end{claim}

\begin{proof}[Proof of claim]
We prove this by induction on the rank of the common upper bound $v_{1}$. The case $r(v_{1}) = 1$ is vacuous; the case $r(v_{1}) = 2$ is trivial, since the only common upper bound of that rank is the standard simple expansion from $\{ [f,B_{\omega}] \}$. 

Assume the inductive hypothesis. We can write $v_{1} = v_{L} \cup v_{R}$, where $v_{L}$ is the subset of $v_{1}$ whose image is $f(B_{\omega0})$ and $v_{R}$ is the subset of $v_{1}$ whose image is $f(B_{\omega1})$. Clearly, we can find simple expansions from $\{ [f,B_{\omega}] \}$ and $\{ [fa_{\omega,\omega},B_{\omega}] \}$ that are less than $v_{1}$; one or the other of these simple expansions must be non-standard. This leads to three separate cases; we will consider the case in which both simple expansions are non-standard, the other two cases being similar. 

Thus, we have
\[ \{ [fa_{\omega0,\omega0},B_{\omega0}], [f,B_{\omega1}] \} \leq v_{1} \]
and
\[ \{ [fa_{\omega,\omega}a_{\omega0,\omega0},B_{\omega0}], [fa_{\omega,\omega},B_{\omega1}] \} \leq v_{1}. \]
We note that $a_{\omega,\omega}$ restricted to $B_{\omega1}$ is
$\sigma_{\omega1,\omega0}$ and $a_{\omega,\omega}a_{\omega0,\omega0}$ restricted to $B_{\omega0}$ is $a_{\omega0,\omega1} = a_{\omega1,\omega1}\sigma_{\omega0,\omega1}$, so, using the definition of the equivalence relation we can rewrite the latter as
\[ \{ [fa_{\omega1,\omega1},B_{\omega1}], [f,B_{\omega0}] \} \leq v_{1}. \]
The inductive hypothesis now implies that there is a common upper bound 
$v'_{L} \leq v_{L}$ of $\{ [f,B_{\omega0}] \}$ and $\{ [fa_{\omega0,\omega0},B_{\omega0}] \}$ that is obtained by standard simple expansions from $\{ [f,B_{\omega0}] \}$ and (likewise) a common upper bound $v'_{R} \leq v_{R}$ of $\{ [f,B_{\omega1}] \}$ and $\{ [fa_{\omega1,\omega1},B_{\omega1}]$ that is obtained by simple expansions from $\{[f,B_{\omega1}]\}$. Let $v' = v'_{L} \cup v'_{R}$; clearly $v'$ is obtained from $\{ [f,B_{\omega}] \}$ via standard simple expansions and $v' \leq v_{1}$, proving the claim.
\end{proof}
 
\end{proof}

\section{Preliminaries for finiteness properties} \label{section:7}

In this section, we will prepare for the finiteness results of the paper, the proofs of which are completed in Section \ref{section:8}. In Subsection \ref{subsection:Brown}, we recall Brown's Finiteness Criterion, which is the foundation of all of our finiteness results. Subsection \ref{subsection:gendesclink} recalls some very standard results about the descending link; in particular, we relate the increasing connectivity of the descending links with the increasing connectivity of the complexes in the natural filtration. Subsections \ref{subsection:analysisI} and \ref{subsection:analysisII} develop the necessary definitions that are to be used in studying the descending links in the complexes $\Delta^{\mathcal{E}}(\mathcal{V}_{\mathbb{S}})$. The section ends with a combinatorial sufficient condition for the descending links to be $n$-connected (Proposition \ref{proposition:downwardlinkconnectivity}).


\subsection{Brown's Finiteness Criterion} \label{subsection:Brown}
We will now briefly review Brown's Finiteness Criterion. First, a basic definition:

\begin{definition} (Properties $F_{n}$ and $F_{\infty}$) Let $G$ be a group. By a \emph{$K(G,1)$-complex} we mean a CW-complex $X$ with fundamental group $G$ and contractible universal cover. We say that $G$ has \emph{type $F_{n}$} if $G$ admits a $K(G,1)$-complex with finite $n$-skeleton. We say that $G$ has \emph{type $F_{\infty}$} if $G$ admits a $K(G,1)$-complex with finite $n$-skeleton for each $n$.
\end{definition}

\begin{remark} The above definition appears to suggest that the $F_{\infty}$ property is strictly stronger than the property of being $F_{n}$ for all $n$,
since the former condition requires a single complex with a finite $n$-skeleton for each $n$, while the latter condition allows a different complex for each $n$. The two properties are nevertheless equivalent; a proof may be found, for instance, in \cite{BookbyRoss}. 

Additionally, we note that $F_{0}$ is a property of every group, $F_{1}$ is equivalent to finite generation, and 
$F_{2}$ is equivalent to finite presentability. These facts are also standard, and may be found in \cite{BookbyRoss}.
\end{remark}
  
\begin{theorem} \label{theorem:Brown} (Brown's Finiteness Criterion)
Let $X$ be a CW-complex. Let $G$ be a group acting on $X$. If
\begin{enumerate}
\item $X$ is $(n-1)$-connected; 
\item $G$ acts cellularly on $X$, and
\item there is a filtration $X_{1} \subseteq X_{2} \subseteq \ldots \subseteq X_{k} \subseteq \ldots \subseteq X$
such that
\begin{enumerate}
\item $X = \bigcup_{k=1}^{\infty} X_{k}$;
\item $G$ leaves each $X_{k}^{(n)}$ invariant and acts cocompactly on each $X_{k}^{(n)}$; 
\item each $p$-cell stabilizer has type $F_{n-p}$, and
\item for sufficiently large $k$, $X_{k}$ is $(n-1)$-connected,
\end{enumerate}
\end{enumerate}
then $G$ is of type $F_{n}$. \qed
\end{theorem}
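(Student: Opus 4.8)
The plan is to deduce Brown's Criterion from its cocompact special case together with a standard piece of homological assembly. First I would observe that hypothesis (3) exists only to manufacture, for all sufficiently large $k$, a single $G$-invariant complex that is simultaneously highly connected and $G$-cocompact: by (3d) the complex $X_k$ is $(n-1)$-connected for large $k$, and since $(n-1)$-connectivity of any CW-complex depends only on its $n$-skeleton, the $G$-invariant subcomplex $X_k^{(n)}$ (invariant and $G$-cocompact by (3b)) is itself $(n-1)$-connected. Each $p$-cell of $X_k^{(n)}$ with $p\le n$ is a $p$-cell of $X$, so its stabilizer has type $F_{n-p}$ by (3c). Replacing $X$ by $Y:=X_k^{(n)}$, it therefore suffices to prove the cocompact statement: if $G$ acts cellularly and cocompactly on an $(n-1)$-connected CW-complex $Y$ all of whose $p$-cell stabilizers are of type $F_{n-p}$ for $0\le p\le n$, then $G$ is of type $F_n$.

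For the cocompact statement I would proceed algebraically, first proving that $G$ is of type $FP_n$ over $\mathbb{Z}G$. After replacing $Y$ by its barycentric subdivision I may assume every cell stabilizer fixes its cell pointwise, so the cellular chain complex of $Y$ is a complex of $\mathbb{Z}G$-modules with $C_p(Y)\cong\bigoplus_{\sigma}\mathbb{Z}[G/G_\sigma]$, the sum ranging over a set of orbit representatives of $p$-cells, finite for $p\le n$ by cocompactness. Since $Y$ is $(n-1)$-connected, the augmented complex
\[ C_n(Y)\to C_{n-1}(Y)\to\cdots\to C_0(Y)\to\mathbb{Z}\to 0 \]
is exact at $C_0(Y),\dots,C_{n-1}(Y)$. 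Because $\mathbb{Z}[G/G_\sigma]\cong\mathbb{Z}G\otimes_{\mathbb{Z}G_\sigma}\mathbb{Z}$ and $G_\sigma$ has type $F_{n-p}$ (hence $FP_{n-p}$), the module $C_p(Y)$ has type $FP_{n-p}$ over $\mathbb{Z}G$; a standard dimension-shifting argument through the short exact sequences $0\to Z_p\to C_p(Y)\to Z_{p-1}\to 0$, beginning from $Z_{-1}=\mathbb{Z}$, then yields that $\mathbb{Z}$ is of type $FP_n$ over $\mathbb{Z}G$. For $n\le 1$ the same hypotheses give finite generation of $G$ directly (the $G$-finite $0$-skeleton has finitely generated vertex stabilizers), and for $n\ge 2$ the case $n=2$ gives finite presentability; the standard equivalence ``type $F_n$ $\iff$ finitely presented and of type $FP_n$'' for $n\ge 2$ then finishes the argument. (Alternatively one could build a $K(G,1)$ directly as a homotopy colimit of the $K(G_\sigma,1)$'s over the poset of cells, but the chain-level route is cleaner given that stabilizers are nontrivial.)

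The hard part will be the passage from finiteness of cell stabilizers to finiteness of $G$: one must genuinely check that the cellular chain complex decomposes as the predicted sum of induced modules with the correct $FP$-behavior — this is precisely where barycentric subdivision (or, equivalently, a careful treatment of the orientation characters twisting each $\mathbb{Z}[G/G_\sigma]$) is required — and then carry out the splicing so that the length-$n$ partial resolution built from the $C_p(Y)$ upgrades to the honest statement that $\mathbb{Z}$ is $FP_n$. By contrast, the filtration hypothesis contributes nothing beyond the reduction in the first paragraph, since (3d) supplies an $(n-1)$-connected subcomplex outright; the subtler ``essentially $(n-1)$-connected'' form of the criterion, in which (3d) is weakened to a statement about connectivity only in the limit, would demand a real colimit/Mittag–Leffler argument, but that refinement is not needed here.
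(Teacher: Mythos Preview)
The paper does not prove this theorem: it is quoted from the literature (Brown's original work) and marked with a \qed\ immediately after the statement, with no accompanying proof. So there is no ``paper's own proof'' to compare against.

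That said, your outline is a correct sketch of one standard proof. The reduction in your first paragraph is clean and exactly right: hypotheses (1) and (2) on $X$ itself are never used once (3d) hands you an $(n-1)$-connected $X_k$, and passing to the $n$-skeleton is harmless. Your second paragraph follows the usual route through $FP_n$; the splicing you describe is the lemma that an exact sequence $0\to M_n\to\cdots\to M_0\to N\to 0$ with each $M_p$ of type $FP_{n-p}$ forces $N$ to be $FP_n$, which is proved exactly by the dimension shifts you indicate. The one place where a reader might want more detail is the passage to finite presentability in the case $n\ge 2$: you assert that ``the case $n=2$ gives finite presentability,'' but this is not a consequence of the $FP_2$ argument and needs its own justification (e.g.\ the standard fact that a group acting cocompactly on a simply connected complex with finitely presented vertex stabilizers and finitely generated edge stabilizers is finitely presented). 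You acknowledge this distinction, so the outline is sound; just be aware that in a full write-up that step carries real content.
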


\begin{remark}
Our goal will be to apply Brown's criterion to the action of $\Gamma_{S}$ on its complex $\Delta^{\mathcal{E}}(\mathcal{V})$. The required connectivity of the complexes $\Delta^{\mathcal{E}}(\mathcal{V})$ can be established using the results of Subsection \ref{subsection:nconnectedE}. 
The finiteness properties of cell stabilizers were considered in Subsection \ref{subsection:fnstab}, while the cocompactness of the action of $\Gamma_{S}$ on $\Delta^{\mathcal{E}}(\mathcal{V})_{n}$ was handled in Subsection
\ref{subsection:Efilt}.

Thus, it remains only to consider the connectivity properties of the subcomplexes $\Delta^{\mathcal{E}}(\mathcal{V})_{n}$, which is the subject of the rest of this section.
\end{remark}

\subsection{Generalities about the descending link} \label{subsection:gendesclink}

We next consider the general problem of establishing the $n$-connectivity of the subcomplexes $\Delta^{\mathcal{E}}(\mathcal{V}_{\mathbb{S}})_{k}$, for suitable
$n$ and $k$. Here we will follow a well-known strategy, which involves reducing the entire question to an analysis of the descending links in the complex $\Delta^{\mathcal{E}}(\mathcal{V}_{\mathbb{S}})$. We offer a complete treatment in order to make our account self-contained. 
 Note that the actual analysis of specific descending links will be pursued later, under additional hypotheses.
 
 We will write $\Delta$ and $\Delta_{k}$ in place of $\Delta^{\mathcal{E}}(\mathcal{V}_{\mathbb{S}})$ and $\Delta^{\mathcal{E}}(\mathcal{V}_{\mathbb{S}})_{k}$, respectively. See Definition \ref{definition:linkandstar} for the definition of descending link.

\begin{lemma} \label{lemma:homology}
If $lk_{\downarrow}(v,\Delta)$ is $n$-connected for all vertices $v$ of rank $k$, then the map between homology groups
\[ \iota_{j}: H_{j}(\Delta_{k-1}) \rightarrow H_{j}(\Delta_{k}) \]
is an isomorphism, for $j=0, \ldots, n$.
\end{lemma}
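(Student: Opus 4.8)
The plan is to build $\Delta_k$ from $\Delta_{k-1}$ one orbit of top-rank vertices at a time, and to identify the relative chain complex in terms of descending links. First I would observe that $\Delta_k$ is obtained from $\Delta_{k-1}$ by adjoining the vertices of rank exactly $k$ together with all $\mathcal{E}$-chains containing them. For a vertex $v$ of rank $k$, note that every $\mathcal{E}$-chain containing $v$ must have $v$ as its \emph{maximal} element: if $v < w$ were an $\mathcal{E}$-chain edge, then $w$ would have rank strictly greater than $k$ (Remark \ref{remark:futurerefrank}), so $w \notin \Delta_k$. Hence the simplices of $\Delta_k$ containing $v$ are exactly the simplices of the cone $v * \mathrm{lk}_{\downarrow}(v,\Delta)$, and $\mathrm{lk}_{\downarrow}(v,\Delta)$ already lies in $\Delta_{k-1}$ since all of its vertices have rank $< k$.

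Next I would argue that the closed stars of distinct rank-$k$ vertices meet only inside $\Delta_{k-1}$, so that
\[ \Delta_k = \Delta_{k-1} \cup \bigcup_{r(v)=k} \bigl( v * \mathrm{lk}_{\downarrow}(v,\Delta) \bigr), \]
where the union is along the descending links. This exhibits $\Delta_k$ as $\Delta_{k-1}$ with cones attached along the subcomplexes $\mathrm{lk}_{\downarrow}(v,\Delta)$, one for each $v$ of rank $k$. I would then pass to the pair $(\Delta_k, \Delta_{k-1})$: by excision and the fact that collapsing a cone on a subspace to the subspace is a homotopy equivalence of pairs, one gets
\[ H_j(\Delta_k, \Delta_{k-1}) \;\cong\; \bigoplus_{r(v)=k} \widetilde{H}_{j-1}\bigl(\mathrm{lk}_{\downarrow}(v,\Delta)\bigr). \]
(Equivalently: $v * L$ is contractible, and $(v*L, L)$ has the relative homology of the cone on $L$ rel $L$, namely $\widetilde{H}_{j-1}(L)$ in degree $j$.) One technical point to handle with care is that there may be infinitely many rank-$k$ vertices; I would either invoke that homology commutes with the relevant colimit/direct sum, or do the attachment orbit-by-orbit (or even vertex-by-vertex) and take a limit, using that each individual attachment is along a subcomplex of $\Delta_{k-1}$.

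Finally, the hypothesis that $\mathrm{lk}_{\downarrow}(v,\Delta)$ is $n$-connected for every vertex $v$ of rank $k$ gives $\widetilde{H}_{j-1}(\mathrm{lk}_{\downarrow}(v,\Delta)) = 0$ for $j-1 \le n$, i.e. for $j \le n+1$; in particular $H_j(\Delta_k,\Delta_{k-1}) = 0$ for $0 \le j \le n+1$. Feeding this into the long exact sequence of the pair $(\Delta_k,\Delta_{k-1})$,
\[ H_{j+1}(\Delta_k,\Delta_{k-1}) \to H_j(\Delta_{k-1}) \xrightarrow{\iota_j} H_j(\Delta_k) \to H_j(\Delta_k,\Delta_{k-1}), \]
the outer terms vanish for $j = 0,\ldots,n$, so $\iota_j$ is an isomorphism in that range, as claimed. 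I expect the main obstacle to be the bookkeeping around the (possibly infinite) family of attached cones — making the excision/Mayer--Vietoris argument rigorous when infinitely many rank-$k$ vertices are present — rather than anything conceptually deep; once the decomposition of $\Delta_k$ along descending links is in hand, the rest is the standard long-exact-sequence argument.
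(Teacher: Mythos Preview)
Your proposal is correct and follows essentially the same strategy as the paper: both arguments compute $H_j(\Delta_k,\Delta_{k-1})$ by excising down to a disjoint union of cones over the descending links and then invoke the long exact sequence of the pair. The only cosmetic difference is that the paper phrases the excision via small $\epsilon$-balls around the rank-$k$ vertices (observing that $L = \Delta_k \setminus \bigcup B_\epsilon(v)$ deformation retracts onto $\Delta_{k-1}$), whereas you use the combinatorial closed-star/cone description directly; your observation that a rank-$k$ vertex is necessarily maximal in any simplex of $\Delta_k$ containing it is exactly what makes the link equal the \emph{descending} link, which the paper uses implicitly.
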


\begin{proof}
Consider the long exact sequence in homology of the pair $(\Delta_{k}, \Delta_{k-1})$. If the relative groups $H_{i}(\Delta_{k}, \Delta_{k-1})$ are all $0$ for $i=0, \ldots, n+1$, then, by exactness, the map $\iota_{j}$ is an isomorphism, for 
$j=0, \ldots, n$, as desired. It therefore suffices to show that the relative groups are all $0$ through the given range.

Around each vertex $v$ of rank $k$, choose an open $\epsilon$-ball $B_{\epsilon}(v) \subseteq \Delta_{k}$ that contains no other vertices. The boundary of this ball is homeomorphic to $lk_{\downarrow}(v,\Delta)$, so the closed ball $\overline{B}_{\epsilon}(v)$ is homeomorphic to the cone on $lk_{\downarrow}(v,\Delta)$. Let $L$ denote the complement of the union of the open balls. The inclusion $(\Delta_{k}, \Delta_{k-1}) \rightarrow (\Delta_{k}, L)$ is a homotopy equivalence of pairs, since $L$ strong deformation retracts on $\Delta_{k-1}$ by radial projection from the vertices of rank $k$. The inclusion of pairs
\[ ( \coprod \overline{B}_{\epsilon}(v), \coprod \dot{B}_{\epsilon}(v)) \rightarrow (\Delta_{k}, L), \] 
obtained by removing the interior of $L$, is an excision, so we get isomorphisms
\[ \bigoplus_{v} H_{\ast}(\overline{B}_{\epsilon}(v), \dot{B}_{\epsilon}(v)) \cong H_{\ast}(\Delta_{k}, \Delta_{k-1})\] 
in all dimensions. (Here $\dot{B}$ denotes the boundary of $\overline{B}$; the direct sum is over all vertices of rank $k$.) Next consider the long exact sequence in reduced homology for the pair $(\overline{B}_{\epsilon}(v), \dot{B}_{\epsilon}(v))$. Since $\overline{B}_{\epsilon}(v)$ is contractible, we get isomorphisms
\[ H_{j}(\overline{B}_{\epsilon}(v), \dot{B}_{\epsilon}(v)) \cong \widetilde{H}_{j-1}(\dot{B}_{\epsilon}(v)) \cong
\widetilde{H}_{j-1}(lk_{\downarrow}(v,\Delta)) \]
for all $j \geq 1$. Since $lk_{\downarrow}(v,\Delta)$ is $n$-connected, it follows that 
$H_{j}(\overline{B}_{\epsilon}(v), \dot{B}_{\epsilon}(v)) = 0$, 
for $j = 0, \ldots, n+1$. It now follows that $H_{j}(\Delta_{k}, \Delta_{k-1}) = 0$ for $j=0, \ldots, n+1$ by the above direct sum decomposition, completing the proof. 
\end{proof}

\begin{proposition} \label{proposition:connectfilt}
(connectivity of the filtration) Assume that $\Delta$ is $n$-connected.
If the descending links $lk_{\downarrow}(v,\Delta)$ of all vertices of rank at least $k$ are $n$-connected, then the maps on homotopy groups
\[ \pi_{j}(\Delta_{\ell - 1}) \rightarrow \pi_{j}(\Delta_{\ell}) \] 
are isomorphisms, for $j=0, \ldots, n$ and $\ell \geq k$. 

In particular, $\Delta_{\ell-1}$ is $n$-connected, for $\ell \geq k$.
\end{proposition}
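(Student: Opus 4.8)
The plan is to combine Lemma \ref{lemma:homology} with the relative Hurewicz theorem to promote the homology isomorphisms to homotopy isomorphisms, and then to pass from the isomorphism statement to the $n$-connectivity statement by a limiting argument using the fact that $\Delta$ is $n$-connected and that $\Delta = \bigcup_\ell \Delta_\ell$. First I would reduce to the case $\ell = k$ by induction on $\ell$: since the descending links of \emph{all} vertices of rank $\geq k$ are $n$-connected, the hypothesis applies verbatim with $k$ replaced by any $\ell \geq k$, so it suffices to prove that $\pi_j(\Delta_{k-1}) \to \pi_j(\Delta_k)$ is an isomorphism for $j = 0, \dots, n$, and then iterate.

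For the core step, I would argue as follows. Both $\Delta_{k-1}$ and $\Delta_k$ are obtained from one another by coning off the descending links of the (finitely many orbit types of) vertices of rank $k$, as in the proof of Lemma \ref{lemma:homology}; in particular the inclusion $\Delta_{k-1} \hookrightarrow \Delta_k$ induces an isomorphism on $\pi_0$ (path components are unaffected by attaching cones along nonempty — indeed $n$-connected, hence connected — subcomplexes when $n \geq 0$; when $n \geq 1$ one sees directly that $\Delta_{k-1}$ meets every component of $\Delta_k$). Next, for $\pi_1$: since each descending link is $n$-connected with $n \geq 1$ in the relevant range (and in the range $n = 0$ there is nothing past $\pi_0$ to check), van Kampen applied to $\Delta_k = \Delta_{k-1} \cup \bigcup_v \overline{B}_\epsilon(v)$ with nonempty connected overlaps $\dot B_\epsilon(v) \simeq lk_\downarrow(v,\Delta)$ shows $\pi_1(\Delta_{k-1}) \to \pi_1(\Delta_k)$ is an isomorphism (the attached pieces are contractible and the overlaps are simply connected when $n \geq 1$). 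Having established that the inclusion is a $\pi_0$- and $\pi_1$-isomorphism, I pass to universal covers: $\widetilde{\Delta_{k-1}} \to \widetilde{\Delta_k}$ is then a map for which Lemma \ref{lemma:homology} (applied to the $G$-equivariant, hence cover-compatible, filtration, or simply by the standard relation between homology of a space and of its universal cover together with the already-known $\pi_1$-isomorphism) gives that $H_j(\widetilde{\Delta_{k-1}}) \to H_j(\widetilde{\Delta_k})$ is an isomorphism for $j \leq n$. The relative Hurewicz theorem, applied to the pair of simply connected spaces $(\widetilde{\Delta_k}, \widetilde{\Delta_{k-1}})$ whose relative homology vanishes through dimension $n+1$, yields that the relative homotopy groups vanish through dimension $n+1$, hence $\pi_j(\widetilde{\Delta_{k-1}}) \to \pi_j(\widetilde{\Delta_k})$ is an isomorphism for $j \leq n$, and therefore so is $\pi_j(\Delta_{k-1}) \to \pi_j(\Delta_k)$ for $j \leq n$. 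This proves the first assertion.

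For the final ``in particular'', I would use that $\Delta$ is $n$-connected: any map $f \colon S^j \to \Delta_{\ell-1}$ with $j \leq n$ has compact, hence finite-dimensional, image landing in some $\Delta_m$ with $m \geq \ell$, and is null-homotopic in $\Delta$; since $\Delta = \bigcup_{m' \geq m} \Delta_{m'}$ and a null-homotopy $D^{j+1} \to \Delta$ again has compact image, the null-homotopy lands in some $\Delta_{m'}$ with $m' \geq \ell$. Thus $f$ is null-homotopic in $\Delta_{m'}$. Now run the isomorphisms $\pi_j(\Delta_{\ell-1}) \xrightarrow{\ \cong\ } \pi_j(\Delta_\ell) \xrightarrow{\ \cong\ } \cdots \xrightarrow{\ \cong\ } \pi_j(\Delta_{m'})$ backwards to conclude $f$ was already null-homotopic in $\Delta_{\ell-1}$; since $\pi_0(\Delta_{\ell-1}) = \pi_0(\Delta) = \ast$ as well, $\Delta_{\ell-1}$ is $n$-connected.

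The main obstacle I anticipate is the bookkeeping around basepoints and $\pi_1$ needed to legitimately invoke the relative Hurewicz theorem: one must genuinely verify that the inclusion $\Delta_{k-1} \hookrightarrow \Delta_k$ is $\pi_1$-surjective (so that the pair is ``$1$-connected'' in the sense Hurewicz requires) rather than merely quoting homology vanishing. This is exactly what the van Kampen argument above supplies, using the nonemptiness and connectivity of the descending links — so the hypothesis that the links are $n$-connected (and in particular nonempty) is doing real work here, not just in the homology computation. Everything else is the standard ``Morse-theoretic'' packaging and should be routine given Lemma \ref{lemma:homology}.
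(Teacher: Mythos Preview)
Your argument is correct and uses the same essential ingredients as the paper: a direct treatment of $\pi_0$ and $\pi_1$ (the latter via van Kampen, using that the descending links are simply connected), followed by Hurewicz to upgrade the homology isomorphisms of Lemma \ref{lemma:homology} to homotopy isomorphisms, and a compactness/direct-limit argument for the final clause.

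The organizational difference is this: the paper runs an induction on $n$, so that at stage $n$ the filtration terms $\Delta_{\ell-1}$ and $\Delta_\ell$ are already known to be $(n-1)$-connected and in particular simply connected, allowing the (relative) Hurewicz theorem to be applied directly to the pair $(\Delta_\ell,\Delta_{\ell-1})$. You instead fix $n$ and pass to universal covers after establishing the $\pi_1$-isomorphism. This works, but your justification for applying Lemma \ref{lemma:homology} ``to the universal covers'' deserves one more sentence: what is actually needed is that the excision argument in the proof of Lemma \ref{lemma:homology} lifts verbatim, which it does because each descending link, being $n$-connected with $n\geq 1$, is simply connected, so the cover is trivial over a neighborhood of each rank-$k$ vertex and the lifted relative pair again decomposes as a wedge of cones on the same links. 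The paper's induction-on-$n$ route sidesteps this lift entirely, at the cost of interleaving the two conclusions of the proposition; your route keeps the two conclusions cleanly separated at the cost of the extra cover argument.
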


\begin{proof}
We prove both statements by induction on $n$. If $n=0$, then Lemma \ref{lemma:homology} implies that each inclusion
$\Delta_{\ell-1} \rightarrow \Delta_{\ell}$ is bijective at the level of path components, provided $\ell \geq k$. This establishes the first conclusion. It follows that
\[ \pi_{0}(\Delta_{k-1}) \rightarrow \pi_{0}(\Delta_{k}) \rightarrow \ldots \rightarrow \pi_{0}(\Delta) \]
is a sequence of bijections. Thus, the inclusion $\Delta_{\ell} \rightarrow \Delta$ determines a bijection of path components for $\ell \geq k$, so $\Delta_{\ell}$ is path connected. This completes the base case of the induction. 

If $n=1$, then, by induction, $\Delta_{\ell-1}$ is path-connected, for all $\ell \geq k$. We recall from the proof of Lemma \ref{lemma:homology} that the complex $\Delta_{\ell}$ is obtained by attaching cones to $\Delta_{\ell-1}$ along their bases, where each such base is the descending link of a vertex of rank $\ell$. By hypothesis, all such bases are $1$-connected for $\ell \geq k$, so van Kampen's theorem and a direct limit argument yield an isomorphism
\[ \pi_{1}(\Delta_{\ell-1}) \rightarrow \pi_{1}(\Delta_{\ell}), \]
for each $\ell \geq k$. The fact that $\Delta$ itself is $1$-connected now implies that $\Delta_{\ell-1}$ is $1$-connected, for $\ell \geq k$, as desired.

The general case $n>1$ follows easily by induction, Lemma \ref{lemma:homology}, and the Hurewicz theorem.
\end{proof}

\subsection{The descending link: the partitioned downward star and link} \label{subsection:analysisI}

For the remainder of the section, we will let $\Delta$ denote any of the complexes $\Delta^{\mathcal{E}}(\mathcal{V}_{\mathbb{S}})$ or $\Delta^{\mathcal{E}}(\mathcal{PV}_{\mathbb{S},Y})$, for $Y$ an arbitrary finite disjoint union of domains. The precise identity of $\Delta$ can be readily determined from the context. In a few cases (e.g., Proposition \ref{proposition:partitionedstar} and Corollary \ref{corollary:partitionedlink}), we suppress any mention of $\Delta$, in order to avoid using $\Delta$ to mean two different things within the same formula.

Here we introduce the partitioned downward stars and links, which are fundamental to our analysis of the downward links. The partitioned downward stars have natural product decompositions (Proposition \ref{proposition:partitionedstar}) and, therefore, the partitioned downward links have natural join structures (Corollary \ref{corollary:partitionedlink}).  

\begin{definition}  \label{definition:downwardpartitionedcomplex} (The downward star and downward link of a partitioned pseudovertex) Let 
\[ v = \{ [f_{1},D_{1}], \ldots, [f_{m},D_{m}] \} \]
 be a pseudovertex.
Let $\mathcal{P} = \{ p_{1}, \ldots, p_{\ell} \}$ be a partition of $v$,
where each $p_{i}$ is a pseudovertex. We denote such a choice of $v$ and partition $\mathcal{P}$ by $v_{\mathcal{P}}$, and call $v_{\mathcal{P}}$ a \emph{partitioned pseudovertex}.

We let $st_{\downarrow}(v_{\mathcal{P}},\Delta)$ denote the full subcomplex of
$st_{\downarrow}(v,\Delta)$ spanned by pseudovertices 
\[ v' = \{ [g_{1},E_{1}], \ldots, [g_{t},E_{t}] \}, \]
such that, for $j = 1, \ldots, t$, $g_{j}(E_{j}) \subseteq im(p_{i})$ for some $i \in \{ 1, \ldots, \ell \}$. The subcomplex $st_{\downarrow}(v_{\mathcal{P}},\Delta)$ is the \emph{downward star of the partitioned pseudovertex $v_{\mathcal{P}}$ in $\Delta$}. The 
\emph{downward link of $v_{\mathcal{P}}$ in $\Delta$}, denoted $lk_{\downarrow}(v_{\mathcal{P}},\Delta)$, is the link of $v$ in $st_{\downarrow}(v_{\mathcal{P}},\Delta)$.
\end{definition}

\begin{proposition} \label{proposition:partitionedstar} (product decomposition of the partitioned downward link)
Let $v \in \Delta$ and let $\mathcal{P} = \{ p_{1}, \ldots, p_{\ell} \}$ be a partition of $v$, where the $p_{i}$ are pseudovertices, for $i=1, \ldots, \ell$. We have 
\[ st_{\downarrow}(v_{\mathcal{P}}) \cong \prod_{i=1}^{\ell} st_{\downarrow}(p_{i}), \]
where the equivalence is a simplicial isomorphism when the right side is given the simplicial product structure (Definition \ref{definition:product}). 
\end{proposition}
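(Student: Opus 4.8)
The plan is to exhibit an explicit simplicial isomorphism between $st_{\downarrow}(v_{\mathcal{P}})$ and the simplicial product $\prod_{i=1}^{\ell} st_{\downarrow}(p_{i})$, building it from the "locality" of the partial order established in Proposition \ref{proposition:orderlocal}. First I would set up the vertex bijection. A vertex of $st_{\downarrow}(v_{\mathcal{P}})$ is a pseudovertex $v' \leq v$ such that every $[g_j,E_j] \in v'$ has image contained in $im(p_i)$ for some $i$; equivalently, $\mathcal{P}_{v'}$ refines the partition $\{ im(p_1), \ldots, im(p_\ell)\}$ of $im(v)$. For such a $v'$, let $p_i' \subseteq v'$ be the (unique, by Proposition \ref{proposition:orderlocal}(1)) sub-pseudovertex with $im(p_i') = im(p_i)$; then $p_i' \leq p_i$ by Proposition \ref{proposition:orderlocal}(2), so $p_i'$ is a vertex of $st_{\downarrow}(p_i)$. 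The map $v' \mapsto (p_1', \ldots, p_\ell')$ is the candidate isomorphism; its inverse sends $(q_1, \ldots, q_\ell)$ with $q_i \leq p_i$ to $q_1 \cup \ldots \cup q_\ell$, which is a pseudovertex because the $im(q_i) = im(p_i)$ are pairwise disjoint, and which lies below $v = p_1 \cup \ldots \cup p_\ell$ by the easy "disjoint union of expansions" direction.

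Next I would check the simplex condition, which is the crux of why the \emph{product} (rather than, say, a join or a plain order complex) is the right target. By the definition of the simplicial product (Definition \ref{definition:product}), a chain $C$ in $\prod_{i} st_{\downarrow}(p_i)$ is a simplex iff each coordinate projection $\pi_i(C)$ is a chain (hence a simplex) in $st_{\downarrow}(p_i)$. So I must show: a finite subset $C = \{v'_0 \lneq v'_1 \lneq \ldots \lneq v'_r\}$ of vertices of $st_{\downarrow}(v_{\mathcal{P}})$ is an $\mathcal{E}$-chain (a simplex of $\Delta$, restricted to the downward star) iff for each $i$ the projected sets $\{(v'_k)_i\}_k$ form an $\mathcal{E}$-chain in $st_{\downarrow}(p_i)$. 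The forward direction is essentially Lemma \ref{lemma:Esubset} applied coordinatewise together with the way $\mathcal{E}$-expansion was unwound in its proof (an $\mathcal{E}$-expansion of $v'_k$ restricts, on the sub-pseudovertex with image $im(p_i)$, to an $\mathcal{E}$-expansion). The reverse direction is the "patchwork"-style assembly: if in each coordinate $(v'_{k+1})_i$ is an $\mathcal{E}$-expansion of $(v'_k)_i$, then taking the union over $i$ realizes $v'_{k+1}$ as an $\mathcal{E}$-expansion of $v'_k$, using that $\mathcal{E}$-expansion is defined piece-by-piece on the elements of a pseudovertex (Definition \ref{definition:scheme}) and that $\mathcal{E}(b) \ni$ the relevant sub-pseudovertices for each $b \in v'_k$. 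One also needs that the comparabilities $v'_k \lneq v'_{k+1}$ match up with "not all coordinates equal," which follows from $im$ being additive over the partition pieces.

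The main obstacle I expect is the bookkeeping in the reverse direction of the simplex condition: one must verify that a tuple of $\mathcal{E}$-expansions, one in each $st_{\downarrow}(p_i)$, genuinely glues to an $\mathcal{E}$-expansion of the whole pseudovertex and not merely to some pseudovertex above it. This is where the "locally determined" machinery of Proposition \ref{proposition:orderlocal}(2) and the definition of $\mathcal{E}$-expansion as a union $\bigcup_i v'_i$ with $v'_i \in \mathcal{E}(b_i)$ (Definition \ref{definition:scheme}) do the work: since each $b \in v'_k$ lies in exactly one $p_i'$, and an $\mathcal{E}$-expansion of $(v'_k)_i$ already specifies, for each such $b$, an element of $\mathcal{E}(b)$, the global $\mathcal{E}$-expansion is just the union of these choices over all $i$ and all $b$. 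Once the vertex bijection is simplicial in both directions, it is automatically a simplicial isomorphism, and passing to links of $v$ on both sides (the link functor commutes with products in the sense of Theorem \ref{theorem:product}(2), though here we only need the star statement) gives Corollary \ref{corollary:partitionedlink}. I would present the argument by: (1) defining the two maps on vertices and checking they are mutually inverse; (2) proving a vertex $v'$ lies in $st_{\downarrow}(v_{\mathcal{P}})$ iff each $p_i' \leq p_i$; (3) proving the simplex equivalence in both directions via Lemma \ref{lemma:Esubset} and the definition of $\mathcal{E}$-expansion; (4) concluding.
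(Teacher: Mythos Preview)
Your proposal is correct and follows essentially the same route as the paper: define the map by decomposing each $v'$ into its pieces $p_i'$ with $im(p_i')=im(p_i)$, invoke Proposition~\ref{proposition:orderlocal} for well-definedness and order preservation, and then verify the simplex condition by unwinding the definition of $\mathcal{E}$-expansion. The paper's proof is terser (it works directly with chains ending at $v$ and leaves the reverse direction implicit), while you spell out both directions of the simplex equivalence; the underlying argument is the same.

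One small point to tighten: in your vertex-bijection paragraph you write ``$p_i' \leq p_i$ \ldots\ so $p_i'$ is a vertex of $st_{\downarrow}(p_i)$'' and similarly ``$q_1 \cup \cdots \cup q_\ell \leq v$'' for the inverse. In $\Delta^{\mathcal{E}}$, being a vertex of the downward star requires that the top be an $\mathcal{E}$-expansion of the bottom, not merely $\leq$. You supply exactly the needed argument later (each $b \in v'$ lies in a unique $p_i'$, and the $\mathcal{E}$-expansion data $w_r \in \mathcal{E}(b_r)$ restricts and reassembles across the partition), so this is a presentational slip rather than a gap; just move that reasoning up to where you first assert $p_i' \in st_{\downarrow}(p_i)$.
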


\begin{proof} 
We will show that the two sides of the equivalence are abstractly isomorphic.

Let $\sigma$ denote an arbitrary simplex in $st_{\downarrow}(v_{\mathcal{P}})$. Thus, $\sigma$ is an $\mathcal{E}$-chain
\[ v_{0} < v_{1} < v_{2} < \ldots < v_{j} = v, \]
for some $j \geq 0$, and each $v_{k}$ ($k=0, \ldots, j$) is such that, for each $[f,D] \in v_{k}$, there is some $i \in \{1, \ldots, \ell \}$ with the property that $f(D) \subseteq im(p_{i})$.

For each pair $(k,i) \in \{ 0, \ldots, j \} \times \{ 1, \ldots, \ell \}$, we let
\[ v_{ki} = \{ [f,D] \mid [f,D] \in v_{k} \text{ and } f(D) \subseteq im(p_{i}) \}. \]
It follows easily that $v_{ki}$ is the (unique) subcollection of $v_{k}$ whose image is precisely $im(p_{i})$. Proposition \ref{proposition:orderlocal}(2) now implies that, for $i = 1, \ldots, \ell$, we have a weakly increasing sequence of pseudovertices
\[ v_{0i} \leq v_{1i} \leq \ldots \leq v_{ji} = p_{i}. \]
We note that the latter sequence is an $\mathcal{E}$-chain (and thus a simplex) in $st_{\downarrow}(p_{i})$,  after removing any repetitions.

We define a map 
\[ (v_{0} < \ldots < v_{j} = v) \mapsto ((v_{01}, \ldots, v_{0\ell}) < (v_{11}, \ldots, v_{1\ell}) < \ldots  < (p_{1}, \ldots, p_{\ell})). \]
The right side of the above formula is a simplex in the product $\prod_{i=1}^{m} st_{\downarrow}(p_{i})$ by Definitions \ref{definition:linkandstar} and \ref{definition:product}. The proof is completed by noticing that a given $v_{k}$ uniquely determines, and, conversely, is determined by, its $\ell$-tuple $(v_{k1}, \ldots, v_{k\ell})$.
\end{proof}

\begin{corollary} \label{corollary:partitionedlink} (join structure of the partitioned descending link)
Let $v \in \Delta$ and $\mathcal{P}$ be as in Proposition \ref{proposition:partitionedstar}. We have a homeomorphism
\[ lk_{\downarrow}(v_{\mathcal{P}}) \cong \bigast_{j=1}^{\ell} lk_{\downarrow}(p_{j}). \]
\end{corollary}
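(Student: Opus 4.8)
The statement to prove is Corollary \ref{corollary:partitionedlink}: with $v \in \Delta$ and $\mathcal{P} = \{p_1,\ldots,p_\ell\}$ a partition of $v$ into pseudovertices, there is a homeomorphism $lk_{\downarrow}(v_{\mathcal{P}}) \cong \bigast_{j=1}^{\ell} lk_{\downarrow}(p_j)$. The whole point is that this is now an immediate consequence of Proposition \ref{proposition:partitionedstar} together with the link-of-a-product statement in Theorem \ref{theorem:product}(2), so the ``proof'' is really just the bookkeeping needed to connect those two results. First I would recall that, by Definition \ref{definition:downwardpartitionedcomplex}, $lk_{\downarrow}(v_{\mathcal{P}})$ is by definition the link of the vertex $v$ inside the complex $st_{\downarrow}(v_{\mathcal{P}})$. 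By Proposition \ref{proposition:partitionedstar}, $st_{\downarrow}(v_{\mathcal{P}})$ is simplicially isomorphic to the simplicial product $\prod_{i=1}^{\ell} st_{\downarrow}(p_i)$, and under that isomorphism the vertex $v$ corresponds to the tuple $(p_1,\ldots,p_\ell)$ (each $p_i$ being the top vertex of the $i$-th factor $st_{\downarrow}(p_i)$, since $st_{\downarrow}(p_i)$ is an interval complex with maximum $p_i$).

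Then I would simply invoke Theorem \ref{theorem:product}(2): the link of a vertex $(v_1,\ldots,v_k)$ in a simplicial product $K$ of complexes $K_1,\ldots,K_k$ has geometric realization homeomorphic to the join $\Asterisk_{i=1}^{k} lk(v_i,K_i)$ of the links in the factors. Applying this with $K_i = st_{\downarrow}(p_i)$ and $v_i = p_i$ gives
\[
lk_{\downarrow}(v_{\mathcal{P}}) = lk\!\left(v, st_{\downarrow}(v_{\mathcal{P}})\right) \cong lk\!\left((p_1,\ldots,p_\ell), \prod_{i=1}^{\ell} st_{\downarrow}(p_i)\right) \cong \bigast_{i=1}^{\ell} lk\!\left(p_i, st_{\downarrow}(p_i)\right) = \bigast_{i=1}^{\ell} lk_{\downarrow}(p_i),
\]
which is exactly the asserted homeomorphism.

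The only genuine care-point — the ``main obstacle'', though it is a mild one — is verifying that the simplicial isomorphism of Proposition \ref{proposition:partitionedstar} carries the distinguished vertex $v$ to the distinguished tuple $(p_1,\ldots,p_\ell)$, so that ``link of $v$'' really does correspond to ``link of $(p_1,\ldots,p_\ell)$.'' This is visible from the explicit map exhibited in the proof of Proposition \ref{proposition:partitionedstar}: a chain $v_0 < \cdots < v_j = v$ is sent to $(v_{01},\ldots,v_{0\ell}) < \cdots < (v_{j1},\ldots,v_{j\ell})$ with $v_{ji} = p_i$, so the top vertex $v$ of any chain maps to $(p_1,\ldots,p_\ell)$; hence the correspondence of stars restricts to a correspondence of the respective links of those vertices. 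I would state this one sentence of justification and then conclude. The remainder is purely formal, so the write-up should be short.
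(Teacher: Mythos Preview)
Your proposal is correct and follows exactly the same approach as the paper, which simply states that the result follows directly from Proposition \ref{proposition:partitionedstar} and Theorem \ref{theorem:product}(2). Your additional verification that the isomorphism of Proposition \ref{proposition:partitionedstar} carries $v$ to the tuple $(p_1,\ldots,p_\ell)$ is a reasonable point of care, but the paper treats this as implicit.
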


\begin{proof}
This follows directly from Proposition \ref{proposition:partitionedstar} and Theorem \ref{theorem:product}(2).
\end{proof}

\begin{proposition} \label{proposition:intersectionsoflinksandstars}
(intersections of partitioned descending links and stars)
Let $v$ be a pseudovertex and let $\mathcal{P}_{1}, \ldots, \mathcal{P}_{k}$ be partitions of $v$. We have the equalities
\begin{enumerate}
\item $\displaystyle \cap_{j=1}^{k} st_{\downarrow}(v_{\mathcal{P}_{j}},\Delta) =
st_{\downarrow}(v_{\wedge_{j=1}^{k} \mathcal{P}_{j}},\Delta)$.
\item $\displaystyle \cap_{j=1}^{k} lk_{\downarrow}(v_{\mathcal{P}_{j}},\Delta) = lk_{\downarrow}(v_{\wedge_{j=1}^{k} \mathcal{P}_{j}},\Delta)$.
\end{enumerate}
\end{proposition}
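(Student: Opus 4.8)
The plan is to prove the two set-equalities directly by unpacking the definition of the partitioned downward star (Definition \ref{definition:downwardpartitionedcomplex}) at the level of simplices, since both $st_{\downarrow}(v_{\mathcal{P}},\Delta)$ and $lk_{\downarrow}(v_{\mathcal{P}},\Delta)$ are \emph{full} subcomplexes determined by a condition on the images of the pairs in each pseudovertex appearing in a chain. First I would observe that a simplex $\sigma = (v_0 < v_1 < \cdots < v_j = v)$ of $st_{\downarrow}(v,\Delta)$ lies in $st_{\downarrow}(v_{\mathcal{P}},\Delta)$ if and only if, for every $v_k$ in the chain and every $[f,D] \in v_k$, the set $f(D)$ is contained in $\operatorname{im}(p)$ for some $p \in \mathcal{P}$. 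The key elementary point is that, because the images $f(D)$ of the pairs in any pseudovertex are pairwise disjoint and the blocks of a partition $\mathcal{P}$ of $v$ have pairwise disjoint images (they are subcollections of the pseudovertex $v$), the condition ``$f(D) \subseteq \operatorname{im}(p)$ for some $p \in \mathcal{P}$'' is equivalent to ``$f(D) \subseteq \operatorname{im}(p)$, where $p$ is the unique block with $f(D) \cap \operatorname{im}(p) \ne \emptyset$''. This lets me phrase membership purely in terms of the partition $\mathcal{P}_v := \{f_i(D_i)\}$ of $\operatorname{im}(v)$ that $\mathcal{P}$ induces: $\sigma \in st_{\downarrow}(v_{\mathcal{P}},\Delta)$ iff the image-partition $\mathcal{P}_{v_k}$ of each $v_k$ refines the partition of $\operatorname{im}(v)$ induced by $\mathcal{P}$.

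Next I would carry out the intersection argument for (1). Take a simplex $\sigma$ in $\cap_{j=1}^k st_{\downarrow}(v_{\mathcal{P}_j},\Delta)$. For each fixed $j$ and each pair $[f,D]$ occurring in the chain, $f(D)$ is contained in the image of a unique block of $\mathcal{P}_j$; hence $f(D)$ is contained in the image of a block of the meet $\wedge_{j=1}^k \mathcal{P}_j$, namely the intersection over $j$ of those blocks — this intersection is nonempty because it contains $f(D)$, so it is a genuine block of the meet. (Here I should be slightly careful: the meet $\wedge \mathcal{P}_j$ is a partition of the finite set $v \subseteq \mathcal{B}$, taken blockwise as in Definition \ref{definition:meetrestrict}, and a block of the meet has image equal to the intersection of the images of the corresponding blocks, since taking images is injective on the disjoint pieces of a pseudovertex.) Thus $\sigma \in st_{\downarrow}(v_{\wedge_j \mathcal{P}_j},\Delta)$. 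Conversely, if $\sigma \in st_{\downarrow}(v_{\wedge_j \mathcal{P}_j},\Delta)$, then every pair $[f,D]$ in the chain has $f(D)$ inside the image of a block of the meet, hence inside the image of a block of each individual $\mathcal{P}_j$; so $\sigma$ lies in each $st_{\downarrow}(v_{\mathcal{P}_j},\Delta)$. Both directions use only the disjointness of images within a pseudovertex and the fact that the chains in question all terminate at the same top vertex $v$, so the subcollections $v_{ki}$ are forced, exactly as in the proof of Proposition \ref{proposition:partitionedstar}.

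Then statement (2) follows almost immediately: by Definition \ref{definition:downwardpartitionedcomplex}, $lk_{\downarrow}(v_{\mathcal{P}},\Delta) = lk(v, st_{\downarrow}(v_{\mathcal{P}},\Delta))$, and the link of a fixed vertex commutes with arbitrary intersections of subcomplexes all containing that vertex — concretely, $\cap_j lk(v, K_j) = lk(v, \cap_j K_j)$ whenever $v$ is a vertex of each $K_j$. Applying this with $K_j = st_{\downarrow}(v_{\mathcal{P}_j},\Delta)$ and invoking part (1) gives $\cap_j lk_{\downarrow}(v_{\mathcal{P}_j},\Delta) = lk(v, \cap_j st_{\downarrow}(v_{\mathcal{P}_j},\Delta)) = lk(v, st_{\downarrow}(v_{\wedge_j \mathcal{P}_j},\Delta)) = lk_{\downarrow}(v_{\wedge_j \mathcal{P}_j},\Delta)$. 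I do not anticipate a serious obstacle here; the only thing requiring genuine care is the bookkeeping translating ``partition of the pseudovertex $v$'' into ``partition of the set $\operatorname{im}(v) \subseteq X$'' and checking that the meet of partitions of $v$ corresponds under this translation to the meet of the induced partitions of $\operatorname{im}(v)$ — this is where I would spend the most words to keep the argument honest, but it is ultimately a routine consequence of the images of distinct pairs in a pseudovertex being disjoint.
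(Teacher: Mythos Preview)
Your argument is correct and is precisely the unpacking the paper has in mind: the paper's own proof simply says this ``is a simple consequence of Definition~\ref{definition:downwardpartitionedcomplex} and the definition of meet (Definition~\ref{definition:meetrestrict}),'' and your proposal spells out exactly that verification. There is no meaningful difference in approach.
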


\begin{proof}
This is a simple consequence of Definition \ref{definition:downwardpartitionedcomplex} and the definition of meet (Definition \ref{definition:meetrestrict}).
\end{proof}

\subsection{The descending link: the standard cover} \label{subsection:analysisII}
We are now almost ready to state a useful inductive principle for proving that the descending link is highly connected.
The main result here is 
Proposition \ref{proposition:downwardlinkconnectivity}, stating easy sufficient conditions for the downward link to be highly connected, in terms of ``contracting pseudovertices''.  

Recall that $\Delta$ denotes any of the complexes $\Delta^{\mathcal{E}}(\mathcal{V}_{\mathbb{S}})$ or $\Delta^{\mathcal{E}}(\mathcal{PV}_{\mathbb{S},Y})$, for $Y$ an arbitrary finite disjoint union of domains.

\begin{definition} \label{definition:contractingpseudovertex}  (contracting pseudovertex)
A pseudovertex $p$ is called \emph{contracting} if there is some $b \in \mathcal{B}$ and some pseudovertex $p' \in \mathcal{E}(b) - \{ b \}$ such that $p$ and $p'$ have the same type.
\end{definition}

\begin{lemma} \label{lemma:contractingpseudovertex} (contracting pseudovertices admit contractions) If $p$ is a contracting pseudovertex, then there is some $[f,D] \in \mathcal{B}$ such that $p \in \mathcal{E}([f,D])$.
\end{lemma}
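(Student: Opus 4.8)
The plan is to unwind the definition of \emph{contracting pseudovertex} (Definition \ref{definition:contractingpseudovertex}) and use the equivariance built into expansion schemes to transport the witnessing data to $p$ itself. By hypothesis there is some $b_{0} \in \mathcal{B}$ and some $p' \in \mathcal{E}(b_{0}) - \{ b_{0} \}$ such that $p$ and $p'$ have the same type (Definition \ref{definition:vt}). First I would fix such a $b_{0}$ and $p'$, and recall from Proposition \ref{proposition:vertexorbits} that two pseudovertices of the same type lie in the same $\widehat{S}$-orbit; that is, there is some $\hat{s} \in \widehat{S}$ with $\hat{s} \cdot p' = p$. I must be slightly careful: Proposition \ref{proposition:vertexorbits} is stated for vertices $v_{1}, v_{2} \in \Delta(\mathcal{PV}_{\mathbb{S}})$, but its proof manufactures $\hat{s}$ coordinate-by-coordinate purely from equality of domain types of the constituent pairs, and therefore applies verbatim to any two pseudovertices of the same type. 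For $\hat{s} \cdot p'$ to be defined we need $im(p')$ to be contained in the domain of $\hat{s}$; since $\hat{s}$ is constructed as a disjoint union of elements of $S$ with domains exactly the images $g_{j}(E_{j})$ of the pairs making up $p'$, this holds.

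Next I would apply the $\widehat{S}$-invariance axiom for expansion schemes, namely property (3) of Definition \ref{definition:scheme}: for every $\hat{s} \in \widehat{S}$ and every $b \in \mathcal{B}$ for which $\hat{s} \cdot b$ is defined, $\hat{s} \cdot \mathcal{E}(b) = \mathcal{E}(\hat{s} \cdot b)$. Here I would like to apply this with $b = b_{0}$ and the $\hat{s}$ found above. For $\hat{s} \cdot b_{0}$ to be defined, I need $im(b_{0})$ to lie in the domain of $\hat{s}$. This requires a small adjustment: $\hat{s}$ as produced above has domain $im(p')$, and since $p' \in \mathcal{E}(b_{0})$, property (1) of Definition \ref{definition:scheme} together with Proposition \ref{proposition:expansion} gives $im(p') = im(\{ b_{0}\}) = im(b_{0})$. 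Hence the domain of $\hat{s}$ is exactly $im(b_{0})$, so $\hat{s} \cdot b_{0}$ is defined. Writing $b_{1} = \hat{s} \cdot b_{0} \in \mathcal{B}$, we then have $p = \hat{s} \cdot p' \in \hat{s} \cdot \mathcal{E}(b_{0}) = \mathcal{E}(\hat{s} \cdot b_{0}) = \mathcal{E}(b_{1})$.

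Finally, writing $b_{1} = [f, D]$ for the appropriate $f \in \widehat{S}$ and domain $D$, this exhibits $[f,D] \in \mathcal{B}$ with $p \in \mathcal{E}([f,D])$, as required. I expect the only subtlety — and hence the ``main obstacle'' — to be the bookkeeping around \emph{when} the partial action $\hat{s} \cdot (-)$ is defined, i.e.\ matching up the images/domains so that both the invocation of Proposition \ref{proposition:vertexorbits} and the invocation of Definition \ref{definition:scheme}(3) are legitimate; this is handled by the observation above that all the relevant images coincide with $im(b_{0})$. Everything else is a direct chain of equalities, so the proof should be short.
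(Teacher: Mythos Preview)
Your proof is correct and follows exactly the same route as the paper's: unwind the definition to get $b_{0}$ and $p'$, invoke Proposition \ref{proposition:vertexorbits} to find $\hat{s}$ with $\hat{s}\cdot p' = p$, and then apply $\widehat{S}$-invariance of $\mathcal{E}$ to conclude $p \in \mathcal{E}(\hat{s}\cdot b_{0})$. Your extra care in verifying that $\hat{s}\cdot b_{0}$ is defined (via $im(p') = im(b_{0})$) is a point the paper's terse proof simply takes for granted.
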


\begin{proof} 
Since $p$ is a contracting pseudovertex, we can find some $[g,E] \in \mathcal{B}$ and some pseudovertex $p' \in \mathcal{E}([g,E])$ such that $p$ and $p'$ have the same type. By Proposition \ref{proposition:vertexorbits}, there exists some $\hat{s} \in \widehat{S}$ such that $\hat{s} \cdot p' = p$. It follows from $\widehat{S}$ invariance of $\mathcal{E}$ (Definition \ref{definition:scheme}) that 
$p \in \mathcal{E}([\hat{s}g,E])$.
\end{proof}

\begin{definition} \label{definition:standardcover} (standard cover)
If $v$ is a pseudovertex in $\Delta$ and $p \subseteq v$ is a contracting pseudovertex, then we let $\mathcal{P}_{p} = \{ p, v-p \}$. We let 
\[ \mathcal{C}_{v} = 
\{ lk_{\downarrow}(v_{\mathcal{P}_{p}}, \Delta) \mid p \subseteq v \text{ is a contracting pseudovertex} \}. \]
We call $\mathcal{C}_{v}$ the \emph{standard cover of $lk_{\downarrow}(v,\Delta)$}.
\end{definition}

\begin{proposition} \label{proposition:standardcoverisacover}
Let $v \in \Delta$ be a pseudovertex. The collection $\mathcal{C}_{v}$ is a cover of $lk_{\downarrow}(v,\Delta)$.
\end{proposition}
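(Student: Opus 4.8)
The plan is to verify the definition of a cover directly: I would show that every non-empty simplex $\sigma$ of $lk_{\downarrow}(v,\Delta)$ lies in $lk_{\downarrow}(v_{\mathcal{P}_p},\Delta)$ for a suitably chosen contracting pseudovertex $p\subseteq v$ (with $\mathcal{P}_p=\{p,v-p\}$ as in Definition \ref{definition:standardcover}). Since each member of $\mathcal{C}_v$ is a subcomplex of $lk_{\downarrow}(v,\Delta)$ and the empty simplex lies in every subcomplex, this is exactly what is needed. When $lk_{\downarrow}(v,\Delta)$ is empty there is nothing to prove; this happens in particular when $v$ has rank $1$, since expansion strictly increases rank (Remark \ref{remark:futurerefrank}).

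So, fixing a non-empty simplex $\sigma$ of $lk_{\downarrow}(v,\Delta)$, I would first observe that $\sigma\cup\{v\}$ is a simplex of $\Delta_{(-\infty,v]}$, hence (being a simplex of $\Delta=\Delta^{\mathcal{E}}$) a finite $\mathcal{E}$-chain, which I write as
\[ w_0 \lneq w_1 \lneq \cdots \lneq w_{k-1} \lneq v, \]
with least vertex $w_0$. By definition of an $\mathcal{E}$-chain, each of $w_1,\dots,w_{k-1},v$ is the result of an $\mathcal{E}$-expansion from $w_0$. Writing $w_0=\{b_1,\dots,b_n\}$ with $b_i\in\mathcal{B}$, this means each $w_j=\bigcup_{i} w_i^{(j)}$ with $w_i^{(j)}\in\mathcal{E}(b_i)$, and likewise $v=\bigcup_i v'_i$ with $v'_i\in\mathcal{E}(b_i)$. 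Since $w_0\lneq v$ strictly, at least one $v'_i$ differs from the trivial pseudovertex $\{b_i\}$; I fix such an index $i_0$ and set $p:=v'_{i_0}$. Then $p$ is a contracting pseudovertex (it has the same type as itself, and $p=v'_{i_0}\in\mathcal{E}(b_{i_0})-\{b_{i_0}\}$, cf. Definition \ref{definition:contractingpseudovertex}), and $p\subseteq v$ because $v$ is the disjoint union of the $v'_i$.

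It then remains to check that the entire chain $w_0\lneq\cdots\lneq w_{k-1}\lneq v$ lies in $st_{\downarrow}(v_{\mathcal{P}_p},\Delta)$, which by Definition \ref{definition:downwardpartitionedcomplex} is the full subcomplex of $st_{\downarrow}(v,\Delta)$ spanned by those pseudovertices each of whose pairs has image inside $im(p)$ or inside $im(v-p)$. For $v$ this condition is trivial. For $w_j=\bigcup_i w_i^{(j)}$: each $w_i^{(j)}\in\mathcal{E}(b_i)$ is obtained by a sequence of expansions from $\{b_i\}$ (Definition \ref{definition:scheme}(1)), so $im(w_i^{(j)})=im(b_i)$ by Proposition \ref{proposition:expansion}, whence every pair of $w_i^{(j)}$ has image contained in $im(b_i)$. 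Now $im(b_{i_0})=im(v'_{i_0})=im(p)$ and $\bigcup_{i\neq i_0}im(b_i)=im(v)-im(p)=im(v-p)$, so every pair of $w_j$ has image in $im(p)$ or in $im(v-p)$. Hence all of $w_0,\dots,w_{k-1},v$ are vertices of $st_{\downarrow}(v_{\mathcal{P}_p},\Delta)$, so the chain itself is a simplex there, and $\sigma=(\sigma\cup\{v\})-\{v\}\in lk_{\downarrow}(v_{\mathcal{P}_p},\Delta)$.

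The one point that genuinely needs care — and the step I expect to be the main obstacle — is the verification that every vertex of $\sigma$ respects the two-block partition $\mathcal{P}_p$; this rests precisely on the block-by-block nature of $\mathcal{E}$-expansion (each $w_j$ refines $w_0$ separately within each image $im(b_i)$) together with the image-preservation of ordinary expansion. The degenerate possibilities (e.g.\ $n=1$ with $p=v$, so that $v-p=\emptyset$ and $\mathcal{P}_p$ is the trivial partition) cause no difficulty, since then $st_{\downarrow}(v_{\mathcal{P}_p},\Delta)=st_{\downarrow}(v,\Delta)$ and $lk_{\downarrow}(v_{\mathcal{P}_p},\Delta)=lk_{\downarrow}(v,\Delta)$ already.
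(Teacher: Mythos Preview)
Your proof is correct and follows essentially the same approach as the paper's: both take an $\mathcal{E}$-chain ending at $v$, decompose each vertex of the chain block-by-block via the $\mathcal{E}$-expansion structure relative to the least vertex $w_0=\{b_1,\dots,b_n\}$, pick an index $i_0$ where the piece of $v$ is strictly larger than $\{b_{i_0}\}$, set $p$ equal to that piece, and verify that every vertex of the chain respects the partition $\{p,v-p\}$. Your discussion of the degenerate case $p=v$ is a harmless extra detail not made explicit in the paper.
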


\begin{proof}
Let 
\[ v_{0} < v_{1} < v_{2} < \ldots < v_{n-1} \]
be a simplex in the downward link $lk_{\downarrow}(v,\Delta)$, where $n \geq 1$. We must show that this simplex lies in $lk_{\downarrow}(v_{\mathcal{P}_{p}}, \Delta)$, for some contracting pseudovertex $p \subseteq v$.

Note that
\[c:= (v_{0} < v_{1} < v_{2} < \ldots < v_{n-1} < v_{n}) \]
is a simplex in the downward star $st_{\downarrow}(v,\Delta)$.
Let us write $v_{0} = \{ b_{1}, \ldots, b_{m}\}$, where each $b_{i} \in \mathcal{B}$. Since $c$ is an $\mathcal{E}$-chain, each $v_{i}$ is the result of $\mathcal{E}$-expansion from $v_{0}$ (see Definition \ref{definition:scheme}). It follows that
\[ v_{k} = \bigcup_{j=1}^{m} v_{k,j}, \]
for $k=0, \ldots, n$, where $v_{k,j} \in \mathcal{E}(b_{j})$. Proposition \ref{proposition:orderlocal} implies that, for all $j$, $v_{k_{1},j} \leq v_{k_{2},j}$ when $k_{1} \leq k_{2}$.

There is some $j$ such that $v_{n,j} \gneq v_{0,j}$, since $v_{n} \gneq v_{0}$. We assume, without loss of generality, that $j=1$, and set $p = v_{n,1}$. (We note that $p \in \mathcal{E}(b_{1}) - \{ b_{1} \}$, so $p$ is a contracting pseudovertex.) It follows directly that, for $k<n$, $v_{k} \in lk_{\downarrow}(v_{\mathcal{P}_{p}},\Delta)$ 
(since $v_{k,1} \leq p$ and $\cup_{j=2}^{m} v_{k,j} \leq v-p$). Thus,
\[ v_{0} < v_{1} < \ldots < v_{n-1} \]
is a simplex in $lk_{\downarrow}(v_{\mathcal{P}_{p}},\Delta)$, completing the proof.  
\end{proof}

\begin{proposition} \label{proposition:nerveofstandardcover} (connectedness of the nerve of the standard cover) Let $v \in \Delta$ be a pseudovertex and let $\mathcal{C}_{v}$ be the standard cover of $lk_{\downarrow}(v,\Delta)$.  If for every 
choice of contracting pseudovertices $p_{0}, \ldots, p_{k} \subseteq v$, there is some contracting pseudovertex $p' \subseteq v-(p_{0} \cup \ldots \cup p_{k})$, then the nerve
$\mathcal{N}(C_{v})$ is $(k-1)$-connected.
\end{proposition}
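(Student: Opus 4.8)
The plan is to show that the nerve $\mathcal{N}(\mathcal{C}_v)$ is a flag complex whose simplices are governed by the hypothesis, and then to exhibit a cone-like structure on it using a fixed contracting pseudovertex. First I would record that a finite collection $\{lk_\downarrow(v_{\mathcal{P}_{p_0}},\Delta),\dots,lk_\downarrow(v_{\mathcal{P}_{p_s}},\Delta)\}$ of elements of $\mathcal{C}_v$ has nonempty common intersection precisely when the intersection is itself a partitioned descending link: by Proposition \ref{proposition:intersectionsoflinksandstars}(2),
\[ \bigcap_{i=0}^{s} lk_\downarrow(v_{\mathcal{P}_{p_i}},\Delta) = lk_\downarrow(v_{\wedge_{i=0}^{s}\mathcal{P}_{p_i}},\Delta), \]
and one checks that this link is nonempty as soon as there is \emph{some} contracting pseudovertex contained in one of the blocks of $\wedge_{i=0}^{s}\mathcal{P}_{p_i}$; when the $p_i$ are pairwise disjoint the relevant block not containing any fixed $p_i$ is $v-(p_0\cup\dots\cup p_s)$, so the existence of a contracting pseudovertex inside $v-(p_0\cup\dots\cup p_s)$ is exactly what is needed. (When the $p_i$ are not pairwise disjoint a short argument reduces to a coarser partition, or one observes directly that overlapping $p_i$ cannot all be simultaneously ``contracted away'' and handles that case separately.) The upshot is that $\mathcal{N}(\mathcal{C}_v)$ is a flag complex on the set of contracting pseudovertices contained in $v$, with a clique spanning $\{p_0,\dots,p_s\}$ iff a certain nonemptiness condition holds.

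Next I would exploit the hypothesis to build an explicit null-homotopy. Apply the hypothesis with $k=0$ and the empty list (or with any single contracting $p_0$, iterating) to produce a contracting pseudovertex $q \subseteq v$; one can in fact fix one such $q$ at the outset. The claim is that $q$ is a \emph{cone point} for $\mathcal{N}(\mathcal{C}_v)$ \emph{after} we know enough vertices are adjacent to it — but in general $q$ need not be adjacent to every vertex, so instead I would argue by induction on $k$ using the Nerve-style / ``deletion'' technique: show that $\mathcal{N}(\mathcal{C}_v)$ is $(k-1)$-connected by showing every map $S^j \to \mathcal{N}(\mathcal{C}_v)$ with $j \le k-1$ factors, up to homotopy, through a subcomplex on which a suitable contracting pseudovertex disjoint from the (finitely many) pseudovertices appearing in the image acts as a cone point. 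Concretely: given such a sphere, its image is a finite subcomplex, hence involves only finitely many contracting pseudovertices $p_0,\dots,p_r$; the hypothesis (applied to this list, which has at most $r+1 \le$ some finite bound, and this is where the quantifier ``for every choice of contracting pseudovertices $p_0,\dots,p_k$'' is used — we only ever need it for lists of length $\le k+1$) yields a contracting $p'$ disjoint from all of them. Then $p'$ is adjacent in $\mathcal{N}(\mathcal{C}_v)$ to each $p_i$ (their meet has a block containing $p'$... wait — rather, $\{p_i,p'\}$ spans an edge because $lk_\downarrow(v_{\mathcal{P}_{p_i} \wedge \mathcal{P}_{p'}},\Delta)$ is nonempty, which holds since the blocks of $\mathcal{P}_{p_i}\wedge\mathcal{P}_{p'}$ are $p_i\cap p', p_i - p', p'-p_i, v-(p_i\cup p')$ and at least one of $p'$ itself sits inside a block — more carefully $p' \subseteq v - p_i$ so $p'$ lies in the block $v-p_i$ of $\mathcal{P}_{p_i}$, and the partitioned link is nonempty by Proposition \ref{proposition:standardcoverisacover}-type reasoning). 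Hence the subcomplex spanned by $\{p_0,\dots,p_r,p'\}$ is a cone with apex $p'$, so the sphere is null-homotopic there.

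I would organize this as: (i) the combinatorial identification of simplices of $\mathcal{N}(\mathcal{C}_v)$ via Proposition \ref{proposition:intersectionsoflinksandstars} and nonemptiness of partitioned links; (ii) the key adjacency lemma: if $p, p' \subseteq v$ are contracting and disjoint then $\{p,p'\}$ is an edge of $\mathcal{N}(\mathcal{C}_v)$ — more generally a pairwise-disjoint family of contracting pseudovertices whose union is a proper subset of $v$ and whose complement contains a contracting pseudovertex spans a simplex; (iii) the induction on the dimension $j \le k-1$ of a sphere, using compactness of its image to get a finite list of involved pseudovertices, the hypothesis to get a fresh disjoint contracting pseudovertex $p'$, and (ii) to see the carrier-plus-$p'$ is a cone with apex $p'$, hence $\pi_j$-trivial. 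The main obstacle I anticipate is step (ii) in the non-disjoint case and making precise exactly which block of an iterated meet $\wedge \mathcal{P}_{p_i}$ witnesses nonemptiness of the corresponding partitioned descending link — i.e.\ verifying that a partitioned descending link $lk_\downarrow(v_{\mathcal{Q}},\Delta)$ is nonempty precisely when some block of $\mathcal{Q}$ contains a contracting pseudovertex. That in turn should follow from Lemma \ref{lemma:contractingpseudovertex} together with the product/join decompositions (Proposition \ref{proposition:partitionedstar} and Corollary \ref{corollary:partitionedlink}), since a nonempty join factor is supplied by any block admitting a nontrivial $\mathcal{E}$-expansion, and ``admitting a nontrivial $\mathcal{E}$-expansion'' is exactly the contracting condition; once that equivalence is nailed down the rest is bookkeeping.
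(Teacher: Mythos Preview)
Your proposal is more complicated than necessary and its main line of argument has a genuine gap. The paper's proof is a two-line argument: first show that \emph{every} $(k+1)$-element subset $\{lk_\downarrow(v_{\mathcal{P}_{p_0}},\Delta),\dots,lk_\downarrow(v_{\mathcal{P}_{p_k}},\Delta)\}$ of $\mathcal{C}_v$ has nonempty intersection (hence spans a $k$-simplex in the nerve), using exactly the computation you sketch via Proposition~\ref{proposition:intersectionsoflinksandstars} together with the hypothesis; then conclude that $\mathcal{N}(\mathcal{C}_v)$ contains the full $k$-skeleton of the simplex on its vertex set, and is therefore $(k-1)$-connected. That is the entire proof.

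Your cone-point strategy instead tries to null-homotope a sphere $S^j$ ($j\le k-1$) by producing a single contracting $p'$ disjoint from \emph{all} vertices $p_0,\dots,p_r$ appearing in the sphere's image. The problem is that $r+1$ can be arbitrarily large, while the hypothesis only guarantees a $p'$ avoiding $k+1$ contracting pseudovertices at a time. Your parenthetical remark ``we only ever need it for lists of length $\le k+1$'' is exactly the right observation, but it belongs to the paper's argument, not to yours: what it actually shows is that any $k+1$ vertices of the nerve span a simplex (apply the hypothesis to those $k+1$ pseudovertices, obtain $p'\subseteq v-(p_0\cup\dots\cup p_k)$, and note that $p'$ lies in a block of the meet, so the intersection is nonempty). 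Once that is established, no global cone point is needed---and in fact any vertex at all then cones over every $(k-1)$-simplex, which is how your approach could be repaired.

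Two minor points: the flag-complex claim is neither used nor obviously true (nerves of covers are not flag in general), and the vertices of $\mathcal{N}(\mathcal{C}_v)$ are the partitioned links $lk_\downarrow(v_{\mathcal{P}_p},\Delta)$, not the contracting pseudovertices $p$ themselves---though this identification is harmless here.
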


\begin{proof}
We claim that every $(k+1)$-element subcollection of $\mathcal{C}_{v}$ spans a simplex in $\mathcal{N}(\mathcal{C}_{v})$; i.e., that every such subcollection has non-empty intersection. 

Let $p_{0}, \ldots, p_{k} \subseteq v$ be an arbitrary choice of contracting pseudovertices. By Proposition \ref{proposition:intersectionsoflinksandstars}
\[ \cap_{j=0}^{k} lk_{\downarrow}(v_{\mathcal{P}_{j}}, \Delta) = lk_{\downarrow}(v_{\wedge_{j=0}^{k} \mathcal{P}_{j}}, \Delta). \]
We note that
\[ v-(p_{0} \cup \ldots \cup p_{k}) \in \wedge_{j=0}^{k} \mathcal{P}_{j}. \]
Since there is a contracting pseudovertex $p' \subseteq v-(p_{0} \cup \ldots \cup p_{k})$, we have
\[ lk_{\downarrow}( v_{\wedge_{j=0}^{k} \mathcal{P}_{j}}, \Delta) \neq \emptyset. \]
This proves the claim.  

It now follows that $\mathcal{N}(\mathcal{C}_{v})$ consists of the entire $k$-skeleton of a high-dimensional (or infinite-dimensional) simplex. (If $\mathcal{N}(\mathcal{C}_{v})$ has $k+1$ or fewer vertices, then it is a simplex.) It follows that $\mathcal{N}(\mathcal{C}_{v})$ is $(k-1)$-connected.   
\end{proof}

\begin{proposition} ($n$-connectivity of the descending link: a sufficient condition) \label{proposition:downwardlinkconnectivity}  Let $v \in \Delta$ be a pseudovertex and let $n \in \{ 0 \} \cup \mathbb{N}$. Assume that the descending link of $v$ is non-empty.
If
\begin{enumerate}
\item for every contracting pseudovertex $p$ contained in $v$,
\[ lk_{\downarrow}(v-p, \Delta) \]
is $(n-1)$-connected, and
\item for every $j \in \{ 2, \ldots, n+2 \}$ and for every choice $p_{1}, \ldots, p_{j}$ of contracting pseudovertices contained in $v$,  
\[ lk_{\downarrow}(v-(p_{1} \cup \ldots \cup p_{j}),\Delta) \] is $(n+1-j)$-connected,
\end{enumerate}
 then $lk_{\downarrow}(v,\Delta)$ is $n$-connected.
\end{proposition}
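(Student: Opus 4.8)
The natural approach is to apply the Nerve Theorem (Theorem \ref{theorem:Nerve}) to the standard cover $\mathcal{C}_{v}$ of $lk_{\downarrow}(v,\Delta)$ supplied by Definition \ref{definition:standardcover}. By Proposition \ref{proposition:standardcoverisacover}, $\mathcal{C}_{v}$ is genuinely a cover, so $lk_{\downarrow}(v,\Delta) = \bigcup_{p} lk_{\downarrow}(v_{\mathcal{P}_{p}},\Delta)$, the union ranging over contracting pseudovertices $p \subseteq v$. The Nerve Theorem then reduces $n$-connectivity of $lk_{\downarrow}(v,\Delta)$ to two things: (a) that every non-empty $t$-fold intersection of members of $\mathcal{C}_{v}$ is $(n-t+1)$-connected, and (b) that the nerve $\mathcal{N}(\mathcal{C}_{v})$ is $n$-connected.

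\textbf{Step 1: identify the intersections.} By Proposition \ref{proposition:intersectionsoflinksandstars}(2), a $t$-fold intersection $\bigcap_{i=1}^{t} lk_{\downarrow}(v_{\mathcal{P}_{p_{i}}},\Delta)$ equals $lk_{\downarrow}(v_{\wedge_{i=1}^{t}\mathcal{P}_{p_{i}}},\Delta)$, where each $\mathcal{P}_{p_{i}} = \{p_{i}, v-p_{i}\}$. One checks that the meet $\wedge_{i=1}^{t}\mathcal{P}_{p_{i}}$ is the partition of $v$ whose blocks are the non-empty sets among $\bigcap_{i\in A}p_{i} \cap \bigcap_{i\notin A}(v-p_{i})$ for $A\subseteq\{1,\dots,t\}$; in particular one of its blocks is $v-(p_{1}\cup\dots\cup p_{t})$. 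Now invoke Corollary \ref{corollary:partitionedlink}: the partitioned descending link $lk_{\downarrow}(v_{\mathcal{Q}},\Delta)$ decomposes as the join $\bigast_{q\in\mathcal{Q}} lk_{\downarrow}(q,\Delta)$. Using the standard fact that the join of an $n_{1}$-connected complex with an $n_{2}$-connected complex is $(n_{1}+n_{2}+2)$-connected (and that an empty join factor contributes, conventionally, a $(-2)$-connected factor which only helps), the connectivity of the intersection is at least $2 + \sum_{q}\bigl(\mathrm{conn}(lk_{\downarrow}(q,\Delta))+1\bigr)$ over the blocks $q$. Isolating the block $v-(p_{1}\cup\dots\cup p_{t})$ and the remaining $t$ ``small'' blocks $\bigcap_{i\in A}p_{i}\cap\dots$ which — being themselves contracting pseudovertices sitting inside $v-(\text{union of some of the }p_j)$, or handled by hypothesis (2) when $t \geq 2$ — have descending links of controlled connectivity, a bookkeeping argument using hypotheses (1) and (2) shows the intersection is $(n-t+1)$-connected. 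The cleanest organization is: for $t=1$ use hypothesis (1) directly (the block $v-p$ gives $lk_{\downarrow}(v-p,\Delta)$, which is $(n-1)$-connected, and since $p$ itself is contracting, $lk_{\downarrow}(p,\Delta)$ is non-empty, i.e. $(-1)$-connected, so the join is at least $(-1)+1+(n-1)+1+2-2 = n$... one tracks the exact indices carefully); for $t\geq 2$ use hypothesis (2) on the big block together with the non-emptiness of the links of the small blocks.

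\textbf{Step 2: connectivity of the nerve.} Here Proposition \ref{proposition:nerveofstandardcover} applies: if for every choice of contracting pseudovertices $p_{0},\dots,p_{k}\subseteq v$ there is a contracting pseudovertex $p'\subseteq v-(p_{0}\cup\dots\cup p_{k})$, then $\mathcal{N}(\mathcal{C}_{v})$ is $(k-1)$-connected. To get $\mathcal{N}(\mathcal{C}_{v})$ $n$-connected, I need this for $k = n+1$. The existence of such a $p'$ is forced by hypothesis (2): if there were no contracting pseudovertex inside $v-(p_{0}\cup\dots\cup p_{n+1})$, then $lk_{\downarrow}(v-(p_{0}\cup\dots\cup p_{n+1}),\Delta)$ would be empty by the argument inside the proof of Proposition \ref{proposition:standardcoverisacover} (every downward simplex there produces a contracting sub-pseudovertex), hence $(-2)$-connected; but hypothesis (2) with $j=n+2$ demands it be $(n+1-(n+2))$-connected $= (-1)$-connected, i.e. non-empty — a contradiction. (One also needs that $lk_{\downarrow}(v,\Delta)\neq\emptyset$, which is assumed, to start the induction / ensure at least one contracting $p$ exists.) So $\mathcal{N}(\mathcal{C}_{v})$ is $n$-connected.

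\textbf{Conclusion and main obstacle.} With (a) and (b) in hand, Theorem \ref{theorem:Nerve} yields that $lk_{\downarrow}(v,\Delta)$ is $n$-connected, as desired. The only genuinely delicate point is the index bookkeeping in Step 1: one must be scrupulous about the conventions for connectivity of the empty complex ($(-2)$-connected) and the $(-1)$-connected (non-empty) case, about which blocks of the meet partition are ``small'' contracting pieces versus the one ``large'' complement block, and about verifying that each small block is itself a pseudovertex of the same type as some $p'\in\mathcal{E}(b)-\{b\}$ so that its descending link is non-empty (this uses that contracting-ness is a property of the \emph{type}, Definition \ref{definition:contractingpseudovertex}, together with Lemma \ref{lemma:contractingpseudovertex}). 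Everything else is a routine assembly of results already proved in the section.
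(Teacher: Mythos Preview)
Your approach matches the paper's: apply the Nerve Theorem to the standard cover $\mathcal{C}_v$, use the join decomposition of Corollary \ref{corollary:partitionedlink} to control the intersections, and handle the nerve via hypothesis (2) at $j=n+2$.

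The bookkeeping you flag as the main obstacle is simpler than you make it. For a $t$-fold intersection with $t\geq 2$ you do \emph{not} need anything about the ``small'' blocks. The meet $\wedge_{i=1}^{t}\mathcal{P}_{p_i}$ contains $v-(p_1\cup\dots\cup p_t)$ as one block (and up to $2^t-1$ others, not $t$), and Corollary \ref{corollary:partitionedlink} gives
\[
\bigcap_{i=1}^{t} lk_{\downarrow}(v_{\mathcal{P}_{p_i}},\Delta)\;\cong\; lk_{\downarrow}\bigl(v-(p_1\cup\dots\cup p_t),\Delta\bigr)\ \ast\ (\text{join of the remaining factors}).
\]
Since joining an $m$-connected complex with any complex (including the empty one, where $A\ast\emptyset=A$) yields something at least $m$-connected, hypothesis (2) alone makes this intersection $(n+1-t)$-connected, exactly the $(n-t+1)$ the Nerve Theorem demands. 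There is no need to check that the small blocks are contracting, and in general they are not: a block such as $p_1\cap p_2$ has no reason to have the type of any member of some $\mathcal{E}(b)-\{b\}$. Only for $t=1$ do you use that $p_1$ itself is contracting, to gain one extra degree of connectivity (from $n-1$ to $n$) via the non-empty factor $lk_{\downarrow}(p_1)$. This is exactly how the paper organizes it, and it dissolves the obstacle you identified.

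A minor stylistic difference in Step 2: the paper shows directly that every $(n+2)$-fold intersection is non-empty (so the nerve contains the full $(n+1)$-skeleton of a simplex and is therefore $n$-connected), rather than deducing the existence of a further contracting $p'$ and then invoking Proposition \ref{proposition:nerveofstandardcover}. Your route also works, since a non-empty descending link of $v-(p_0\cup\dots\cup p_{n+1})$ does force a contracting sub-pseudovertex by the argument in Proposition \ref{proposition:standardcoverisacover}; the paper's route is just one step shorter.
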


\begin{proof}
We will apply the Nerve Theorem (Theorem \ref{theorem:Nerve}) to the nerve $\mathcal{N}(\mathcal{C}_{v})$.

We will first show that $\mathcal{N}(\mathcal{C}_{v})$ is $n$-connected. Let $j = n+2$. For every choice $p_{1}, \ldots, p_{n+2} \subseteq v$ 
of contracting pseudovertices, we have
\[ lk_{\downarrow}(v-(p_{1} \cup \ldots \cup p_{n+2}), \Delta) \neq \emptyset \]
by hypothesis.
Since $v-(p_{1} \cup \ldots \cup p_{n+2}) \in \wedge_{\ell=1}^{n+2} \mathcal{P}_{\ell}$,
\[ lk_{\downarrow}(v_{\wedge_{\ell =1}^{n+2} \mathcal{P}_{\ell}}, \Delta) 
= \cap_{\ell =1}^{n+2} lk_{\downarrow}(v_{\mathcal{P}_{\ell}},\Delta), \]
is also non-empty. It now follows from Proposition \ref{proposition:nerveofstandardcover} that $\mathcal{N}(\mathcal{C}_{v})$ is $n$-connected.

Next, consider any $j \in \{ 2, \ldots, n+1 \}$; let $p_{1}, \ldots, p_{j} \subseteq v$ be contracting pseudovertices. We have that
\[ lk_{\downarrow}(v-(p_{1} \cup \ldots \cup p_{j})) \]
is $(n+1-j)$-connected by hypothesis. It follows that
\[ \cap_{\ell=1}^{j} lk_{\downarrow}(v_{\mathcal{P}_{\ell}},\Delta) \]
is $(n+1-j)$-connected, by essentially the same reasoning as above.

Now consider the case $j=1$. Let $p_{1} \subseteq v$ be a contracting pseudovertex. We have
\[ lk_{\downarrow}(v_{\mathcal{P}_{p_{1}}}) \cong lk_{\downarrow}(p_{1}) \ast
lk_{\downarrow}(v-p_{1}). \]
Note that $lk_{\downarrow}(p_{1})$ is non-empty since $p_{1}$ is a contracting pseudovertex, and $lk_{\downarrow}(v-p_{1})$ is $(n-1)$-connected by hypothesis. It follows that $lk_{\downarrow}(v_{\mathcal{P}_{p_{1}}})$ is $n$-connected. 

It follows from Theorem \ref{theorem:Nerve} that $lk_{\downarrow}(v,\Delta)$ is $n$-connected.
\end{proof}

\section{Finiteness properties of groups} \label{section:8}

In this section, we will establish finiteness properties for a number of the groups. In Subsection \ref{subsection:Finfbasic}, we prove Theorem \ref{theorem:easyFinfinitygroups}, which states that the group $\Gamma_{S}$ has type $F_{\infty}$ when the associated expansion scheme $\mathcal{E}$ is ``rich in contractions'' (Definition \ref{definition:rich}). This result greatly generalizes the main theorem of \cite{FarleyHughes}. In Subsection \ref{subsection:inductive}, we establish an inductive procedure that enables one to prove that a given group has type $F_{n}$. The latter procedure can be applied in various cases where the ``rich in contractions'' property fails. 

\subsection{A basic sufficient condition for type $F_{\infty}$} \label{subsection:Finfbasic}

We can now offer a simple sufficient condition for the group $\Gamma_{S}$ to have type $F_{\infty}$ (Theorem \ref{theorem:easyFinfinitygroups}). The most important new ingredient is the ``rich in contractions'' property (Definition \ref{definition:rich}), a generalization of the ``rich in simple contractions'' property (Definition 5.11) from 
\cite{FarleyHughes}. 

\begin{definition} \label{definition:rich} (Rich in contractions) \label{definition:rich}
Let $\mathcal{E}$ be an expansion scheme. We say that $\mathcal{E}$ is \emph{rich in contractions} if there is some constant $C_{1}$ such that, if $v \in \mathcal{PV}_{\mathbb{S}}$ is any pseudovertex of rank at least $C_{1}$, then there is some contracting pseudovertex $v' \subseteq v$.
\end{definition}

\begin{theorem} \label{theorem:easyFinfinitygroups}(Groups of type $F_{\infty}$) Let $\mathbb{S}$ be an $S$-structure with finitely many domain types, such that the group $\mathbb{S}(D,D)$ has type $F_{\infty}$ for each $D \in \mathcal{D}^{+}$. 
Let $\mathcal{E}$ be an expansion scheme such that 
\begin{enumerate}
\item $\mathcal{E}$ is $n$-connected for all $n$;
\item $\mathcal{E}$ is rich in contractions;
\item each set $\mathcal{E}(b)$ ($b \in \mathcal{B}$) is finite.
\end{enumerate}
The group $\Gamma_{S}$ has type $F_{\infty}$.
\end{theorem}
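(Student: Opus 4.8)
The plan is to verify the four hypotheses of Brown's Finiteness Criterion (Theorem \ref{theorem:Brown}) for the action of $\Gamma_{S}$ on $\Delta^{\mathcal{E}}(\mathcal{V}_{\mathbb{S}})$, fixing an arbitrary $n$ and producing type $F_{n}$; since $n$ is arbitrary this yields type $F_{\infty}$. Write $\Delta = \Delta^{\mathcal{E}}(\mathcal{V}_{\mathbb{S}})$ and $\Delta_{k} = \Delta^{\mathcal{E}}(\mathcal{V}_{\mathbb{S}})_{k}$ for the filtration of Definition \ref{definition:thefiltration}. First I would record that $\Delta$ is $n$-connected: this is immediate from hypothesis (1) and Theorem \ref{theorem:bigone}. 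Next, the action is cellular by Theorem \ref{theorem:Ecomplex}, and the filtration $\{\Delta_{k}\}$ is by finite-dimensional $\Gamma_{S}$-invariant subcomplexes exhausting $\Delta$ by Proposition \ref{proposition:findiminv}. Cocompactness of the action of $\Gamma_{S}$ on each $\Delta_{k}$ follows from Proposition \ref{proposition:cocompact}, whose hypotheses — $\mathcal{E}$ being $\mathbb{S}$-finite and $\mathbb{S}$ having finitely many domain types — hold: finitely many domain types is assumed, and $\mathbb{S}$-finiteness follows because each $\mathcal{E}(b)$ is finite (hypothesis (3)), so the action of $\mathbb{S}(D,D)$ on the finite complex $\Delta(\mathcal{E}([f,D]))$ is trivially cocompact. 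For the cell-stabilizer condition I would invoke Corollary \ref{corollary:Fnstabilizers}: its hypothesis (1), that $\mathbb{S}(D,D)$ acts on $\mathcal{E}([f,D])$ with finite orbits, again follows from $\mathcal{E}(b)$ being finite, and hypothesis (2), that $\mathbb{S}(D,D)$ has type $F_{n}$, follows from the assumption that these groups have type $F_{\infty}$. Hence every cell stabilizer has type $F_{n}$, which certainly gives type $F_{n-p}$ for a $p$-cell.

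The remaining and main point is condition 3(d) of Brown's criterion: for sufficiently large $k$, $\Delta_{k}$ is $(n-1)$-connected. By Proposition \ref{proposition:connectfilt}, since $\Delta$ is already $n$-connected, it suffices to show that the descending links $lk_{\downarrow}(v,\Delta)$ are $(n-1)$-connected for all vertices $v$ of sufficiently large rank; then $\Delta_{\ell-1}$ is $(n-1)$-connected (indeed $n$-connected) for $\ell$ large. So the heart of the argument is a rank bound: there is a constant $K = K(n)$ such that every pseudovertex $v$ of rank $\geq K$ has $(n-1)$-connected descending link. I would prove this by induction on $n$, applying Proposition \ref{proposition:downwardlinkconnectivity}. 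Its two hypotheses require, for each contracting pseudovertex $p \subseteq v$, that $lk_{\downarrow}(v-p,\Delta)$ be $(n-2)$-connected, and for $j \in \{2,\dots,n+1\}$ and contracting $p_{1},\dots,p_{j} \subseteq v$ that $lk_{\downarrow}(v-(p_{1}\cup\dots\cup p_{j}),\Delta)$ be $(n+1-j)$-connected. Here the "rich in contractions" hypothesis (2) is what makes the induction close: with constant $C_{1}$ from Definition \ref{definition:rich}, if $v$ has rank large enough that even after removing up to $n+1$ contracting pseudovertices the rank of the remainder still exceeds the (inductively smaller) threshold needed for the appropriate connectivity, then both hypotheses of Proposition \ref{proposition:downwardlinkconnectivity} are met; one also needs that $v-(p_{1}\cup\dots\cup p_{j})$ has nonempty descending link, which again follows from richness once its rank is $\geq C_{1}$. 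A minor wrinkle: the removed pieces $p_{i}$ are pseudovertices of various ranks, so I would bound the rank drop crudely — each $p_{i}$ has rank at most $|v|$, but more usefully, the complement $v - (p_{1} \cup \dots \cup p_{j})$ loses at most $\sum |p_{i}|$ elements; since we only ever remove a bounded number $j \leq n+1$ of contracting pseudovertices and each contracting pseudovertex has rank at most some fixed bound $M$ (because there are finitely many domain types, hence finitely many types of $\mathcal{E}(b)$-members up to the bounded finite cardinality of each $\mathcal{E}(b)$), the total rank drop is bounded by $(n+1)M$, so choosing $K(n) = K(n-1) + (n+1)M + C_{1}$ suffices.

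Let me make the bound on contracting pseudovertex rank precise, since it is the one genuinely new estimate. A contracting pseudovertex $p$ has, by definition, the same type as some $p' \in \mathcal{E}(b) - \{b\}$ for some $b \in \mathcal{B}$; by Lemma \ref{lemma:contractingpseudovertex}, $p \in \mathcal{E}([f,D])$ for some $[f,D]$. Since $\mathbb{S}$ has finitely many domain types, $\mathcal{E}$ (being $\widehat{S}$-invariant, Definition \ref{definition:scheme}(3)) is determined up to the $\widehat{S}$-action by its values on a transversal $\mathcal{T}$, a finite set; each $\mathcal{E}([id_{D},D])$ for $[id_{D},D] \in \mathcal{T}$ is finite by hypothesis (3); hence there are finitely many possible ranks among all members of all sets $\mathcal{E}(b)$, and $M := \max\{|w| : w \in \mathcal{E}(b),\ b \in \mathcal{B}\}$ is a finite constant (rank being $\widehat{S}$-invariant). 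Thus every contracting pseudovertex has rank at most $M$.

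Assembling: with $K(0)$ chosen so that every $v$ of rank $\geq K(0)$ has nonempty descending link (such $K(0)$ exists by richness — take $K(0) = C_{1}$, since a contracting $v' \subseteq v$ yields a proper expansion refining within $\mathcal{E}$, hence a downward edge) and $K(n) = K(n-1) + (n+2)M + C_{1}$, an easy induction using Proposition \ref{proposition:downwardlinkconnectivity} shows every $v$ of rank $\geq K(n)$ has $(n-1)$-connected descending link. Then Proposition \ref{proposition:connectfilt} gives $\Delta_{k}$ is $(n-1)$-connected for all $k \geq K(n)$. All four hypotheses of Theorem \ref{theorem:Brown} now hold, so $\Gamma_{S}$ has type $F_{n}$; as $n$ was arbitrary, $\Gamma_{S}$ has type $F_{\infty}$. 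The main obstacle, as indicated, is the bookkeeping in the inductive rank bound on descending-link connectivity — verifying that "rich in contractions" plus finiteness of the $\mathcal{E}(b)$ genuinely gives a uniform threshold $K(n)$ — but no single step is technically deep once the constant $M$ is isolated.
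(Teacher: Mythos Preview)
Your proposal is correct and follows essentially the same route as the paper: verify Brown's criterion on $\Delta^{\mathcal{E}}$, dispatch the easy hypotheses via the cited propositions, and prove the descending-link connectivity by induction using Proposition~\ref{proposition:downwardlinkconnectivity}, the key input being the uniform bound on the rank of a contracting pseudovertex (your $M$, the paper's $C_{0}$). The only differences are cosmetic: the paper gives the closed-form threshold $(2n+2)C_{0}+C_{1}$ for $n$-connectedness rather than your recursive $K(n)$, and your statement of hypothesis~(2) has a small index slip (for target $(n-1)$-connectedness the requirement is $(n-j)$-connected, not $(n+1-j)$-connected), but this does not affect the argument.
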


\begin{proof}
We will show that the hypotheses of Theorem \ref{theorem:Brown} are satisfied for all $n$.

We first note that, since $\mathcal{E}$ is $n$-connected for all $n$, $\Delta^{\mathcal{E}}$ is $n$-connected for all $n$ by Theorem \ref{theorem:bigone}, and thus contractible by Whitehead's Theorem. The action of $\Gamma_{S}$ on $\Delta^{\mathcal{E}}$ is clearly cellular. 

By Proposition \ref{proposition:findiminv}, we have the equality
\[ \Delta^{\mathcal{E}} = \bigcup_{k=1}^{\infty} \Delta^{\mathcal{E}}_{n}, \]
where each subcomplex $\Delta^{\mathcal{E}}_{k}$ is $\Gamma_{S}$-invariant.
Moreover, the finiteness of the sets $\mathcal{E}(b)$ easily implies that $\mathcal{E}$ is $\mathbb{S}$-finite. Since $\mathbb{S}$ also has finitely many domain types, Proposition \ref{proposition:cocompact} implies that the action of $\Gamma_{S}$ on each $\Delta^{\mathcal{E}}_{k}$ is cocompact.

Since each set $\mathcal{E}(b)$ is finite and each group $\mathbb{S}(D,D)$ has type $F_{\infty}$, both hypotheses of Corollary \ref{corollary:Fnstabilizers} are satisfied for each $n$. Thus, each cell stabilizer in $\Delta^{\mathcal{E}}$ is of type $F_{\infty}$.

It is now enough to show that, for each $n$, $\Delta^{\mathcal{E}}_{k}$ is $n$-connected for sufficiently large $k$. By Proposition \ref{proposition:connectfilt}, it suffices to show that the descending links $lk_{\downarrow}(v, \Delta^{\mathcal{E}})$ are always $n$-connected, provided that the rank of the vertex $v$ is sufficiently large. We will in fact show this for all pseudovertices $v$ of sufficiently large rank.

Let $C_{1}$ be the constant from Definition \ref{definition:rich}. Any pseudovertex $v$ of rank at least $C_{1}$ thus contains a contracting pseudovertex. It follows from Lemma \ref{lemma:contractingpseudovertex} that $lk_{\downarrow}(v, \Delta^{\mathcal{E}})$ is non-empty if $r(v) \geq C_{1}$.
We note also that the standard cover $\mathcal{C}_{v}$ is a cover of $lk_{\downarrow}(v,\Delta^{\mathcal{E}})$ if $r(v) \geq C_{1}$, by Proposition \ref{proposition:standardcoverisacover}. 

Since there are only finitely many domain types and each set 
$\mathcal{E}(b)$ is finite (for $b \in \mathcal{B}$), there is a constant $C_{0}$ such that the rank of each contracting pseudovertex is less than or equal to $C_{0}$. 

Let $n \geq 0$ and let $v$ be a pseudovertex. We claim that if $r(v) \geq (2n+2)C_{0} + C_{1}$, then $lk_{\downarrow}(v, \Delta^{\mathcal{E}})$ is $n$-connected.
The proof is by induction on $n$, beginning with the case $n=0$. We will use the sufficient condition for $n$-connectivity given in Proposition \ref{proposition:downwardlinkconnectivity}. Thus, assume that $r(v) \geq 2C_{0} + C_{1}$. If $p \subseteq v$ is any contracting pseudovertex, we have $r(v-p) \geq C_{0} + C_{1}$, and therefore $lk_{\downarrow}(v-p,\Delta^{\mathcal{E}})$ is non-empty. This establishes condition (1) from Proposition \ref{proposition:downwardlinkconnectivity}. Now let $p_{1}, p_{2} \subseteq v$ be contracting pseudovertices.
Clearly, 
\[ r(v - (p_{1} \cup p_{2})) \geq C_{1},\] 
so $lk_{\downarrow}(v - (p_{1} \cup p_{2}))$ is non-empty, establishing (2). This proves the claim when $n=0$.

Now let $n$ be arbitrary, and assume that the claim holds for smaller $n$. Let $v$ be a pseudovertex of rank at least $(2n+2)C_{0} + C_{1}$. We check condition (1) from Proposition \ref{proposition:downwardlinkconnectivity}; thus, let $p \subseteq v$ be a contracting pseudovertex. Clearly, 
\[ r(v-p) \geq (2n+1)C_{0} + C_{1} \geq (2n)C_{0} + C_{1}, \] so $lk_{\downarrow}(v-p, \Delta^{\mathcal{E}})$ is $(n-1)$-connected, as required. Now we check (2); let $p_{1}, \ldots, p_{j} \subseteq v$ be contracting pseudovertices, for some $j \in \{ 2, \ldots, n+2 \}$. Clearly,
\begin{align*}
 r(v - (p_{1} \cup \ldots \cup p_{j})) &\geq (2n+2-j)C_{0} + C_{1} \\
 &\geq [2(n+1-j) + 2]C_{0} + C_{1}, 
 \end{align*}
 which shows that 
 \[ lk_{\downarrow}(v-(p_{1} \cup \ldots \cup p_{j}), \Delta^{\mathcal{E}}) \]
 is $(n+1-j)$-connected, as required. 
 
 This proves the claim and completes the proof of the theorem.
 \end{proof}
 
 \begin{example} \label{example:VhasFinfinity} (The $F_{\infty}$ property for the generalized Thompson groups $V_{n,r}$)
We first consider Thompson's group $V$. Recall that the associated set $\mathcal{D}_{S_{V}}^{+}$ of domains satisfies the compact ultrametric property E (Example \ref{example:V}). We use the maximal $S_{V}$-structure (Example \ref{example:maximalSstructure}). The resulting structure function $\mathbb{S}$ assigns a singleton to each pair of domains:
\[ \mathbb{S}(B_{\omega_{1}}, B_{\omega_{2}}) = \{ \sigma_{\omega_{1},\omega_{2}} \}. \]
(In particular, we note that there is only one domain type.)
It follows directly that every simplex in $\Delta(\mathcal{V}_{\mathbb{S}})$ has a finite stabilizer (Proposition \ref{proposition:stab}).
We use the expansion scheme $\mathcal{E}$ from Proposition \ref{proposition:expansionforcompactumetric}; the associated complex $\Delta^{\mathcal{E}}$ is $m$-connected for all $m$. For any domain $B_{\omega} \in \mathcal{D}_{S_{V}}^{+}$, the maximal partition of $B_{\omega}$ is as follows:
\[ \mathcal{P}_{B_{\omega}} = \{ B_{\omega0}, B_{\omega1} \}. \]
It follows easily from the description of $\mathcal{E}$ that any pseudovertex of rank two is a contracting pseudovertex. Thus, the expansion scheme $\mathcal{E}$ is rich in contractions with constant $C_{1}=2$.  Clearly, the sets $\mathcal{E}(b)$ are also finite, so Theorem \ref{theorem:easyFinfinitygroups} implies that $V$ has type $F_{\infty}$.

More generally, we can consider the group $V_{n}$ that acts on the $n$-ary Cantor set
\[ \mathcal{C}_{n} = \prod_{k=1}^{\infty} \{0, 1, \ldots, n-1 \}. \]
We analogously define transformations $\sigma_{\omega_{1},\omega_{2}}$, where $\omega_{1}$ and $\omega_{2}$ are finite strings over the alphabet $\{ 0, \ldots, n-1 \}$. As in the case of $V$, a transformation $\sigma_{\omega_{1},\omega_{2}}$ removes the prefix $\omega_{1}$ from an infinite $n$-ary string, and attaches the prefix $\omega_{2}$ to the resulting string. (If the string $a_{1}a_{2}\ldots$ does not begin with the prefix $\omega_{1}$, then $\sigma_{\omega_{1},\omega_{2}}(a_{1}a_{2}\ldots)$ is undefined.) Letting 
\[ S_{V_{n}} = \{ \sigma_{\omega_{1},\omega_{2}} \mid \omega_{1}, \omega_{2} \text{ are finite }n\text{-ary strings} \} \cup \{ 0 \}, \]
we find that 
\[ \mathcal{D}^{+}_{S_{V_{n}}} = \{ B_{\omega} \mid \omega \text{ is a finite }n\text{-ary string} \}, \]
where $B_{\omega}$ is the collection of all infinite $n$-ary strings that begin with the prefix $\omega$. For any $\omega$,
\[ \mathcal{P}_{B_{\omega}} = \{ B_{\omega0}, \ldots, B_{\omega n-1} \}. \]
If we use the expansion scheme $\mathcal{E}$ from 
Proposition \ref{proposition:expansionforcompactumetric}, then every pseudovertex of rank $n$ is a contracting pseudovertex. Thus, the expansion scheme $\mathcal{E}$ is rich in contractions with constant $C_{1} = n$. The remaining conditions from Theorem \ref{theorem:easyFinfinitygroups} are easily checked, so we conclude that
$V_{n}$ has type $F_{\infty}$ for all $n \geq 2$.

The group $V_{n,r}$ ($n \geq 2$; $r \geq 1$) acts on the set
\[ X = \mathcal{C}_{n} \coprod \mathcal{C}_{n} \coprod \ldots \coprod \mathcal{C}_{n}, \]
where there are $r$ terms in the disjoint union. Elements of $V_{n,r}$ are locally determined by transformations between sets of the form $B_{\omega}$. (The domains and images of these transformations can be inside different copies of $\mathcal{C}_{n}$.) Essentially the same line of argument as those given above shows that $V_{n,r}$ has type $F_{\infty}$.   
\end{example}
 
 \begin{example} \label{example:nVFinfinity} (The $F_{\infty}$ property for the Brin-Thompson groups $nV$ and R\"{o}ver's group) Next we will  show that the groups $nV$ and R\"{o}ver's group all have type $F_{\infty}$. We will use the expansion schemes described in Propositions \ref{proposition:expansionschemeproduct} and  \ref{proposition:Roeverexpansion}, respectively, as well as all previously-established conventions related to these groups. Recall that all of these expansion schemes are $m$-connected, for all $m$.
 
 Note that all of the $S$-structures in question have only a single domain type. The expansion schemes are alike in that any two-element subset 
 $\{ [f_{1},D_{1}], [f_{2},D_{2}] \}$ of any pseudovertex is a contracting pseudovertex. It follows that the expansion schemes in question are rich in contractions with constant $C_{1} = 2$ (see Definition \ref{definition:rich}). 
It is clear from the descriptions of $\mathcal{E}$ that each set $\mathcal{E}(b)$ is finite. Finally, we note that the group $\mathbb{S}(D,D)$ is either trivial  (in the case of $nV$) or of order four (in the case of R\"{o}ver's group). It follows from Theorem \ref{theorem:easyFinfinitygroups} that $nV$ and the R\"{o}ver group have type $F_{\infty}$. Note that this proof also covers Thompson's group $V = 1V$.
\end{example}

\begin{example} \label{example:more}
(more examples based on products)
For $n \geq 1$, let $\bar{T}_{n}$ denote the rooted ordered infinite $n$-ary tree. 
Thus, $\bar{T}_{1}$ is a cellulated ray, $\bar{T}_{2}$ is the rooted ordered infinite binary tree, and so forth. We let $S_{\bar{n}}$ denote the inverse semigroup generated by two types of partial transformations of $\bar{T}_{n}$:
\begin{enumerate}
\item singleton transformations, whose domains and images are both singleton sets, and
\item transformations between subtrees, which move one rooted subtree to another, without otherwise permuting leaves or branches.
\end{enumerate}
Thus, $S_{\bar{1}}$ is the same as the inverse semigroup $S_{H_{1}}$ (Example \ref{example:Houghton}), and 
$S_{\bar{2}} = S_{QV}$ (Example \ref{example:QV}). If $n \geq 3$, then 
$S_{\bar{n}}$ is the straightforward $n$-ary generalization of $S_{QV}$. 

For $n \geq 2$, we will let $S_{n}$ denote the inverse semigroup associated to the generalized Thompson groups $V_{n}$. Thus, $S_{2} = S_{V}$ and $S_{n} = S_{V_{n}}$ for $n \geq 3$. 

We will consider product actions of the above semigroups. Let
\[ \Sigma = (\bar{a}_{1}, \bar{a}_{2},\ldots, \bar{a}_{j}, a_{j+1}, \ldots, a_{k}),\] where the $a_{\ell}$ are all positive integers, $a_{\ell} \geq 1$ when $1 \leq \ell \leq j$, and $a_{\ell} \geq 2$ when $j+1 \leq \ell \leq k$. We assume, furthermore, that
$a_{1} \leq \ldots \leq a_{j}$ and $a_{j+1} \leq \ldots \leq a_{k}$. We define
\[ S_{\Sigma}
:= S_{\bar{a}_{1}} \times S_{\bar{a}_{2}} \times
\ldots \times S_{\bar{a}_{j}} \times S_{a_{j+1}} \times \ldots \times S_{a_{k}}. \]

We claim that the group $\Gamma_{S_{\Sigma}}$ has type $F_{\infty}$ if $j<k$ (i.e., if there is at least one integer without a bar). The proof is very similar to the proof that $nV$ has type $F_{\infty}$, and we in fact choose the $S_{\Sigma}$-structure and expansion scheme $\mathcal{E}$ exactly as in that case. We mention the main difference: the structure function $\mathbb{S}$ associated to $\Gamma_{S_{\Sigma}}$ has $2^{j}$ distinct domain types. (This is because each of the factors has either two or one domain types, according to whether the subscript has a bar or not (respectively).) If a pseudovertex $v$ has rank at least $(a_{j+1} - 1) 2^{j} + 1$, then there will necessarily be at least $a_{j+1}$ pairs $[f,D]$ having the same type. It follows from the definition of $\mathcal{E}$ that it will then be possible to perform a contraction on $v$. Thus, the expansion scheme $\mathcal{E}$ is rich in contractions with constant $(a_{j+1} - 1)2^{j} + 1$. The remaining hypotheses of Theorem \ref{theorem:easyFinfinitygroups} are straightforward to check, completing the proof.    
 \end{example}

\begin{example} \label{example:FSS} (FSS groups) 
Let $X$ be a compact ultrametric space with ultrametric $d$. A \emph{finite similarity structure} \cite{FarleyHughes} associates to each pair of balls $(B_{1},B_{2})$ a finite set $\mathrm{Sim}_{X}(B_{1},B_{2})$ of surjective similarities; i.e. bijections $h: B_{1} \rightarrow B_{2}$ that stretch distances by a constant factor $\lambda$ that depends only upon $h$. The sets $\mathrm{Sim}_{X}(B_{1},B_{2})$ are required to satisfy properties (S2)-(S4) from Definition \ref{definition:sstructure} and to be closed under restrictions (in the sense of Remark
\ref{remark:closedrestrictions}). Let $\Gamma_{\mathrm{Sim}_{X}}$ denote the set of bijections of $X$ that are locally determined by the sets $\mathrm{Sim}_{X}(B_{1},B_{2})$. The authors showed that if $\mathrm{Sim}_{X}$ is rich in simple contractions and has finitely many ball types, then $\Gamma_{\mathrm{Sim}_{X}}$ has type $F_{\infty}$ \cite{FarleyHughes}. 

If we let 
\[ S = \bigcup_{(B_{1},B_{2})} \mathrm{Sim}_{X}(B_{1},B_{2}) \]
then $S$ is an inverse semigroup acting on $X$. The set of domains $\mathcal{D}_{S}^{+}$ is precisely the set of all metric balls in $X$. If we use the maximal $S$-structure, then we have the identity $\mathbb{S}(B_{1},B_{2}) = \mathrm{Sim}_{X}(B_{1},B_{2})$, for all pairs of metric balls $(B_{1},B_{2})$. We let the expansion scheme $\mathcal{E}$ be defined as in Proposition \ref{proposition:expansionforcompactumetric}. If we assume that $\mathcal{E}$ is rich in contractions and $\mathbb{S}$ has finitely many domain types, then Theorem \ref{theorem:easyFinfinitygroups} shows that $\Gamma_{S}$ has type $F_{\infty}$. This recovers the main result from \cite{FarleyHughes} (as described above).  
\end{example}  
 
\subsection{An inductively-defined sufficient condition for type $F_{n}$} \label{subsection:inductive}

In some cases, the ``rich in contractions" condition (Definition \ref{definition:rich}) is too restrictive. In this subsection, we will compute the connectivity of the descending link by inductive means. The induction will be done over the collection of ``type vectors".  

\begin{definition} \label{definition:typevectors} (type vectors; contracting vectors)
Assume that there are only finitely many domain types relative to  $\mathbb{S}$ (Definition \ref{definition:domaintypes}). If there are $t$ different domain types in all, then choose a numbering $1, \ldots, t$ of these domain types.

Let $p = \{ [f_{1}, D_{1}], \ldots, [f_{m}, D_{m}] \}$ be a pseudovertex. The \emph{type vector of $p$}, denoted $\vec{w}_{p}$,  
is the vector 
\[ (a_{1}, \ldots, a_{t}) \in (\mathbb{N} \cup \{ 0 \})^{t}, \]
where $a_{i}$ is the number of subscripts $j \in \{ 1, \ldots, m \}$ such that $D_{j}$ has type $i$. (I.e.,  $\vec{w}_{p}$ counts the number of domains $D_{j}$ having each of the $t$ domain types.) 
Conversely, 
we say that $v$ is \emph{of type $\vec{w}_{v}$}.

A vector $\vec{w} \in (\mathbb{N} \cup \{ 0 \})^{t}$ is called a \emph{contracting vector} if it is the type vector of some contractive pseudovertex. If $p \in \mathcal{E}(b)- \{ b \}$ is a contracting pseudovertex and $\vec{w} = \vec{w}_{p}$, then we write $\vec{w} \rightarrow \vec{w}_{\{ b \}}$. 
\end{definition}

\begin{remark} \label{remark:stableconn}
Clearly, $\ell_{sc}(\vec{w}_{1}) \geq \ell_{sc}(\vec{w}_{2})$
when $\vec{w}_{1} \succcurlyeq \vec{w}_{2}$. We will use this fact without further comment in what follows.
\end{remark}

\begin{definition} (connectivity length; stable connectivity length)
Let $\vec{w}$ be a type vector. The \emph{connectivity length of $\vec{w}$}, denoted $\ell_{c}(\vec{w})$, is the largest $n$ such that $lk_{\downarrow}(p)$
is $n$-connected, for some (equivalently, any) pseudovertex $p$ having type vector $\vec{w}$. 

If $\vec{w}_{1} = (a_{1}, \ldots, a_{t})$ and $\vec{w}_{2} = (b_{1}, \ldots, b_{t})$ are type vectors, then we write 
$\vec{w}_{1} \preccurlyeq \vec{w}_{2}$ if $a_{i} \leq b_{i}$ for all $i \in \{ 1, \ldots, t \}$. 

A type vector $\vec{w}$ has \emph{stable connectivity length at least $n$}
if, for every type vector $\vec{w}_{1} \succcurlyeq \vec{w}$, $\ell_{c}(\vec{w}_{1}) \geq n$. In this case, we write $\ell_{sc}(\vec{w}) \geq n$. We say that $\vec{w}$ has \emph{stable connectivity length $n$} if
\[ n = \mathrm{max}\{ j  \mid \ell_{sc}(\vec{w}) \geq j \}. \]
\end{definition}
 
\begin{proposition} \label{proposition:Finfind} (A sufficient condition for type $F_{n}$) Let $\mathbb{S}$ be an $S$-structure with finitely many domain types, such that the group $\mathbb{S}(D,D)$ has type $F_{n}$ for each $D \in \mathcal{D}^{+}$. 
Let $\mathcal{E}$ be an $(n-1)$-connected expansion scheme such that 
\begin{enumerate}
\item each set $\mathcal{E}(b)$ ($b \in \mathcal{B}$) is finite, and
\item there is a constant $C$ such that, whenever $v$ is a vertex satisfying $r(v) \geq C$, $\ell_{sc}(\vec{w}(v)) \geq n-1$.
\end{enumerate}
The group $\Gamma_{S}$ has type $F_{n}$.
\end{proposition}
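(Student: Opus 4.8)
The plan is to verify the hypotheses of Brown's Finiteness Criterion (Theorem \ref{theorem:Brown}) for the action of $\Gamma_{S}$ on $\Delta^{\mathcal{E}} = \Delta^{\mathcal{E}}(\mathcal{V}_{\mathbb{S}})$, exactly as in the proof of Theorem \ref{theorem:easyFinfinitygroups}, but only up to level $n$ rather than for all $n$. First I would record that since $\mathcal{E}$ is $(n-1)$-connected, $\Delta^{\mathcal{E}}$ is $(n-1)$-connected by Theorem \ref{theorem:bigone}, giving hypothesis (1). The action is cellular. The filtration $\Delta^{\mathcal{E}} = \bigcup_{k} \Delta^{\mathcal{E}}_{k}$ is provided by Proposition \ref{proposition:findiminv}, each $\Delta^{\mathcal{E}}_{k}$ is finite-dimensional and $\Gamma_{S}$-invariant. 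Finiteness of the sets $\mathcal{E}(b)$ implies $\mathcal{E}$ is $\mathbb{S}$-finite, so with finitely many domain types Proposition \ref{proposition:cocompact} gives cocompactness of the $\Gamma_{S}$-action on each $\Delta^{\mathcal{E}}_{k}$, hence on each $(\Delta^{\mathcal{E}}_{k})^{(n)}$. For the cell stabilizers: the sets $\mathcal{E}(b)$ are finite, so the action of $\mathbb{S}(D,D)$ on $\mathcal{E}([f,D])$ has finite orbits; combined with each $\mathbb{S}(D,D)$ having type $F_{n}$ (hence type $F_{n-p}$ for all $p \geq 0$), Corollary \ref{corollary:Fnstabilizers} yields that every cell stabilizer has type $F_{n}$, so in particular every $p$-cell stabilizer has type $F_{n-p}$.

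The remaining point — and the only one requiring hypothesis (2) — is that $\Delta^{\mathcal{E}}_{k}$ is $(n-1)$-connected for sufficiently large $k$. By Proposition \ref{proposition:connectfilt} it suffices to show that $lk_{\downarrow}(v, \Delta^{\mathcal{E}})$ is $(n-1)$-connected for all vertices $v$ of sufficiently large rank. I would prove this via the following claim: there is a constant $C'$ (built from $C$, the maximal rank $C_{0}$ of a contracting pseudovertex, and $n$) such that $r(v) \geq C'$ implies $lk_{\downarrow}(v,\Delta^{\mathcal{E}})$ is $(n-1)$-connected. The finiteness of the $\mathcal{E}(b)$ together with finitely many domain types bounds the rank of any contracting pseudovertex by a constant $C_{0}$, as in the proof of Theorem \ref{theorem:easyFinfinitygroups}.

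The claim is proved by induction on $n$ using Proposition \ref{proposition:downwardlinkconnectivity}. Here is where hypothesis (2) enters and where the argument genuinely differs from the $F_{\infty}$ case: instead of merely asserting that the relevant sub-pseudovertices have large rank and hence that $lk_{\downarrow}$ is highly connected "for free" via rich-in-contractions, I would instead note that if $r(v)$ is large enough that, after removing $j \leq n+1$ contracting pseudovertices (each of rank $\leq C_{0}$), the residual pseudovertex $v - (p_{1} \cup \ldots \cup p_{j})$ still has rank $\geq C$, then $\ell_{sc}(\vec{w}(v-(p_{1}\cup\ldots\cup p_{j}))) \geq n-1$ by hypothesis (2), which in turn (by the definition of stable connectivity length and Remark \ref{remark:stableconn}) gives that $lk_{\downarrow}(v - (p_{1}\cup\ldots\cup p_{j}))$ is $(n-1)$-connected, hence certainly $(n+1-j)$-connected for $j \geq 2$, and $(n-1)$-connected for $j=1$, verifying conditions (1) and (2) of Proposition \ref{proposition:downwardlinkconnectivity} (condition (1) of that proposition asks for $(n-1-1) = (n-2)$-connectedness of $lk_\downarrow(v-p)$, which follows a fortiori; one must be slightly careful matching the indexing). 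One also needs $lk_{\downarrow}(v,\Delta^{\mathcal{E}})$ itself nonempty, which follows since a large-rank $v$ admits a contracting pseudovertex — here I should check whether "rich in contractions" is needed or whether hypothesis (2) plus the definition of $\ell_{sc}$ already forces nonemptiness; in fact $\ell_{sc}(\vec{w}(v)) \geq n-1 \geq 0$ already implies $lk_{\downarrow}(v)$ is nonempty (it is at least $(-1)$-connected, i.e. nonempty, when $n \geq 1$; the case $n=0$ is degenerate and handled trivially since every group has type $F_{0}$).

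Setting $C' = (2n+2)C_{0} + C$ should suffice: removing at most $n+2$ contracting pseudovertices costs at most $(n+2)C_{0} \leq (2n+2)C_{0}$ in rank, leaving residual rank $\geq C$. The induction on $n$ then proceeds exactly as in Theorem \ref{theorem:easyFinfinitygroups}, with $C$ playing the role that $C_{1}$ played there. Once all hypotheses of Theorem \ref{theorem:Brown} are checked for the given $n$, we conclude $\Gamma_{S}$ has type $F_{n}$. The main obstacle I anticipate is purely bookkeeping: getting the connectivity indices to line up correctly in Proposition \ref{proposition:downwardlinkconnectivity} (which is stated with a shift, asking for $(n-1)$-connectedness of $lk_{\downarrow}(v-p)$ and $(n+1-j)$-connectedness of the $j$-fold removals to conclude $n$-connectedness of $lk_{\downarrow}(v)$), and correspondingly making sure the induction is run to produce $(n-1)$-connectedness of descending links of high-rank vertices — i.e. running Proposition \ref{proposition:downwardlinkconnectivity} at parameter $n-1$, not $n$ — so that Proposition \ref{proposition:connectfilt} delivers $(n-1)$-connectedness of $\Delta^{\mathcal{E}}_{k}$, which is precisely hypothesis (3)(d) of Brown's criterion for type $F_{n}$.
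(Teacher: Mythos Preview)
Your overall strategy --- verify the hypotheses of Brown's Finiteness Criterion for the action on $\Delta^{\mathcal{E}}$ with the filtration $\{\Delta^{\mathcal{E}}_{k}\}$ --- is exactly right and matches the paper. The first several paragraphs (connectivity of $\Delta^{\mathcal{E}}$, cocompactness via $\mathbb{S}$-finiteness and finitely many domain types, stabilizers of type $F_{n}$) are correct and essentially identical to the paper's argument.

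Where you diverge is in the final step, and here you are working much harder than necessary. You propose an induction using Proposition~\ref{proposition:downwardlinkconnectivity}, introducing a bound $C_{0}$ on the rank of contracting pseudovertices and a new constant $C' = (2n+2)C_{0} + C$, and then checking connectivity of $lk_{\downarrow}(v - (p_{1}\cup\ldots\cup p_{j}))$ for various $j$. None of this is needed. Unpack the definition of stable connectivity length: $\ell_{sc}(\vec{w}(v)) \geq n-1$ means that every type vector $\vec{w}_{1} \succcurlyeq \vec{w}(v)$ has $\ell_{c}(\vec{w}_{1}) \geq n-1$; in particular $\ell_{c}(\vec{w}(v)) \geq n-1$, which by definition of connectivity length says precisely that $lk_{\downarrow}(v,\Delta^{\mathcal{E}})$ is $(n-1)$-connected. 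So hypothesis~(2) already \emph{is} the statement that descending links of high-rank vertices are $(n-1)$-connected, and Proposition~\ref{proposition:connectfilt} finishes the proof immediately with $k = C$. The paper's proof of this step is one sentence.

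You have conflated this proposition with Proposition~\ref{proposition:inductiveconnectivity}, which is the inductive tool one uses in applications to \emph{establish} that $\ell_{sc}(\vec{w}) \geq n-1$. Here that bound is simply given as a hypothesis. Your detour also runs into a genuine wrinkle: hypothesis~(2) is stated for vertices, but the residuals $v - (p_{1}\cup\ldots\cup p_{j})$ are only pseudovertices, so invoking~(2) for them would need justification. The direct reading of the definition avoids this entirely.
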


\begin{proof}
We note first that $\Delta^{\mathcal{E}}$ is $(n-1)$-connected by Theorem \ref{theorem:bigone}. The action of $\Gamma_{S}$ on $\Delta^{\mathcal{E}}$ is cellular by Theorem \ref{theorem:Ecomplex}. The complex $\Delta^{\mathcal{E}}$ is filtered by the $\Gamma_{S}$-complexes $\Delta^{\mathcal{E}}_{k}$ (see Definition \ref{definition:thefiltration} and Proposition \ref{proposition:findiminv}). The action of $\Gamma_{S}$ on each $\Delta^{\mathcal{E}}_{k}$ is cocompact by Proposition \ref{proposition:cocompact}. Each cell stabilizer has type $F_{n}$, by Corollary \ref{corollary:Fnstabilizers}. 

It therefore suffices to show that each subcomplex $\Delta^{\mathcal{E}}_{k}$ is $(n-1)$-connected, for $k$ sufficiently large.  For this, it is sufficient, by Proposition \ref{proposition:connectfilt}, to prove, for some $k$, that the descending links $lk_{\downarrow}(v,\Delta^{\mathcal{E}})$ of all vertices of rank at least $k$ are $(n-1)$-connected. The latter follows immediately from (2) by letting $k=C$. 
\end{proof}

\begin{proposition} \label{proposition:inductiveconnectivity} (inductively computing $\ell_{sc}(\vec{w})$)
Let $\vec{w} \in (\mathbb{N} \cup \{ 0 \})^{t}$ be a type vector and let $n \in \mathbb{N} \cup \{ 0 \}$. If
\begin{enumerate}
\item for each contracting vector $\vec{c}_{1}$ such that $\vec{c}_{1} \preccurlyeq \vec{w}$, 
\[ \ell_{sc}(\vec{w} - \vec{c}_{1}) \geq n-1,\]
and
\item for each $j \in \{ 2, \ldots, n+2 \}$ and for every choice of (not necessarily distinct) contracting vectors $\vec{c}_{1}, \ldots, \vec{c}_{j} \preccurlyeq \vec{w}$, 
\[ \ell_{sc}\left(\vec{w} - \sum_{k=1}^{j} \vec{c}_{k}\right) \geq n-j+1, \]
where any negative entries in the above vector are to be interpreted as $0$s,
\end{enumerate}
then $\ell_{sc}(\vec{w}) \geq n$.
\end{proposition}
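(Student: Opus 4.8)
The statement is a ``type-vector'' version of Proposition \ref{proposition:downwardlinkconnectivity}, so the proof should reduce the assertion $\ell_{sc}(\vec{w}) \geq n$ to showing that $\ell_c(\vec{w}_1) \geq n$ for every type vector $\vec{w}_1 \succcurlyeq \vec{w}$, and then apply Proposition \ref{proposition:downwardlinkconnectivity} to a pseudovertex $v$ of type $\vec{w}_1$. First I would fix an arbitrary type vector $\vec{w}_1 \succcurlyeq \vec{w}$ and an arbitrary pseudovertex $v$ with $\vec{w}_v = \vec{w}_1$; by definition of $\ell_c$ it suffices to prove $lk_\downarrow(v, \Delta)$ is $n$-connected. (If $lk_\downarrow(v,\Delta)$ happens to be empty this must be excluded or handled — presumably the ambient hypotheses, e.g. $\mathcal{E}$ rich in contractions on vectors of size $\succcurlyeq \vec{w}$, guarantee the existence of a contracting pseudovertex inside $v$, hence non-emptiness of the link by Lemma \ref{lemma:contractingpseudovertex} and Proposition \ref{proposition:standardcoverisacover}; I would make this explicit.) Then I would verify the two hypotheses of Proposition \ref{proposition:downwardlinkconnectivity} for this $v$.

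For hypothesis (1) of Proposition \ref{proposition:downwardlinkconnectivity}: let $p \subseteq v$ be a contracting pseudovertex. Its type vector $\vec{c}_1 := \vec{w}_p$ is a contracting vector, and since $p \subseteq v$ we have $\vec{c}_1 \preccurlyeq \vec{w}_v = \vec{w}_1$. The pseudovertex $v - p$ has type vector $\vec{w}_1 - \vec{c}_1$. Now $\vec{w}_1 - \vec{c}_1 \succcurlyeq \vec{w} - \vec{c}_1$ componentwise (subtracting the same vector from a larger one), so hypothesis (1) of the present proposition gives $\ell_{sc}(\vec{w} - \vec{c}_1) \geq n-1$, and by Remark \ref{remark:stableconn} (monotonicity of $\ell_{sc}$ under $\succcurlyeq$, applied via $\ell_{sc}(\vec{w}_1 - \vec{c}_1) \geq \ell_{sc}(\vec{w} - \vec{c}_1)$) we conclude $\ell_c(\vec{w}_1 - \vec{c}_1) \geq \ell_{sc}(\vec{w}_1 - \vec{c}_1) \geq n-1$, i.e. $lk_\downarrow(v-p,\Delta)$ is $(n-1)$-connected. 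For hypothesis (2): given $j \in \{2,\dots,n+2\}$ and contracting pseudovertices $p_1,\dots,p_j \subseteq v$, set $\vec{c}_k = \vec{w}_{p_k}$; each is a contracting vector $\preccurlyeq \vec{w}_1$, and $v - (p_1 \cup \dots \cup p_j)$ has type vector equal to the componentwise truncation at $0$ of $\vec{w}_1 - \sum_k \vec{c}_k$. The same monotonicity argument, comparing with $\vec{w} - \sum_k \vec{c}_k$ (truncated), together with hypothesis (2) of the present proposition, gives that this link is $(n+1-j)$-connected. The one point needing care is the interaction between the componentwise truncation-at-$0$ convention and the $\succcurlyeq$ comparison: I would check that truncating both $\vec{w}_1 - \sum\vec{c}_k$ and $\vec{w} - \sum\vec{c}_k$ at $0$ preserves $\succcurlyeq$, which is immediate since $x \mapsto \max(x,0)$ is monotone.

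Having verified both hypotheses, Proposition \ref{proposition:downwardlinkconnectivity} yields that $lk_\downarrow(v,\Delta)$ is $n$-connected. Since $v$ was an arbitrary pseudovertex of type $\vec{w}_1$, $\ell_c(\vec{w}_1) \geq n$; and since $\vec{w}_1 \succcurlyeq \vec{w}$ was arbitrary, $\ell_{sc}(\vec{w}) \geq n$ by definition of stable connectivity length. The main obstacle I anticipate is purely bookkeeping: making sure the several monotonicity reductions (from $\vec{w}$ to $\vec{w}_1$, from subtracting exact contracting vectors to the truncated differences, and the use of Remark \ref{remark:stableconn}) are stated in the right direction, and confirming that the non-emptiness of $lk_\downarrow(v,\Delta)$ required by Proposition \ref{proposition:downwardlinkconnectivity} is in fact available here — this may require either an extra standing hypothesis or observing that when no contracting pseudovertex is contained in $v$ the hypotheses are vacuously consistent with $\ell_c$ being $-\infty$-or-whatever; I would resolve this by noting the intended application always has $\vec{w}$ large enough that contractions exist, and stating that assumption cleanly.
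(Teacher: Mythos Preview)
Your approach is essentially identical to the paper's: fix $v$ with $\vec{w}_v \succcurlyeq \vec{w}$, verify the two hypotheses of Proposition \ref{proposition:downwardlinkconnectivity} by translating contracting pseudovertices $p_i \subseteq v$ into contracting vectors and invoking monotonicity of $\ell_{sc}$, then conclude. The paper's proof is in fact less careful than yours --- it writes ``the type vector of $v-p$ is $\vec{w} - \vec{p}$'' when it should be $\vec{w}_v - \vec{w}_p$, and it passes over both the non-emptiness hypothesis and the issue you flag about whether $\vec{w}_p \preccurlyeq \vec{w}$ (as opposed to merely $\preccurlyeq \vec{w}_v$) --- so your more explicit bookkeeping is an improvement, not a deviation.
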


\begin{proof}
Let $\vec{w}$ satisfy the given conditions; we let $v$ be a pseudovertex such that $\vec{w}_{v} \succcurlyeq \vec{w}$. We apply Proposition \ref{proposition:downwardlinkconnectivity}. 

If $p\subseteq v$ is a contracting pseudovertex, then the type vector of 
$v-p$ is $\vec{w} - \vec{p}$. Our hypothesis says that $\ell_{sc}(\vec{w}-\vec{p}) \geq n-1$, which implies that $lk_{\downarrow}(v-p, \Delta)$ is $(n-1)$-connected. This establishes the first part of the hypothesis from Proposition \ref{proposition:downwardlinkconnectivity}.

Now suppose that $p_{1}, \ldots, p_{j}$ are contracting pseudovertices, each contained in $v$. It follows that
\[ \vec{w}_{v} - \left( \sum_{i=1}^{j} \vec{w}_{p_{i}} \right) \preccurlyeq \vec{w}_{v - (p_{1} \cup \ldots \cup p_{j})}, \]
where any negative entries in the vector on the left may be interpreted as zeroes. Since the stable connectivity length of the vector on the left is at least $n-j+1$ by hypothesis, it follows that $\vec{w}_{v-(p_{1} \cup \ldots \cup p_{j})}$ has connectivity length at least $n-j+1$. Thus,
$lk_{\downarrow}(v-(p_{1} \cup \ldots \cup p_{j}))$ is at least $(n-j+1)$- connected. 

It now follows from Proposition \ref{proposition:downwardlinkconnectivity} that $lk_{\downarrow}(v)$ is $n$-connected. Thus, $\ell_{c}(\vec{w}_{v}) \geq n$ if $\vec{w}_{v} \succcurlyeq \vec{w}$, so
$\ell_{sc}(\vec{w}) \geq n$.
\end{proof}

\begin{example}
\label{example:QVisFinf}
(the group $QV$ is of type $F_{\infty}$) Consider the group $QV$ and the associated semigroup $S_{QV}$ from Example \ref{example:QV}. The set $\mathcal{D}^{+}_{S_{QV}}$ satisfies the compact ultrametric property. We use the maximal $S_{QV}$-structure and the expansion scheme $\mathcal{E}$ from Proposition \ref{proposition:expansionforcompactumetric}.
There are two domain types: singleton sets $\{ \omega \}$ and the sets $\mathcal{T}^{0}_{\omega}$, where $\omega$ is an arbitrary finite binary string. The expansion scheme $\mathcal{E}$ is not rich in contractions, since pseudovertices of the form
\[ \{ [f_{1}, \{ \omega_{1} \}], \ldots, [f_{m}, \{ \omega_{m} \}] \} \]
contain no contracting pseudovertices, and we can clearly let the rank of such pseudovertices become arbitrarily large.

We claim that $QV$ is of type $F_{\infty}$. Since Theorem \ref{theorem:easyFinfinitygroups} does not apply, we proceed inductively and try to apply Proposition \ref{proposition:Finfind}. As noted above, there are just two domain types. Each structure set of the form $\mathbb{S}(D,D)$ contains only the identity transformation, and therefore has type $F_{n}$ for all $n$.   
The expansion scheme $\mathcal{E}$ is $(n-1)$-connected for all $n$ (Proposition \ref{proposition:expansionforcompactumetric}). Each set $\mathcal{E}(b)$ is clearly finite. Thus, it remains only to check condition (2) from Proposition \ref{proposition:Finfind}. 

We order the domain types, letting the singleton sets be first. Thus, a type vector $(a,b)$ describes a pseudovertex
\[ \{ [f_{1}, D_{1}], \ldots, [f_{m},D_{m}] \}, \]
where $a+b=m$ and precisely $a$ of the domains $D_{1}, \ldots, D_{m}$ are singletons. With this convention, there is
only one contracting vector, namely $(1,2)$. (This is because the maximal partition of $\mathcal{T}^{0}_{\omega}$ is
\[ \{ \mathcal{T}^{0}_{\omega0}, \mathcal{T}^{0}_{\omega1}, \{ \omega \} \}, \]
while the maximal partition of $\{ \omega \}$ is
$\{ \{ \omega \} \}$.) 

It follows directly that $\ell_{sc}(\vec{w}) \geq -1$ when
$\vec{w} \succcurlyeq (1,2)$. We claim that, in general, 
$\ell_{sc}(\vec{w}) \geq n$ whenever $\vec{w} \succcurlyeq (2n+3,4n+6)$. Assume the claim is true for $n$; we try to prove that $\ell_{sc}((2n+5,4n+10)) \geq n+1$ by checking the conditions from Proposition \ref{proposition:inductiveconnectivity}.
Note that 
\begin{align*}
\ell_{sc}((2n+5,4n+10) - (1,2)) &= \ell_{sc}((2n+4,4n+9)) \\
&\geq n
\end{align*}
and that, for $j \in \{ 2, \ldots, n+3 \}$,
\begin{align*}
\ell_{sc}((2n+5,4n+10) - (j,2j)) &= \ell_{sc}(2n - j +5, 4n -2j + 10) \\
&\geq n-j+2 
\end{align*}
since $2n-j+5 \geq 2n - 2j +7$ and
$4n-2j+10 \geq 4n -4j + 14$. 
This proves the claim by induction.

A general vertex in the associated complex has the type vector $(n,n+1)$, for some nonnegative integer $n$. It follows easily from the above computation and Proposition \ref{proposition:Finfind} that $QV$ has type $F_{\infty}$, recovering a result from \cite{QV}.
\end{example}

\begin{remark} \label{remark:conclusion!}
(other possible examples) 
Another example that could be considered under this heading is ``$2QV$''; i.e., the group locally determined by $S_{QV} \times S_{QV}$, which would be denoted
$\Gamma_{S_{(\bar{2},\bar{2})}}$ under the conventions of Example \ref{example:more}. Note that the proof of the $F_{\infty}$ property for $\Gamma_{S_{\Sigma}}$ assumes that at least one entry in $\Sigma$ occurs without a bar, so the finiteness properties of $2QV$ are unresolved by Example \ref{example:more}. The combinatorial analysis required for this example seems substantially more difficult than that required for Example \ref{example:QVisFinf}, so we will not undertake it here.

The Houghton group $H_{n}$ (Example \ref{example:Houghton}) was proved by Brown \cite{Brown} to be of type $F_{n-1}$ but not of type $F_{n}$. The inductive principle outlined in Proposition \ref{proposition:inductiveconnectivity} does not quite prove $F_{n-1}$, although it is reasonable to guess that a slight modification would be sufficient. Of course, we have not considered any methods that would allow us to prove that a group $\Gamma_{S}$ does not have type $F_{n}$.

Finally, we note that the groups described by Bieri and Sach (Example \ref{example:bieri}) appear to pose a much more substantial challenge. Some of their finiteness properties are known; we refer the reader to \cite{Bieri} for the current state of knowledge about these groups.
\end{remark}

 \bibliographystyle{plain}
\bibliography{biblio}

\begin{thebibliography}{10}

\bibitem{QV}
Samuel Audino, Delaney~R. Aydel, and Daniel Farley.
\newblock Quasiautomorphism groups of type {$F_\infty$}.
\newblock {\em Algebr. Geom. Topol.}, 18(4):2339--2369, 2018.

\bibitem{Grigorchuk}
Laurent Bartholdi, Rostislav~I. Grigorchuk, and Zoran \v{S}uni\'{k}.
\newblock Branch groups.
\newblock In {\em Handbook of algebra, {V}ol. 3}, volume~3 of {\em Handb.
  Algebr.}, pages 989--1112. Elsevier/North-Holland, Amsterdam, 2003.

\bibitem{BelkMatucci}
James Belk and Francesco Matucci.
\newblock R\"{o}ver's simple group is of type {$F_\infty$}.
\newblock {\em Publ. Mat.}, 60(2):501--524, 2016.

\bibitem{Bieri}
Robert Bieri and Heike Sach.
\newblock Groups of piecewise isometric permutations of lattice points.
\newblock {\em arXiv:1606.07728}, 2016.

\bibitem{AB}
A.~Bj\"{o}rner.
\newblock Topological methods.
\newblock In {\em Handbook of combinatorics, {V}ol. 1, 2}, pages 1819--1872.
  Elsevier Sci. B. V., Amsterdam, 1995.

\bibitem{BrinHighD}
Matthew~G. Brin.
\newblock Higher dimensional {T}hompson groups.
\newblock {\em Geom. Dedicata}, 108:163--192, 2004.

\bibitem{Brown}
Kenneth~S. Brown.
\newblock Finiteness properties of groups.
\newblock In {\em Proceedings of the {N}orthwestern conference on cohomology of
  groups ({E}vanston, {I}ll., 1985)}, volume~44, pages 45--75, 1987.

\bibitem{BrownGeoghegan}
Kenneth~S. Brown and Ross Geoghegan.
\newblock An infinite-dimensional torsion-free {${\rm FP}_{\infty }$} group.
\newblock {\em Invent. Math.}, 77(2):367--381, 1984.

\bibitem{BraidedV}
Kai-Uwe Bux, Martin~G. Fluch, Marco Marschler, Stefan Witzel, and Matthew C.~B.
  Zaremsky.
\newblock The braided {T}hompson's groups are of type {$\rm F_\infty$}.
\newblock {\em J. Reine Angew. Math.}, 718:59--101, 2016.
\newblock With an appendix by Zaremsky.

\bibitem{CFP}
J.~W. Cannon, W.~J. Floyd, and W.~R. Parry.
\newblock Introductory notes on {R}ichard {T}hompson's groups.
\newblock {\em Enseign. Math. (2)}, 42(3-4):215--256, 1996.

\bibitem{FarleyHughes}
Daniel~S. Farley and Bruce Hughes.
\newblock Finiteness properties of some groups of local similarities.
\newblock {\em Proc. Edinb. Math. Soc. (2)}, 58(2):379--402, 2015.

\bibitem{nV}
Martin~G. Fluch, Marco Marschler, Stefan Witzel, and Matthew C.~B. Zaremsky.
\newblock The {B}rin-{T}hompson groups {$sV$} are of type {$\text{F}_\infty$}.
\newblock {\em Pacific J. Math.}, 266(2):283--295, 2013.

\bibitem{BookbyRoss}
Ross Geoghegan.
\newblock {\em Topological methods in group theory}, volume 243 of {\em
  Graduate Texts in Mathematics}.
\newblock Springer, New York, 2008.

\bibitem{Houghton}
C.~H. Houghton.
\newblock The first cohomology of a group with permutation module coefficients.
\newblock {\em Arch. Math. (Basel)}, 31(3):254--258, 1978/79.

\bibitem{SemigroupBook}
J.~M. Howie.
\newblock {\em An introduction to semigroup theory}.
\newblock Academic Press [Harcourt Brace Jovanovich, Publishers], London-New
  York, 1976.
\newblock L.M.S. Monographs, No. 7.

\bibitem{Hughes}
Bruce Hughes.
\newblock Local similarities and the {H}aagerup property.
\newblock {\em Groups Geom. Dyn.}, 3(2):299--315, 2009.
\newblock With an appendix by Daniel S. Farley.

\bibitem{Lodha}
Yash Lodha.
\newblock A nonamenable type $\mathrm{F}_{\infty}$ group of piecewise
  projective homeomorphisms.
\newblock {\em arXiv:1408.3127}, 2014.

\bibitem{LM}
Yash Lodha and Justin~Tatch Moore.
\newblock A nonamenable finitely presented group of piecewise projective
  homeomorphisms.
\newblock {\em Groups Geom. Dyn.}, 10(1):177--200, 2016.

\bibitem{Nek}
Volodymyr~V. Nekrashevych.
\newblock Cuntz-{P}imsner algebras of group actions.
\newblock {\em J. Operator Theory}, 52(2):223--249, 2004.

\bibitem{Nucinkis}
Brita E.~A. Nucinkis and Simon St. John-Green.
\newblock Quasi-automorphisms of the infinite rooted 2-edge-coloured binary
  tree.
\newblock {\em Groups Geom. Dyn.}, 12(2):529--570, 2018.

\bibitem{RS}
C.~P. Rourke and B.~J. Sanderson.
\newblock {\em Introduction to piecewise-linear topology}.
\newblock Springer-Verlag, New York-Heidelberg, 1972.
\newblock Ergebnisse der Mathematik und ihrer Grenzgebiete, Band 69.

\bibitem{Rover}
Claas~E. R\"{o}ver.
\newblock Constructing finitely presented simple groups that contain
  {G}rigorchuk groups.
\newblock {\em J. Algebra}, 220(1):284--313, 1999.

\bibitem{Skipper}
Rachel Skipper, Stefan Witzel, and Matthew C.~B. Zaremsky.
\newblock Simple groups separated by finiteness properties.
\newblock {\em Invent. Math.}, 215(2):713--740, 2019.

\bibitem{Stein}
Melanie Stein.
\newblock Groups of piecewise linear homeomorphisms.
\newblock {\em Trans. Amer. Math. Soc.}, 332(2):477--514, 1992.

\bibitem{Thumann}
Werner Thumann.
\newblock Operad groups and their finiteness properties.
\newblock {\em Adv. Math.}, 307:417--487, 2017.

\bibitem{Walker}
James~W. Walker.
\newblock Canonical homeomorphisms of posets.
\newblock {\em European J. Combin.}, 9(2):97--107, 1988.

\bibitem{Witzel}
Stefan Witzel.
\newblock Classifying spaces from {O}re categories with {G}arside families.
\newblock {\em Algebr. Geom. Topol.}, 19(3):1477--1524, 2019.

\bibitem{WZcloning}
Stefan Witzel and Matthew C.~B. Zaremsky.
\newblock Thompson groups for systems of groups, and their finiteness
  properties.
\newblock {\em Groups Geom. Dyn.}, 12(1):289--358, 2018.

\end{thebibliography}

\end{document}